\title{Quasi-geodesics in the Cannon--Thurston metric}
\author{Vaibhav Gadre, Joseph Maher, Catherine Pfaff, Caglar Uyanik }
\date{\today}
\theoremstyle{plain}
\newtheorem{theorem}{Theorem}
\newtheorem{lemma}[theorem]{Lemma}
\newtheorem{proposition}[theorem]{Proposition}
\newtheorem{corollary}[theorem]{Corollary}
\newtheorem*{claim*}{Claim}
\theoremstyle{definition}
\newtheorem{definition}[theorem]{Definition}
\newtheorem{remark}[theorem]{Remark}
\newtheorem{remark-convention}[theorem]{Remark-Convention}
\newcommand{\fakeenv}{} 
 \renewcommand{\fakeenv}{#2} 
 \theoremstyle{plain} 
 \newtheorem*{\fakeenv}{#1~\ref{#2}} 
\newcommand{\HH}{\mathbb{H}} 
\newcommand{\RR}{\mathbb{R}}
\newcommand{\ogamma}{{\overline{\gamma}}}
\newcommand{\LL}{\Lambda}
\newcommand{\oLambda}{{\overline{\Lambda}}}
\renewcommand{\ge}{\geqslant}
\renewcommand{\le}{\leqslant}
\newcommand{\ws}{{\widetilde{S}}}
\newcommand{\wsr}{{\widetilde{S}_h \times \RR}}
\newcommand{\norm}[1]{\left\| {#1} \right\|}
\DeclareFontFamily{U}{mathx}{}
\DeclareFontShape{U}{mathx}{m}{n}{<-> mathx10}{}
\DeclareSymbolFont{mathx}{U}{mathx}{m}{n}
\DeclareMathAccent{\widecheck}{0}{mathx}{"71}
\newcommand{\PSL}{\text{PSL}}
\newcommand{\pa}{f\xspace}
\newcommand{\param}%
	{{\mathchoice{\mkern1mu\mbox{\raise2.2pt\hbox{$\centerdot$}}\mkern1mu}%
	{\mkern1mu\mbox{\raise2.2pt\hbox{$\centerdot$}}\mkern1mu}%
	{\mkern1.5mu\centerdot\mkern1.5mu}{\mkern1.5mu\centerdot\mkern1.5mu}}}
\newtheorem*{proposition:ft}{Proposition \ref{prop:fellow travel}}
\newtheorem*{proposition:pi}{Proposition \ref{prop:projection interval}}
\newtheorem*{theorem:hitting}{Theorem \ref{theorem:hitting}}
\newtheorem*{theorem:lebesgue}{Theorem \ref{theorem:lebesgue}}
\newtheorem*{lemma:bounded distance}{Lemma \ref{lemma:bounded distance}}
\newtheorem*{lemma:intersection interval qg}{Lemma \ref{lemma:intersection interval qg}}
\newtheorem*{prop:union qg}{Proposition \ref{prop:union qg}}
\newtheorem*{lemma:straight qg}{Lemma \ref{lemma:straight qg}}
\newtheorem*{cor:corner distance}{Lemma \ref{cor:corner distance}}
\newtheorem*{prop:vertical flow distance decreasing}{Proposition \ref{prop:vertical flow distance decreasing}}
\newlist{thmenum}{enumerate}{1} 
\setlist[thmenum]{leftmargin=3\parindent, label=\textup{(\arabic*)},
                  ref=\textup{\thetheorem.\arabic*}}
\crefname{thmenumi}{proposition}{propositions}
\begin{document}

\maketitle

\begin{abstract}

A closed fibered $3$-manifold admits a complete hyperbolic metric if and only if it has a fibration with a pseudo-Anosov monodromy. 
The stable and the unstable laminations associated to the pseudo-Anosov homeomorphism on the fiber surface give rise to a natural metric on the $3$-manifold, the Cannon--Thurston metric, which is quasi-isometric to the hyperbolic metric.


\medskip
In this paper, we describe a specific family of quasi-geodesics in the Cannon--Thurston metric. 
We use the main results of this article in a companion paper to obtain statistics for typical geodesics with respect to various natural measures on the $2$-sphere, thus giving a geometric criterion for singularity between some of these measure classes.

\end{abstract}

\tableofcontents

\section{Introduction}

In this paper, we describe a specific family of quasigeodesics in the
Cannon--Thurston metric, with the following two properties, see
\Cref{section:effective} for precise statements.

\begin{enumerate}

\item For a quasigeodesic that corresponds to a bi-infinite geodesic in the
fiber, the distance of the quasigeodesic from the fiber is explicitly
described in terms of the angle the geodesic makes with
the stable and unstable laminations.

\item The projection from the fiber to the quasigeodesic along the
vertical flow is coarsely distance reducing.  

\end{enumerate}

There are a number of previous constructions of quasigeodesics in the
Cannon--Thurston metric, which we discuss in more detail in
\Cref{section:previous}. The properties above have not been verified in these constructions. 
We use these properties in \cite{gmpueffective} to give effective
estimates of the amount of time a typical geodesic spends close to the
fiber.

\medskip
In the remainder of this section, we first review the definition
of the Cannon--Thurston metric, and then discuss previous constructions
of quasigeodesics.  We then give a brief outline of the construction
we use.

\subsection{The Cannon--Thurston metric}

Suppose that $S$ is a closed orientable surface with genus at least two and  
$f: S \to S$ is an orientation preserving homeomorphism. The
\emph{mapping torus of $f$} is the closed $3$-manifold obtained as
$M_{f}=M:=S \times [0,1]/ (f(x),1)\sim (x,0)$.  The universal cover
$\widetilde M_f$ of the mapping torus $M_f$ is a (topological) product
$\widetilde S \times \RR$.  We call the $\mathbb{R}$ action on the
universal cover the \emph{suspension flow}.

\medskip
In his groundbreaking work on geometrization of $3$-manifolds, Thurston proved that a closed $3$-manifold that fibers over the circle admits a complete hyperbolic metric if and only if it is a mapping torus $M_f$ of a pseudo-Anosov mapping class $f$ of a fiber surface $S$ \cite{thurston}. 

\medskip Since $S$ has negative Euler characteristic, the universal
cover $\widetilde S$ can be identified with $\HH^2$ by choosing a complete
hyperbolic metric on $S$. With the metric chosen, we
denote the fiber surface as $S_h$ and the universal cover as
$\widetilde S_h$.  The fiber inclusions
$\iota_r : \widetilde S_h \to \widetilde S_h \times \{r\}$ are exponentially distorted in $\HH^3$. Despite the distortion, Cannon
and Thurston \cite{cannon-thurston} showed that the inclusions extend
to a continuous map
$\partial_\infty \widetilde S_h = S^1 \to S^2 = \partial_\infty \HH^3$
at infinity. The \emph{Cannon--Thurston maps} are continuous,
surjective, and finite-to-one.  The suspension flow gives for each
$r \in \RR$ a map $f_r$ such that $\iota_r = f_r \circ \iota_0 $ with
$f_1$ acting by the lift of the pseudo-Anosov $f$. So we only need to
consider $\iota = \iota_0$.

\medskip
The action of the pseudo-Anosov map $f$ on $S_h$ has two invariant measured geodesic laminations $(\Lambda_\pm)$, where $\Lambda_+$ is called \emph{stable lamination} and $\Lambda_-$ is called \emph{unstable lamination}.
Roughly speaking, $f$ 
stretches the leaves of $\Lambda_-$ by a factor of $k = k(f) >1$ and  contracts the leaves of $\Lambda_+$ by $\frac{1}{k}$. 
In terms of transverse measures, $f$ scales down (respectively up) the transverse measure $dy$ (respectively $dx$) of $\Lambda_-$ (respectively $\Lambda_+$) by a factor $k = k(f) >1$. 
The constant $k$ is called the \emph{stretch factor} of the pseudo-Anosov $f$. Topologically, the invariant laminations and the stretch factor do not depend on the choice of the hyperbolic metric on $S$. 

\medskip
The invariant laminations can be lifted to laminations
$\overline{\Lambda}_\pm$ of $\HH^2 = \widetilde S_h$. 
Cannon and Thurston then define the infinitesimal pseudometric on $\widetilde S_h \times \RR$ by 
\begin{equation}\label{eq:ct metric}
ds^2 = k^{2z} dx^2 + k^{-2z} dy^2 + (\log k)^2
dz^2. 
\end{equation}
where $k$ is the stretch factor of $f$. 
The infinitesimal pseudo-metric gives rise to a
pseudometric $d_{\widetilde S_h \times \RR}$ on
$\widetilde{S}_h \times \RR$ in the standard manner:
\begin{itemize}
    \item integrating the pseudometric along rectifiable paths gives a pseudo-distance; and then
    \item defining the distance between two points as the infimum of the lengths over all rectifiable paths connecting the two points.
\end{itemize}
The resulting pseudo-metric on $\widetilde{S}_h \times \RR$ is called the \emph{Cannon--Thurston metric}.

\medskip
Suppose that $A$ is a subset of $\widetilde S_h$. 
We define the \emph{suspension flow set $F(A)$} to be
\[
F(A) = \bigcup_{z \in \RR} F_z(A).
\]
If $A = \{p \}$ is a point, then $F(p)$ is just the suspension flow
line through $p$, and hence a geodesic in the Cannon--Thurston pseudometric.
If $A$ is a hyperbolic geodesic $\gamma$ in
$\widetilde S_h$, then the suspension flow set $F(\gamma)$ is called
the \emph{ladder} of $\gamma$.  We refer to $\gamma$ as the
\emph{base} of the ladder.

\medskip
Ladders over geodesics in $\widetilde S_h$ are quasiconvex, a special case of a more general result of
Mitra \cite{mitra}*{Lemma 4.1}. 
Ladders over arbitrary geodesics are also quasi-isometric to $\HH^2$, though we do not use this fact directly.
However, because of the distortion, the image $\iota (\gamma)$ in $\widetilde S_h \times \RR$ need not be a quasi-geodesic, even up to reparameterization. In fact, when $\gamma$ is a leaf of an invariant lamination (or more generally, has a long enough segment that fellow travels a leaf), the image $\iota(\gamma)$ is  horocyclic in the ladder over $\gamma$.

\medskip
Our goal here is to explicitly construct quasi-geodesics in the Cannon--Thurston metric by straightening the ``horocyclic'' $\iota(\gamma)$ segments in the ladder over $\gamma$. We do this by defining a \emph{height function $h_\gamma(t)$} along the geodesic $\gamma$, where $h_\gamma(t)$ is roughly $\log \log $ the distance in the unit tangent bundle from the geodesic to the invariant laminations. We then show that the paths $(\gamma(t), h_\gamma(t))$, which we call \emph{test paths} are uniformly quasi-geodesic, that is their quasi-geodesic constants do not depend on $\gamma$.

\subsection{Quasi-geodesics in the universal cover}\label{section:previous}

McMullen \cite{mcmullen} constructed quasigeodesics in the singular solv metric on $\widetilde S \times \RR$ 
using saddle connections in the singular flat metric associated to the pseudo-Anosov $f$.
Hamenstaedt \cite{hamenstaedt} considered the more general case of
word hyperbolic extensions of surface groups, and Mj and Sardar
\cite{mj-sardar} and Kapovich and Sardar \cite{kapovich-sardar}
consider the general case of hyperbolic groups arising as hyperbolic-by-hyperbolic groups.

\medskip In all the above cases, the authors construct quasigeodesics
by connecting segments of vertical flow lines by horizontal
paths\footnote{We warn the reader that other authors, for example
  Kapovich and Sardar \cite{kapovich-sardar}, use the opposite
  convention for which directions are horizontal or vertical.}.
However, there is no \emph{a priori} upper bound for the length of the
vertical segments, and so the projection from the fiber geodesic to
the quasi-geodesic by the vertical flow need not be coarsely distance
non-increasing.

\medskip
As we note in the example below, these properties, particularly the coarse distance non-increasing property, need not hold for all quasigeodesics, and they depend on the details of the construction.  
For an example, consider a geodesic $\gamma$ in the hyperbolic plane intersecting a horocycle $h$. 
See the illustration in in \Cref{fig:qg} in the upper half space model of $\HH^2$.

\begin{figure}[h]
\begin{center}
\begin{tikzpicture}[scale=0.85]

\tikzstyle{point}=[circle, draw, fill=black, inner sep=1pt]

\draw (0, 0) -- (8, 0);

\draw (0, 1) -- (8, 1) node [label=above:$h$] {} ;

\draw (1, 0) arc (180:0:3) node [label=above right:$\gamma$] {};

\draw (1, 0) -- (1, 3) node [midway, label=left:$\alpha$] {} -- (7, 3) -- (7, 0);

\draw (1, 0) -- (4, 3) node [midway, label=left:$\beta$] {} -- (7, 0);

\end{tikzpicture}
\end{center}
\caption{Two quasigeodesics in the upper half space model of $\HH^2$.} \label{fig:qg}
\end{figure}

\medskip
The two paths $\alpha$ and $\beta$ in \Cref{fig:qg} are quasigeodesics. For all horocycles $h$ intersecting $\gamma$, the
vertical projection from $h$ onto $\beta$ is uniformly
coarsely distance non-increasing, that is, there are constants $D, K, C > 0$ such that for any horocycle $h$ and any pair of points distance at least $D$ apart in the path metric on $h$, their projections to $\beta$ are distance at most $K$ times their distance along $h$ plus $C$. However, the
vertical projection from $h$ to $\alpha$ does not have this property since a uniform $D$ does not exist for $h$ as its height (imaginary part) goes to zero. 

\medskip
In this article, we prove that both properties hold for the test paths we build by relating properties of the test paths to a function on the unit tangent bundle of the surface.


It may be possible to modify the
previous constructions of quasigeodesics to have the desired
properties, though we do not attempt it here.

\subsection{A specific construction of quasigeodesics}\label{section:effective}

In this section, we state the construction of the test paths precisely and also the properties that they satisfy.

\begin{definition}\label{def:non-exceptional} 
We say that a bi-infinite geodesic $\gamma$ in $\widetilde S_h$ is
\emph{non-exceptional} if
\begin{itemize}
\item its limit points $\gamma_-$ and $\gamma_+$ are distinct from the
limit points of any boundary leaf of any ideal complementary region of
either of the invariant laminations, and
\item the limit points have distinct images under the Cannon--Thurston
map, that is $\iota(\gamma_-) \not = \iota(\gamma_+)$.
\end{itemize}
\end{definition}

\medskip
Suppose that $\gamma$ is a non-exceptional geodesic in the universal
cover of the surface $\ws$ with unit speed
parametrization.  The inclusion map $\iota$ embeds $\gamma$ in $\wsr$
at height $z = 0$.  The image $\iota(\gamma)$ is, in general, not a
quasigeodesic.  However, we will show that the height for each point of $\iota(\gamma)$ can be changed in a specified way so that the resulting path is a quasigeodesic, i.e. there is a
function $h_\gamma(t)$ such that $(\gamma(t), h_\gamma(t))$ is an
(unparametrized) quasigeodesic.

\medskip
In order to define the function $h_\gamma$, we need the following mild
generalization of a measured lamination.  A
measured lamination is \emph{maximal} if every complementary region is
an ideal triangle.  By adding finitely many leaves to divide every ideal complementary region of a measured lamination $\Lambda$ into ideal triangles we may extend $\Lambda$ to a maximal lamination. There are thus only finitely many
such maximal laminations containing $\Lambda$.  We will call the union
of these maximal laminations the \emph{extended lamination}
$\overline{\Lambda}$, which in general is not itself a measured
lamination. See \Cref{def:extended_laminations} and \Cref{sec:extended_laminations} for more details.

\medskip
Let $\oLambda$ be the union of the two extended laminations obtained from the invariant laminations
for $\pa$, and let $\oLambda^1$ be its lift in $T^1(S_h)$.  Let
$h \colon T^1(S_h) \setminus \oLambda^1 \to \RR$ be a continuous
function.  Then $h$ determines an embedding in $\wsr$ of any non-exceptional
unit-speed geodesic $\gamma$ by the map $t \mapsto (\gamma(t), h(\gamma^1(t)))$,
where $\gamma(t)$ is the oriented geodesic and
$\gamma^1(t)$ is the unit tangent vector to $\gamma$ at $\gamma(t)$.  We
shall call $h$ the \emph{height function}, and the embedding
$\tau_\gamma(t) = ( \gamma(t), h(\gamma^1(t)) )$ the \emph{test path}
for $\gamma$ determined by $h$.

\medskip
We now specify the height function we will use, see
\Cref{section:quasigeodesics} for background and motivation.

\medskip

We shall write $\log_k$ for the logarithm function with base $k$,
where $k = k_f > 1$ is the stretch factor of $\pa$.
\begin{definition}\label{def:height function2}
Suppose that $\pa \colon S \to S$ is a pseudo-Anosov map, let
$(S_h, \Lambda)$ is a hyperbolic metric on $S$ together with a pair of
invariant measured laminations, and $\oLambda^1_-$ and
$\overline{\Lambda}^1_+$ the lifts in $T^1(S_h)$ of the extended
laminations given by the invariant laminations of $\pa$, and let
$\theta > 0$ be a positive constant.
We define the \emph{height function}
$h_\theta \colon T^1(S_h) \setminus (\overline{\Lambda}^1_+ \cup
\overline{\Lambda}^1_-) \to \RR$ to be
\[ h_\theta(v) = \log_k \left\lfloor \log \frac{1}{d_{\PSL(2, \RR)}(v,
  \overline{\Lambda}^1_+ )} - \log \frac{1}{\theta} \right\rfloor_1
- \log_k \left\lfloor \log \frac{1}{d_{\PSL(2, \RR)}(v,
  \overline{\Lambda}^1_- )} - \log \frac{1}{\theta}
\right\rfloor_1 \]
\end{definition}

Here $\lfloor x \rfloor_c = \max \{ x, c \}$ is the standard floor
function.  As the two extended laminations are a positive distance
apart in $T^1(S_h)$, for sufficiently small $\theta$, at most one
of the terms on the right hand side above will be non-zero.

\medskip
We prove that for a choice of $\theta$ sufficiently small, the test path determined
by the corresponding height function is a quasigeodesic in $\wsr$. The following is the main result of this article: 

\begin{theorem}\label{theorem:quasigeodesic-h2}
Suppose that $\pa \colon S \to S$ is a pseudo-Anosov map and
$(S_h, \Lambda)$ is a hyperbolic metric on $S$ together with a pair of
invariant measured laminations.  Then there are constants
$\theta > 0$, $Q \ge 1$ and $c \ge 0$, such that for any
non-exceptional geodesic $\gamma$ in $S_h$, with a unit speed
parametrization $\gamma(t)$, the test path
$\tau_\gamma(t) = (\gamma(t), h_\theta(\gamma^1(t)) )$ is an
unparametrized $(Q, c)$-quasigeodesic in $\widetilde S_h \times \RR$
with the same limit points as $\iota(\gamma)$, where $h_\theta$ is the
height function from \Cref{def:height function2}.
\end{theorem}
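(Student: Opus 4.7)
The plan is to decompose any non-exceptional $\gamma$ in $\widetilde{S}_h$ into alternating \emph{straight} segments, where $\gamma^1(t)$ is $\theta$-far in $T^1(S_h)$ from both extended laminations (so $h_\theta = 0$), and \emph{fellow-travel} segments, where $\gamma^1(t)$ lies within $\theta$ of exactly one of $\overline{\Lambda}^1_+$ or $\overline{\Lambda}^1_-$. With $\theta$ chosen small enough that the two $\theta$-neighborhoods are disjoint in $T^1(S_h)$ — possible by \Cref{prop:non common leaves}, \Cref{prop:disjoint leaves}, and compactness — at most one term of $h_\theta$ is ever nonzero, and on a straight segment $\tau_\gamma$ coincides with $\iota(\gamma)$ at height zero. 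Because $\gamma$ is quantitatively transverse to both laminations on such a segment, this part is already a uniform quasigeodesic in $\wsr$.

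The heart of the argument is the fellow-travel case; suppose $\gamma^1$ is close to a lift $\ell^1$ of a leaf of $\Lambda_+$. By \Cref{prop:projection interval} applied in $\HH^2$, $\gamma$ fellow-travels $\ell$ on an interval of length $2\log(1/\theta_0)+O(1)$, where $\theta_0$ is the minimum distance, attained near the midpoint. Over this interval, $\tau_\gamma$ rises to height $\sim \log_k \log(1/\theta_0)$ inside the ladder $F(\ell)$, moves across, then descends. Working in the $(y,z)$-coordinates of $F(\ell)$ with $ds^2 = k^{-2z}\,dy^2 + (\log k)^2\,dz^2$, a direct minimization shows that the optimal height for traversing a horocyclic arc of $dy$-length $L$ is $\log_k L$, yielding total length $2\log L+O(1)$, which matches the hyperbolic distance between the endpoints in $F(\ell)$. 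Since the $dy$-length of the fellow-travel endpoints is itself $\sim 2\log(1/\theta_0)$, the height produced by $h_\theta$ is precisely this optimum, so the up-across-down arc is a uniform quasigeodesic inside the ladder; Mitra's quasiconvexity of $F(\ell)$ then promotes this to a uniform quasigeodesic in $\wsr$.

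To concatenate, I would invoke a local-to-global principle in the $\delta$-hyperbolic space $\wsr$: once each maximal straight or fellow-travel piece is a uniform quasigeodesic, the global path $\tau_\gamma$ is uniformly quasigeodesic provided consecutive pieces do not overlap pathologically. Overlap control comes from \Cref{prop:cobounded intersections} (no long leaf segments avoid the other lamination), \Cref{prop:overlap} (bounded overlap of projection intervals of intersecting leaves), and \Cref{prop:separated ladders} (uniform gap between non-touching ladders). Convergence to $\iota(\gamma_\pm)$ holds because the straight segments are cofinal in both tails, $\tau_\gamma = \iota(\gamma)$ on them, and a quasigeodesic in a hyperbolic space has unique boundary limits.

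The main obstacle I expect is extracting uniform quasigeodesic constants independent of the depth $\theta_0$ and of the infinite family of possible leaves $\ell$. Here the floor truncation $\lfloor \cdot \rfloor_1$ in $h_\theta$ is essential for continuity and to prevent spurious negative contributions at moderate approaches, and passing to the extended laminations $\overline{\Lambda}_\pm$ rather than $\Lambda_\pm$ ensures that the ``nearest leaf'' structure varies continuously even when $\gamma$ crosses through an ideal complementary region. Patching the height-zero approximation to the optimal-height plateau along a fellow-travel segment requires that the logarithmic derivative of $h_\theta$ along $\gamma$ stays bounded, which is exactly why the outer $\log_k$ in the definition of $h_\theta$ is applied to $\log(\theta/d)$ rather than to $1/d$ itself.
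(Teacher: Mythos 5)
Your decomposition of $\gamma$ into ``straight'' segments (where $h_\theta = 0$) and ``fellow-travel'' segments is genuinely different from the paper's, which decomposes $\gamma$ into \emph{corner segments} (properly embedded in an innermost rectangle, with endpoints in different laminations) and \emph{straight segments} (intervals between consecutive corner segments, essentially intersections with one or two ideal complementary regions). Note the terminology clash: what you call ``straight'' are segments where $\gamma$ is transverse to both laminations, and these correspond roughly to the paper's corner segments; what you call ``fellow-travel'' corresponds roughly to the paper's straight segments. More importantly, the paper does not show your ``straight'' pieces are quasigeodesic — instead it shows they create \emph{bottlenecks} (Lemma~\ref{cor:corner distance}, Section~\ref{section:tame}): the geodesic $\overline{\gamma}$ in $\wsr$ is forced to pass within bounded distance of $\tau_\gamma$ over each corner segment. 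This bottleneck mechanism is what lets the paper establish both halves of Farb's criterion (Theorem~\ref{theorem:uniform progress h}) — that $\tau_\gamma$ stays near $\overline{\gamma}$ and makes uniform progress along it — and it is entirely absent from your proposal.

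The concatenation step is where your argument has a genuine gap. You invoke a local-to-global principle in $\wsr$, but that principle requires each piece to be longer than a threshold depending on the quasigeodesicity constants and $\delta$, and your pieces (on either side of the decomposition) can be arbitrarily short; moreover, the local-to-global theorem is stated for parametrized quasigeodesics, while here the test path carries a non-arclength parametrization inherited from $\gamma$ and the claim is about unparametrized quasigeodesicity, which is precisely why the paper reaches for Farb's criterion instead. In the fellow-travel case you also implicitly assume a single leaf $\ell$ governs the height function throughout the interval; over a long intersection interval with a complementary region, the nearest leaf in the extended lamination changes, and the paper needs a nontrivial ``dominant leaf'' argument (Proposition~\ref{prop:dominant}, using Proposition~\ref{prop:comparing height functions} and the $1$-Lipschitz property of the radius function) to show the height function is coarsely equal to that of a single leaf and hence to an absolute value function in the ladder. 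Your optimal-height calculation and the up-across-down picture are sound as far as they go, but without the bottleneck and dominant-leaf arguments they do not close the proof.
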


\medskip
We shall now fix a sufficiently small constant $\theta$ in \Cref{theorem:quasigeodesic-h2} and simplify notation to just write $h$ for $h_\theta$.  See
\Cref{section:height function} for the exact choice of $\theta$ that
we use.  Furthermore, we will write $h_\gamma(t)$ for
$h_\theta(\gamma^1(t))$.

\medskip
We will use one further property of these quasigeodesics.  The test
path $\tau_\gamma$ lies in the ladder $F(\gamma)$ determined by
$\gamma$, so vertical projection gives a map
$(\gamma(t), 0) \mapsto (\gamma(t), h_\gamma(t))$ from $\iota(\gamma)$
to the test path $\tau_\gamma$.  We will prove that this map is coarsely distance non-increasing.

\begin{proposition}\label{prop:vertical flow distance decreasing}
Suppose that $\pa \colon S \to S$ is a pseudo-Anosov map and
$(S_h, \Lambda)$ is a hyperbolic metric on $S$ together with a pair of
invariant measured laminations.  There are constants $K > 0$ and
$c \ge 0$ such that for any non-exceptional geodesic $\gamma$ with
unit speed parametrization, and any real numbers $s$ and $t$,
\[ d_{\wsr}( \tau_\gamma(s), \tau_\gamma(t) ) \le K d_{\wsr} \left(
\iota(\gamma(s)), \iota(\gamma(t)) \right) + c , \]
where the test path has the parametrization inherited from the
unit speed parametrization on $\gamma$.
\end{proposition}

\subsection{Overview}

In this section we give a brief overview of the argument, omitting
many technical details.  Each subsequent section has its own
introduction discussing these details further.

In \Cref{section:background2} we review the background material on
hyperbolic geometry and laminations that we will use, and set up some
notation.  The key observation is the following.  A \emph{rectangle}
is a subset of the universal cover of the fiber surface whose boundary
consists of two pairs of arcs, one pair from each of the stable
and unstable lamination, see for example \Cref{fig:opposite
  quadrants}.  The image of a rectangle under the vertical flow is
again a rectangle, but with one pair of sides scaled by a factor of
$k^z$, and the other pair scaled by a factor of $k^{-z}$, where $z$ is
the coordinate in the vertical flow direction.  The \emph{optimal
  height} of the rectangle is the $z$ value which makes both sides
equal length.

The bi-infinite leaves containing the sides of the rectangle divide
the plane into nine regions.  We call a non-compact region whose
boundary contains two arcs, one from each lamination, a
\emph{quadrant}.  The union of the vertical flow lines through a
quadrant is quasiconvex.  In \Cref{section:tame} we show that the
coarse intersection of two opposite quadrants is a regular
neighborhood of the optimal height rectangle, where the size of the
regular neighborhood depends on the area of the rectangle.  We call
this regular neighborhood of the optimal height rectangle a
\emph{bottleneck}, as any geodesic in the universal cover of the
$3$-manifold with endpoints in opposite quadrants passes through this
set.

In \Cref{section:small angle bottleneck} we show that whenever a
geodesic $\gamma$ in the universal cover of the fiber crosses a
lamination with small angle, then it crosses a rectangle with area
bounded below.  This controls the height of any quasigeodesic in the
Cannon--Thurston metric which connects the endpoints of the image of
$\gamma$ in the universal cover of the $3$-manifold $\widetilde{M}$.
In particular, the optimal height is approximately $\log \log \theta$,
where $\theta$ is the angle of intersection between $\gamma$ and the
lamination, with the sign depending on whether the lamination is the
stable or the unstable lamination.  Given our choice of the height function in
\Cref{def:height function2}, this shows that the test paths over
segments containing small angles of intersection stay within a bounded
distance of a geodesic in $\widetilde{M}$.

In \Cref{section:large angles}, we show that whenever a short
subinterval of the geodesic $\gamma$ meets both laminations with large
angles, $\gamma$ again crosses a rectangle of area bounded below.  The
final case to consider is when a segment of $\gamma$ spends a long
time in a single complimentary region.  In this case it does not
necessarily cross rectangles of area bounded below, but as we have an
explicit description of the Cannon--Thurston metric in these regions,
we show directly that the test paths we construct are quasigeodesic in
\Cref{section:straight qg}.

Finally, in \Cref{section:proj}, we verify that vertical projection
along the flow from the fiber to the quasigeodesic test path is
coarsely distance decreasing.

\subsection{Acknowledgments}

This work would not have been possible without the wonderful working
conditions of the American Institute of Mathematics. We would like to
thank them for their hospitality during the April 2022 workshop
“Random walks beyond hyperbolic groups” where this work started.  We
thank the referee of a previous version of this paper for useful
comments.

\medskip
The first author acknowledges the support of the Institut Henri
Poincaré (UAR 839 CNRS-Sorbonne Université) and LabEx CARMIN
(ANR-10-LABX-59-01).  The
second author thanks PSC-CUNY and the Simons Foundation for their
support.  The third author is grateful to the Institute for Advanced
Study for their hospitality and the Bob Moses fund for funding her
2024--2025 membership, she is funded by an NSERC Discovery Grant. The
last author gratefully acknowledges support from the NSF grant DMS--2439076.

\section{Hyperbolic geometry and (extended) laminations}\label{section:background2}

In this section, we briefly review some useful facts about hyperbolic
geometry and  properties of hyperbolic geodesics. We also discuss properties of measured laminations and extended laminations. We refer reader to \cite{bh}, \cite{Casson-Bleiler}, and \cite{kapovich} for details.

\subsection{Geodesic laminations and pseudo-Anosov maps}
\label{sec:laminationsandpas}

A \emph{geodesic lamination} on $S_h$ is a closed
union of simple pairwise disjoint geodesics.  
A \emph{transverse measure} on a geodesic
lamination $\Lambda$ is a positive measure $dm$ defined on local transverse arcs to the leaves of $\Lambda$ that 
\begin{itemize}
    \item is invariant under any isotopy preserving the transverse
    intersections with the leaves of $\Lambda$, and
\item is positive and finite on any
nontrivial compact transversals.
\end{itemize}
Such a measure lifts to a $\pi_1(S)$-invariant transverse measure on
the pre-image of $\Lambda$ in $\HH^2$.  By abuse of notation, we will
denote the pre-image also by $\Lambda$ and the lifted transverse measure also by
$dm$.  A geodesic lamination equipped with a transverse measure is
called a \emph{measured lamination}.  We will only consider measured
laminations, and so we will often just write lamination to mean
measured lamination.

\medskip

We say a measured lamination is \emph{filling} if there are no
essential simple closed curves disjoint from the lamination.  The
complement of a filling lamination is a union of ideal polygons with
finitely many sides. We say a leaf of the lamination is a
\emph{boundary leaf} if it is the boundary of an ideal polygon.  There
are finitely many ideal complementary regions in the compact
surface $S_h$, and so there are only finitely many boundary leaves in
$S_h$. This implies that there are countably many boundary leaves in
the universal cover $\widetilde S_h$.

\medskip

We say a measured lamination is
\emph{minimal} if every leaf is dense in the lamination.  A minimal
filling lamination has the following properties:
\begin{itemize}
    \item there are uncountably many leaves,
    \item no leaf is isolated, and
    \item the transverse measure is non-atomic.
\end{itemize}

We say a pair of measured laminations $\Lambda_+$ and $\Lambda_-$
\emph{bind} $S_h$ if each geodesic ray on $S_h$ crosses a leaf of
$\Lambda_+ \cup \Lambda_-$.

\begin{definition}
We shall write $(S_h, \Lambda)$ for a triple consisting of a
hyperbolic metric on a compact surface $S$, together with a pair of
minimal filling measured laminations $\Lambda_+$ and $\Lambda_-$ which
bind the surface.  We refer to such a triple $(S_h, \Lambda)$ as a
\emph{hyperbolic surface and a full pair of laminations}.
\end{definition}

A pseudo-Anosov map $\pa$ determines a pair of invariant measured
laminations called stable and unstable
laminations, which we shall denote
$(\Lambda_+, dx)$ and $(\Lambda_-, dy)$ respectively, where $dx$ and $dy$ are the transverse measures.  In particular, 
\begin{itemize}
    \item $f (\Lambda_+) = \Lambda_+$ and $f_\ast dx = k dx$, and
    \item $f(\Lambda_-) = \Lambda_-$ and $f_\ast dy = k^{-1} dy$,
\end{itemize}
where $k = k_\pa > 1$ is known as the \emph{stretch factor} of the pseudo-Anosov map.
We shall often write $\Lambda_+$ or $\Lambda_-$ to refer to the measured laminations if we
do not need to refer to the respective measures.  


We will use the following useful properties of geodesic laminations.

\begin{proposition}\cite{gmpueffective}*{Proposition 11}\label{prop:multi}
Suppose that $(S_h, \LL)$ is a hyperbolic surface together with a full
pair of measured laminations.  Then there exist constants
$\alpha_\LL, \epsilon_\LL, L_\LL > 0$ such that each of the following
properties holds for any pair of leaves $(\ell_+,\ell_-)$ with
$\ell_+$ in $\Lambda_+$ and $\ell_-$ in $\Lambda_-$:

\begin{thmenum}[label={(\ref{prop:multi}.\arabic*)}]

\item \label{prop:angle bound} If $\ell_+$ and $\ell_-$ intersect, then their angle of
intersection is at least $\alpha_\LL$.

\item \label{prop:disjoint leaves} 
If $\ell_+$ and $\ell_-$ are disjoint,
the distance between any two lifts of $\ell_+$ and $\ell_-$ in
$\widetilde S_h$ is at least $\epsilon_\LL$.

\item \label{prop:cobounded intersections}\label{c:L_pa} Any segment of a leaf of one of the
laminations of length at least $L_\LL$ intersects a leaf of the other
lamination.

\item \label{prop:complementary regions don't share} No ideal
complementary region in $\widetilde S_h \setminus \Lambda_+$ has an
ideal vertex in common with an ideal complementary region of
$\widetilde S_h \setminus \Lambda_-$.

\end{thmenum}

\end{proposition}

Cannon and Thurston showed that the Cannon--Thurston metric is
quasi-isometric to the hyperbolic metric.  We state this explicitly
below to fix notation for the quasi-isometry constants.

\begin{theorem}\cite{cannon-thurston}*{Theorem 5.1}\label{prop:qi}\label{c:Q_pa}\label{c:c_pa}
Suppose that $(S_h, \Lambda)$ is a hyperbolic metric on $S$ together
with a full pair of measured laminations.  Then the hyperbolic
metric $d_{\HH^2}$ and the Cannon--Thurston pseudometric
$d_{\widetilde S_h}$ on the universal cover $\widetilde S_h$ are
quasi-isometric, i.e. there are constants $Q_\LL \ge 1$ and
$c_\LL \ge 0$ such that for any points $p$ and $p'$ in the universal
cover,
\[ \frac{1}{Q_\LL} d_{\widetilde S_h}(p, p') - c_\LL \le d_{\HH^2}(p,
p') \le Q_\LL d_{\widetilde S_h}(p, p') + c_\LL. \]
\end{theorem}

\subsection{Close geodesics diverge exponentially}\label{section:exponential}

It is well known that if the lifts of two geodesics in $\PSL(2, \RR)$
are very close, then the distance between them increases exponentially
as you move away from the closest point between them until they are a
definite distance apart, and then grows linearly.  More precisely, if
they are distance $\theta$ apart, then the distance between them grows
exponentially for a distance of length roughly
$\log \tfrac{1}{\theta}$.  We will use the following version of this
result.

\begin{proposition}\label{prop:fellow travel}
There are constants $\theta_0 > 0$ and $L_0 \ge 1$, such that for any
two geodesics $\gamma_1$ and $\gamma_2$ in $\HH^2$ whose lifts in
$\PSL(2, \RR)$ have closest points distance $\theta \le \theta_0$
apart, if $\gamma_1$ has a unit speed parametrization $\gamma_1(t)$
with the closest point to $\gamma_2$ being $\gamma_1(0)$, then for all
$|t| \le \log \frac{1}{\theta}$,
\begin{equation}\label{eq:fellow travel}
\tfrac{1}{L_0} \theta e^{|t|} \le d_{\PSL(2, \RR)} ( \gamma^1_1(t),
\gamma_2^1 ) \le L_0 \theta e^{|t|}.
\end{equation}
Furthermore, the lower bound at $|t| = \log \tfrac{1}{\theta}$ holds
for all $t$ outside this range, i.e. for all
$|t| \ge \log \tfrac{1}{\theta}$,
\[ d_{\PSL(2, \RR)} ( \gamma^1_1(t), \gamma^1_2 ) \ge \tfrac{1}{L_0}. \]
\end{proposition}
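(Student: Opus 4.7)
The plan is to reduce the statement to a direct computation in the upper half-plane model of $\HH^2$ using the left-invariance of the metric on $\PSL(2,\RR)$, and to leverage \Cref{prop:projection interval} to control the behavior on the range $|t| \le \log(1/\theta)$. First I would apply an isometry of $\PSL(2,\RR)$ to normalize $\gamma_1$ to be the geodesic from $0$ to $\infty$, parametrized by arc length with $\gamma_1(0) = i$ and $\gamma_1^1(0)$ the upward-pointing unit tangent vector, so that $\gamma_1^1(t)$ is the upward unit tangent vector at the point $e^t i$. The hypothesis becomes that there is a closest point $v \in \gamma_2^1$ with $d_{\PSL(2,\RR)}(\gamma_1^1(0), v) = \theta$.

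Next I would decompose the admissible perturbations $\gamma_2$ of $\gamma_1$ into two one-parameter families: those obtained by rotating the unit tangent vector at $i$ by an angle $\theta$ (so $\gamma_2$ meets $\gamma_1$ transversely at $i$), and those obtained by translating perpendicularly in $\HH^2$ so that $\gamma_2$ is the geodesic from $0$ to $\infty$ shifted to have closest point distance $\theta$ from $\gamma_1$ (so $\gamma_2$ is disjoint from $\gamma_1$ but ultraparallel with minimum distance $\theta$). Any other closest-point configuration is accounted for by combining such a perturbation with a shift along $\gamma_1$, which only alters the parameter by an additive constant and can be absorbed in the constants. In both model cases, explicit hyperbolic trigonometry—either the right-angled triangle with two ideal vertices, as used in the proof of \Cref{prop:projection interval}, or the distance formula between two vertical geodesics in the upper half-plane—yields that the footpoint distance $d_{\HH^2}(\gamma_1(t),\gamma_2)$ is of order $\theta e^{|t|}$ uniformly for $|t| \le \log\tfrac{1}{\theta}$, and that the angle between the tangent directions is also of order $\theta e^{|t|}$ in the same range.

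I would then upgrade footpoint distance and angular discrepancy to the $\PSL(2,\RR)$ distance between tangent vectors. Since the left-invariant metric on $\PSL(2,\RR)$ is locally bi-Lipschitz to the product of the hyperbolic distance in $\HH^2$ with the angular distance on $S^1$ (with a universal Lipschitz constant on a compact neighborhood of the identity), the bounds obtained above translate into
\[
\tfrac{1}{L_0}\,\theta e^{|t|} \le d_{\PSL(2,\RR)}(\gamma_1^1(t), \gamma_2^1) \le L_0\,\theta e^{|t|}
\]
for $|t| \le \log\tfrac{1}{\theta}$, provided $\theta \le \theta_0$ is small enough that $\theta e^{|t|} \le 1$ throughout and the bi-Lipschitz comparison applies. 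For the tail bound, at $|t| = \log\tfrac{1}{\theta}$ the right-hand side already gives $d_{\PSL(2,\RR)}(\gamma_1^1(t),\gamma_2^1) \ge 1/L_0$; beyond this range the footpoints continue to diverge (the two geodesics are monotonically separating once past their closest-point configuration), so the distance in $\PSL(2,\RR)$ is bounded below by a universal constant, which after possibly enlarging $L_0$ we may take equal to $1/L_0$.

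The main obstacle will be organizing the case analysis cleanly and keeping track of the interplay between the two components of the $\PSL(2,\RR)$ metric: the hyperbolic distance between footpoints tends to be minimized at a parameter $s \ne t$, while the angular discrepancy is minimized elsewhere, so one must argue that the minimum over $s$ of the combined expression is still comparable to $\theta e^{|t|}$. This is a routine but slightly delicate calculation that I expect to handle by showing that translating along $\gamma_2$ by an amount small compared to $\log\tfrac{1}{\theta}$ cannot reduce either component substantially below the values obtained by projecting perpendicularly, so that the natural pairing $t \leftrightarrow s = t$ already realizes the minimum up to a bounded multiplicative constant.
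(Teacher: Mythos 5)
Your approach takes a genuinely different route from the paper's, and while it is in the right spirit, it leaves the central step unresolved.

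The paper proceeds through direct matrix computation. It normalizes the two geodesics into the standard upper/lower-triangular families $\alpha_s(t)$, $\beta_{s'}(t)$, uses the general fact that the left-invariant metric on $\PSL(2,\RR)$ is bi-Lipschitz to a matrix norm near the identity (cited from Einsiedler--Ward), and computes $d_{\PSL(2,\RR)}(\gamma_1^1(t),\gamma_2^1(t))$ explicitly from the matrix entries (this is \Cref{prop:t bound}). It then passes from the pointwise distance $d(\gamma_1^1(t),\gamma_2^1(t))$ to the set distance $d(\gamma_1^1(t),\gamma_2^1)$ with a one-line triangle-inequality lemma, \Cref{prop:t bounds d}, valid in any geodesic metric space, which shows $d_X(\gamma_1(t),\gamma_2) \ge \tfrac{1}{2}\left(d_X(\gamma_1(t),\gamma_2(t)) - \theta\right)$. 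Finally, near $|t| = \log\tfrac{1}{\theta}$ it falls back on $\HH^2$ estimates, where the additive $\pi$ error from \eqref{eq:dist reducing} is absorbable.

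The gap in your plan is precisely the minimization over $s$ that you flag at the end. You observe that the footpoint minimizer and the angular minimizer may occur at different parameters and that ``one must argue that the minimum over $s$ of the combined expression is still comparable to $\theta e^{|t|}$,'' but the resolution you sketch---that sliding along $\gamma_2$ by an amount small compared to $\log\tfrac{1}{\theta}$ cannot substantially reduce either component---is not an argument; it is a restatement of what needs proving, and it is not obviously true as stated. In fact, you have made the problem harder than it is. For the \emph{upper} bound you may simply take $s=t$: by a direct Lambert-quadrilateral computation $d_{\HH^2}(\gamma_1(t),\gamma_2(t)) \asymp \theta e^{|t|}$ and the angular discrepancy at $s=t$ is also controlled, so no optimization over $s$ is needed. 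For the \emph{lower} bound, the set distance satisfies $d_{\PSL(2,\RR)}(\gamma_1^1(t),\gamma_2^1) \ge \theta$ trivially (since $\theta$ is the infimum over all pairs of parameters), which handles $|t|$ bounded; and for $|t|$ larger than a universal constant, the footpoint inequality $d_{\PSL(2,\RR)} \ge d_{\HH^2}(\gamma_1(t),\gamma_2)$ from \eqref{eq:dist reducing} together with \Cref{prop:hyperbolic bounds disjoint} and \Cref{prop:hyperbolic bounds intersect} already gives $\gtrsim \theta e^{|t|}$. So the delicate interplay you anticipate evaporates once you split the range of $t$ this way; but as written the step is left open.

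Two further technical imprecisions worth noting. First, your claim that the footpoint distance $d_{\HH^2}(\gamma_1(t),\gamma_2)$ is ``of order $\theta e^{|t|}$ uniformly for $|t| \le \log\tfrac{1}{\theta}$'' fails at $t=0$ in the intersecting case (where it is $0$, compare $\tfrac{1}{8}\theta(e^{|t|}-1)$ in \Cref{prop:hyperbolic bounds intersect}); likewise the angular discrepancy at $t=0$ vanishes in the ultraparallel case, contradicting the ``also of order $\theta e^{|t|}$'' claim there. These do not sink the argument (the combined quantity still has the right order), but they mean the clean product formulation you invoke is not literally available and the two terms must be balanced against each other, which is part of what makes the minimization step look harder than it is. Second, the bi-Lipschitz comparison between $d_{\PSL(2,\RR)}$ and ``footpoint distance plus angular discrepancy'' requires a choice of trivialization of the circle bundle and a precise statement on a neighborhood of the identity; it is true, but the paper's route through a matrix norm sidesteps the need to separate the two components at all, which is why the matrix computation ends up being cleaner.
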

We give a detailed proof of this result in Appendix
\ref{section:fellow travel} for the convenience of the reader.

\begin{definition}
\label{def:exponential interval}
Suppose that $\ell$ and $\gamma$ are geodesics with $\gamma$
parametrized with unit speed.  Suppose $\gamma(t)$ is the closest
point of $\gamma$ to $\ell$ and suppose that
$d_{\PSL(2, \RR)} (\gamma^1(t), \ell^1) = \theta$.  We call the interval
    \[
    E_\ell = [t -\log \tfrac{1}{\theta}, t_\ell + \log \tfrac{1}{\theta}] 
    \]
the \emph{exponential interval} for $\gamma$
with respect to $\ell$.
\end{definition}
Note that $E_\ell$ is the interval on which the exponential
estimate \eqref{eq:fellow travel} holds.

\subsection{Quasigeodesics}

We recall some basic facts about quasigeodesics, see for example
Bridson and Haefliger \cite{bh}*{III.H}.

\begin{definition}
Let $(X, d)$ be a geodesic metric space and let
$\gamma \colon I \to X$ be a path, where $I$ is a (possibly infinite)
connected subset of $\RR$.  Let $Q \ge 1$ and $c \ge 0$ be constants.
The path $\gamma$ is a \emph{(Q, c)-quasigeodesic} if for all $t_1$
and $t_2$ in $I$,
\[ \frac{1}{Q} | t_2 - t_1 | - c \le d( \gamma(t_1), \gamma(t_2) )
\le Q |t_2 - t_1| + c.  \]
\end{definition}

By \cite{bh}*{III.H Lemma 1.11}, given a $(Q, c)$-quasigeodesic, there
is a continuous $(Q, c')$-quasigeodesic with the same endpoints, so
for our purposes we may assume that all quasigeodesics are continuous,
and we will do so from now on.

\medskip
A \emph{reparametrization} of a path $\gamma \colon \RR \to X$ is the
path $\gamma \circ \rho$, where $\rho \colon \RR \to \RR$ is a proper
non-decreasing function.  We say a path $\gamma \colon \RR \to X$ is
an \emph{unparametrized} $(Q, c)$-quasigeodesic if there is a
reparametrization of $\gamma$ which is a $(Q, c)$-quasigeodesic.

\medskip
We will use the following stability property for quasigeodesics in
hyperbolic spaces, known as the Morse Lemma.

\begin{lemma}\cite{bh}*{Theorem 1.7}\label{lemma:morse}
Let $X$ be a $\delta$-hyperbolic space.  Then for any $Q$ and $c$
there is a constant $L$ such that any $(Q, c)$-quasigeodesic is
contained in an $L$-neighborhood of the geodesic connecting its
endpoints.
\end{lemma}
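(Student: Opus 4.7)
The plan is to prove this classical Morse stability lemma by contradiction, using only the thin-triangle condition together with the length bound coming from the quasigeodesic inequality. First I would invoke \cite{bh}*{III.H.1.11} to replace $\gamma$ by a continuous $(Q,c')$-quasigeodesic with the same endpoints, so that we may freely speak of arclength parameters and of points where $\gamma$ crosses a given sphere. Let $[p,q]$ be a geodesic from $p=\gamma(a)$ to $q=\gamma(b)$ and set $D = \sup_{t} d(\gamma(t),[p,q])$; the goal is to bound $D$ by a constant $L = L(Q,c,\delta)$.

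Pick $t_{0}$ with $d(\gamma(t_{0}),[p,q])$ within $\delta$ of $D$ and, using continuity of $\gamma$, choose times $s_{-}<t_{0}<s_{+}$ with $d(\gamma(s_{\pm}),[p,q])=2D$ (or take $s_{-}=a$, $s_{+}=b$ if the continuous function $t\mapsto d(\gamma(t),[p,q])$ never reaches $2D$ on one side, in which case that endpoint already lies within $2D$ of $[p,q]$ and a nearly identical argument applies). Let $y_{\pm}$ be nearest-point projections of $\gamma(s_{\pm})$ onto $[p,q]$. The quasigeodesic inequality gives an upper bound
\[
  \mathrm{length}\bigl(\gamma|_{[s_{-},s_{+}]}\bigr) \;\leq\; Q\,|s_{+}-s_{-}| \;\leq\; Q^{2}\, d(\gamma(s_{-}),\gamma(s_{+})) + Q^{2}c,
\]
and $d(\gamma(s_{-}),\gamma(s_{+}))$ is in turn controlled by $4D + d(y_{-},y_{+})$ via the triangle inequality.

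To get the contradiction I would derive a matching lower bound on the same length, using the standard exponential divergence estimate in $\delta$-hyperbolic spaces: any path joining two points lying at distance $\ge D$ from $[p,q]$ and whose projection onto $[p,q]$ spans an interval of length $\ell$ must itself have length at least of order $\ell\cdot 2^{D/\delta}$. This is proved by subdividing $[y_{-},y_{+}]$ into pieces of length a small multiple of $\delta$, applying the slim-triangle condition inductively to lift each subdivision point up to the given path, and summing. Combining the two estimates forces an inequality of the shape $d(y_{-},y_{+})\cdot 2^{D/\delta} \;\leq\; Q^{2}\bigl(4D + d(y_{-},y_{+})\bigr) + Q^{2}c$, which bounds $D$ purely in terms of $Q,c,\delta$; the resulting constant is the $L$ we sought.

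The main obstacle I expect is setting up the exponential divergence estimate cleanly. The thin-triangle condition is local in scale $\delta$, so turning it into a global exponential bound requires carefully iterating the slimness inequality along a dyadic subdivision of $[y_{-},y_{+}]$ and keeping track of how the error accumulates. Once that estimate is in hand, the remainder of the argument is just arithmetic between the upper and lower bounds on the length of $\gamma|_{[s_{-},s_{+}]}$.
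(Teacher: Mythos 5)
The paper does not prove this lemma; it cites Bridson--Haefliger \cite{bh}*{III.H.1.7} and uses it as a black box, so there is no in-paper argument to compare against. I am therefore evaluating your sketch as a proof of the Morse Lemma on its own terms, and I think it has a genuine structural flaw and an over-claimed auxiliary estimate.

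The structural flaw: you define $D = \sup_{t} d(\gamma(t),[p,q])$ and then try to choose $s_{\pm}$ with $d(\gamma(s_{\pm}),[p,q]) = 2D$. Since $D$ is the supremum, no such points exist, and the fallback clause (``take $s_{\pm}$ to be the endpoints if the function never reaches $2D$'') is triggered unconditionally; but $\gamma(a)=p$ and $\gamma(b)=q$ lie on $[p,q]$, so the fallback gives nothing. What you presumably intended is to bracket $t_{0}$ by the nearest times on either side at which $d(\gamma(s),[p,q])$ equals some fixed \emph{fraction} of $D$ (say $D/2$), so that the intervening arc stays uniformly far from the geodesic. That change is not cosmetic: it alters which lengths you can compare and you have to redo the bookkeeping at the end.

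The second issue is the exponential-divergence lemma you invoke. The form that Bridson--Haefliger actually prove (III.H.1.6) is: for a rectifiable path $c$ with endpoints $p,q$ and any point $x\in[p,q]$, one has $d(x,\operatorname{im} c) \le \delta\,|\log_{2}\ell(c)| + 1$; equivalently, if the path avoids $B(x,R)$ for some $x$ on $[p,q]$, then $\ell(c)\ge 2^{(R-1)/\delta}$. Your version asserts additionally a linear factor of $\ell = d(y_-,y_+)$ in the lower bound, which is stronger and does not follow from a single application of slim triangles; a dyadic subdivision of $[y_-,y_+]$ lifts each midpoint close to the path, but those lifted points need not all avoid the same ball, so the contributions cannot simply be summed. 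More importantly, your concluding inequality $d(y_-,y_+)\cdot 2^{D/\delta} \le Q^{2}(4D+d(y_-,y_+))+Q^{2}c$ does not bound $D$ when $d(y_-,y_+)$ is small or zero, which can certainly happen. The standard argument sidesteps this by applying the length estimate to a modified path (the subarc of $\gamma$ together with two short geodesic legs down to $[p,q]$ and the connecting piece of $[p,q]$) that provably avoids a ball of radius comparable to $D$ centred at the far point $\gamma(t_{0})$; then the lower bound $2^{cD/\delta}$ comes with no prefactor depending on $d(y_-,y_+)$, and the comparison with the quasigeodesic upper bound $\lesssim QD + Qc$ genuinely forces $D$ to be bounded. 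I would recommend reorganising the proof along those lines (or simply following the proof of \cite{bh}*{III.H.1.7} directly) rather than trying to repair the current version.
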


\subsection{Nearest point projections and fellow traveling}

Suppose that $\alpha$ is a subset of a Gromov hyperbolic space
$(X, d)$.  The \emph{nearest point projection}
$p_\alpha \colon X \to \alpha$ sends each point $x \in X$ to a closest point to $x$ in $\alpha$.  If $\alpha$ is $Q$-quasiconvex,
then the nearest point projection is $K$-coarsely well defined, where
$K$ depends only on $Q$ and the constant $\delta$ of hyperbolicity.  

\medskip Suppose that $\alpha$ and $\beta$ are two geodesics in $X$. 
We define the \emph{$K$-fellow traveling set} for $\alpha$ with respect to $\beta$ to be the subset
of $\alpha$ contained in a $K$-neighborhood of $\beta$, i.e.
$\alpha \cap N_K(\beta)$.  If the diameter of the projection image
$p_\alpha(\beta)$ is sufficiently large, then $p_\alpha(\beta)$ is
contained in a bounded neighborhood of the geodesic $\beta$, and so is
contained in a fellow traveling set.

\medskip
In the special case that $X$ is $\HH^n$ and $\alpha$
is a geodesic, the closest point on $\alpha$ to any point $x \in X$ is
unique, and so $p_\alpha$ is well-defined.  Furthermore, for any geodesic
$\beta$, the image $p_\alpha(\beta)$ is a subinterval of $\alpha$,
which we will refer to as the \emph{nearest point projection
  interval}, or just the \emph{projection interval}.  Similarly, any
$K$-fellow traveling set $\alpha \cap N_K(\beta)$ is also an
interval, which we shall call the \emph{$K$-fellow traveling
  interval}.



\medskip
A standard consequence of $\delta$-hyperbolicity is the useful property stated below that if the projection image of a geodesic $\beta$
onto another geodesic $\alpha$ is large, then the two geodesics fellow
travel and the projection image $p_\alpha(\beta)$ is contained in a
bounded neighborhood of $\beta$.

\begin{proposition} \cite{kapovich-sardar}*{Lemma 1.120}
\label{prop:neighbourhood}
Suppose that $X$ is a $\delta$-hyperbolic space and suppose that
$\alpha$ and $\beta$ are geodesics in $X$.  If the diameter of the
projection image $p_\alpha(\beta)$ is greater than $8 \delta$, then
the projection image is contained in a $6 \delta$-neighborhood of
$\beta$, i.e. $p_\alpha(\beta) \subseteq N_{6 \delta}(\beta)$, and so
$p_\alpha(\beta)$ is contained in the fellow traveling set
$\alpha \cap N_{6 \delta}(\beta)$.
\end{proposition}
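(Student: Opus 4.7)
The plan is to use the thin quadrilateral property of $\delta$-hyperbolic spaces together with basic facts about nearest-point projections onto geodesics. Given any $q \in p_\alpha(\beta)$, the goal is to produce a point of $\beta$ within $6\delta$ of $q$.

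First, I would use the hypothesis $\operatorname{diam}(p_\alpha(\beta)) > 8\delta$ to choose $z_1, z_2 \in \beta$ whose projections onto $\alpha$ straddle $q$, with $d(q, p_\alpha(z_i)) > 4\delta$ for $i = 1, 2$. This is possible because $p_\alpha$ is coarsely monotone along the geodesic $\beta$, so the projection image is, up to bounded error, an interval, and we can arrange for $q$ to sit inside it. Next, consider the geodesic quadrilateral with vertices $z_1, z_2, p_\alpha(z_2), p_\alpha(z_1)$, whose fourth side $[p_\alpha(z_1), p_\alpha(z_2)] \subseteq \alpha$ contains $q$. Splitting this quadrilateral along a diagonal into two $\delta$-thin triangles gives the standard thin quadrilateral property: $q$ lies within $2\delta$ of the union of the other three sides, namely the two projection legs $[z_i, p_\alpha(z_i)]$ and the base $[z_1, z_2] \subseteq \beta$.

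The main obstacle is ruling out the possibility that $q$ is $2\delta$-close to one of the projection legs rather than to $\beta$. For this I would invoke the coarse perpendicularity of projection legs: for any $r$ on the geodesic $[z_1, p_\alpha(z_1)]$, the nearest point $p_\alpha(r) \in \alpha$ lies within a bounded distance of $p_\alpha(z_1)$, since $[z_1, p_\alpha(z_1)]$ realizes the distance from $z_1$ to $\alpha$ at its endpoint. If $q$ were within $2\delta$ of such an $r$, then $d(r, \alpha) \le 2\delta$, so $p_\alpha(r)$ would lie within $O(\delta)$ of $q$; combined with $p_\alpha(r)$ being close to $p_\alpha(z_1)$, this would force $q$ within a small multiple of $\delta$ of $p_\alpha(z_1)$, contradicting $d(q, p_\alpha(z_1)) > 4\delta$ for the right choice of constants. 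The symmetric argument excludes $[z_2, p_\alpha(z_2)]$.

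Therefore $q$ lies within $2\delta$ of the subsegment $[z_1, z_2] \subseteq \beta$. Since this subsegment is contained in $\beta$, we get $d(q, \beta) \le 2\delta$ at this level of care; tracking the extra contributions picked up in the coarse-monotonicity step and in the projection-leg exclusion yields the claimed $6\delta$ bound, and so $p_\alpha(\beta) \subseteq N_{6\delta}(\beta)$. The containment in the fellow traveling set $\alpha \cap N_{6\delta}(\beta)$ is immediate since $p_\alpha(\beta) \subseteq \alpha$.
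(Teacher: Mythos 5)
Your key geometric tools --- the $2\delta$-thin quadrilateral inequality and the fact that a projection leg $[z, p_\alpha(z)]$ stays near $p_\alpha(z)$ under $p_\alpha$ --- are the right ones, and the leg-exclusion step is essentially correct. The gap is in the setup: you require $q$ to be straddled, i.e. you ask for $z_1, z_2$ with $p_\alpha(z_1), p_\alpha(z_2)$ on opposite sides of $q$, each at distance $> 4\delta$. This cannot be done when $q$ lies within $4\delta$ of an endpoint of the projection shadow, and the hypothesis only provides diameter $> 8\delta$, so there is no slack. The remark that coarse monotonicity lets you ``arrange for $q$ to sit inside it'' is not right: $q \in p_\alpha(\beta)$ is an arbitrary given point and cannot be moved, and coarse monotonicity of $p_\alpha$ along $\beta$ does not manufacture projection points past the end of the shadow. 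This boundary case is precisely where the extra $4\delta$ in the statement comes from --- your argument as written gives the sharper bound $d(q, \beta) \le 2\delta$, but only for interior $q$, and the proposition must also cover the extremal points of $p_\alpha(\beta)$.

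The repair needs only one auxiliary point rather than two. Given $q = p_\alpha(z)$, pick $z' \in \beta$ with $d(q, q') > 8\delta$, where $q' = p_\alpha(z')$. For any $w$ on the subsegment $[q, q'] \subset \alpha$ with $d(w,q) > 4\delta$ and $d(w,q') > 4\delta$, the thin quadrilateral on $z, q, q', z'$ together with your leg-exclusion step gives $d(w, [z,z']) \le 2\delta$; letting $d(w,q) \to 4\delta^{+}$ and applying the triangle inequality yields $d(q, \beta) \le 4\delta + 2\delta = 6\delta$ with nothing left to track. One further constant issue worth flagging: your exclusion argument routes through $p_\alpha(r)$, which doubles the loss and would force a diameter hypothesis of $16\delta$ with the rest of your bookkeeping. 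It is both cleaner and tighter to note that for $r \in [z_i, p_\alpha(z_i)]$ with $d(q,r) \le 2\delta$ and $q \in \alpha$, one has $d(r, p_\alpha(z_i)) = d(z_i, p_\alpha(z_i)) - d(z_i, r) \le d(r, \alpha) \le 2\delta$, and hence $d(q, p_\alpha(z_i)) \le 4\delta$ directly.
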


In $\HH^2$, if two geodesics intersect at angle $\theta$, then the
size of the projection interval is roughly $\log (1/\theta)$.  In
fact,the same result holds for two geodesics that do not intersect,
but are distance $\theta$ apart.  We will use these properties in
\Cref{section:quasigeodesics} below.

\begin{proposition}\label{prop:projection interval}\label{c:T_0}
There is a constant $T_0 \ge 0$, such that for any unit speed geodesic
$\gamma_1$ in $\HH^2$ and any geodesic $\gamma_2$ such that

\begin{itemize}

\item $\gamma_2$ intersects $\gamma_1$ at the point $\gamma_1(0)$ at
an angle $0 < \theta \le \pi/2$, or

\item the distance from $\gamma_1$ to $\gamma_2$ is $\theta > 0$, and
the closest point occurs at $\gamma_1(0)$,

\end{itemize}

then the nearest point projection interval $p_{\gamma_1}(\gamma_2)$
is equal to $\gamma_1( [-T, T] )$, where
\[ \log \frac{1}{\theta} \le T \le \log \frac{1}{\theta} +
T_0, \]
and furthermore, for all $|t| \le \log \tfrac{1}{\theta}$, the
distance from $\gamma_1(t)$ to $\gamma_2$ is at most $3/2$.
\end{proposition}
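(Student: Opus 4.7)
The plan is to compute both cases explicitly using hyperbolic trigonometry in the Poincar\'e disk model of $\HH^2$, placing $\gamma_1(0)$ at the origin with $\gamma_1$ along the real diameter. In each case the projection set $p_{\gamma_1}(\gamma_2)$ is symmetric about $\gamma_1(0)$ (by the isometric involution fixing either the intersection point or the common perpendicular), so it suffices to compute the supremum $T$ of $|p_{\gamma_1}(\gamma_2(s))|$ as $s$ ranges over $\gamma_2$, and then bound the perpendicular distance $d(\gamma_1(t), \gamma_2)$ for $|t| \le \log(1/\theta)$.

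For Case 1, I would drop a perpendicular from $\gamma_2(s)$ to $\gamma_1$ meeting $\gamma_1$ at $\gamma_1(t(s))$, obtaining a hyperbolic right triangle with angle $\theta$ at $\gamma_1(0)$, hypotenuse $|s|$, and right angle at $\gamma_1(t(s))$. The standard identities for such a triangle give
\[
\tanh|t(s)| = \tanh|s|\cos\theta, \qquad \sinh\bigl(d(\gamma_2(s), \gamma_1)\bigr) = \sinh|s|\sin\theta.
\]
Letting $|s| \to \infty$ yields $T = \tanh^{-1}(\cos\theta) = \log\cot(\theta/2)$. The elementary inequalities $\tan(\theta/2) \le \theta \le 2\tan(\theta/2)$ on $\theta \in (0,\pi/2]$ translate into $1 \le \theta\cot(\theta/2) \le 2$, hence $\log(1/\theta) \le T \le \log(1/\theta) + \log 2$. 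For the distance bound, at $|t| = \log(1/\theta)$ the second identity gives $\sinh d = \tfrac{1}{2}(\theta^{-1} - \theta)\sin\theta \le 1/2$, so $d \le \sinh^{-1}(1/2) < 3/2$.

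For Case 2 I would run the analogous argument with the Lambert quadrilateral on $\gamma_1(0)$, $q_0$, $\gamma_2(s)$, $\gamma_1(t(s))$, where $q_0$ is the foot of the common perpendicular on $\gamma_2$. Splitting this quadrilateral along the diagonal $\gamma_1(0)\gamma_2(s)$ into two hyperbolic right triangles yields $\cosh|t(s)|\cosh d = \cosh\theta\cosh|s|$ together with a second identity comparing the angles at $\gamma_1(0)$. Equivalently one can read off $T$ directly: the endpoints of $\gamma_2$ on $\partial \HH^2$ lie at explicit coordinates $(\pm 1/\cosh\theta, \tanh\theta)$ in the disk, and $T$ is the position on $\gamma_1$ whose perpendicular in $\HH^2$ is asymptotic to such a boundary point, giving $T = \log\coth(\theta/2)$. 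The inequalities $\tanh u \le u$ again yield $\log(1/\theta) \le T$, and the corresponding upper bound on $\theta\coth(\theta/2)$ furnishes the constant $T_0$. The perpendicular distance in this case satisfies $\sinh d(\gamma_1(t),\gamma_2) = \sinh\theta\cosh|t|$, giving the same $3/2$ bound at $|t| = \log(1/\theta)$.

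The main obstacle is just the Lambert quadrilateral bookkeeping in Case 2; the underlying content in both cases is that $\cot u$ and $\coth u$ are each asymptotic to $1/u$ near $u = 0$, so $T \sim \log(1/\theta)$ up to a universal additive constant, which can be taken as $T_0 = \log 2$ (possibly after a mild enlargement to absorb the Case 2 comparison).
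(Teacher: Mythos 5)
Your proof is correct and uses essentially the same tools as the paper's appendix proof: the right-triangle sine and tangent rules of hyperbolic trigonometry in the intersecting case and a Lambert quadrilateral identity in the disjoint case, combined with elementary bounds on hyperbolic functions. Your version is in fact slightly more complete---the paper's appendix writes out only the intersecting case of the projection-interval bound, leaving the disjoint case to the reader, and recovers the $3/2$ distance bound from its separate Propositions A.1 and A.2---and your closed forms $T = \log\cot(\theta/2)$ (intersecting) and $T = \log\coth(\theta/2)$ (disjoint) are tidier than the paper's numerical truncations, giving the slightly sharper $T_0 = \log 2$ in Case~1. One small slip worth fixing: the identity you record, $\sinh\bigl(d(\gamma_2(s),\gamma_1)\bigr) = \sinh|s|\sin\theta$, bounds the distance from $\gamma_2(s)$ to $\gamma_1$, whereas the proposition asks for the distance from $\gamma_1(t)$ to $\gamma_2$; the identity you actually want is $\sinh\bigl(d(\gamma_1(t),\gamma_2)\bigr) = \sinh|t|\sin\theta$, read off from the right triangle with vertices $\gamma_1(0)$, $\gamma_1(t)$, and the foot of the perpendicular dropped from $\gamma_1(t)$ onto $\gamma_2$, after which your evaluation at $|t| = \log\tfrac{1}{\theta}$ goes through unchanged.
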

This is well known, we provide the details in Appendix
\ref{section:hyperbolic} for the convenience of the reader.  In fact,
for small $|t|$, the two geodesics are exponentially close, see
\Cref{section:exponential} for further details.

\medskip We now record the useful fact that if two geodesics $\alpha$ and
$\beta$ intersect at angle $\theta$, then the size of their projection
intervals onto each other is roughly $\log \tfrac{1}{\theta}$, and
furthermore, for any other geodesic $\gamma$, the overlap between the
projection intervals for $\alpha$ and $\beta$ on $\gamma$ is bounded
in terms of $\theta$.

\begin{proposition}\label{prop:overlap}\cite{gmpueffective}*{Proposition 26}
For any constant $\alpha_\LL > 0$ there is a constant $\rho_\LL > 0$
such that for any two geodesics in $\HH^2$ which intersect at angle
$\theta \ge \alpha_\LL$, and for any other geodesic $\gamma$, the
intersection of the nearest point projection intervals of $\alpha$ and $\beta$ to $\gamma$
has diameter at most $\rho_\LL$.\qed
\end{proposition}

If two geodesics $\alpha$ and $\beta$ have
strictly nested projection intervals onto a third geodesic $\gamma$,
i.e. $p_{\gamma}(\alpha) \subset p_\gamma(\beta)$, and $\alpha$
intersects $\gamma$, then $\alpha$ and $\beta$ also intersect.

\begin{proposition}\cite{gmpueffective}*{Proposition 27}\label{prop:nested implies intersect}
Let $\gamma$ be a geodesic in $\HH^2$ which intersects a geodesic
$\ell_1$, with projection interval $I_1 \subset \gamma$.  Let $\ell_2$
be a geodesic with projection interval $I_2 \subset \gamma$, such that
$I_1 \subset I_2$.  Then $\ell_1$ and $\ell_2$ intersect.\qed
\end{proposition}


\subsection{Extended laminations}
\label{sec:extended_laminations}

Given a full lamination on a hyperbolic surface, we define the resulting extended lamination as below.

\begin{definition}
\label{def:extended_laminations}
Suppose that $\Lambda$ is a measured lamination on a hyperbolic surface $S_h$, and suppose that all complementary regions of $\Lambda$ are ideal polygons. 
We define the \emph{extended lamination} $\overline{\Lambda}$ to be the union of
$\Lambda$ with additional leaves connecting every pair of ideal points for each ideal complementary region.  We will call the additional
leaves \emph{extended leaves}.
\end{definition}

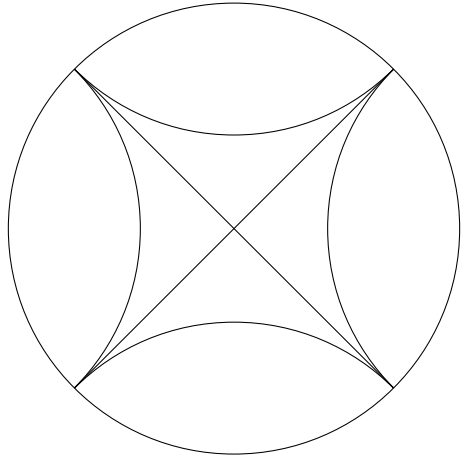
\begin{figure}[h]
\begin{center}
\begin{tikzpicture}[scale=0.75]

\tikzstyle{point}=[circle, draw, fill=black, inner sep=1pt]

\draw (0, 0) circle (4);

\begin{scope}
\clip (0, 0) circle (4);
\draw (0:5.66) circle (4);
\draw (90:5.66) circle (4);
\draw (180:5.66) circle (4);
\draw (270:5.66) circle (4);
\end{scope}

\draw (45:4) -- (225:4);
\draw (135:4) -- (315:4);

\end{tikzpicture}
\end{center}
\caption{Extended leaves in a complementary region with four
  sides.} \label{fig:extended lamination}
\end{figure}

If all ideal complementary regions are triangles, then the extended
lamination equals the original lamination.  If there are ideal polygons
with more than three vertices, the extended lamination is not even a
geodesic lamination, as there are intersecting leaves.  Extended
laminations are not minimal, as not every leaf is dense, but they are
still closed.  Although the extended lamination is not a lamination,
it still makes sense to assign measures to transverse arcs, and if we
assign the extended leaves measure zero, the resulting measure is the
same as the measure from the original measured lamination.  As the
only way to obtain a finite measure is to make the extended leaves
measure zero, the extended lamination is also not a geodesic current
as there are transverse arcs of zero measure.

\begin{proposition}\label{prop:non common leaves extended}
Suppose that $(S_h, \LL)$ is a hyperbolic metric on $S$ together with
a full pair measured laminations.  Then the corresponding extended laminations
$\overline{\Lambda}_+$ and $\overline{\Lambda}_-$
do not share any common leaves.
\end{proposition}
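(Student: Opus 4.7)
The plan is to argue by contradiction, splitting into cases according to whether a putative common leaf $\ell$ of $\overline{\Lambda}_+$ and $\overline{\Lambda}_-$ comes from the original lamination or from an added ``diagonal'' leaf of an ideal complementary region. Since $\overline{\Lambda}_\pm$ consists of $\Lambda_\pm$ together with geodesics connecting non-adjacent ideal vertices of each ideal complementary region of $\Lambda_\pm$, there are four cases to consider: (i) $\ell \in \Lambda_+ \cap \Lambda_-$; (ii) $\ell \in \Lambda_+$ and $\ell$ is a diagonal of $\overline{\Lambda}_-$; (iii) $\ell$ is a diagonal of $\overline{\Lambda}_+$ and $\ell \in \Lambda_-$; (iv) $\ell$ is a diagonal of both extended laminations.

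The two easy cases are handled by previously established results. Case (i) is immediately ruled out by \Cref{prop:non common leaves}. In case (iv), the two endpoints of $\ell$ would be ideal vertices of both an ideal complementary region of $\Lambda_+$ and one of $\Lambda_-$, directly contradicting \Cref{prop:complementary regions don't share}.

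The substantive cases are (ii) and (iii), which are symmetric, so I will argue case (ii). Suppose $\ell \in \Lambda_+$ and $\ell$ is a diagonal of some ideal complementary region $R_-$ of $\Lambda_-$. The key observation is that $R_-$, being an ideal polygon in $\widetilde{S}_h$, is convex as a subset of $\HH^2$. Since both endpoints of $\ell$ are ideal vertices of $R_-$, convexity forces $\ell \subseteq \overline{R_-}$. As $\ell$ is a diagonal rather than a boundary leaf, $\ell$ does not coincide with any side of $R_-$, so $\ell$ in fact lies in the open region $R_-$. But $R_-$ is a component of $\widetilde{S}_h \setminus \Lambda_-$, hence its interior is disjoint from $\Lambda_-$, so the entire bi-infinite leaf $\ell$ is disjoint from $\Lambda_-$. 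This contradicts \Cref{prop:cobounded intersections}, which bounds the length of any leaf segment disjoint from the opposite lamination by $L_\LL$.

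I do not expect any step here to be a real obstacle; the proposition is essentially a clean corollary of \Cref{prop:non common leaves} and \Cref{prop:multi}. The only subtlety worth being precise about is the convexity of the ideal polygon $R_-$, which one should state as an intersection of half-planes bounded by its boundary leaves so that the containment $\ell \subseteq \overline{R_-}$ is unambiguous. Once that is in place, the length contradiction in case (ii) (and symmetrically in (iii)) finishes the proof.
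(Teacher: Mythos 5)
Your proof is correct. It follows the same skeleton as the paper's: reduce via \Cref{prop:non common leaves} to the case where the common leaf is an extended leaf, then invoke \Cref{prop:complementary regions don't share}. The paper's proof, however, is considerably terser — after ruling out a common leaf of $\Lambda_+$ and $\Lambda_-$, it jumps straight to ``the two laminations have ideal complementary regions that share a common point at infinity,'' which is only immediate when the common leaf is an extended leaf of \emph{both} $\overline{\Lambda}_+$ and $\overline{\Lambda}_-$. You, by contrast, explicitly isolate the mixed cases — a genuine leaf of $\Lambda_+$ that happens to be a diagonal of a complementary region of $\Lambda_-$, or vice versa — and dispose of them by convexity of the ideal polygon plus \Cref{prop:cobounded intersections}. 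That is the right tool: such a leaf would be an entire bi-infinite geodesic of one lamination disjoint from the other, contradicting the cobounded-intersection (binding) property. In short, your version fills in a small gap the paper elides; the four-way case split and the convexity observation make the argument self-contained and unambiguous, at the cost of a little more length.
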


\begin{proof}
Suppose that the two extended laminations share a common leaf.  The
measured laminations do not share any common leaves, so the common
leaf must be an extended leaf.  But then the two laminations have ideal complementary regions that share a common point at
infinity, contradicting \Cref{prop:complementary regions don't share}.
\end{proof}

\begin{proposition}\label{prop:extended angle bound}
Suppose that $(S_h, \LL)$ is a hyperbolic metric on $S$ together with
a full pair of measured laminations.  Then there is a constant
$\alpha_\LL > 0$ such that any two leaves of the extended laminations
$\overline{\Lambda}_+$ and $\overline{\Lambda}_-$ which intersect,
intersect at angle at least $\alpha_\LL$.
\end{proposition}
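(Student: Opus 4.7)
The plan is to adapt the compactness argument used for \Cref{prop:angle bound} to the extended setting. The crucial new input needed is that the subset
\[
T^1(\overline{\Lambda}_{\pm}) := \{ v \in T^1(S_h) : v \text{ is tangent to a leaf of } \overline{\Lambda}_{\pm} \}
\]
is closed in the compact space $T^1(S_h)$. Once this closedness is established, the same limiting argument as in \Cref{prop:angle bound} gives the angle bound.

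First I would verify closedness of $T^1(\overline{\Lambda}_{\pm})$. The tangent vectors to $\Lambda_{\pm}$ already form a closed subset, since $\Lambda_{\pm}$ is a geodesic lamination on the compact surface $S_h$. The additional tangent vectors to extended leaves are carried by finitely many geodesic immersions $\mathbb{R} \to S_h$, namely the projections of the extended leaves, one for each diagonal of each of the finitely many ideal complementary regions of $\Lambda_{\pm}$. Each extended leaf $\widetilde{\ell}$ in $\widetilde{S}_h$ connects two ideal vertices of a common complementary region and thus shares each endpoint with a boundary leaf of that region, which is a leaf of $\Lambda_{\pm}$. By the standard exponential decay of distance between asymptotic geodesics in $\HH^2$, the projection of $\widetilde{\ell}$ to $S_h$ has tangent vectors that, in forward and backward time, accumulate in $T^1(S_h)$ onto $T^1(\Lambda_{\pm})$. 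A convergent sequence in $T^1(\overline{\Lambda}_{\pm})$ therefore has a subsequence either on one of the finitely many extended geodesic orbits in $T^1(S_h)$ (converging either on that orbit or onto $T^1(\Lambda_{\pm})$) or already inside the closed set $T^1(\Lambda_{\pm})$. In every case, the limit lies in $T^1(\overline{\Lambda}_{\pm})$.

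With closedness established, I would then argue by contradiction exactly as in the proof of \Cref{prop:angle bound}. Suppose there is a sequence of pairs of intersecting leaves $(\ell^n_+, \ell^n_-)$ with $\ell^n_{\pm}$ in $\overline{\Lambda}_{\pm}$, meeting at points $p_n \in \widetilde{S}_h$ at angles $\theta_n \to 0$. Let $u_n, v_n$ be unit tangent vectors to $\ell^n_+, \ell^n_-$ at $p_n$. Replacing each triple $(p_n, u_n, v_n)$ by a $\pi_1(S)$-translate, I may assume the images of $p_n$ in $S_h$ all lie in a fixed fundamental domain. Passing to a subsequence, the projections of $(u_n, v_n)$ converge in the compact space $T^1(S_h) \times T^1(S_h)$ to vectors $(u, v)$ based at a common point, with angle $\lim_n \theta_n = 0$, hence $u = \pm v$. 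By the closedness proved above, $u \in T^1(\overline{\Lambda}_+)$ and $v \in T^1(\overline{\Lambda}_-)$, so the single geodesic tangent to $u = \pm v$ is a leaf of both $\overline{\Lambda}_+$ and $\overline{\Lambda}_-$, contradicting \Cref{prop:non common leaves extended}. This contradiction yields the uniform lower bound $\alpha_{\LL} > 0$.

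The main obstacle is the closedness claim, which is genuinely subtle because individual extended leaves in $S_h$ are typically not closed subsets of $T^1(S_h)$; they spiral and accumulate on $\Lambda_{\pm}$. The point is that this spiraling is absorbed into $T^1(\Lambda_{\pm}) \subseteq T^1(\overline{\Lambda}_{\pm})$, so no new limit points appear outside $T^1(\overline{\Lambda}_{\pm})$. This in turn relies essentially on the fact that every extended leaf has both its endpoints shared with boundary leaves of the same ideal complementary region.
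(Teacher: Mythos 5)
Your proposal takes essentially the same route as the paper: the paper's proof is a three-sentence compactness argument whose only nontrivial ingredient is the assertion, stated without proof in the text following the definition of extended laminations, that $\overline{\Lambda}_\pm$ is closed. You have correctly identified this as the genuine subtlety and supplied the missing verification: the tangent-vector set $T^1(\overline{\Lambda}_\pm)$ decomposes as the already-closed $T^1(\Lambda_\pm)$ together with finitely many geodesic orbits (one per diagonal of each of the finitely many ideal complementary regions in $S_h$), and each such orbit is biasymptotic to boundary leaves of $\Lambda_\pm$, so its accumulation set lies in $T^1(\Lambda_\pm)$, making the whole union a finite union of closed sets. The compactness argument that follows is then exactly the paper's. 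Your version is more self-contained than the paper's; the paper buys brevity by deferring to the earlier remark that extended laminations are closed, while you buy a complete argument by a paragraph of work.
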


Essentially the same proof as before works, but we give the details
for the convenience of the reader.

\begin{proof}
Suppose that there is a sequence of pairs of intersecting leaves
$\ell^-_n, \ell^+_n$, whose angles of intersection tend to zero.  By
compactness of $S$, we may pass to a convergent subsequence.  As
the extended laminations are closed, this limits to a pair of leaves with zero angle
of intersection, so $\overline{\Lambda}_+$ and $\overline{\Lambda}_-$
share a common leaf, contradicting \Cref{prop:non common leaves extended}.
\end{proof}

Every geodesic $\gamma$ in $S_h$ has a unique lift in the unit tangent
bundle $T^1(S_h)$.

\begin{proposition}\label{prop:unit tangent bound}
Suppose that $(S_h, \LL)$ is a hyperbolic metric on $S$ together with
a full pair of measured laminations.  Then there is a constant
$\alpha_\LL > 0$ such that the distance in $T^1(S_h)$ between any two
leaves of the extended laminations $\overline{\Lambda}^1_+$ and
$\overline{\Lambda}^1_-$ is at least $\alpha_\LL$.
\end{proposition}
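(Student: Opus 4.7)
I would prove this by a compactness–contradiction argument essentially parallel to the proof of \Cref{prop:extended angle bound}, but carried out in $T^1(S_h)$ rather than on $S_h$. Suppose, for contradiction, that no such constant $\alpha_\LL$ exists. Then there are sequences of unit tangent vectors $v_n \in \overline{\Lambda}^1_+$ and $w_n \in \overline{\Lambda}^1_-$ with
\[
d_{T^1(S_h)}(v_n, w_n) \longrightarrow 0.
\]

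The plan is then to pass to a convergent subsequence. Since $T^1(S_h)$ is compact (as $S_h$ is compact), after extracting a subsequence I may assume $v_n \to v$ in $T^1(S_h)$, and hence also $w_n \to v$. The key observation I would now invoke is that each extended lamination $\overline{\Lambda}_\pm$ is a \emph{closed} subset of $T^1(S_h)$: the underlying measured lamination $\Lambda_\pm$ is closed by definition, and $\overline{\Lambda}_\pm$ differs from it by adjoining the diagonals of finitely many ideal complementary regions in $S_h$, each of which has only finitely many ideal vertices. Thus only finitely many additional complete geodesics are added, and each has closed lift to $T^1(S_h)$. It follows that $v \in \overline{\Lambda}^1_+ \cap \overline{\Lambda}^1_-$.

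The final step is the easy but crucial one: a complete geodesic on $S_h$ is uniquely determined by any of its unit tangent vectors. Hence the inclusion $v \in \overline{\Lambda}^1_+$ means $v$ is tangent to a leaf $\ell_+$ of $\overline{\Lambda}_+$, and $v \in \overline{\Lambda}^1_-$ means $v$ is tangent to a leaf $\ell_-$ of $\overline{\Lambda}_-$, forcing $\ell_+ = \ell_-$ as geodesics. This directly contradicts \Cref{prop:non common leaves extended}, which says the two extended laminations share no common leaf. This contradiction establishes the lower bound $\alpha_\LL > 0$.

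I do not anticipate any substantial obstacle; the only point that requires care is verifying that the extended lamination is closed in $T^1(S_h)$, and this reduces to the finiteness of complementary regions (and hence of adjoined diagonals) on the compact surface $S_h$. Note that the statement of this proposition in fact implies (and is strictly stronger than) \Cref{prop:extended angle bound}, since two intersecting leaves with tangent vectors at the intersection point $p$ differ in $T^1(S_h)$ by at least a universal constant times the intersection angle.
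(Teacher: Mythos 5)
Your argument is correct and matches the paper's proof essentially step by step: both proceed by contradiction, pass to a convergent subsequence using compactness of $T^1(S_h)$, use closedness of the extended laminations to produce a common limit, and then invoke \Cref{prop:non common leaves extended} to reach a contradiction. The only difference is that you spell out the closedness of $\overline{\Lambda}^1_\pm$ in a bit more detail, which is a reasonable point to flag.
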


\begin{proof}
Suppose that there is a sequence of pairs of leaves
$\ell^-_n, \ell^+_n$, whose lifts become arbitrarily close in
$T^1(S_h)$.  By compactness of $S$, we may pass to a convergent
subsequence.  As the extended laminations are closed, this limits to a
pair of leaves which are tangent, and hence equal, implying that the
extended laminations share a common leaf, contradicting \Cref{prop:non common leaves extended}.
\end{proof}

Finally, we record the fact that any sufficiently long segment of a
leaf of the extended lamination intersects the other extended lamination.
Essentially, the same argument as before works, but we give the details below.

\begin{proposition}\label{prop:extended intersections}
Suppose that $(S_h, \Lambda)$ is a hyperbolic metric on $S$ together
with a full pair of measured laminations.  Then there is a constant
$\overline{L}_\LL > 0$ such that any leaf of either of the extended
laminations $\overline{\Lambda}_+$ or $\overline{\Lambda}_-$ of length
at least $\overline{L}_\LL$, intersects the other lamination.
\end{proposition}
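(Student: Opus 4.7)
The plan is to argue by contradiction via a compactness argument, leveraging \Cref{prop:cobounded intersections} for the original laminations together with the binding property of the full pair $(S_h,\LL)$ to handle the new feature introduced by the extended leaves.

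Suppose no such $\overline{L}_\LL$ exists. Breaking symmetry, we find a sequence of leaves $\ell_n$ of $\overline{\Lambda}_+$ together with subsegments $\sigma_n \subset \ell_n$ of length $L_n \to \infty$ which are disjoint from $\overline{\Lambda}_-$. After applying elements of $\pi_1(S)$ (which preserve both extended laminations), I may assume the midpoints of the $\sigma_n$ all lie in a fixed compact fundamental domain of $\widetilde S_h$. By compactness of the unit tangent bundle, after passing to a subsequence, the tangent vectors at these midpoints converge to some $v_\infty$, and correspondingly the geodesics $\ell_n$ converge to the bi-infinite geodesic $\ell_\infty$ determined by $v_\infty$. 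Since $L_n \to \infty$ and $\overline{\Lambda}_-$ is a closed subset of $\widetilde S_h$, the entire limit geodesic $\ell_\infty$ is disjoint from $\overline{\Lambda}_-$, and in particular from $\Lambda_-$.

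The key observation is that each $\ell_n$ does not cross $\Lambda_+$ transversely: if $\ell_n$ is a leaf of $\Lambda_+$ it lies within $\Lambda_+$; if $\ell_n$ is an extended leaf, then its interior is contained in the open ideal complementary region of $\Lambda_+$ whose ideal vertices it joins. A transverse intersection of $\ell_\infty$ with some leaf of $\Lambda_+$ would persist in a neighborhood for large $n$, contradicting the preceding property. Since a geodesic is determined by any one of its tangent vectors, any tangential contact of $\ell_\infty$ with a leaf of $\Lambda_+$ forces $\ell_\infty$ to coincide with that leaf. Hence $\ell_\infty$ is either a leaf of $\Lambda_+$, or entirely disjoint from $\Lambda_+$.

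In the first case, $\ell_\infty$ is a leaf of $\Lambda_+$ whose arbitrarily long subsegments (inherited as limits of the $\sigma_n$) avoid $\Lambda_-$, directly contradicting \Cref{prop:cobounded intersections}. In the second case, $\ell_\infty$ is a bi-infinite geodesic disjoint from $\Lambda_+ \cup \Lambda_-$, whose rays then violate the binding hypothesis that every geodesic ray crosses a leaf of $\Lambda_+ \cup \Lambda_-$. Either outcome is a contradiction, completing the proof. The main obstacle is the passage to the limit in the no-transverse-crossings step; this is resolved by combining the fact that extended leaves have interiors contained in open complementary regions of $\Lambda_+$ with the uniqueness of geodesics determined by a tangent vector, which reduces the possible behavior of $\ell_\infty$ to the two cases handled above.
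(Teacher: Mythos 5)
Your argument takes a genuinely different route from the paper's. The paper first invokes \Cref{prop:cobounded intersections} to dispense with leaves of $\Lambda_+$, then uses the \emph{finiteness} of extended leaves in $S_h$ to concentrate on a single extended leaf, observes that a ray of such a leaf is asymptotic to a boundary leaf $\ell$ of $\Lambda_+$, and finally applies the angle bound from \Cref{prop:angle bound} to conclude that an infinite subray of $\ell$ is also disjoint from $\Lambda_-$, again contradicting \Cref{prop:cobounded intersections}. Your argument is more uniform: it treats leaves of $\Lambda_+$ and extended leaves together, works directly with a compactness limit in the unit tangent bundle, and closes by a dichotomy ($\ell_\infty$ is a leaf of $\Lambda_+$ or is disjoint from $\Lambda_+$), with either branch ending in a contradiction with \Cref{prop:cobounded intersections} or with the binding hypothesis. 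You avoid both the angle bound and the finiteness of extended leaves, which is clean and would generalize beyond the specific structure of extended leaves.

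There is, however, a gap in the step where you claim $\ell_\infty$ is disjoint from $\overline{\Lambda}_-$ (hence from $\Lambda_-$) ``since $\overline{\Lambda}_-$ is a closed subset of $\widetilde S_h$.'' Closedness alone does not give this: the complement of a closed set is open, and a limit of points all lying in that open set can land on its boundary. Concretely, a limit of geodesic segments each disjoint from a lamination can touch, indeed coincide with, a leaf of that lamination. The two ways $\ell_\infty$ could meet $\Lambda_-$ are: (a) $\ell_\infty$ crosses a leaf of $\Lambda_-$ transversally, which is correctly ruled out because a transverse crossing would persist to $\sigma_n$ for large $n$; and (b) $\ell_\infty$ \emph{is} a leaf of $\Lambda_-$, which your argument does not address. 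Case (b) can be excluded by noting that $\overline{\Lambda}_+$ is closed, so $\ell_\infty$ is itself a leaf of $\overline{\Lambda}_+$, and then invoking \Cref{prop:non common leaves extended}; alternatively, once you have your $\Lambda_+$-dichotomy, a leaf of $\Lambda_-$ that is either a leaf of $\Lambda_+$ or disjoint from $\Lambda_+$ contradicts \Cref{prop:non common leaves} or \Cref{prop:cobounded intersections} with the laminations swapped. Either fix closes the gap, so the strategy is sound, but as written the ``closed subset'' justification is incorrect and should be replaced.
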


\begin{proof}
Breaking symmetry, suppose that the leaf belongs to $\overline{\Lambda}_+$. By swapping the laminations, the same argument works for $\overline{\Lambda}_-$.

\medskip
By \Cref{prop:cobounded intersections}, there is a constant $L_\LL$
such that every segment of a leaf of the invariant lamination $\Lambda_+$ of length at least $L_\LL$, intersects $\Lambda_-$. So it
suffices to consider extended leaves. Let $\sigma_n$ be a sequence of
segments of extended leaves $\ell_n \in \overline{\Lambda}_+$, such
that the $\sigma_n$ are disjoint from $\Lambda_-$, and the length of
the segments $\sigma_n$ tends to infinity.  As there are finitely many extended leaves for $\overline{\Lambda}_+$ in $S_h$, we may pass to a subsequence
that converges to an infinite ray of an extended leaf of $\overline{\Lambda}_+$ such that the ray is
disjoint from $\Lambda_-$. Note that such a ray is asymptotic to
a boundary leaf $\ell$ of $\Lambda_+$.  By Proposition \ref{prop:angle bound}
there is a lower bound $\alpha_\LL$ on the angle of intersection
between the leaves of the two invariant laminations. It follows that an
infinite subray of $\ell$ is also disjoint from $\Lambda_-$,
contradicting \Cref{prop:cobounded intersections}.
\end{proof}

\subsection{Complementary regions, polygons and cusps}
\label{sec:polygons}

In this section, we fix notation to describe subsets of either the
surface $S_h$, or the universal cover $\widetilde S_h$, which have
boundaries consisting of alternating arcs of the laminations $\Lambda_+$ and
$\Lambda_-$.

\medskip
Suppose that $\Lambda$ is an invariant lamination. Then $S_h \setminus \Lambda$ has finitely
many connected components, and each connected component lifts to an ideal
polygon in $\widetilde S_h$ with its boundary consisting of leaves of $\Lambda$.  We shall call
these complementary regions \emph{ideal polygons with boundary in $\Lambda$}, and there are countably many of these in the universal cover
$\widetilde S_h$.

\medskip
Suppose that $R$ is an ideal polygon with boundary in $\Lambda_+$ (respectively, 
$\Lambda_-$).
Suppose that $\ell$ is segment of a leaf
of $\Lambda_-$ (respectively, $\Lambda_+$) such that the endpoints of $\ell$ lie on adjacent
sides of $R$. 
The adjacent sides meet in an ideal point of $R$ and we call the component of $R \setminus \ell$ containing this ideal point a \emph{cusp} of $R$.  
We say a cusp of $R$ is \emph{maximal}, if it is not contained in any larger cusp.

\medskip
We now describe regions in $\widetilde S_h$ with boundary consisting of arcs alternately contained in $\Lambda_+$ and $\Lambda_-$.
A \emph{polygon} is a compact subset of
$\widetilde S_h$ homeomorphic to a disc, whose boundary consists of an
even number of arcs, alternately contained in $\Lambda_+$ and
$\Lambda_-$. We (partially) organize polygons as follows.
We call a polygon a \emph{rectangle} if its boundary consists
of four arcs.
We call the polygon a \emph{non-rectangular polygon} if it has more
than four sides.  We shall only consider polygons in $\widetilde{S}_h$
whose interiors embed in $S_h$.

\medskip The interior of a polygon may intersect other leaves of the
laminations.  If the interior of the polygon is disjoint from the
leaves of $\Lambda_+$ and $\Lambda_-$ then we
shall call it an \emph{innermost polygon}.  An innermost polygon can be 
either an innermost rectangle, or an innermost non-rectangular
polygon.  

\begin{definition}
Let $(S_h, \LL)$ be a hyperbolic metric on $S$ together with a full
pair of measured laminations.  We say that the laminations are
\emph{suited} if every ideal complementary region $R$ contains a
unique non-rectangular polygon.  In particular, any arc of the other
invariant lamination that lies in $R$ connects adjacent sides of $R$,
and so determines a cusp.
\end{definition}

The invariant laminations of a pseudo-Anosov map are suited; we include a proof below for convenience.

\begin{proposition}\label{prop:innermost polygon}
Suppose that $\pa \colon S \to S$ is a pseudo-Anosov map and
$(S_h, \Lambda)$ is a hyperbolic metric on $S$ together with a pair of
invariant measured laminations. Then the laminations are suited.
\end{proposition}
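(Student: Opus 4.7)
My plan is to exploit the fact that the invariant laminations of $\pa$ arise from a pair of transverse measured foliations $\mathcal{F}_+, \mathcal{F}_-$ on the flat metric $S_q$ determined by $\pa$, whose singularities are $k$-pronged cone points with $k \geq 3$. Breaking symmetry, I would fix an ideal complementary region $R$ of $\Lambda_+$ and aim to show it contains a unique innermost (hence non-rectangular) polygon. Because $\Lambda_+$ is a full measured lamination, $R$ is a finite-sided ideal polygon with ideal vertices $v_1, \ldots, v_n$, and the hyperbolic geometry (two distinct geodesics share at most one ideal endpoint) immediately forces $n \geq 3$.

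The crucial geometric step is to show that every arc of $\Lambda_-$ lying in $R$ has its two endpoints on adjacent sides of $R$. I would verify this via the local flat structure: $R$ corresponds to a star neighborhood of a cone singularity in $S_q$, cut into $n$ sectors by the $n$ prongs of $\mathcal{F}_+$. A leaf of $\mathcal{F}_-$ inside this star neighborhood cannot cross the cone singularity, so it must enter one sector and exit through an adjacent one. Straightening the foliations back to $\Lambda_\pm$ in $\widetilde{S}_h$, this forces each arc of $\Lambda_-$ inside $R$ to connect adjacent sides of $R$. (A direct contradiction argument using \Cref{prop:complementary regions don't share} together with the minimality of $\Lambda_-$ is also possible but less transparent.)

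Granted this adjacency, the rest is structural. By \Cref{prop:complementary regions don't share}, no $v_i$ is an ideal vertex of any complementary region of $\Lambda_-$, so leaves of $\Lambda_-$ accumulate at each $v_i$, producing arbitrarily small cusps of $R$ there. Let $U_i$ be the maximal cusp at $v_i$; its boundary is an arc of $\Lambda_-$ because $\Lambda_-$ is closed. Since each cusp of $R$ contains exactly one ideal vertex, the $U_i$ are pairwise disjoint, and the closure of $R \setminus (U_1 \cup \cdots \cup U_n)$ is a polygon $P$ with $2n \geq 6$ sides alternating between arcs of $\Lambda_+$ and $\Lambda_-$, hence non-rectangular. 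Moreover, $P$ is the unique innermost polygon in $R$: any innermost polygon inside $R$ must have its $\Lambda_-$-sides bounding maximal cusps at each $v_i$, which uniquely specifies $P$.

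The main obstacle is the adjacency step, where the specific pseudo-Anosov hypothesis is essential: the statement would fail for a generic full pair of measured laminations, where arcs could potentially skip between non-adjacent sides of some complementary region. The flat-geometric sector argument is the cleanest route, but it requires carefully tracking the identification between the complementary regions of $\mathcal{F}_\pm$ on $S_q$ and those of the hyperbolically straightened laminations $\Lambda_\pm$ on $\widetilde{S}_h$, including the correspondence between prongs of the singularity and ideal vertices of $R$.
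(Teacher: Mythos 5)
Your proof takes essentially the same route as the paper's: both pass through the correspondence between ideal complementary polygons of the straightened laminations and singularities of the associated foliations, exploiting the alternating prong structure at each cone point. The paper phrases this by collapsing complementary regions to measured foliations $F_\pm$ and invoking unique ergodicity; you phrase it via the flat metric $S_q$ — these are the same mechanism. That said, the two write-ups differ in where the key geometric input is made explicit, and your version has a couple of spots worth tightening.

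The fact that an $n$-sided ideal complementary region $R$ of $\Lambda_+$ corresponds to a \emph{single} $n$-prong cone point of $S_q$ — which you use when you say ``$R$ corresponds to a star neighborhood of a cone singularity'' — is precisely the statement that $\mathcal{F}_+$ has no saddle connections. The paper derives this from unique ergodicity of the invariant laminations; you only assert that singularities are $k$-pronged with $k \geq 3$, which is a different (weaker, and insufficient) fact, since a quadratic differential can have all $k \geq 3$ while still having saddle connections, in which case $R$ would collapse to a chain of cone points joined by singular segments and the sector picture would break down. Since you correctly observe in your last paragraph that the adjacency step is exactly where the pseudo-Anosov hypothesis enters, you should name the mechanism: no saddle connections, because a saddle connection would have finite $\pa$-invariant flat length, which is incompatible with the stretching/shrinking of $\pa$ (or, as the paper says, because the invariant foliations are uniquely ergodic).

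Once adjacency is in hand, your bottom-up construction of $P$ (remove the maximal cusps $U_i$) and the paper's top-down construction ($P = R_+ \cap R_-$) give the same polygon, and each has its virtues — yours makes the cusp structure and the uniqueness argument more explicit, the paper's makes the $2n$-gon structure immediate from the alternating limit sets. One point to be careful about in your version: showing $\Lambda_-$ accumulates at each $v_i$ and that the maximal cusps exist and are pairwise disjoint requires a bit more than \Cref{prop:complementary regions don't share}. You also need that no leaf of $\Lambda_-$ can share an ideal endpoint with a boundary leaf of $R$ (this follows from \Cref{prop:disjoint leaves}, since asymptotic geodesics are at pseudo-distance zero), and that intersection points of $\Lambda_-$ with each boundary leaf of $R$ are $L_\Lambda$-dense along that leaf, i.e.\ \Cref{prop:cobounded intersections}. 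Finally, ``unique innermost polygon'' should read ``unique innermost non-rectangular polygon'' — $R$ contains infinitely many innermost rectangles, and the uniqueness claim is only for the non-rectangular one.
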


\begin{proof}
By collapsing the complementary regions of an invariant lamination, say $\Lambda_+$, we obtain a
a measured foliation $F_+$; see 
\cite{kapovich}*{Chapter 11} for details.  
Since the laminations are uniquely ergodic, the resulting foliations are also uniquely ergodic and in particular, contain no saddle connections. Thus, the image of an ideal
polygon with $n$ sides is a singular leaf of $F_+$ with a single $n$-prong
singularity. 
The measured foliation $F_-$ (given by collapsing complementary regions of $\Lambda_-$) also has a
singular leaf with an $n$-prong singularity at the same point. 
The pre-image of this leaf is an ideal polygon complementary to $\Lambda_-$.
As the limit points of the two singular leaves alternate at the
boundary at infinity, the two ideal polygons also have alternating
ideal points at infinity, and so the intersection of the two ideal
polygons is an innermost non-rectangular polygon $P$ with $2n$ sides.

\medskip
In particular, each side of the polygon $P$ which intersects the
interior of $R$ determines a maximal cusp, and all other arcs of the
other lamination which intersect $R$ therefore also lie in
cusps. Therefore these arcs have endpoints in adjacent sides of $R$,
and so determine (non-maximal) cusps in $R$.
\end{proof}

\Cref{fig:non-rectangular polygon} shows two ideal
quadrilaterals $R_+$ and $R_-$ intersecting in an innermost polygon
$P$ with eight sides.  The complement in each ideal quadrilateral of
the innermost polygon $P$ is a maximal cusp.  All other arcs of
$\Lambda_+$ intersecting $R_-$ lie in maximal cusps, and so determine
non-maximal cusps.

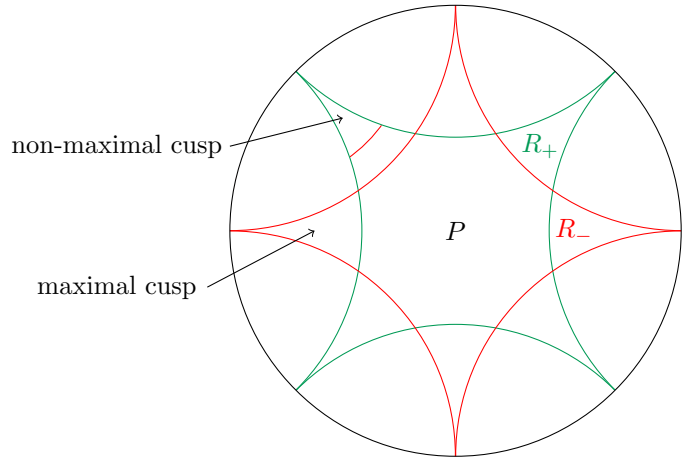
\begin{figure}[h]
\begin{center}
\begin{tikzpicture}[scale=0.75]

\tikzstyle{point}=[circle, draw, fill=black, inner sep=1pt]

\def\boundary{(0, 0) circle (4)}

\begin{scope}
\clip \boundary;
\draw [color=red] (135:4.72) circle (2.5);
\draw [color=white, fill=white] (90:5.66) circle (4);
\draw [color=white, fill=white] (180:5.66) circle (4);
\end{scope}

\begin{scope}
\clip \boundary;
\draw [color=ForestGreen] (0:5.66) circle (4);
\draw [color=ForestGreen] (90:5.66) circle (4);
\draw [color=ForestGreen] (180:5.66) circle (4);
\draw [color=ForestGreen] (270:5.66) circle (4);
\end{scope}
                     
\draw (1.5, 1.5) node [color=ForestGreen] {$R_+$};

\begin{scope}[rotate=45]
\clip \boundary;
\draw [color=red] (0:5.66) circle (4);
\draw [color=red] (90:5.66) circle (4);
\draw [color=red] (180:5.66) circle (4);
\draw [color=red] (270:5.66) circle (4);
\end{scope}

\draw (2.1, 0) node [color=red] {$R_-$};

\draw (0, 0) node {$P$};

\draw (-6, -1) node {maximal cusp};

\draw [arrows=->] (-4.4, -1) -- (-2.5, 0);

\draw (-6, 1.5) node {non-maximal cusp};

\draw [arrows=->] (-4, 1.5) -- (-2, 2);

\draw \boundary;

\end{tikzpicture}
\end{center}
\caption{Two ideal polygons intersecting in a non-rectangular
  polygon.} \label{fig:non-rectangular polygon}
\end{figure}

\begin{remark}\label{rem:full but not suited}
    It is possible to have a full pair of laminations $\LL$ such that 
    \begin{itemize}
        \item the pair is not suited, and yet
        \item the bi-infinite Teichmüller geodesic that they define gives a doubly degenerate hyperbolic 3-manifold with bounded geometry.
    \end{itemize}
    This can happen when one of the measured foliations has a saddle connection, and yet the bi-infinite Teichmüller geodesic lies in a thick part of Teichmüller space. 
\end{remark}
In light of \Cref{rem:full but not suited}, our discussion in this section and its uses in the proofs of the effective theorems does not extend without extra hypothesis to doubly degenerate hyperbolic 3-manifolds with bounded geometry.

\medskip
We observe below that there is an upper bound on the diameter of any
innermost polygon.

\begin{proposition}\label{prop:innermost polygon diameter bound}
Suppose that 
$(S_h, \Lambda)$ is a hyperbolic metric on $S$ together with a suited
pair of measured laminations.  Then there is a constant $D_\LL$ such that diameter of any innermost polygon with boundary in 
$\Lambda_+ \cup \Lambda_-$ is at most $D_\LL$.
\end{proposition}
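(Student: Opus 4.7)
The plan is to bound both the number of sides of any innermost polygon and the length of each side, and then conclude by triangle inequality along the boundary.

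For the side-length bound, let $P$ be an innermost polygon with boundary in $\Lambda_+ \cup \Lambda_-$. Each side $\sigma$ of $P$ is a segment of a leaf of one invariant lamination, say $\Lambda_+$. Its endpoints are vertices of $P$, which are intersection points of $\Lambda_+$-leaves with $\Lambda_-$-leaves. The interior of $\sigma$ cannot meet $\Lambda_-$: if it did, then a leaf of $\Lambda_-$ transverse to $\sigma$ would enter the interior of $P$, contradicting the assumption that the interior of $P$ is disjoint from both laminations. Hence $\sigma$ is a segment of a leaf of $\Lambda_+$ disjoint from $\Lambda_-$ in its interior; by \Cref{prop:cobounded intersections} its length is at most $L_\LL$. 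The same holds for sides contained in $\Lambda_-$.

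For the bound on the number of sides, innermost rectangles have exactly four sides by definition. For innermost non-rectangular polygons, I will use the structure established in the proof of \Cref{prop:innermost polygon}: each such polygon arises as the intersection of an ideal complementary region of $\Lambda_+$ with an ideal complementary region of $\Lambda_-$ whose singular leaves (after collapsing the invariant laminations to the measured foliations $F_+$ and $F_-$) are centered at a common singular point. The number of sides of the resulting innermost polygon equals twice the number of prongs at this singularity. Since $S$ is closed, $F_+$ and $F_-$ have only finitely many singular points, each of finite prong order, and so there is a uniform upper bound $n_\LL$ on the number of sides of any innermost non-rectangular polygon. Combined with the rectangular case, every innermost polygon has at most $\max\{4, n_\LL\}$ sides.

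Combining the two bounds, the perimeter of any innermost polygon is at most $\max\{4, n_\LL\} \cdot L_\LL$, and since the polygon is bounded by piecewise geodesic segments, the triangle inequality along the boundary yields that the diameter is bounded above by this same quantity. We take $D_\LL = \max\{4, n_\LL\} \cdot L_\LL$.

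The only subtle step is Step 1, where one must be sure that the interior of each side is genuinely disjoint from the opposite lamination so that \Cref{prop:cobounded intersections} applies; this is where the innermost hypothesis is essential. The bound on the number of sides, by contrast, is a soft consequence of the compactness of $S$ together with the finiteness of singularities of the invariant foliations, which is already packaged into the proof of \Cref{prop:innermost polygon}.
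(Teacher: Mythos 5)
Your proof is correct and takes essentially the same approach as the paper: bound each side's length by $L_\LL$ via \Cref{prop:cobounded intersections} (using the innermost hypothesis so each side's interior avoids the opposite lamination), bound the number of sides by compactness of $S$ (via finiteness of non-rectangular innermost polygons), and conclude diameter $\le$ perimeter $\le n L_\LL$. The paper states both ingredients a bit more tersely, but the decomposition and key lemmas are identical.
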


\begin{proof}
By \Cref{prop:cobounded intersections}, there is an upper bound $L_\LL$
on the length of any side of an innermost polygon.  There are only
finitely many non-rectangular innermost polygons in $S_h$, and the
length of the boundary of a polygon is an upper bound on its diameter,
so we may choose $D_\LL$ to be $n L_\LL$, where $n$ is the maximum
number of sides of any innermost polygon with boundary in
$\Lambda_+ \cup \Lambda_-$.
\end{proof}

\medskip

Suppose that $P$ is an innermost non-rectangular polygon.  The intersection
of the extended leaves of $\Lambda_+$ and $\Lambda_-$ with $P$ gives
us a further finite collection of finite length geodesic segments in $P$ which we call the \emph{extended leaves} in $P$.

\begin{proposition}\label{prop:polygon}
Suppose that $(S_h, \Lambda)$ is a hyperbolic metric on $S$ together
with a suited pair of measured laminations.  Suppose that $P$ is an
innermost non-rectangular complementary polygon.  Then there is a
constant $\theta_P$ such that if the lift of a non-exceptional
geodesic (c.f. \Cref{def:non-exceptional}) $\gamma$ passes within
distance $\theta_P$ in $T^1(S_h)$ of an extended leaf in $P$, then its
distance in $T^1(S_h)$ is at least $\theta_P$ from all of the other
extended leaves in $P$.
\end{proposition}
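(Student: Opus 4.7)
The plan is to prove this by a straightforward compactness argument, exploiting the fact that $P$ is compact and contains only finitely many extended leaf segments.

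First I would note that, by the suited hypothesis and the proof of \Cref{prop:innermost polygon}, the innermost non-rectangular polygon $P$ is compact in $\widetilde{S}_h$ and lies inside the intersection of a single ideal complementary region of $\Lambda_+$ and a single ideal complementary region of $\Lambda_-$, each with the same number of ideal vertices. Since each of these complementary regions has only finitely many ideal vertices, only finitely many diagonals of the extended laminations $\overline{\Lambda}_\pm$ can meet $P$; together with the (finitely many) boundary arcs, this gives a finite collection $\mathcal{E} = \{E_1, \dots, E_m\}$ of compact geodesic segments, namely the extended leaves in $P$.

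Next, I would pass to the unit tangent bundle. Each $E_i$ has a compact lift $E_i^1 \subset T^1(S_h)$. Two distinct geodesics in $\widetilde{S}_h$ cannot share a point with the same tangent direction, so distinct geodesics have disjoint unit-tangent lifts. Combined with \Cref{prop:non common leaves extended}, which ensures that no leaf of $\overline{\Lambda}_+$ coincides with a leaf of $\overline{\Lambda}_-$, this gives that all $E_1, \dots, E_m$ are pairwise distinct as geodesic arcs, so their lifts $E_i^1$ are pairwise disjoint compact subsets of $T^1(S_h)$.

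By compactness and finiteness, the quantity
\[
d \;:=\; \min_{i \neq j} d_{T^1(S_h)}(E_i^1, E_j^1)
\]
is strictly positive, and I would set $\theta_P := d/2$. The conclusion is then immediate from the triangle inequality: if $\gamma^1$ is within $\theta_P$ of some $E_i^1$, then for every $j \neq i$,
\[
d_{T^1(S_h)}(\gamma^1, E_j^1) \;\geq\; d_{T^1(S_h)}(E_i^1, E_j^1) \;-\; d_{T^1(S_h)}(\gamma^1, E_i^1) \;\geq\; d - \theta_P \;=\; \theta_P.
\]

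There is no serious obstacle here: the statement is a compactness lemma whose sole purpose is to record a finite separation constant for later use. The non-exceptional hypothesis on $\gamma$ plays no essential role in the bound itself and is included only to keep the statement consistent with the broader setup, where it conveniently rules out the degenerate case in which $\gamma$ coincides with one of the extended leaves.
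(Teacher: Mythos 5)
Your argument proves a strictly weaker statement than the proposition, and the step you call ``the triangle inequality'' is actually invalid as written. You apply
\[
d_{T^1(S_h)}(\gamma^1, E_j^1) \;\geq\; d_{T^1(S_h)}(E_i^1, E_j^1) \;-\; d_{T^1(S_h)}(\gamma^1, E_i^1)
\]
with $\gamma^1$ the full lift of $\gamma$, i.e.\ a non-singleton \emph{set}. But the reverse triangle inequality for set distances is false in general: a bi-infinite $\gamma^1$ can pass within $\theta_P$ of $E_i^1$ at one parameter value and within $\theta_P$ of $E_j^1$ at another, without contradiction, because the infimum defining $d(\gamma^1, E_i^1)$ and the one defining $d(\gamma^1, E_j^1)$ are realized at different points of $\gamma^1$. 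The inequality you wrote is only valid if $\gamma^1$ is replaced by a single point $\gamma^1(t)$, and then it yields only the pointwise conclusion: at the same instant $t$ that $\gamma^1(t)$ is $\theta_P$-close to $E_i^1$, it is $\theta_P$-far from $E_j^1$. That is not what the proposition asserts.

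The paper's proof has two steps, and you have only supplied the first. Step one is exactly your observation: the finitely many extended leaf segments in the compact polygon $P$ lift to pairwise disjoint compact sets in $T^1(S_h)$, so they admit a positive uniform separation $\theta_{P}$. Step two, which you are missing, is what upgrades the pointwise closeness at one $t$ to control over the whole passage of $\gamma$ through $P$: by \Cref{prop:innermost polygon diameter bound} the polygon has uniformly bounded diameter $D_\LL$, and by the exponential fellow-traveling estimate \Cref{prop:fellow travel} the $\PSL(2,\RR)$-distance from $\gamma^1(t)$ to a fixed leaf can grow by at most a factor of $L_0 e^{D_\LL}$ over the duration of that passage. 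Shrinking the initial tolerance to $\theta'_P$ with $L_0 e^{D_\LL}\theta'_P \le \theta_P$ guarantees that $\gamma^1(t)$ stays within $\theta_P$ of the same leaf for \emph{all} $t$ with $\gamma(t)\in P$, and hence stays at least $\theta_P$ from every other extended leaf throughout. Your remark that the non-exceptional hypothesis ``plays no essential role'' is a symptom of this gap: it is precisely what makes $\gamma\cap P$ a genuine finite arc over which the fellow-traveling estimate is applied.
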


\begin{proof}
If any two pre-images of geodesics in $S_h$ intersect in $T^1(S_h)$,
they are the same geodesic.  Any innermost non-rectangular
complementary polygon is compact and there are finitely many innermost
non-rectangular polygons $P_i$.  Hence, for each $P_i$ there is a
constant $\theta_{P_i}$ such that the $\theta_{P_i}$-neighborhoods of
all of the segments of the leaves in the polygon are disjoint in
$T^1(S_h)$. By \Cref{prop:innermost polygon diameter bound}, the polygon has bounded diameter, and hence by 
\Cref{prop:fellow travel}, there is a constant $\theta'_{P_i}$ such that
if a point on $\gamma$ is within distance $\theta'_{P_i}$ of one of
the leaves in the polygon, then it is within distance $\theta_{P_i}$
of that leaf for all $t$ for which $\gamma(t)$ is in the polygon, and
so is distance at least $\theta_{P_i}$ from all of the other leaves in
the polygon.  By choosing $\theta_P$ to be the minimum of
$\theta_{P_i}$ we conclude the proof.
\end{proof}

\subsection{Rectangles}\label{sec:outermost}

In this section, we analyze rectangles and define their measures.  Let $R$ be a
rectangle with opposite sides contained in leaves $\ell_1$ and
$\ell_2$ of one of the invariant laminations.  We say $R$ is
\emph{$(\ell_1, \ell_2)$-maximal} if it is not contained in any larger
rectangle with sides in $\ell_1$ and $\ell_2$.  We will show that if
two leaves $\ell_1$ and $\ell_2$ have a common leaf of intersection,
then they bound a unique $(\ell_1, \ell_2)$-maximal rectangle, with
upper and lower bounds on its measure.

\medskip
Note that a rectangle is non-innermost if its interior intersects other leaves of the laminations.

\medskip
Given a side of a rectangle in $\widetilde{S}_h$, we will distinguish between its hyperbolic and its Cannon--Thurston pseudometric length.  We will call the pseudometric length the \emph{measure} of the side, as it is defined in terms of the
measured laminations.  Note that every leaf (of the complementary
lamination) that crosses the interior of a side of a rectangle also
crosses the interior of the opposite side of that rectangle.  It
follows that opposite sides of a rectangle have equal measures, even
though they may have different hyperbolic lengths.  If the interior of
a side does not cross any leaves of the complementary lamination then
it has zero measure.  A rectangle is a \emph{square} if its sides have
equal measures.  We define the \emph{measure} of a rectangle to be the
product of the measures of two adjacent sides.  
By the definition, an innermost rectangle has measure zero.

\medskip
We now fix some notation to refer to the side measures of a
rectangle.  Let $\alpha^+$ be a side of the rectangle in $\Lambda_+$,
and let $\alpha^-$ be a side of the rectangle in $\Lambda_-$.  Define
$dx(R) = \int_{\alpha^- \cap R} \ dx$ and
$dy(R) = \int_{\alpha^+ \cap R} \ dy$.  By the discussion above, these
quantities do not depend on the choice of side.
We define the
measure of $R$ to be $dx(R) dy(R)$.  We say a non-innermost rectangle
$R$ has \emph{positive measure} if $dx(R) dy(R) > 0$, and this will be
the case if and only if its interior intersects leaves of both
invariant laminations.

\medskip
We specify below some notation for rectangles with positive measure, using the conventions in \Cref{fig:opposite quadrants}.

\medskip
Suppose that $R$ is a rectangle with positive measure.  The rectangle
has two sides contained in leaves of $\Lambda_+$, which we shall label
$\alpha^+$ and $\beta^+$.  Similarly, the rectangle has two sides
contained in leaves of $\Lambda_-$, which we shall label $\alpha^-$
and $\beta^-$.

\begin{figure}[h]
\begin{center}
\begin{tikzpicture}[scale=0.75]

\tikzstyle{point}=[circle, draw, fill=black, inner sep=1pt]

\def\boundary{(0, 0) circle (4)}

\def\redone{(270:11.70) circle (11)}
\def\redtwo{(90:11.70) circle (11)}
\def\greenone{(180:11.70) circle (11)}
\def\greentwo{(0:11.70) circle (11)}

\begin{scope}
    \clip \boundary;
    \clip \greenone;
    \draw [color=white, fill=black!20] \redone;
\end{scope}

\draw (-2.25, -2) node {$U$};

\begin{scope}
    \clip \boundary;
    \clip \greentwo;
    \draw [color=white, fill=black!20] \redtwo;
\end{scope}

\draw (1.5, 2) node {$V$};

\begin{scope}
\clip \boundary;
\draw [color=red] \redone;
\draw [color=red] \redtwo;
\draw [color=ForestGreen] \greenone;
\draw [color=ForestGreen] \greentwo;
\end{scope}

\draw (-1.5, -4.5) node [color=ForestGreen] {$\alpha^+$};
\draw (1.5, -4.5) node [color=ForestGreen] {$\beta^+$};
\draw (4.5, -1.5) node [color=red] {$\alpha^-$};
\draw (4.5, 1.5) node [color=red] {$\beta^-$};

\draw (0, 0) node {$R$};

\draw \boundary;

\end{tikzpicture}
\end{center}
\caption{Opposite quadrants.} \label{fig:opposite quadrants}
\end{figure}
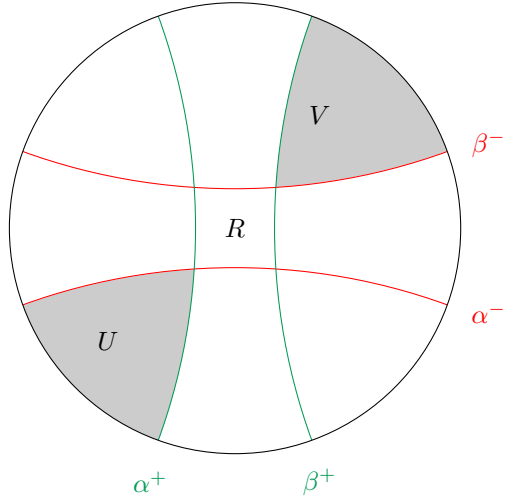

\medskip
Since all sides of $R$ have positive measure, the four leaves
containing the sides of $R$ have distinct endpoints at infinity (no
two are asymptotic).  They divide $\widetilde S_h$ into nine
complementary regions, of which only $R$ is compact.  We call a
(non-compact) complementary region in $\widetilde S_h \setminus R$ a
\emph{quadrant} if it meets exactly one corner of $R$.  We call a pair
of quadrants \emph{opposite}, if they meet opposite corners of $R$.
In \Cref{fig:opposite quadrants}, the regions $U$ and $V$ form a
pair of opposite quadrants.

\medskip

We define the \emph{optimal height} $z = z(R)$ of a positive measure rectangle $R$ to be the value at which the sides
of the rectangle $F_z(R)$ have equal measure, that is $F_z(R)$ is a
square in $F_z( \widetilde{S}_h)$.  So if the sides have measures
$dx(R)$ and $dy(R)$, the optimal height is
$\frac{1}{2} \log_k (dy(R) / dx(R))$, where $k$ is the stretch factor
of the pseudo-Anosov $\pa$, and the measure of each side at the
optimal height is $\sqrt{dx(R)dy(R)}$. In particular, the optimal
height for a square is at $z = 0$.

\medskip
Suppose $\ell_1$ and $\ell_2$ are distinct leaves in $\Lambda_-$. 
We say that a leaf $\ell_+ \in \Lambda_+$ is a \emph{common positive leaf} for $\ell_1$ and
$\ell_2$ if 
\begin{itemize}
    \item $\ell_+$ intersects both $\ell_1$ and $\ell_2$, and
    \item the arc of $\ell_+$ between $\ell_1$ and $\ell_2$ has positive measure.
\end{itemize}

This is illustrated in \Cref{fig:common leaf}.  Similarly, given distinct leaves $\ell_1$
and $\ell_2$ of $\Lambda_+$, we may define a leaf of $\Lambda_-$ to be common positive leaf for $\ell_1$
and $\ell_2$ in the same way. 

\medskip
Since invariant laminations do not contain isolated leaves, by \Cref{prop:angle bound}, the set of common positive leaves is a closed set. 

\medskip
The set of common positive leaves can be empty; for example, if no leaf of the complementary lamination intersects both $\ell_1$ and  $\ell_2$, or if $\ell_1$ and $\ell_2 $ are boundary leaves of an ideal complementary region in which case all complementary arcs between $\ell_1$ and $\ell_2$ have measure zero.

\medskip
We say that two leaves $\ell_1$ and $\ell_2$ of the same lamination
\emph{bound} a rectangle, if the leaves contain opposite sides of
a rectangle with positive measure. In particular, the set of common positive leaves is non-empty. Identifying $\ell_1$ with $\mathbb{R}$, we
conclude that the intersection points with $\ell_1$ of common positive
leaves is a closed bounded set. Since invariant laminations have no
isolated leaves, it follows that this closed bounded set contains no
isolated points. We call the common positive leaves that give the
extrema of this set the \emph{outermost} common positive leaves.  It
also follows that the arc of $\ell_1$ (similarly of $\ell_2$) between
the outermost common positive leaves has positive measure.  The arcs
of the outermost common positive leaves together with the arcs of
$\ell_1$ and $\ell_2$ that they determine, are the sides of an
\emph{$(\ell_1, \ell_2)$-maximal} rectangle, that is, the rectangle is
not contained in a larger rectangle with sides in $\ell_1$ and
$\ell_2$.

\medskip
Suppose that $R$ is an $(\ell_1, \ell_2)$-maximal rectangle. Then
the two sides of $R$ not contained in $\ell_1$ or $\ell_2$, being segments of the outermost common leaves, are contained in boundary leaves of the other lamination. Maximality of $R$ implies that these sides contain sides of a non-rectangular polygon.  The sides of $R$ in $\ell_1$ and
$\ell_2$ need not contain sides of an innermost non-rectangular
polygon, and so $R$ may be contained in a larger rectangle, which at
least one of $\ell_1$ and $\ell_2$ intersect in its interior.

\begin{figure}[h]
\begin{center}
\begin{tikzpicture}

\tikzstyle{point}=[circle, draw, fill=black, inner sep=1pt]

\begin{scope}[rotate=270]

\draw [color=red] (0, 0) -- (0, 4) node [label=right:$\ell_1$] {};
\draw [color=red] (2, 0) -- (2, 4) node [label=right:$\ell_2$] {};

\draw [color=red] (0.5, 0) -- (0.5, 4);
\draw [color=red] (1.5, 0) -- (1.5, 4);

\draw [color=ForestGreen] (-1, 2) -- (3, 2) node [label=below:$\ell_+$] {};

\draw [color=ForestGreen] (-1, 1) -- (3, 1);
\draw [color=ForestGreen] (-1, 3) -- (3, 3);

\draw [very thick, color=ForestGreen] (0, 2) -- (2, 2) node [midway, label=right:$\alpha$]
{};

\draw [very thick, color=red] (0, 1) -- (0, 3) node [midway, label=above right:$I_1$] {};
\draw [very thick, color=red] (2, 1) -- (2, 3) node [midway, label=below right:$I_2$] {};

\end{scope}

\end{tikzpicture}
\end{center}
\caption{Leaves of $\Lambda_-$ intersecting a common leaf of $\Lambda_+$.} \label{fig:common leaf}
\end{figure}
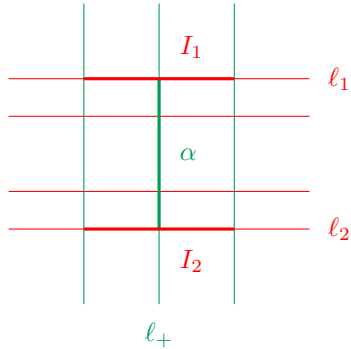

\medskip
We now show that there is a lower bound on the measure of an
$(\ell_1, \ell_2)$ maximal rectangle.

\begin{proposition}\label{prop:min area}
Suppose that $(S_h, \Lambda)$ is a hyperbolic metric on $S$ together
with a suited pair of measured laminations.  Then there is a
constant $A_\LL > 0$, such that any two leaves $\ell_1$ and $\ell_2$
of $\Lambda_+$ that have a common positive leaf in $\Lambda_-$ bound
an $(\ell_1, \ell_2)$-maximal rectangle of measure at least $A_\LL$.
Similarly, any two leaves $\ell_1$ and $\ell_2$ of $\Lambda_-$ that
have a common positive leaf in $\Lambda_+$ bound a maximal rectangle
of measure at least $A_\LL$.
\end{proposition}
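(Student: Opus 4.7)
The plan is to argue by contradiction via a compactness argument, exploiting cocompactness of the $\pi_1(S)$-action on $\widetilde{S}_h$ together with the finiteness of non-rectangular innermost polygons guaranteed by the suited hypothesis (\Cref{prop:innermost polygon}).

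Suppose for contradiction that no such $A_\LL$ exists. Then there is a sequence $R_n$ of $(\ell_1^n, \ell_2^n)$-maximal rectangles with measures $dx(R_n) \cdot dy(R_n) \to 0$; breaking symmetry, I take $\ell_i^n \in \Lambda_+$ and extract a subsequence along which $dx(R_n) \to 0$. Each $R_n$ has its two $\Lambda_-$-sides contained in outermost common leaves $m_1^n, m_2^n$, which by the discussion preceding the proposition contain sides of non-rectangular innermost polygons $P_n^1, P_n^2$. Using \Cref{prop:innermost polygon} and cocompactness, after $\pi_1(S)$-translation and extraction of subsequences I normalize so that $P_n^i = P^i$ and $m_i^n = m_i$ for fixed polygons and leaves in $\widetilde{S}_h$, and further so that $\ell_i^n \to \ell_i \in \Lambda_+$ by the closedness of $\Lambda_+$.

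Since $dx(R_n)$ equals the $dx$-measure of the arc of $m_1$ between $\ell_1^n$ and $\ell_2^n$, and transverse measures of pseudo-Anosov laminations are non-atomic, the arc of $m_1$ between the limit leaves $\ell_1$ and $\ell_2$ must have $dx$-measure zero. Combined with positivity for each $R_n$, this forces $\ell_1$ and $\ell_2$ to be adjacent boundary leaves of the ideal complementary region $C$ of $\Lambda_+$ housing $P^1$, so that the limiting arc lies entirely inside $C$. I then exploit maximality: because $P^1$ is non-rectangular, it has additional sides in $\Lambda_-$ giving rise to leaves of $\Lambda_-$ that, for $n$ large enough, still intersect both $\ell_1^n$ and $\ell_2^n$ with positive $dx$-arc, lying strictly beyond $m_1$ on the far side from $R_n$. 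This contradicts the outermost property of $m_1$ for $R_n$, completing the argument; the case $dy(R_n) \to 0$ is handled symmetrically by switching the roles of the two laminations.

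The main technical obstacle is making the last step rigorous — exhibiting the ``further-out'' common positive leaf arising from the other sides of $P^1$, and ruling out degenerate subcases in which some $\ell_i^n$ already coincides with a boundary leaf of $C$ or the outermost leaves $m_1, m_2$ collapse onto asymptotically coincident configurations in the limit. The essential technical input here is \Cref{prop:complementary regions don't share}, stating that no ideal complementary region of $\Lambda_+$ shares an ideal vertex with one of $\Lambda_-$; this forbids the pathological cusp configurations that could otherwise foil the contradiction and guarantees that the extra sides of $P^1$ produce genuine violations of maximality in the limit.
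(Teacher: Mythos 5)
Your overall strategy — compactness/contradiction using closedness of the laminations, non-atomicity of the transverse measure, and the structure of the innermost polygons — is the same as the paper's, and your step showing that $dx(R_n)\to 0$ forces the limiting arc of $m_1$ to lie inside a single complementary region $C$ of $\Lambda_+$, so that $\ell_1$ and $\ell_2$ are adjacent boundary leaves of $C$ meeting at an ideal vertex $v$, is correct and a genuine observation. However, the final contradiction step as you describe it is incorrect, and this is a real gap, not just a detail to be tidied.

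The problem is with the claim that the \emph{other} $\Lambda_-$-sides of $P^1$ (those other than the one on $m_1$) give leaves that intersect both $\ell_1^n$ and $\ell_2^n$ for $n$ large. They do not. Write $P^1 = C\cap D$ with $C$ the complementary region of $\Lambda_+$ and $D$ the corresponding complementary region of $\Lambda_-$; each $\Lambda_-$-side of $P^1$ lies on a boundary leaf of $D$ and separates exactly one cusp of $C$ (at one ideal vertex of $C$) from $P^1$. The leaf $m_1$ is precisely the one separating the cusp at the shared ideal vertex $v$ of $\ell_1$ and $\ell_2$, and it is the \emph{unique} boundary leaf of $D$ that crosses $C$ from $\ell_1$ to $\ell_2$. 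Every other $\Lambda_-$-side of $P^1$ crosses $C$ between a \emph{different} adjacent pair of boundary leaves of $C$, so each one meets at most one of $\ell_1$, $\ell_2$, and (by continuity) at most one of $\ell_1^n$, $\ell_2^n$ for $n$ large. So the ``further-out'' common positive leaves that your argument needs simply are not the extra sides of $P^1$, and \Cref{prop:complementary regions don't share} does not rescue this particular mechanism.

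A maximality-based contradiction can in fact be salvaged, but the correct source of further-out common positive leaves is not the other sides of $P^1$; it is the $\Lambda_-$-leaves accumulating onto $m_1$ from the cusp side. Minimality of $\Lambda_-$ guarantees that $m_1$ is not isolated, so there are $\Lambda_-$-leaves $m'$ in the cusp of $C$ at $v$ arbitrarily close to $m_1$; \Cref{prop:disjoint leaves} and \Cref{prop:complementary regions don't share} then ensure these $m'$ cross both $\ell_1$ and $\ell_2$ (rather than escaping to the cusp vertex), and hence cross $\ell_1^n$ and $\ell_2^n$ with a positive-$dx$ arc for $n$ large — contradicting outermostness of $m_1$. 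This would be a valid repair, but it is a different argument from what you wrote and also different from what the paper does: the paper instead keeps track of \emph{both} end polygons $P$ and $P'$, shows that in the degenerate limit a single leaf $\ell$ becomes a common boundary leaf of both, and then derives a contradiction either from $\ell$ being isolated or from two non-rectangular innermost polygons lying in one complementary region, violating the suited hypothesis. Finally, your normalization step — simultaneously fixing $P^1$, $P^2$, $m_1$, and $m_2$ by a single $\pi_1(S)$-translation — is also unjustified as written: one translation can fix $P^1$, but keeping $P^2$ bounded then requires a separate argument (roughly, a diameter bound on the rectangles), which should be addressed.
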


\begin{proof}
Suppose that there is a sequence of pairs of leaves
$\ell_n, \ell'_n$ containing opposite sides $\alpha_n$ and $\alpha'_n$
of an $(\ell_n, \ell'_n)$-maximal rectangle $R_n$, so that the measure
of the $(\ell_n, \ell'_n)$-maximal rectangle $R_n$ tends to zero.  Let
$\beta_n$ and $\beta'_n$ be the other sides of the
$(\ell_n, \ell'_n)$-maximal rectangle.  As the rectangle is
$(\ell_n, \ell'_n)$-maximal, each side $\beta_n$ (respectively 
$\beta'_n$) contains a side of an innermost non-rectangular polygon $P_n$ (respectively $P'_n$) in
$\widetilde S_h \setminus (\Lambda_+ \cup \Lambda_-)$, whose interior is disjoint from $R_n$.

\medskip
As there are only finitely non-rectangular innermost polygons in
$S_h$, we may pass to a subsequence where the innermost polygons at
each end do not change. We denote these non-rectangular polygons $P$ and $P'$. By compactness of $S_h$, we may pass to a
subsequence of rectangles which converge to a (possibly degenerate) rectangle $R$
of measure zero.  A measure zero rectangle is either an innermost
rectangle, or a degenerate rectangle given by a subinterval of a leaf, or a point.  
In all cases, at least one of the two leaves $\ell$ and $\ell'$ that arise as limits of $\ell_n$ and $\ell'_n$ respectively, is a boundary leaf of both $P$ and $P'$. Breaking symmetry, suppose $\ell$ is a boundary leaf of both $P$ and $P'$. Orienting $\ell$, suppose that $P$ and $P'$ lie on opposite sides of $\ell$. Then $\ell$ cannot be a limit of other leaves from either side and is hence isolated, a contradiction. Suppose then that $P$ and $P'$ lie on the same side of $\ell$. Then $P$ and $P'$ are contained in a single ideal complementary region of the lamination containing $\ell$, a contradiction to the fact that the pair of laminations is suited. 
\end{proof}

We finally record one more useful fact that there is
an upper bound on the measure of any rectangle.

\begin{proposition}\label{prop:square upper bound}
Suppose that $(S_h, \Lambda)$ is a hyperbolic metric on $S$ together
with a pair of suited measured laminations with bounded geometry.
Then there is a constant $A_{\max}$ such that any rectangle $R$ with
sides in the invariant laminations has measure at most $A_{\max}$.
\end{proposition}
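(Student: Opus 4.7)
The plan is to identify the measure $dx(R) \cdot dy(R)$ of a rectangle $R$ with the integral of the product transverse measure over $R$, and then use that the interior of $R$ embeds in the compact surface $S_h$ to bound this by a global finite quantity, namely the geometric intersection number of the two laminations.

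First, I would observe that, by convention established earlier in \Cref{sec:polygons}, we consider only polygons (and in particular rectangles) in $\widetilde{S}_h$ whose interiors embed in $S_h$ under the covering projection. For such a rectangle $R$, a Fubini-type argument using the transverse structure of the two laminations gives
\[
dx(R) \cdot dy(R) = \int_R d\mu_+ \otimes d\mu_-,
\]
where $d\mu_+ \otimes d\mu_-$ denotes the (locally defined) product of the transverse measures on the complement of $\Lambda_+ \cup \Lambda_-$. Indeed, leaves of $\Lambda_+$ crossing $R$ foliate $R$ into arcs of constant $dy$-measure $dy(R)$, with total transverse $dx$-measure $dx(R)$, so integrating gives the product.

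Next, I would push this integral down to $S_h$. The product measure $d\mu_+ \otimes d\mu_-$ is $\pi_1(S)$-invariant, hence descends to a Borel measure on $S_h$ whose total mass is the geometric intersection number $\iota(\Lambda_+,\Lambda_-)$ (equivalently, the area of the associated singular flat metric $S_q$, up to normalization). Since $S$ is compact and $\Lambda_\pm$ have finite transverse measures on compact transversals, $\iota(\Lambda_+,\Lambda_-)$ is finite. Because the interior of $R$ embeds in $S_h$, the integral over $R$ is bounded above by the integral over all of $S_h$:
\[
dx(R) \cdot dy(R) \;=\; \int_R d\mu_+ \otimes d\mu_- \;\le\; \int_{S_h} d\mu_+ \otimes d\mu_- \;=\; \iota(\Lambda_+, \Lambda_-).
\]
Setting $A_{\max} = \iota(\Lambda_+,\Lambda_-)$ then gives the bound.

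The main subtlety, rather than an obstacle, is justifying the Fubini identity in the first step: since the laminations have empty interior and the product ``measure'' is really constructed from transverse measures, one needs to verify the identity either via the flat coordinates on $R$ (where it becomes ordinary two-dimensional Lebesgue integration after collapsing complementary regions) or by a direct approximation argument using the measured foliation structure. The bounded geometry hypothesis is not strictly needed for the bound itself, but is consistent with the standing assumptions in this part of the paper and ensures that the intersection number $\iota(\Lambda_+, \Lambda_-)$ behaves as expected.
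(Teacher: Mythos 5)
Your argument is correct, but it takes a genuinely different route from the paper. The paper's proof flows $R$ to its optimal height $z$ (where it becomes a square of the same measure), then invokes the bounded-geometry hypothesis to get a quasi-isometry from $\widetilde{S}_h \times \{z\}$ to $\widetilde{S}_h$ with constants uniform over $z$, and finally uses that innermost non-rectangular polygons are cobounded and cannot fit inside a square to bound the square's diameter, hence its side length, hence its measure. Your proof instead identifies $dx(R)\cdot dy(R)$ with the mass of the product transverse measure on $R$ and uses the convention from \Cref{sec:polygons} that the interior of $R$ embeds in $S_h$ to dominate this by the total product measure on the surface, which is the finite intersection number $i(\Lambda_+,\Lambda_-)$ (equivalently the area of the associated flat metric). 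Your approach is more elementary and, as you note, does not actually use bounded geometry; it only requires that $R$'s interior embeds in $S_h$ and that the total intersection number is finite, both automatic here. The paper's approach is less direct but keeps the argument entirely within the Cannon--Thurston metric and the coboundedness machinery already set up, which fits the surrounding development. One point worth making explicit in your write-up is that the collapsing map $S_h \to S_q$ is injective on $\Lambda_+ \cap \Lambda_-$ (it only collapses complementary regions), so the product measure supported on $\Lambda_+ \cap \Lambda_- \cap R$ is not overcounted after the quotient; this makes the inequality $\int_R d\mu_+\otimes d\mu_- \le i(\Lambda_+,\Lambda_-)$ airtight.
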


\begin{proof}
The vertical flow is measure preserving for rectangles.  Let $R_z$ be
the image of $R$ under the vertical flow at optimal height $z$, at
which height $R_z$ is a square, in the intrinsic Cannon--Thurston
metric on $\widetilde{S}_h \times \{ z \}$.

\medskip

As the pair of laminations has bounded geometry, the Teichm\"uller
geodesic determined by the pair of laminations is contained in a
compact subset of moduli space.  In particular, there are constants
$Q$ and $c$ such that for all $z$, the intrinsic Cannon--Thurston
metric on $\widetilde{S}_h \times \{ z \}$ is $(Q, c)$-quasi-isometric
to $\ws_h$.

\medskip

Every point in $\widetilde{S}_h$ is a bounded distance from an
innermost non-rectangular region, hence every point in
$\ws_h \times \{ z\}$ is a bounded distance from a non-rectangular
region.  As an innermost non-rectangular region cannot be contained in
a square, there is an upper bound on the diameter of any square, and
hence the measure of the square.
\end{proof}

\subsection{Corner segments and straight segments}\label{section:corners}

In this section we define some notation that will be useful for
describing specific subsegments of geodesics.

\begin{definition}
\label{def:corner segment}
Let $\gamma$ be a geodesic in $(S_h, \Lambda)$.  We say a compact
interval $I$ is a \emph{corner segment} if the segment $\gamma(I)$
\begin{itemize}
    \item is properly embedded in an innermost rectangle, and
    \item the endpoints of $\gamma(I)$ lies in different laminations.
\end{itemize}
We include the degenerate case in which $I$ is
a point and $\gamma(I)$ a vertex of an innermost rectangle.
\end{definition}

Corner segments can occur when a geodesic enters or exits an ideal
complementary region through one of its cusps.

\begin{definition}\label{def:straight}
Let $\gamma$ be a geodesic in $(S_h, \Lambda)$.  We say a compact
interval $I$ is a \emph{straight segment}, if it contains exactly two
corner segments, each one adjacent to an endpoint of $I$.
\end{definition}

We now show that for a suited pair of laminations there are exactly
two types of straight segments.  Recall that if $(S_h, \LL)$ is
suited, then every non-rectangular polygon $P$ is the intersection $P = R_+ \cap R_-$, where $R_+$ and $R_-$ are ideal complementary regions to $\Lambda_+$ and $\Lambda_-$ respectively, and each region of
$R_+ \setminus P$ and $R_- \setminus P$ is a cusp.

\begin{proposition}\label{prop:straight}
Let $(S_h, \LL)$ be a hyperbolic metric on $S$ together with a suited
pair of measured laminations, and let
$\gamma$ be a non-exceptional geodesic in $S_h$.  Then
every corner segment is contained in a straight segment, and
furthermore, every straight segment consists of either the
intersection of $\gamma$ with a single ideal complementary region, or the
intersection of $\gamma$ with the union of two ideal complementary regions
intersecting in a non-rectangular polygon.
\end{proposition}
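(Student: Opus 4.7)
The plan is to start from a given corner segment $\gamma(I_0)$ in the innermost rectangle $Q_0 = R_+^0 \cap R_-^0$, follow $\gamma$ forward until it reaches another corner, and simultaneously classify the resulting straight segment; the backward extension will be symmetric. The underlying geometric inputs are that each ideal complementary region is an ideal polygon of finite hyperbolic area (so $\gamma$ must exit it in finite time) and that each non-rectangular polygon $P$ is convex as the intersection of two convex ideal polygons, so a geodesic meets it in a single connected subarc.

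After orienting $I_0$ so that $\gamma$ enters $R_+^0$ through a $\Lambda_+$-side of $Q_0$ and exits $R_-^0$ through a $\Lambda_-$-side (the opposite orientation is treated symmetrically), $\gamma$ lies in the open region $R_+^0$ just after $I_0$. Following $\gamma$ forward, it exits $R_+^0$ at some first time $b_0$ across a $\Lambda_+$-boundary leaf. By \Cref{prop:innermost polygon}, the cell of $\widetilde{S}_h \setminus (\Lambda_+ \cup \Lambda_-)$ that $\gamma$ occupies just before $b_0$ is either a rectangle $Q_{\mathrm{exit}} \subseteq R_+^0$ or the unique non-rectangular polygon $P_{R_+^0} = R_+^0 \cap R_-^{\mathrm{main}}$, where $R_-^{\mathrm{main}}$ is the distinguished ideal complementary region of $\Lambda_-$ paired to $R_+^0$ by the suited hypothesis.

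If the exit cell is a rectangle $Q_{\mathrm{exit}}$, the key observation is that since $\gamma$ stayed in $R_+^0$ throughout the interval up to $b_0$, its entry into $Q_{\mathrm{exit}}$ must have been through a $\Lambda_-$-side, as any $\Lambda_+$-entry would have taken it out of $R_+^0$ earlier. Thus $\gamma$ traverses $Q_{\mathrm{exit}}$ between adjacent sides, giving a corner at $b_0$, and the straight segment is exactly $\gamma \cap R_+^0$, which is case (i). If instead the exit cell is $P_{R_+^0}$, then $\gamma$ enters $R_-^{\mathrm{main}}$ at its first entry into $P_{R_+^0}$ through a $\Lambda_-$-side, and leaves $R_+^0$ (while remaining in $R_-^{\mathrm{main}}$) at $b_0$. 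I then repeat the argument inside $R_-^{\mathrm{main}}$: it must be exited in finite further time, and by the convexity of $P_{R_+^0}$ the exit cannot be through $P_{R_-^{\mathrm{main}}} = P_{R_+^0}$, so the exit cell is a rectangle $Q_{\mathrm{exit}}'$. The analogous entry-side reasoning shows $\gamma$ enters $Q_{\mathrm{exit}}'$ through a $\Lambda_+$-side and exits through a $\Lambda_-$-side, giving the terminal corner. The straight segment is then $\gamma \cap (R_+^0 \cup R_-^{\mathrm{main}})$, which is case (ii) with $P = R_+^0 \cap R_-^{\mathrm{main}}$.

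The main subtlety to verify will be that no intermediate corner appears strictly between $I_0$ and the terminal corner I have located. This will follow from the observation that a corner traversal of an innermost rectangle requires $\gamma$ to cross one leaf of each lamination within that rectangle, whereas while $\gamma$ is confined to a single ideal complementary region it crosses leaves of only the opposite lamination; hence every intermediate rectangle traversal must use opposite sides (a straight crossing, not a corner), and visits to non-rectangular polygons are never corners by definition. The backward extension from $I_0$ into $R_-^0$ proceeds by the symmetric argument, so $I_0$ is indeed contained in a straight segment of one of the two listed forms.
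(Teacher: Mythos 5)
Your proof is correct and reaches the same conclusion as the paper, but it is organized differently. The paper classifies the intersection interval $\gamma \cap R$ for an arbitrary ideal complementary region $R$ by the location of its endpoints (both in cusps, both in the non-rectangular polygon $P_R$, or one of each), while you start from a corner segment $\gamma(I_0)$, orient forward into $R_+^0$, and classify by whether the last cell of $R_+^0$ that $\gamma$ visits is a cusp rectangle or $P_{R_+^0}$. These are equivalent dichotomies --- ``exit cell a rectangle'' matches the paper's ``terminal endpoint in a cusp,'' and ``exit cell $P_{R_+^0}$'' matches ``terminal endpoint in $P_R$.'' Your third case (both endpoints in $P_R$) never arises precisely because you anchor at a corner segment, which sits in a rectangle; the paper's Case~2 exists only because it does not anchor, and it just observes that the same straight segment is recovered from the other lamination's complementary region. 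A genuine advantage of your write-up is that you make explicit the ``no intermediate corner'' check: while $\gamma$ is confined to a single $\Lambda_\pm$-complementary region, it crosses only leaves of the opposite lamination, so any rectangle traversed in the interior is a straight crossing. The paper leaves that verification implicit in the phrase ``and hence in corner segments.''

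One small clarification worth making: your closing remark about the backward extension reads as though it is needed to finish the argument. It is not --- the forward construction already exhibits a straight segment containing $\gamma(I_0)$, with $\gamma(I_0)$ adjacent to its initial endpoint, which is exactly what Definition~\ref{def:straight} requires. The backward extension produces a \emph{second} straight segment (with $\gamma(I_0)$ at its terminal end), which is a nice observation about the local structure but redundant for the stated proposition. It would be clearest to phrase the ``opposite orientation'' remark purely as handling the case where $\gamma(I_0)$ meets $\Lambda_-$ at its initial endpoint rather than as a further extension step.
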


\begin{proof}
As $\gamma$ is non-exceptional, the intersection of $\gamma$ with any
ideal complementary region $R$ is a compact subinterval.  
Suppose that $\gamma(I) = \gamma \cap R$ has both endpoints in cusps, i.e. neither
endpoint lies in the non-rectangular polygon.
Then both endpoints lie
in rectangles, and hence in corner segments. 
Thus, $\gamma(I)$ is a
straight segment contained in a single ideal complementary region.

\medskip
Now suppose $\gamma(I) = \gamma \cap R$ has both endpoints in the
non-rectangular polygon $P = R \cap R'$ contained in $R$, where $R'$ is an ideal complementary region of the other lamination. It follows that 
\begin{itemize}
    \item $\gamma(I)$ is contained in $\gamma(I') = \gamma \cap R'$, and
    \item both endpoints of $\gamma(I') $ are in cusps of $R'$.
\end{itemize}
Thus $\gamma(I)$ is again a straight segment contained in a single ideal complementary region. 

\medskip
We may now suppose that exactly one endpoint of $\gamma(I) = \gamma \cap R$ is contained in the non-rectangular polygon $P = R \cap R'$. 
Then $\gamma(I') = \gamma \cap R'$ contains the other endpoint of $\gamma \cap P$ and so $\gamma(I \cup I')$ is a straight segment with endpoints in cusps of $R$ and $R'$.

\end{proof}

Finally, we show that the intersection of $\gamma$ and an innermost polygon is contained in a straight segment.

\begin{proposition}\label{prop:straight dense}
Let $(S_h, \LL)$ be a hyperbolic metric on $S$ together with a suited
pair of measured laminations, and let
$\gamma$ be a non-exceptional geodesic in $S_h$.  Then
every intersection segment $\gamma \cap R$ with an innermost polygon $P$
is contained in a straight segment.  In particular, straight segments
are dense in $\gamma$.
\end{proposition}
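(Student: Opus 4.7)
The plan is to reduce this proposition to \Cref{prop:straight}, which has already been established. The key observation is that every innermost polygon $P$ of the partition of $\widetilde{S}_h$ by $\Lambda_+ \cup \Lambda_-$ is contained in the closure of some ideal complementary region $R$ of $\Lambda_+$: indeed, the interior of $P$ is disjoint from $\Lambda_+$ and is connected, so it lies in a single such $R$. Consequently $\gamma \cap P \subseteq \gamma \cap R$.

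Given this containment, I would apply the case analysis in the proof of \Cref{prop:straight} directly to the compact subinterval $\gamma \cap R$. That analysis shows, depending on whether the endpoints of $\gamma \cap R$ lie in cusps of $R$ or on the $\Lambda_+$-sides of the unique non-rectangular polygon $P_R \subset R$, that $\gamma \cap R$ is either itself a straight segment (when both endpoints lie in cusps), or is contained in a larger straight segment of the form $\gamma \cap (R \cup R')$, where $R'$ is the ideal complementary region of $\Lambda_-$ with $R \cap R' = P_R$ (in the remaining cases). In every case $\gamma \cap R$ lies in a straight segment, and so does $\gamma \cap P \subseteq \gamma \cap R$, proving the first claim.

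For the density assertion, I would show that the set of points of $\gamma$ lying in the interior of some innermost polygon is dense in $\gamma$; each such point belongs to a straight segment by the first claim. This reduces to checking that $\gamma \cap (\Lambda_+ \cup \Lambda_-)$ has empty interior in $\gamma$. Since $\gamma$ is non-exceptional, its two limit points at infinity have distinct images under the Cannon--Thurston map; as this map identifies the two endpoints of every leaf of $\Lambda_\pm$, the geodesic $\gamma$ is not itself a leaf of either invariant lamination. Any overlap of $\gamma$ with a leaf on an arc of positive length would force $\gamma$ to coincide with that leaf, a contradiction. Hence $\gamma \cap \Lambda_\pm$ is a closed nowhere dense subset of $\gamma$, and its complement is the desired dense set.

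I do not foresee a substantive obstacle: the proposition is essentially a direct corollary of \Cref{prop:straight}, and the only care needed is the initial observation that every innermost polygon (rectangular or not) sits inside a single ideal complementary region of $\Lambda_+$.
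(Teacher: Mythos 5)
Your reduction has a gap in exactly the sub-case the paper isolates and treats separately: a rectangular innermost polygon $P$ that $\gamma$ crosses on opposite sides. Let $R$ be the ideal complementary region of $\Lambda_+$ containing the interior of $P$; the two $\Lambda_+$-sides of $P$ then lie on adjacent boundary leaves of $R$, so $P$ sits inside a cusp of $R$. Suppose $\gamma$ enters $R$ through one of those $\Lambda_+$-sides and leaves $R$ through the other, without meeting $\Lambda_-$ in between. Then $\gamma \cap R = \gamma \cap P$. This interval contains no corner segment at all: its interior lies in the interior of $P$, disjoint from both laminations, and its two endpoints lie in the same lamination. So $\gamma \cap R$ is not a straight segment, and the case analysis you invoke --- ``both endpoints in cusps implies $\gamma \cap R$ is itself a straight segment'' --- does not produce one. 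The reduction to the $\Lambda_+$-region simply returns $\gamma \cap P$ unchanged.

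The paper's proof is organized precisely to handle this case: when $\gamma$ crosses opposite sides of a rectangular $P$, it passes to the ideal complementary region of the other lamination. If the endpoints of $\gamma \cap P$ lie on $\Lambda_+$-leaves, as above, those leaves are interior to the $\Lambda_-$-complementary region $R'$, so $\gamma \cap R'$ is a strictly larger interval than $\gamma \cap P$; the paper then shows $\gamma \cap R'$ terminates in a corner segment in a cusp (the last innermost rectangle $\gamma$ traverses before exiting $R'$) and concludes via \Cref{prop:straight}. Repairing your argument requires allowing $R$ to be a complementary region of either lamination and choosing the one into whose cusps $\gamma$ actually runs, which is the content of the paper's rectangle case and cannot be absorbed into the $\Lambda_+$-only reduction. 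Your density argument, by contrast, is correct and matches the paper's in substance.
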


\begin{proof}
Suppose that $P = R_+ \cap R_-$ is an innermost polygon, where $R_+$ and $R_-$ are ideal complementary regions of $\Lambda_+$ and $\Lambda_-$.  

\medskip
Suppose that $P$ is a rectangle.
If $\gamma \cap P$ is a corner
segment, then we are done by \Cref{prop:straight}.
So suppose that $\gamma$ intersects opposite sides of the rectangle $P$.
Breaking symmetry, assume that the endpoints
of $\gamma \cap P$ are contained in $R_+$.  Each side of $P$
in $R_+$ bounds a cusp in $R_-$, with one of the cusps contained in the other. The geodesic $\gamma$ therefore enters the smaller cusp $C$. Let $P'$ be the last innermost rectangle in $C$ that $\gamma \cap C$ intersects. Then $\gamma \cap P'$ is a
corner segment. 
Therefore $\gamma \cap P$ is contained in a segment
$\gamma \cap R_-$ which terminates in a corner segment at one end, and
so is a subset of a straight segment by \Cref{prop:straight}.

\medskip
Now suppose that $P$ is not a rectangle. Each region of $R_+ \setminus P$ and $R_- \setminus P$ is a cusp. If both endpoints of $\gamma \cap P$
lie in the same lamination, say $R_+$, then $\gamma \cap R_-$ has both
endpoints in cusps of $R_-$. Thus, $\gamma \cap P$ is contained in
the straight segment $\gamma \cap R_-$.  If both endpoints of $\gamma
\cap P$ lie in different laminations, then one endpoint of $\gamma
\cap (R_+ \cup R_-)$ lies in a cusp of $R_+ \setminus P$, and the
other endpoint lies in a cusp of $R_- \setminus P$. Again, 
$\gamma \cap P$ is contained in the straight segment $\gamma \cap (R_+
\cup R_-)$.

\medskip
As innermost regions are dense in $S_h$, and $\gamma$ is
non-exceptional, intersections of innermost regions are dense in
$\gamma$.  As every intersection with an innermost region is contained
in a straight segment, straight segments are dense in $\gamma$, as
required.
\end{proof}

\subsection{Bottlenecks}\label{section:bottlenecks}

The main result of this section is that a non-innermost rectangle in
$(\ws_h, \LL)$ creates a \emph{bottleneck} in $\wsr$, which we now define.

\begin{definition}\label{def:bottleneck}
Let $X$ be a geodesic metric space, and let $U$ and $V$ be subsets of
$X$.  A set $R \subset X$ is an $(r, K)$-bottleneck for $U$ and $V$ if
the distance from $U$ to $V$ is at least $r$, and any geodesic from
$U$ to $V$ passes within distance $K$ of $R$.
\end{definition}

We will show that an optimal height rectangle $F_z(R)$ is an
\emph{$(r, K)$-bottleneck} with respect to either pair of opposite
quadrants.  The constants $r$ and $K$ depend on the measure of the
rectangle (as well as various constants depending on the pseudo-Anosov
map $\pa$), and as the measure of the rectangle tends to zero, $r$ tends
to zero and $K$ tends to infinity.

\begin{lemma}\label{lemma:bottleneck}(Rectangles create
bottlenecks.)  Suppose that $(S_h, \Lambda)$ is a hyperbolic metric on
$S$ together with a full pair of measured laminations.  Let $R$
be a rectangle of measure at least $A > 0$ and optimal height $z$.
Then there are constants $r > 0$ and $K \ge 0$ (that depend on $\pa$
and $A$) such that the optimal height rectangle
$F_z(R) = R \times \{ z \}$ is an $(r, K)$-bottleneck for the flow
sets $F(U)$ and $F(V)$ over any pair of opposite quadrants $U$ and $V$
of $R$.
\end{lemma}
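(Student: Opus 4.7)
My plan is to work in coordinates adapted to $R$: identify the $(x,y)$-plane of $\widetilde S_h$ so that $R = [0, dx(R)] \times [0, dy(R)]$, with leaves of $\Lambda_-$ parallel to the $x$-axis and leaves of $\Lambda_+$ parallel to the $y$-axis, and with opposite quadrants $U = \{x < 0,\ y < 0\}$ and $V = \{x > dx(R),\ y > dy(R)\}$. The optimal height is $z = \tfrac{1}{2}\log_k(dy(R)/dx(R))$, at which both sides of $F_z(R)$ have Cannon--Thurston length $s := \sqrt{dx(R)\,dy(R)} \ge \sqrt{A}$. The proof splits into two parts: bounding $d(F(U), F(V)) \ge r$ from below, and showing that any Cannon--Thurston geodesic between $F(U)$ and $F(V)$ passes within distance $K$ of $F_z(R)$.

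For the lower bound, any rectifiable path $\gamma \colon [0,1]\to \wsr$ from $F(U)$ to $F(V)$ must have $x$-variation $\ge dx(R)$ and $y$-variation $\ge dy(R)$, since the opposite quadrants force the $x$- and $y$-coordinates to traverse the full width and height of $R$. From the infinitesimal metric $ds^2 = k^{2z}dx^2 + k^{-2z}dy^2 + (\log k)^2 dz^2$ together with Jensen's inequality applied to the convex functions $k^z$ and $k^{-z}$, the length $L$ of $\gamma$ satisfies
\[ L \ge k^{\bar z}\, dx(R) \quad \text{and} \quad L \ge k^{-\bar z'}\, dy(R), \]
where $\bar z$ and $\bar z'$ are the $|\dot x|$- and $|\dot y|$-weighted averages of $z(t)$ along $\gamma$. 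Since the $z$-variation along $\gamma$ is at most $L/\log k$, we have $|\bar z - \bar z'| \le L/\log k$; multiplying the two inequalities then gives $L^2 e^L \ge dx(R)\, dy(R) \ge A$, which yields $L \ge r = r(A, f) > 0$.

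For the bottleneck, any geodesic $\gamma$ joining $F(U)$ and $F(V)$ must cross $F(R) = R \times \RR$ at some point $(p^*, z^*)$, since $U$ and $V$ are opposite quadrants. The minimum length of a \emph{planar} crossing of $R$ inside the slice at height $z'$ is of order $s\sqrt{2\cosh(2(z'-z)\log k)}$, equal to $\sqrt{2}\, s$ at the optimal height $z$ and growing exponentially in $|z'-z|$. Constructing a test path that crosses at optimal height near the corners $p_U = \alpha^+ \cap \alpha^-$ and $p_V = \beta^+ \cap \beta^-$ (which lie on the closures of $U$ and $V$ respectively) produces an upper bound on $d(F(U), F(V))$ of order $s$; comparing this with the general crossing cost forces $|z^* - z|$ to lie in a bounded interval $[-K_0, K_0]$ with $K_0 = K_0(A, f)$, so $\gamma$ lies within Cannon--Thurston distance $K = K_0 \log k + O(1)$ of $F_z(R)$. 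The main technical obstacle will be this quantitative comparison: paths in $\wsr$ can take vertical excursions and shortcut via ladder horocycles, so the upper bound on $d(F(U), F(V))$ must be established carefully, and the dependence of $K$ on $A$ will be at best logarithmic in $A$ (the optimal path may deviate from optimal height by $O(\log s)$ for large $A$). For the lemma as stated, however, any finite $K = K(A, f)$ suffices.
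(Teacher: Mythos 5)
Your lower bound on $d_{\wsr}(F(U), F(V))$ is correct and is, if anything, a cleaner version of the computation in \Cref{prop:transverse separate}: applying Jensen with the $|\dot x|$- and $|\dot y|$-weighted averages and coupling $\bar z$ to $\bar z'$ via the total $z$-variation gives $L^2 e^L \ge dx(R)\,dy(R) \ge A$ directly, without first invoking the $\sqrt{2\,dx(R)\,dy(R)}$ upper bound on path length, and the resulting $r(A)$ is monotone in $A$ with no hidden appeal to an upper bound on rectangle measure.

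The bottleneck half, however, rests on a claim that is false: a geodesic in $\wsr$ from $F(U)$ to $F(V)$ need not cross $F(R)$. The four leaves $\alpha^{\pm}, \beta^{\pm}$ divide $\ws_h$ into nine regions (see \Cref{fig:opposite quadrants}), and a path from $U$ to the opposite quadrant $V$ can cross the four leaves in an order such as $\alpha^{+}, \beta^{+}, \alpha^{-}, \beta^{-}$, passing through an edge region, one of the other two corner quadrants, and another edge region, without ever entering $R$. Already in $\HH^2$ a geodesic between suitably off-axis points of $U$ and $V$ cuts the corner in this way, and $\wsr$ is only $\delta$-hyperbolic (not CAT(0)), so geodesics are not unique --- even if one geodesic did happen to pass through $F(R)$, others between the same endpoints might not. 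Your crossing point $(p^*, z^*)$ therefore may not exist, and the height-comparison argument never gets started. The paper avoids this by going through $\delta$-hyperbolicity rather than a topological crossing: since the ladders $F(\alpha^{\pm})$, $F(\beta^{\pm})$ are convex, so are $F(U)$ and $F(V)$, and \Cref{prop:coarse intersection} forces any geodesic between them into $N_{2\delta+r}(F(U)) \cap N_{2\delta+r}(F(V))$; \Cref{prop:close to rectangle} and \Cref{prop:quadrant_height} then confine that intersection to a bounded neighborhood of $F_z(R)$, with the essential geometric input being the uniform angle bound $\alpha_\LL$ between leaves of $\Lambda_+$ and $\Lambda_-$ (\Cref{prop:angle bound}, fed into \Cref{prop:intersecting geodesics}). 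Your quantitative height estimate is the right intuition for \emph{why} the bottleneck sits at the optimal height, but it only has traction once you already know the geodesic passes near $F(R)$; that localization is precisely the nontrivial content and does not come for free from ``opposite quadrants.''
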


We start by showing that there is a lower bound on the distance
between the suspension flow sets over opposite quadrants of a
transverse rectangle $R$, in terms of the measure of $R$.

\begin{proposition}\label{prop:transverse separate}
Suppose that $(S_h, \Lambda)$ is a hyperbolic metric on $S$ together
with a full pair of measured laminations.  For any $A > 0$ there is
$r > 0$ such that for any rectangle of measure at least $A$, the
suspension flow sets over opposite quadrants are Cannon--Thurston
distance at least $r$ apart in $\widetilde S_h \times \RR$.
\end{proposition}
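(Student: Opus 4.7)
The plan is to show that any rectifiable path from $F(U)$ to $F(V)$ must spend enough transverse $\Lambda_+$-measure and $\Lambda_-$-measure to force positive Cannon--Thurston length, independent of the shape of the rectangle (only its measure $A = dx(R) dy(R)$ matters). Let the sides of $R$ be $\alpha^\pm,\beta^\pm$, with $U$ meeting the corner $\alpha^+\cap \alpha^-$ and $V$ meeting $\beta^+\cap\beta^-$, and let $\ell_{\alpha^\pm},\ell_{\beta^\pm}$ be the leaves containing these sides. Consider a rectifiable path $\sigma\colon [0,1]\to \widetilde S_h\times \RR$ with $\sigma(0)\in F(U)$, $\sigma(1)\in F(V)$, and write $\sigma(t) = (\sigma_H(t), z(t))$.

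First I would establish the topological/transverse-measure lower bound: the horizontal projection $\sigma_H$ goes from $U$ to $V$, and $U$, $V$ lie in opposite half-planes of $\widetilde S_h\setminus \ell_{\alpha^+}$ (and of $\widetilde S_h\setminus \ell_{\beta^+}$). Define $g_x\colon \widetilde S_h\to [0,dx(R)]$ by integrating characteristic functions of half-planes against the transverse measure $d\mu_{dx}$ on the leaves of $\Lambda_+$ inside the strip between $\ell_{\alpha^+}$ and $\ell_{\beta^+}$, extended by $0$ and $dx(R)$ on either side. As $\sigma_H$ crosses a leaf of $\Lambda_+$, the relevant characteristic function jumps by $\pm 1$, so the total variation of $g_x$ along $\sigma_H$ is bounded by the total transverse $\Lambda_+$-measure of $\sigma_H$. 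Since $g_x(U)=0$ and $g_x(V)=dx(R)$, this $\Lambda_+$-measure is at least $dx(R)$. The symmetric construction with $g_y$ gives $\Lambda_-$-measure at least $dy(R)$.

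Next, the Cannon--Thurston length of $\sigma$ satisfies, using $\sqrt{a^2+b^2+c^2}\ge (a+b+c)/\sqrt 3$,
\[
|\sigma|\;\ge\; \frac{1}{\sqrt 3}\int_0^1\!\Big(k^{z(t)}|dx|\,+\,k^{-z(t)}|dy|\,+\,(\log k)\,|dz|\Big).
\]
Letting $z_-=\inf_t z(t)$ and $z_+=\sup_t z(t)$, the right side is at least
\[
\frac{1}{\sqrt 3}\Big(k^{z_-}dx(R)\,+\,k^{-z_+}dy(R)\,+\,(\log k)(z_+-z_-)\Big).
\]
Now minimize over $z_-\le z_+$: if $z_-\ge 0$ one gets at least $dx(R)/\sqrt 3$; if $z_+\le 0$ at least $dy(R)/\sqrt 3$; otherwise the $dz$-term forces $z_+-z_-$ bounded in terms of the exponential terms, so an elementary calculus argument (e.g.\ setting $z_-=z_+=\tfrac12 \log_k(dy(R)/dx(R))$ as the best balanced-height choice) yields the lower bound $\frac{2}{\sqrt 3}\sqrt{dx(R) dy(R)}\ge \frac{2}{\sqrt 3}\sqrt A$. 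In all cases the bound is a positive function of $A$ and $k$, depending only on $A$.

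The main obstacle is Step~2, the construction of $g_x$ as a genuinely $1$-Lipschitz function with respect to the transverse measure along rectifiable arcs — the argument must deal with the fact that a path can oscillate across the same leaves many times, but since $\sigma_H$ is rectifiable its total $\mu_{dx}$-variation is finite, and the integral-of-characteristic-functions representation of $g_x$ turns leaf-crossings into controlled jumps. Once this Lipschitz bound is in hand, Steps 3 and 4 are routine metric estimates in the Cannon--Thurston pseudometric, and the constant $r$ depends only on $A$ and on $k=k_f$, as required.
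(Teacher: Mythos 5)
Your overall strategy — lower-bound the transverse $dx$- and $dy$-measure that any path from $F(U)$ to $F(V)$ must accumulate, then exploit the exponential scaling of the Cannon--Thurston metric — is the same as the paper's, and the total-variation argument for $g_x, g_y$ is a reasonable way to make the crossing inequality $\int dx \ge dx(R)$ precise (the paper simply asserts it). The one genuine divergence in method is worth noting: the paper first derives an \emph{upper} bound $\sqrt{2ab}$ on the shortest-path length from the diagonal of the optimal-height square, uses it to constrain $z_+ - z_-$, and then tracks only the $dx$- and $dy$-contributions to the lower bound. You instead retain the $(\log k)\,|dz|$-term via the pointwise inequality $\sqrt{u^2+v^2+w^2} \ge (|u|+|v|+|w|)/\sqrt 3$, which eliminates the need for the diagonal upper bound and is a genuine simplification.

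However, your closing minimization is not correct. You minimize $a k^{z_-} + b k^{-z_+} + (\log k)(z_+ - z_-)$ over $z_- \le z_+$ (with $a = dx(R)$, $b = dy(R)$) and assert the answer is $2\sqrt{ab}$, achieved at $z_- = z_+$. But restricting to $z_- = z_+$ confines you to the \emph{boundary} of the feasible region, which yields an upper bound on the minimum, not a lower bound on the integral. The unconstrained critical point solves $a k^{z_-} = b k^{-z_+} = 1$, i.e.\ $z_- = -\log_k a$ and $z_+ = \log_k b$; this is feasible whenever $ab \ge 1$, and there the value is $2 + \log(ab)$, strictly less than $2\sqrt{ab}$ once $ab > 1$. (Concretely, for $a = b = 10$ and $k = e$ your claimed bound is $20/\sqrt 3 \approx 11.5$, whereas a path descending to height $-\log 10$, crossing $\Lambda_+$, climbing to $+\log 10$, and crossing $\Lambda_-$ realises a surrogate value of $2 + \log 100 \approx 6.6$.) The correct global minimum of the surrogate is $\min\{2 + \log(ab),\, 2\sqrt{ab}\}$ according to whether $ab \ge 1$, which is still positive and increasing in $ab$, so your method does prove the proposition once the calculus is repaired — but the explicit $r(A) = \tfrac{2}{\sqrt 3}\sqrt A$ you state is false. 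Relatedly, the two preliminary cases ($z_- \ge 0$, $z_+ \le 0$) give bounds $a/\sqrt 3$ and $b/\sqrt 3$, which depend on $a$ and $b$ separately rather than on $ab \ge A$, and so on their own do not yield an $r$ depending only on $A$; the global minimization makes them unnecessary and you should just do that directly.
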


\begin{proof}
Suppose that $U$ and $V$ are opposite quadrants, as illustrated in
\Cref{fig:opposite quadrants}, so $U$ the region bounded by
$\alpha^+$ and $\alpha^-$, and $V$ the region bounded by $\beta^+$ and
$\beta^-$.  Let the corresponding suspension flow sets over these
regions be $F(U)$ and $F(V)$.  Suppose that the measures of the sides
of the rectangle $R$ are $dx(R) = a> 0$ and $dy(R) = b > 0$.  The
optimal height of $R$ is $z = \tfrac{1}{2}\log_k(b/a)$, and so the
measure of the diagonal (from the corner of $R$ meeting $U$ to the
corner of $R$ meeting at $V$) at this height is $\sqrt{2ab}$.

\medskip

We now give a lower bound for the distance in
$\widetilde S_h \times \RR$ between $F(U)$ and $F(V)$. Let $\gamma$ be
a shortest path in $\widetilde S_h \times \RR$ from $F(U)$ to $F(V)$.
By definition, $\int_\gamma dx \ge a$ and $\int_\gamma dy \ge b$.  Let
$z_+$ and $z_-$ be the largest and smallest height attained along
$\gamma$; these exist because $\gamma$ is compact.  The
Cannon--Thurston pseudo-metric length of the diagonal of $R$ at the
optimal height is an upper bound on the length of $\gamma$, so the
length of $\gamma$ is at most $\sqrt{2ab}$.  We deduce that the
difference in heights between any two points on $\gamma$ is at most
$\sqrt{2ab}$, in particular $z_+ - z_- \le \sqrt{2ab}$.  The
Cannon--Thurston distance in $\widetilde S_h \times \RR$ between
$\alpha^+$ and $\beta^+$ at any height $z \le z_+$ is at least
$a k^{z_-}$.  Similarly, the Cannon--Thurston distance between
$\alpha^-$ and $\beta^-$ at any height $z \ge z_-$ is at least
$b k^{-z_+}$.  Therefore the length of $\gamma$ is at least
$a k^{z_-} + b k^{-z_+}$.  If we set $z_0$ to be the average of $z_+$
and $z_-$, i.e. $z_0 = \tfrac{1}{2}(z_+ + z_-)$, then
\[ z_+ \le z_0 + \tfrac{1}{2} \sqrt{2ab} \quad \text{ and } \quad z_-
\ge z_0 - \tfrac{1}{2} \sqrt{2ab}.  \]
In particular,
\begin{align*}
  a k^{z_-} + b k^{-z_+} & \ge a k^{z_0 - \sqrt{ab/2}} + b k^{-z_0 -
                           \sqrt{ab/2}}, \\
  \intertext{which may be rewritten as}
  a k^{z_-} + b k^{-z_+} & \ge k^{-\sqrt{ab/2}} \left( a k^{z_0} + b
                           k^{-z_0} \right).
\end{align*}

\medskip
The right hand side is minimized when $z_0 = \frac{1}{2} \log_k(b/a)$,
so the length of $\alpha$ is at least
$r = k^{-\sqrt{ab/2}} 2 \sqrt{ab}$, which only depends on the measure $ab \ge A$ of the rectangle $R$.
\end{proof}

\begin{proposition}\label{prop:coarse intersection}
Suppose that $(X, d)$ is a $\delta$-hyperbolic metric space. Suppose that $U$ and
$V$ are convex sets in $X$ that are distance $r \ge 0$ apart.  Then any geodesic from $U$ to
$V$ is contained in $N_{2 \delta + r}(U) \cup N_{2 \delta + r}(V)$ and
intersects $N_{2 \delta + r}(U) \cap N_{2 \delta + r}(V)$.
\end{proposition}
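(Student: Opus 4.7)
The plan is to construct a geodesic quadrilateral by gluing a nearly optimal connector from $U$ to $V$ onto the geodesic $\gamma$ using the convexity of $U$ and $V$, and then invoke the thin-quadrilateral property of $\delta$-hyperbolic spaces. First, for any $\epsilon > 0$, pick $u_0 \in U$ and $v_0 \in V$ with $d(u_0, v_0) \le r + \epsilon$, and let $\sigma$ be a geodesic from $u_0$ to $v_0$. Given any geodesic $\gamma$ from $u \in U$ to $v \in V$, the geodesic segments $[u, u_0]$ and $[v_0, v]$ lie entirely in $U$ and $V$ respectively, by convexity. Together with $\sigma$ and $\gamma$, these four segments form a geodesic quadrilateral with vertices $u, u_0, v_0, v$.

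Next, the thinness is exploited. Any geodesic quadrilateral in a $\delta$-hyperbolic space is $2\delta$-thin: splitting it into two geodesic triangles along a diagonal and applying the $\delta$-thin triangle condition twice shows that every point on one side of the quadrilateral lies within $2\delta$ of the union of the other three sides. Applied to a point $p \in \gamma$, this gives three cases: $p$ is within $2\delta$ of $[u, u_0] \subseteq U$, within $2\delta$ of $[v_0, v] \subseteq V$, or within $2\delta$ of $\sigma \subseteq N_{r + \epsilon}(U)$. In the last case the triangle inequality yields $d(p, U) \le 2\delta + r + \epsilon$. Letting $\epsilon \to 0$, every point of $\gamma$ lies in $N_{2\delta + r}(U) \cup N_{2\delta + r}(V)$, giving the first claim.

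For the intersection claim, consider the continuous function $f(t) = d(\gamma(t), U) - d(\gamma(t), V)$ along a unit speed parametrization of $\gamma$. Since $\gamma$ starts in $U$ and ends in $V$, $f$ is nonpositive at one endpoint and nonnegative at the other, so the intermediate value theorem produces some $t^\ast$ with $d(\gamma(t^\ast), U) = d(\gamma(t^\ast), V)$. The first claim bounds this common value by $2\delta + r$, so $\gamma(t^\ast) \in N_{2\delta + r}(U) \cap N_{2\delta + r}(V)$, as required.

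The only mild technical issue is that the distance $r$ need not be realized by any pair $(u_0, v_0)$, which is handled by the $\epsilon$-approximation above; beyond that, the argument is a direct application of convexity and the thin-quadrilateral property and presents no substantial obstacle.
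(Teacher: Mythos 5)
Your proposal is correct and follows the same overall strategy as the paper: build a geodesic quadrilateral by picking a near-optimal connector $\sigma$ between $U$ and $V$, use convexity to place the two flanking sides $[u,u_0]$ and $[v_0,v]$ inside $U$ and $V$, and apply the $2\delta$-thin quadrilateral estimate to cover $\gamma$. The main difference is in how you establish that $\gamma$ actually meets the \emph{intersection} of the two fattened neighborhoods. The paper runs a case analysis: either $\gamma$ passes within $2\delta$ of $\sigma$ (which lies in $N_{r+\epsilon}(U)\cap N_{r+\epsilon}(V)$, giving the conclusion directly), or $\gamma$ is absorbed into $N_{2\delta}(U)\cup N_{2\delta}(V)$ and one argues, by a connectedness/continuity consideration, that some point of $\gamma$ must be close to both. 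You instead bypass the case split entirely by applying the intermediate value theorem to $f(t)=d(\gamma(t),U)-d(\gamma(t),V)$, which changes sign along $\gamma$; the equalizing point is then trapped in $N_{2\delta+r}(U)\cap N_{2\delta+r}(V)$ by the coverage statement already proved. This is a small but genuine simplification. You also handle the fact that the distance $r$ need not be realized more carefully, via an explicit $\epsilon\to 0$ limit, where the paper silently drops the $\epsilon$; your version is tighter on that point.
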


\begin{proof}
Suppose that $\eta = [a, b]$ is a geodesic of length $r + \epsilon$, where $a \in U$ and $b \in V$.
Suppose $u$ is a point of $U$ and $v$ a point of $V$ and $\gamma = [u, v]$ a geodesic from $u$ to $v$. By the thin triangles property, $\gamma$ is contained in a
$2 \delta$-neighborhood of $[u, a] \cup [a, b] \cup [b, v]$, and
hence in a $2 \delta$-neighborhood of
$N_r(U) \cup N_r(V)$.

\medskip
The geodesic $\eta$ itself is contained in $N_r(U) \cap N_r(V)$. 
If $\gamma$ passes within distance $2 \delta$ of $\eta$, then $\gamma$ intersects $N_{2 \delta + r}(U) \cap N_{2 \delta + r}(V)$, as required. Otherwise, $\gamma$ is contained in a $2 \delta$-neighborhood of $[u, a] \cup [b, v]$,
and hence in a $2 \delta$-neighborhood of $U \cup V$.  In particular,
there is a point on $\gamma$ which is distance at most $2 \delta$ from
both $U$ and $V$, so $\gamma$ again intersects
$N_{2 \delta + r}(U) \cap N_{2 \delta + r}(V)$, as required.
\end{proof}

\begin{proposition}\label{prop:intersecting geodesics}
Suppose $\theta> 0$ and $r> 0$ are constants. Then there is a constant
$K> 0$ such that for any two geodesics $\gamma_1$ and $\gamma_2$ in
$\HH^2$ meeting at a point $x$ with an angle at least $\theta$, we
have $N_r(\gamma_1) \cap N_r(\gamma_2) \subseteq N_K(x)$ in the
hyperbolic metric.
\end{proposition}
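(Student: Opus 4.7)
I would apply the hyperbolic law of cosines to the geodesic triangle formed by $x$ together with nearest-point projections to the two geodesics. Concretely, given any $p \in N_r(\gamma_1) \cap N_r(\gamma_2)$, pick nearest points $p_i \in \gamma_i$ to $p$. Then $d(p, p_i) \le r$ and the triangle inequality yields $d(p_1, p_2) \le 2r$. Setting $a = d(x, p_1)$, $b = d(x, p_2)$, $c = d(p_1, p_2)$, and letting $\psi$ denote the angle at $x$ in the (possibly degenerate) triangle $\triangle x p_1 p_2$, the rays $[x, p_1] \subseteq \gamma_1$ and $[x, p_2] \subseteq \gamma_2$ meet at angle $\psi$ at $x$, which is one of the four angles formed by the crossing and therefore satisfies $\psi \ge \theta$.

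The hyperbolic law of cosines, rewritten as
\[
\cosh c \;=\; \cosh(a - b) + (1 - \cos \psi)\,\sinh a \sinh b,
\]
combined with $c \le 2r$ and $\cos \psi \le \cos \theta$, yields the uniform inequality
\[
\sinh a \sinh b \;\le\; \frac{\cosh(2r)}{1 - \cos \theta} \;=:\; C_0.
\]
This forces $\max(a,b)$ to be bounded in terms of $r$ and $\theta$ alone. Indeed, if both $a, b \ge 1$, the elementary estimate $\sinh t \ge e^t / 3$ gives $e^{a+b}/9 \le C_0$, hence $a + b \le \log(9 C_0)$. If instead $a \le 1$ (the case $b \le 1$ being symmetric), then either $a = 0$, in which case $p_1 = x$ and $d(x,p) \le r$ immediately, or the bound $\sinh b \le C_0 / \sinh a$ controls $b$ in terms of $a$ and $C_0$. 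In every case $\max(a,b) \le K'$ for some $K' = K'(r,\theta)$, and the triangle inequality
\[
d(x, p) \;\le\; d(x, p_1) + d(p_1, p) \;\le\; K' + r
\]
lets us take $K = K' + r$.

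There is no serious obstacle: the argument reduces to a bookkeeping exercise with the hyperbolic law of cosines, and the only subtlety is the degenerate case when one of $p_1, p_2$ coincides with $x$, which is handled directly by the triangle inequality.
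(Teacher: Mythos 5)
Your proof takes a genuinely different route from the paper. The paper keeps the point of interest (which you call $p$, the paper calls $y$) as a vertex: it forms two right triangles $\triangle x\,y\,p_1$ and $\triangle x\,y\,p_2$ with right angles at the feet of perpendiculars $p_1, p_2$, notes that the angles at $x$ sum to (at least) $\theta$ so one of them is $\ge \theta/2$, and then a single application of the hyperbolic sine rule $\sin\theta_1 = \sinh r_1 / \sinh d$ bounds $d(x,y)$ directly. Your route replaces $p$ by the triangle $\triangle x\,p_1\,p_2$ and uses the law of cosines; this is perfectly reasonable but requires the extra step of first bounding $\max(a,b)$ and then invoking the triangle inequality once more.

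There is, however, a genuine gap in your case analysis. When $0 < a \le 1$, you conclude that $\sinh b \le C_0/\sinh a$ ``controls $b$ in terms of $a$ and $C_0$,'' and then assert that in every case $\max(a,b) \le K'(r,\theta)$. But as $a \to 0^+$ the bound $C_0/\sinh a$ blows up, so this alone does \emph{not} yield a uniform bound on $b$. The uniform bound you need is already hiding in what you wrote: from $\cosh c = \cosh(a-b) + (1-\cos\psi)\sinh a \sinh b$ and $c \le 2r$ you get not only $\sinh a \sinh b \le C_0$ but also $\cosh(a-b) \le \cosh(2r)$, i.e. $|a-b| \le 2r$ (equivalently, just the triangle inequality $|a-b| \le d(p_1,p_2) \le 2r$). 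With this in hand, $a \le 1$ forces $b \le 1 + 2r$. Alternatively, a cleaner single case works: assume WLOG $a \le b$; then $\sinh^2 a \le \sinh a \sinh b \le C_0$ bounds $a$, and $b \le a + 2r$ bounds $b$. Either fix closes the gap, after which your argument is correct.
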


\begin{proof}
Suppose that $y$ is a point distance at most $r$ from both $\gamma_1$ and $\gamma_2$.  Let $p_1$ and $p_2$ be the respective points on $\gamma_1$ and $\gamma_2$ closest to $y$. Then $x, y, p_1$
and $x, y, p_2$ form two right angled triangles, with angles
$\theta_1$ and $\theta_2$ at $x$ such that
$\theta_1 + \theta_2 = \theta$.  This is illustrated in \Cref{fig:intersecting geodesics}.

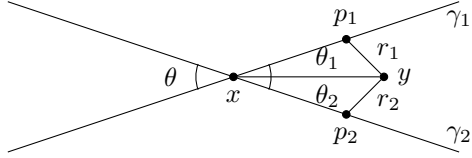
\begin{figure}[h]
\begin{center}
\begin{tikzpicture}

\tikzstyle{point}=[circle, draw, fill=black, inner sep=1pt]

\draw (0, 0) -- (6, 2) node (x) [point, midway, label=below:$x$] {} node
[below] {$\gamma_1$};

\draw (0, 2) -- (6, 0) node [above] {$\gamma_2$};

\draw (5, 1) node (y) [point, label=right:$y$] {};

\draw (x) -- (y);

\draw (y) -- (4.5, 1.5) node [pos=0.66, label=right:$r_1$] {} node
[point, label=above:$p_1$] {};

\draw (y) -- (4.5, 0.5) node [pos=0.66, label=right:$r_2$] {} node
[point, label=below:$p_2$] {};

\draw ([shift=(160:0.5cm)] 3, 1) arc (160:200:0.5cm) node [midway,
label=left:$\theta$] {};

\draw ([shift=(20:0.5cm)] 3, 1) arc (20:-20:0.5cm);

\draw (4.25, 1.25) node {$\theta_1$};

\draw (4.25, 0.75) node {$\theta_2$};

\end{tikzpicture}
\end{center}
\caption{Intersecting geodesics.} \label{fig:intersecting geodesics}
\end{figure}

\medskip
In particular, $\min \{ \theta_1, \theta_2\} \ge \tfrac{1}{2} \theta$,
and up to relabeling, we may assume that
$\theta_1 \ge \tfrac{1}{2} \theta$.  Let $d = d(x, y)$. Using the
$\sin$ rule for right angled triangles in $\HH^2$,
\[  \sin \theta_1 = \frac{ \sinh r_1 }{ \sinh d }.  \]
which we may rewrite as
\[  \sinh d = \frac{ \sinh r_1 }{ \sin \theta_1 }.  \]

\medskip We will use the following elementary estimates: for
$x \le \tfrac{\pi}{2}$, $\sin x \ge \tfrac{1}{2} x$ and
$\sinh x \le \tfrac{1}{2} e^x$.  Together with
$\theta_1 \ge \tfrac{1}{2} \theta$ and $r_1 \le r$, we deduce
\[  \sinh d \le \tfrac{1}{\theta}  e^{r}.  \]

We may therefore choose $K = \sinh^{-1}(e^r / \theta)$ to conclude the proof.
\end{proof}

Suppose that $R$ is a rectangle with optimal height $z$.  We now show
that for opposite quadrants $U$ and $V$ for $R$, the intersection of
the metric regular neighborhoods of $F_z(U)$ and $F_z(V)$ with the fiber
$\widetilde S_h \times \{ z\}$ are contained in a bounded neighborhood
of the optimal height square $F_z(R)$.

\begin{proposition}\label{prop:close to rectangle}
Suppose that $(S_h, \Lambda)$ is a hyperbolic metric on $S$ together
with a full pair of measured laminations.  For any positive constants
$A > 0$ and $r> 0$, there is a positive constant $K > 0$ such that for
any opposite quadrants $U$ and $V$ of a rectangle $R$ with measure at
least $A$ and optimal height $z$
\[ N_r( F_{z}(U) ) \cap N_r( F_{z}(V) ) \subset N_K(F_z( R ) ). \]
in the
hyperbolic metric on $\widetilde S_h \times \{ z\}$.
\end{proposition}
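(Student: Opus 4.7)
The plan is to locate any point $p$ in the intersection near a corner of $F_z(R)$ using the uniform angle bound between leaves of $\Lambda_+$ and $\Lambda_-$, together with \Cref{prop:intersecting geodesics}. In particular, the constant $K$ will depend only on $r$ and on $\pa$ (through $\alpha_\LL$); the hypothesis $\mathrm{meas}(R) \ge A$ is needed only to ensure that the four corners of $R$ are distinct so that the opposite quadrants are well defined.

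Given $p \in N_r(F_z(U)) \cap N_r(F_z(V))$, pick $u \in F_z(U)$ and $v \in F_z(V)$ realizing the distance bounds, so that $d(p, u), d(p, v) \le r$ and the geodesic segment $[u, v]$ has length at most $2r$. Label the sides of $R$ so that $U$ has corner $c_U = \alpha^+ \cap \alpha^-$ and $V$ has the diagonally opposite corner $c_V = \beta^+ \cap \beta^-$, and set $c := \alpha^+ \cap \beta^-$, another corner of $R$. A direct check from the definition of opposite quadrants, together with the fact that $R$ lies in the strip bounded by $\alpha^+$ and $\beta^+$ and in the strip bounded by $\alpha^-$ and $\beta^-$, shows that $F_z(U)$ and $F_z(V)$ lie on opposite sides of each of $\alpha^+$ and $\beta^-$. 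Consequently $[u, v]$ crosses $\alpha^+$ at some point $a$ and $\beta^-$ at some point $b$, and by the triangle inequality
\[
d(p, a) \le d(p, u) + d(u, a) \le r + 2r = 3r, \qquad d(p, b) \le 3r,
\]
so that $p \in N_{3r}(\alpha^+) \cap N_{3r}(\beta^-)$.

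By \Cref{prop:angle bound}, the leaves $\alpha^+$ and $\beta^-$ meet at $c$ at an angle of at least $\alpha_\LL > 0$. Applying \Cref{prop:intersecting geodesics} with this angle and with $r$ replaced by $3r$ yields a constant $K = K(r, \pa)$ such that $N_{3r}(\alpha^+) \cap N_{3r}(\beta^-) \subseteq N_K(c)$. Since $c$ is a corner of $F_z(R)$, this gives $p \in N_K(F_z(R))$, establishing the proposition.

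The main technical point in the argument is verifying the separation claim---that $F_z(U)$ and $F_z(V)$ lie on opposite sides of each of $\alpha^+$ and $\beta^-$---which reduces to tracking the four quadrants at each corner of $R$ using the convention that opposite quadrants meet at diagonally opposite corners. Everything else is a straightforward application of the triangle inequality and the angle bound, which is exactly why \Cref{prop:intersecting geodesics} was set up as an intermediate step.
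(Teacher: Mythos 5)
Your proof is correct and, while it ultimately rests on the same key ingredient as the paper's (namely \Cref{prop:intersecting geodesics} together with \Cref{prop:angle bound}), it reaches it by a genuinely different route. The paper's argument is a casework over the nine regions into which the four leaves $\alpha^\pm, \beta^\pm$ divide $\widetilde S_h$: for a point $x \in N_r(U) \cap N_r(V)$, the cases $x \in R$, $x \in U$, $x \in V$, $x \in W_1, W_2, W_3$ (plus the symmetric ones) are handled separately, and in each case a tailored pair of intersecting leaves is identified with $x$ within distance $r$ of both. Your argument replaces this with a single uniform observation: diagonally opposite quadrants lie on opposite sides of \emph{every} one of the four (bi-infinite geodesic) leaves, so the short geodesic $[u,v]$ from $U$ to $V$ must cross any chosen transverse pair, in particular $\alpha^+$ and $\beta^-$, placing $p$ within $3r$ of both. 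You pay a small price in the constant ($3r$ rather than $r$ in the neighborhood fed into \Cref{prop:intersecting geodesics}), which is harmless. In exchange you get a shorter, symmetric argument with no casework, and the dependence on $R$ is pushed entirely into the single separation claim, which you correctly flag as the one point requiring a careful check. Your side remark that the bound $\mathrm{meas}(R) \ge A$ plays no quantitative role in this proposition (it is only there to ensure the four leaves have distinct endpoints and hence well-defined quadrants) agrees with the paper's proof, where $K$ is likewise chosen from $r$ and $\alpha_\LL$ alone.
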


\begin{proof}
We may assume that $z = 0$ and that $R$ is a square with the measures
of the sides satisfying $a \ge \sqrt{A}$.  We will choose $K$ to be
the constant from \Cref{prop:intersecting geodesics}, with the given
choice of $r$, and $\theta$ chosen to be $\alpha_\LL$, the minimal
angle of intersection of any two leaves from each invariant
lamination, from \Cref{prop:angle bound}.

\medskip
Let $x$ be a point that is distance at most $r$ from both $U$ and $V$.
If $x$ lies in $R$, there is nothing to prove. So we may assume that
$x$ does not lie in $R$.
We will then show that $x$ is distance at most $r$ from
each pair of intersecting geodesics containing the sides of $R$.
As these geodesics intersect in the corners of $R$, the result follows
from \Cref{prop:intersecting geodesics}.  We now give the details.

\medskip

Suppose $x$ lies in $U$.  We refer to \Cref{fig:opposite quadrants}.
Since any geodesic from $x$ to $V$ crosses both $\alpha^+$ and
$\alpha^-$, the point $x$ lies within distance $r$ of both $\alpha^+$
and $\alpha^-$.  These two geodesics meet at angle at least
$\alpha_\LL$, by \Cref{prop:angle bound}, so by
\Cref{prop:intersecting geodesics}, $x$ is within distance $K$ of
their intersection point, which is one of the corners of $R$.

The same argument works if $x$ lies in $V$, so we may now assume that
$x$ does not lie in either $U$ or $V$, and breaking symmetry, we may
assume that $x$ lies in one of the regions $W_1, W_2$ and $W_3$ in
\Cref{fig:opposite quadrants}.  We consider each case in turn.

\medskip

If $x$ lies in the region $W_1$, then $x$ is distance at most $r$ from
both $\alpha^+$ and $\alpha^-$, which intersect.  Similarly, if $x$
lies in the region $W_2$, then $x$ is distance at most $r$ from both
$\alpha^+$ and $\beta^-$, which intersect.  Finally, if $x$ lies in
the region $W_3$, then $x$ is distance at most $r$ from both $\beta^+$
and $\beta^-$, which intersect.  In all three cases, by
\Cref{prop:angle bound}, the angles of intersection of the leaves is
at least $\alpha_\LL$.  Hence, \Cref{prop:intersecting geodesics}
applies and $x$ is contained in a $K$-neighborhood of the intersection
points of the pairs of geodesics.  As the intersection points are
corners of the rectangle $R$, the result follows.
\end{proof}

By the quasi-isometry between the hyperbolic and Cannon--Thurston
metric, we deduce \Cref{prop:close to rectangle} also in the
Cannon--Thurston metric.  We now show that any geodesic in
$\widetilde S_h \times \RR$ between the suspension flow sets of
opposite quadrants is contained in a bounded neighborhood of the
suspension flow sets, and passes within a bounded neighborhood of the
optimal height rectangle.

\begin{proposition}
\label{prop:quadrant_height}
Suppose that $(S_h, \Lambda)$ is a hyperbolic metric on $S$ together
with a full pair of measured laminations.  For any positive constant
$A > 0$ there is a positive constant $K > 0$ such that if $\gamma$ is
any Cannon--Thurston geodesic in $\widetilde S_h \times \RR$
intersecting both suspension flow sets $F(U)$ and $F(V)$ of opposite
quadrants of a rectangle $R$ with measure at least $A$ and optimal
height $z$, then $\gamma$ is contained in $N_K(F(U) \cup F(V))$, and
intersects $N_K(F_z(R))$.
\end{proposition}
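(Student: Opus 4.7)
The plan is to chain together three results from the excerpt: the lower bound on the distance between $F(U)$ and $F(V)$ from \Cref{prop:transverse separate}, the abstract hyperbolic geometry statement \Cref{prop:coarse intersection}, and the fiberwise bottleneck \Cref{prop:close to rectangle}, with a vertical-flow projection argument bridging the $\wsr$-metric neighborhoods of $F(U), F(V)$ and the fiber neighborhoods of $F_z(U), F_z(V)$.

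First I would observe that $F(U)$ and $F(V)$ are convex in $\wsr$, as suspension flow sets over convex subsets of $\widetilde{S}_h$. By \Cref{prop:transverse separate}, there is $r_1 = r_1(A) > 0$ with $d_{\wsr}(F(U), F(V)) \geq r_1$. Applying \Cref{prop:coarse intersection} to the subsegment of $\gamma$ between its entry into $F(U)$ and its exit from $F(V)$ then gives the first conclusion with $K_1 = 2\delta_3 + r_1$, and also produces a point
\[
p \in \gamma \cap N_{2\delta_3 + r_1}(F(U)) \cap N_{2\delta_3 + r_1}(F(V)).
\]

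The core work is then to bound $d_{\wsr}(p, F_z(R))$. Writing $p = (q_p, z_p)$, I would choose witnesses $u \in F(U)$, $v \in F(V)$ with $d_{\wsr}(p,u), d_{\wsr}(p,v) \leq 2\delta_3 + r_1$, and examine the concatenated path $[u,p]\cup[p,v]$ of total length $L \leq 2(2\delta_3 + r_1)$. Its extreme heights $z_-, z_+$ satisfy $z_+ - z_- \leq L/\log k$ from the $(\log k)^2 dz^2$ term of the Cannon--Thurston metric, while the argument used in the proof of \Cref{prop:transverse separate} yields $L \geq a k^{z_-} + b k^{-z_+}$, where $a = dx(R)$ and $b = dy(R)$. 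Combined with the two-sided bound $A \leq ab \leq A_{\max}$ coming from \Cref{prop:square upper bound}, these inequalities bound $|z_p - z| \leq C_1 = C_1(A)$ at the optimal height $z = \tfrac{1}{2}\log_k(b/a)$.

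Finally I would flow $p$ vertically to $\bar{p} = (q_p, z)$, at $\wsr$-distance at most $C_1 \log k$ from $p$. Then $\bar{p}$ lies within $\wsr$-distance $D := 2\delta_3 + r_1 + C_1 \log k$ of $F(U)$, and since the $z$-component of any realizing path is at most $D/\log k$, one sees $\bar{p}$ is within $\wsr$-distance $2D$ of $F_z(U)$, and similarly of $F_z(V)$. \Cref{prop:close to rectangle}, applied in the Cannon--Thurston fiber metric via \Cref{prop:qi}, then yields a constant $K_2 = K_2(A,D)$ with $\bar{p} \in N_{K_2}(F_z(R))$, and the triangle inequality finishes the proof by setting $K = \max(K_1, K_2 + C_1 \log k)$. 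The main obstacle I anticipate is justifying the length inequality $L \geq a k^{z_-} + b k^{-z_+}$ for a piecewise-geodesic path rather than a single geodesic: this reduces to the observation that the $dx$- and $dy$-crossings can be realized on essentially disjoint subpaths, which is implicit in the proof of \Cref{prop:transverse separate} and should transfer without trouble.
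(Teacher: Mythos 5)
Your proof is correct in outline and reaches the same conclusion, but it differs from the paper's argument in how the key step -- bounding the height of the intersection point $p$ -- is carried out. The paper does not use the length inequality from \Cref{prop:transverse separate} a second time; instead it projects the short paths from $p$ to $F(U)$ and $F(V)$ along suspension flow lines into the fiber at $p$'s own height, obtaining a fiber-intrinsic bound on the distance to $F_z(U)$ and $F_z(V)$, and then uses the fiber-intrinsic separation $\max\{ak^z, ak^{-z}\}$ to bound $|z|$ relative to the optimal height. Your route -- applying the length inequality $L \geq ak^{z_-} + bk^{-z_+}$ to the broken geodesic $[u,p]\cup[p,v]$ -- is a nice alternative that reuses the quantitative estimate already established, and it yields the same height bound with comparable effort. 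Both then finish by flowing to the optimal height and invoking \Cref{prop:close to rectangle} via \Cref{prop:qi}.

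Two points need attention. First, in your final step you produce a bound on the \emph{ambient} Cannon--Thurston distance from $\bar{p}$ to $F_z(U)$ (namely $2D$), but \Cref{prop:close to rectangle} is a statement about the intrinsic metric on the fiber $\widetilde{S}_h \times \{z\}$. You still need to convert: project the realizing path of length $\leq 2D$ into the height-$z$ fiber along suspension flow lines, which distorts length by at most $k^{2D/\log k}$, and \emph{then} pass to the hyperbolic fiber metric via \Cref{prop:qi}. This is precisely the projection device the paper uses, so the fix is cheap, but without it the appeal to \Cref{prop:close to rectangle} is not yet justified. Second, you invoke \Cref{prop:square upper bound} for the bound $ab \leq A_{\max}$, but that proposition carries the additional hypothesis of bounded geometry, which is not part of the hypotheses of \Cref{prop:quadrant_height}. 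Fortunately, your computation of $|z_p - z| \leq C_1(A)$ only ever uses $ab \geq A$ together with $L \leq 2(2\delta_3 + r_1)$ and $z_+ - z_- \leq L/\log k$ -- the upper bound on $ab$ never enters -- so you should simply drop that citation. As stated it makes the proof appear to need a stronger hypothesis than the proposition asserts.

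One more remark on the step you flagged as the potential obstacle, $L \geq ak^{z_-} + bk^{-z_+}$: this is stated and used in the paper's proof of \Cref{prop:transverse separate}, and it applies to \emph{any} rectifiable path from $F(U)$ to $F(V)$, not only a shortest one, because the integrals $\int dx \geq a$ and $\int dy \geq b$ are forced topologically by the fact that the path must cross each of the four ladders bounding the slab between the quadrants. So there is no issue with transferring it to your broken geodesic.
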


\begin{proof}
By \Cref{prop:transverse separate}, given $A > 0$, there is a constant
$r> 0$ such that the suspension flow sets $F(U)$ and $F(V)$ over
opposite quadrants $U$ and $V$ are Cannon--Thurston distance at least
$r$ apart.  Since $F(U)$ and $F(V)$ are convex, by \Cref{prop:coarse
  intersection} any geodesic from $F(U)$ to $F(V)$ is contained in
$N_{2 \delta + r}(F(U) \cup F(V))$, and passes through
$N_{2 \delta + r}(F(U)) \cap N_{2 \delta + r}(F(V))$.  It therefore
suffices to prove that there is a $K > 0$ such that
$N_{2 \delta + r}(F(U)) \cap N_{2 \delta + r}(F(V))$ is contained in
$N_K(F_z(R))$.

\medskip

We may now assume that the optimal height is $z = 0$, at which height
$R$ is a square of side lengths $a \ge \sqrt{A}$.  Let $(p, z)$ be a
point in $\widetilde S_h \times \RR$ that is Cannon--Thurston distance
at most $2 \delta + r$ from both $F(U)$ and $F(V)$.  A path starting
at $(p, z)$ with length at most $2 \delta + r$, when projected into
$S_h \times \{ z \}$ along suspension flow lines, changes its length
by a factor of at most $k^{2 \delta + r}$.  So in the Cannon--Thurston
metric on $S_h \times \{ z \}$, the point $(p, z)$ is distance at most
$r_1 = (2 \delta + r) k^{2 \delta + r}$ from both $F_z(U)$ and
$F_z(V)$.  In the intrinsic Cannon--Thurston pseudometric on
$S_h \times \{ z \}$, the distance from $U$ to $V$ is at least
$\max \{ a k^z, a k^{-z}\}$.  This implies that
$\max \{ a k^z, a k^{-z}\} \le 2r_1$, and thus
$| z | \le r_2 = \log_k (2 r_1) - \log_k (a)$.  Projecting down to
$z = 0 $ implies that $(p, 0)$ is distance at most $r_3 = r_1 k^{r_2}$
from both $F_{0}(U)$ and $F_{0}(V)$.

\medskip

By \Cref{prop:qi}, the point $(p, 0)$ lies hyperbolic distance at most
$r_4 = Q_\LL r_3 + c_\LL$ from both $F_{0}(U)$ and $F_{0}(V)$.  Let
$K_1$ be the constant from \Cref{prop:close to rectangle}, with $r$
chosen to be $r_4$, and the choice of $A$ from the initial assumption
above.  \Cref{prop:close to rectangle} now implies that as $(p, 0)$
lies in $N_{r_4}(F_0(U)) \cap N_{r_4}(F_0(V))$, the point $(p, 0)$
lies in $N_{K_1}(F_0(R))$.  As $|z| \le r_2$, the point $(p, z)$ lies
in a $(K_1 + r_2)$-neighborhood of $F_0(R)$.  So we may choose
$K = \max \{ 2\delta + r, K_1 + r_2 \}$, which only depends on
$A, \pa$ and $S_h$, as required.
\end{proof}

\section{Quasigeodesics in the Cannon--Thurston metric}\label{section:quasigeodesics}

In this section, from a non-exceptional geodesic $\gamma$ in
$\widetilde S_h$, we construct an explicit quasigeodesic in
$\widetilde S_h \times \RR$ whose limit points are the images of the
limit points of $\gamma$ by the Cannon--Thurston map.

\medskip
Recall that, by definition, the Cannon--Thurston images of the limit
points of a non-exceptional geodesic $\gamma$ in $\widetilde S_h$ are
distinct and hence determine a geodesic $\overline{\gamma}$ in
$\widetilde S_h \times \RR$, which we shall call the \emph{target
  geodesic}.  As the ladder $F(\gamma)$ of $\gamma$ is quasiconvex,
the target geodesic $\overline{\gamma}$ is contained in a bounded
neighborhood of $F(\gamma)$.  In particular, the nearest point
projection of $\overline{\gamma}$ to $F(\gamma)$ is quasigeodesic.  In
particular, there is a function $h_\gamma(t)$ such that the path
$(\gamma(t), h_\gamma(t))$ is quasigeodesic.

\medskip In this section we shall define the function $h_\gamma$ in
terms of the height function
$h \colon T^1(S_h) \setminus \oLambda^1 \to \RR$ from \Cref{def:height
  function2}, and we will give a specific choice of the constant used
in the definition of this function in \Cref{section:height function}.
We will set $h \colon T^1(S_h) \setminus \oLambda^1 \to \RR$, and then
define $h_\gamma(t) = h(\gamma^1(t))$.  We will call the path
$\tau_\gamma(t) = ( \gamma(t), h_\gamma(t) )$ in $F(\gamma)$ the
\emph{test path} associated to $\gamma$.  We call the function
$h_\gamma(t)$ the \emph{height function} for $\gamma$.  We will show
that the test path is an (unparametrized) quasigeodesic in
$\widetilde S_h \times \RR$ connecting the limit points of
$\iota(\gamma)$, and we emphasize that the unit speed parametrization
of $\gamma$ in $\HH^2$ does not give a quasigeodesic parametrization
of the test path.

\medskip

We now specify the height function we will use.  We shall write
$\log_k$ for the logarithm function with base $k$, where $k = k_f > 1$
is the parameter from the definition of the Cannon--Thurston metric,
which is the stretch factor of $\pa$ in the case that the laminations
are the invariant laminations of a pseudo-Anosov map $\pa$.
\begin{definition}\label{def:height function}
Let $(S_h, \Lambda)$ be a hyperbolic metric, together with a regular
pair of measured laminations.  Let $\oLambda^1_-$ and
$\overline{\Lambda}^1_+$ be the lifts in $T^1(S_h)$ of the extended
laminations determined by $\Lambda$, and let $\theta_\LL > 0$ be a
positive constant.
We define the \emph{height function}
$h_{\theta_\LL} \colon T^1(S_h) \setminus (\overline{\Lambda}^1_+ \cup
\overline{\Lambda}^1_-) \to \RR$ to be
\[ h_\theta(v) = \log_k \left\lfloor \log \frac{1}{d_{\PSL(2, \RR)}(v,
  \overline{\Lambda}^1_+ )} - \log \frac{1}{\theta_\LL} \right\rfloor_1
- \log_k \left\lfloor \log \frac{1}{d_{\PSL(2, \RR)}(v,
  \overline{\Lambda}^1_- )} - \log \frac{1}{\theta_\LL}
\right\rfloor_1 \]
\end{definition}

Here $\lfloor x \rfloor_c = \max \{ x, c \}$ is the standard floor
function.  As the two extended laminations are a positive distance
apart in $T^1(S_h)$, for sufficiently small $\theta_\LL$, at most one
of the terms on the right hand side above will be non-zero.

\medskip
We prove below that for a sufficiently small choice of $\theta_\LL$, the test path determined
by the corresponding height function is quasigeodesic.

\begin{theorem}\label{theorem:quasigeodesic-h}
Suppose that $(S_h, \Lambda)$ is a hyperbolic metric on $S$ together
with a suited pair of measured laminations.  Then there are constants
$\theta_\LL > 0$, $Q \ge 1$ and $c \ge 0$, such that for any
non-exceptional geodesic $\gamma$ in $S_h$, with a unit speed
parametrization $\gamma(t)$, the test path
$\tau_\gamma(t) = (\gamma(t), h_{\theta_\LL}(\gamma^1(t)) )$ is an
unparametrized $(Q, c)$-quasigeodesic in $\widetilde S_h \times \RR$
with the same limit points as $\iota(\gamma)$, where $h_{\theta_\LL}$ is the
height function from \Cref{def:height function2}.
\end{theorem}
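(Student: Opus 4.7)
The plan is to verify Farb's local-to-global criterion for quasigeodesics in the Gromov-hyperbolic space $\widetilde{S}_h \times \RR$: it suffices to establish, with constants uniform in $\gamma$, that (i) $\tau_\gamma$ lies in a bounded neighborhood of the target geodesic $\overline{\gamma}$ connecting its endpoints, and (ii) $\tau_\gamma$ makes uniform coarse progress along $\overline{\gamma}$. First I would fix $\theta_\LL$ small enough that (a) $\theta_\LL < \theta_0$ from Proposition \ref{prop:fellow travel}, (b) $\theta_\LL$ is smaller than the $T^1(S_h)$-separation of the two extended laminations furnished by Proposition \ref{prop:unit tangent bound}, and (c) $\theta_\LL < \theta_P$ for each of the finitely many innermost non-rectangular polygons $P$ from Proposition \ref{prop:polygon}. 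With this choice, at any tangent vector at most one of the two terms in $h_{\theta_\LL}$ is nonzero, so $h_\gamma(t)$ is governed by the distance to a single well-defined closest extended leaf.

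The proof then proceeds via the corner/straight decomposition of Propositions \ref{prop:straight} and \ref{prop:straight dense}. For a corner segment lying in an innermost rectangle, Proposition \ref{prop:min area} provides an adjacent $(\ell_1,\ell_2)$-maximal rectangle $R$ of measure at least $A_\LL$; by Lemma \ref{lemma:bottleneck} its optimal-height image $F_{z(R)}(R)$ is an $(r,K)$-bottleneck for the suspension flow sets of the opposite quadrants containing the limit points of $\overline{\gamma}$, forcing $\overline{\gamma}$ itself to pass within bounded distance of $F_{z(R)}(R)$. The key point is that $h_{\theta_\LL}$ was designed so that $\tau_\gamma$ also passes coarsely through this bottleneck: by the exponential fellow-traveling estimate of Proposition \ref{prop:fellow travel}, if $\gamma$ passes within distance $\theta_\LL e^{-T}$ of the bounding $\Lambda_+$-leaf then $h_\gamma = \log_k T$, and matching $T$ against the side measures $dx(R), dy(R)$ shows that $h_\gamma$ coarsely reaches $z(R)$ at the corner before descending.

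For a straight segment, the geodesic $\gamma$ lies in the union of at most two ideal complementary regions, of which there are only finitely many $\pi_1(S)$-orbits with uniformly bounded diameter (Proposition \ref{prop:innermost polygon diameter bound}). Along such a segment $\tau_\gamma$ traces a path in the ladder $F(\ell)$ over a single extended leaf $\ell$, and by Mitra's theorem this ladder is quasiconvex and quasi-isometric to $\HH^2$ via $(y,z)\mapsto(y,k^{-z})$. The calibration $h = \log_k \bigl(\log(1/d) - \log(1/\theta_\LL)\bigr)$ is tuned so that under this quasi-isometry the horocyclic path $\iota(\gamma)\cap F(\ell)$ is straightened into a uniform upper-half-plane quasigeodesic, converting the straight-segment estimate into a bounded computation in $\HH^2$.

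I expect the main obstacle to be controlling the long straight segments where $\gamma$ shadows a single extended leaf for a long time: the uniform quasigeodesic constants there depend sensitively on the precise matching between the exponential divergence rate of Proposition \ref{prop:fellow travel} and the logarithmic profile of $h_{\theta_\LL}$, and in particular on the floor cutoff $\lfloor\cdot\rfloor_1$, which prevents the height from being distorted by points very near the leaf. Once this estimate is in hand, the two Farb conditions follow by assembling the pieces: corners contribute definite $\overline{\gamma}$-progress through bottlenecks, straight segments between consecutive bottlenecks are uniformly quasigeodesic, and $\delta_3$-hyperbolicity of $\widetilde S_h \times \RR$ together with uniform bottleneck spacing confines $\tau_\gamma$ to a uniform neighborhood of $\overline{\gamma}$.
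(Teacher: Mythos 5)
Your plan identifies the same architecture as the paper's: Farb's local-to-global criterion (\Cref{theorem:uniform progress h}), the decomposition of $\gamma$ into corner segments and straight segments (\Cref{prop:straight}, \Cref{prop:straight dense}), bottlenecks from corners, and quasigeodesicity over straight segments. So far so good.

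However, there is a genuine gap in the corner-segment step. You invoke \Cref{prop:min area} and \Cref{lemma:bottleneck} to get a transverse rectangle $R$ whose optimal-height image is an $(r,K)$-bottleneck, and then argue that $h_{\theta_\LL}$ coarsely reaches the optimal height $z(R)$. But that only places $\tau_\gamma$ once in a neighborhood of the bottleneck; it does not give what Farb's criterion actually needs, namely a \emph{tame} bottleneck: a uniform bound on the arc length of $\tau_\gamma$ between the initial and terminal quadrants of $R$. The outermost rectangle $R$ determined by $\gamma\cap\ell$ is $(\ell_1,\ell_2)$-maximal but there is no control on where $\gamma$ crosses the bounding leaves $\alpha^\pm,\beta^\pm$: $\gamma$ can enter the initial quadrant, exit it, and then travel an \emph{a priori} unbounded distance in $\ws_h$ before hitting the next bounding leaf, and the corresponding segment of $\tau_\gamma$ is not length-bounded without further work. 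This is precisely why the paper passes to an $\epsilon$-truncated rectangle $R^0\subset R$ (\Cref{prop:truncated}, \Cref{section:truncated}) and then proves \Cref{prop:truncated path bounded length}, which bounds the arc length of $\tau_\gamma([u^0,v^0])$ by (i) showing $\gamma_z([u^0,v^0])$ at the optimal height is monotonic with bounded length (\Cref{prop:geodesic is monotonic}, \Cref{prop:monotonic length bound}, \Cref{prop:optimal height bounded length}), and (ii) bounding the variation of the height function over $[u^0,v^0]$ (\Cref{prop:height change main}). Without this tameness estimate you cannot get the uniform constant $L$ in the $(L,1)$-uniform-progress condition, because consecutive corners are not a priori uniformly spaced along $\tau_\gamma$.

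On a smaller point: for straight segments you say $\tau_\gamma$ traces a path in the ladder over a single extended leaf $\ell$ and then appeal to Mitra's theorem. What the paper actually shows is the more delicate statement (\Cref{prop:dominant}, \Cref{prop:single leaf}) that there is a single \emph{$K$-dominant} leaf whose radius function coarsely realizes $\rho_\gamma$ along the whole intersection interval; this needs \Cref{prop:long segment sign} to know the height function has a consistent sign, and the comparison of overlapping projection intervals in \Cref{prop:comparing height functions}. Mitra's quasiconvexity of ladders is not what is used here; the quasi-isometry $(y,z)\mapsto(y,k^{-z})$ of a ladder over a leaf with $\HH^2$ is an explicit computation and is used via \Cref{prop:abs val qg} and \Cref{prop:K-neighborhood}. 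You would also need to handle separately the straight segments consisting of \emph{two} intersection intervals meeting in a non-rectangular polygon (\Cref{prop:union qg}); your summary lumps both cases together.
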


As we shall fix a sufficiently small constant $\theta_\LL$, depending
only on $(S_h, \Lambda)$, we shall just write $h$ for
$h_{\theta_\LL}$.  See \Cref{section:height function} for the exact
choice of $\theta_\LL$ that we use.  Furthermore, we will write
$h_\gamma(t)$ for $h_{\theta_\LL}(\gamma^1(t))$.

\medskip
Here is the basic intuition behind \Cref{theorem:quasigeodesic-h}.
We partition the geodesic $\gamma(t)$ into arcs where over each arc either the geodesic is far from both laminations or it comes close to one of the laminations.
If it comes $r$-close to a lamination, say $\Lambda_+$ then there is a fellow travel of length
$\log \frac{1}{r}$ with a leaf $\ell$ of $\Lambda_+$.
This means that in $\widetilde{S}_h \times \RR$ there is a  
short cut through the ladder (hyperbolic plane) of suspension flow lines through $\ell$ which has maximum height
$\log_k ( \log \frac{1}{r})$.

\medskip
This intuition works well when the geodesic has a large intersection
with a complementary region that is an ideal triangle.  There are two
main technical issues to address.

\medskip
Firstly, there may be complementary ideal regions with more than three
sides.  Suppose that a geodesic crosses a non-triangular ideal complementary
region entering and leaving the region through non-adjacent cusps and with a very small angle.  As the geodesic traverses through the middle of the
complementary region, it is far from all boundary leaves, and yet the height of the test path in
$\widetilde S_h \times \RR$ should be large.  In this portion the geodesic is close to an extended leaf and so we overcome the difficulty 
by modifying the distance functions to consider distances to the extended laminations.

\medskip

Secondly, there can be long geodesic segments that are close to leaves
of one of the laminations, but do not spend very long in any single
complementary region.  In this case, we show that the geodesic
$\gamma$ passes though rectangles formed by the stable and unstable
laminations, whose area is bounded below.  Furthermore, the endpoints
of the geodesic lie in opposite quadrants of the complementary regions
formed by the leaves of the laminations containing the sides of the
rectangles.  The image of each rectangle in $\widetilde M$, at optimal
height under the vertical flow, is a bottleneck for the union of flow
lines through opposite quadrants, and so the target geodesic in
$\widetilde{M}$ passes within a bounded distance of the optimal height
rectangle.  By definition of the height function, the test path also
passes within a bounded distance of the optimal height rectangle, and
hence is close to the target geodesic.  The remaining technical step
to check is that the segments of the test path passing through the
bottlenecks have bounded length, and we also do this in
\Cref{section:tame}.

\subsection{The test path is quasigeodesic}\label{section:qg}

We will use the following criteria due to Farb \cite{farb} (see also
Hoffoss \cite{hoffoss}*{page 216}) to ensure that a path is a
quasigeodesic.

\begin{definition}\label{def:uniform progress}
Suppose that $(X, d)$ is a metric space.
Suppose that $\tau$ is a path in $X$ with unit
speed parametrization and suppose that $\gamma$ is a geodesic in $X$.
We say that $\tau$ \emph{makes $(L, M)$-uniform progress along $\gamma$} if
there is a constant $L$ such that for any two points distance at least
$L$ apart along $\tau$, the distance between their nearest point
projections to $\gamma$ is at least $M$.
\end{definition}

\begin{theorem}\cite{farb}\cite{hoffoss}*{page
  216}\label{theorem:uniform progress h} 
Suppose that $\tau$ is parametrized unit speed path in $\HH^3$ such that 
\begin{itemize}
    \item $\tau$ is contained in a bounded neighborhood of a geodesic $\gamma$, and
    \item there is a constant $L > 0$ such that $\tau$ makes $(L, 1)$-uniform progress along $\gamma$.
\end{itemize}
Then $\tau$ is a quasigeodesic.
\end{theorem}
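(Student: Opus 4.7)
The plan is to reduce the two-dimensional problem to a one-dimensional statement about the projection $f := \pi_\gamma \circ \tau$ of $\tau$ onto the geodesic $\gamma$, and then exploit uniform progress together with continuity of $f$ to show that $f$ itself is a quasigeodesic in $\gamma \cong \RR$.

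First, the upper bound $d(\tau(s), \tau(t)) \le |t - s|$ is immediate from unit speed. For the lower bound, I would use that the nearest-point projection $\pi_\gamma \colon \HH^3 \to \gamma$ is $1$-Lipschitz on the CAT$(0)$ space $\HH^3$, so $f = \pi_\gamma \circ \tau \colon \RR \to \gamma$ is a continuous $1$-Lipschitz map, and the hypothesis translates to $|f(s) - f(t)| \ge 1$ whenever $|s - t| \ge L$.

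The heart of the proof is the following one-dimensional lemma: there exist constants $Q_0, c_0$ depending only on $L$ such that $|f(s) - f(t)| \ge \tfrac{1}{Q_0} |s - t| - c_0$. To prove it, fix $s < t$, set $n = \lfloor (t - s)/L \rfloor$, and sample $f$ at times $s_i = s + iL$ for $0 \le i \le n$. Uniform progress makes $\{f(s_i)\}_{i = 0}^{n}$ a set of $n + 1$ real numbers pairwise at distance at least $1$, so their sorted values fill an interval of length at least $n$; in particular $\max_{[s, t]} f - \min_{[s, t]} f \ge n$. Breaking symmetry, I may assume $\max_{[s, t]} f \ge f(s) + n/2$; denote this maximum by $M$, attained at some $r^\ast \in [s, t]$. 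For any value $v$ with $\max(f(s), f(t)) < v < M$, the intermediate value theorem yields times $t^{\mathrm{up}} \in [s, r^\ast]$ (taking the last) and $t^{\mathrm{down}} \in [r^\ast, t]$ (taking the first) with $f(t^{\mathrm{up}}) = f(t^{\mathrm{down}}) = v$. The $1$-Lipschitz property forces $t^{\mathrm{down}} - t^{\mathrm{up}} \ge 2(M - v)$, while uniform progress forces $t^{\mathrm{down}} - t^{\mathrm{up}} < L$. Hence $v > M - L/2$, and letting $v$ decrease to $\max(f(s), f(t))$ gives $\max(f(s), f(t)) \ge M - L/2 \ge f(s) + (n - L)/2$. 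For $n > L$ this forces $f(t) \ge f(s) + (n - L)/2$, establishing the linear lower bound for $|f(t) - f(s)|$.

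Having proved the one-dimensional lemma, the theorem follows from the triangle inequality: since $\tau \subset N_K(\gamma)$, the points $\tau(s), \tau(t)$ lie at distance at most $K$ from $f(s), f(t)$ respectively, so $d(\tau(s), \tau(t)) \ge |f(t) - f(s)| - 2K$, which is linear in $|t - s|$ with slope $\tfrac{1}{2L}$ and additive constant depending only on $L$ and $K$. The main obstacle is the one-dimensional lemma: naive pairwise sampling only controls the diameter of the set $\{f(s_i)\}$ and not the signed difference $f(t) - f(s)$, since $f$ may oscillate; combining the intermediate value theorem with uniform progress is needed to rule out backtracking of amplitude at least $1$ over timescales of length at least $L$.
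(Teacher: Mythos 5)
The paper does not prove this statement: it is cited to Farb \cite{farb} and to Hoffoss \cite{hoffoss}*{page 216} and is used as a black box, so there is no ``paper's own proof'' against which to compare your argument.

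Your proof is correct and self-contained. Reducing to the one-dimensional projection $f = \pi_\gamma \circ \tau$ (valid since $\pi_\gamma$ is $1$-Lipschitz on a geodesic in the CAT$(0)$ space $\HH^3$ and $\tau$ is unit speed) is the right move; the sampling $s_i = s + iL$ gives $n+1$ values pairwise at distance $\ge 1$, hence range $\ge n$; and the intermediate-value argument correctly rules out backtracking: if some level $v < M - L/2$ lay strictly between $\max(f(s), f(t))$ and $M$, the last preimage before the argmax and the first one after would be at least $2(M - v) \ge L$ apart by the Lipschitz bound while having equal $f$-value, violating $(L,1)$-uniform progress. This forces $\max(f(s), f(t)) \ge M - L/2 \ge f(s) + (n - L)/2$, and for $n > L$ the maximum must be $f(t)$, giving the linear lower bound $|f(t) - f(s)| \ge \frac{1}{2L}|t - s| - \frac{1 + L}{2}$; the triangle inequality through the $K$-neighborhood of $\gamma$ then converts this to the quasigeodesic inequality for $\tau$ with constants $(Q, c)$ depending only on $L$ and $K$. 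The only cosmetic remark is that in your closing paragraph the phrase ``backtracking of amplitude at least $1$'' understates what is actually excluded; the argument excludes backtracking of amplitude exceeding $L/2$, using that such a reversal produces two times $\ge L$ apart where $f$ takes identical values (hence projection distance $0 < 1$). But the proof itself is precise and complete.
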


As the Cannon--Thurston pseudo-metric is quasi-isometric to the hyperbolic metric on $\HH^3$, it suffices to show the following two results.

\begin{lemma}\label{lemma:bounded distance}
Suppose that $(S_h, \Lambda)$ is a hyperbolic metric on $S$ together
with a suited pair of measured laminations.  Then there is a
constant $K$ such that for any non-exceptional geodesic $\gamma$ in
$\HH^2$, the corresponding test path $\tau_\gamma$ is contained in a
$K$-neighborhood of $\overline{\gamma}$ in
$\widetilde{S}_h \times \RR$.
\end{lemma}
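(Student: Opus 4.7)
The plan is to reduce the lemma to the bottleneck structure established in Section \ref{section:bottlenecks}. By construction, the test path $\tau_\gamma$ lies in the ladder $F(\gamma)$, which by Mitra's theorem is quasiconvex in $\widetilde{S}_h \times \RR$, so $\overline{\gamma}$ is contained in a bounded neighborhood of $F(\gamma)$; the real content of the lemma is thus that $\tau_\gamma$ does not wander within the ladder away from $\overline{\gamma}$.

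I would first establish a local-to-global matching between high points of the test path and bottlenecks on $\overline{\gamma}$. Fix $t$ with, say, $h_\gamma(t) \geq 1$; then by \Cref{def:height function} the tangent vector $\gamma^1(t)$ is within distance $\theta_\LL$ of a leaf $\ell$ of $\overline{\Lambda}_+$, and in fact the height is approximately $\log_k \log \tfrac{1}{r}$ where $r = d_{\PSL(2,\RR)}(\gamma^1(t), \ell)$. By the exponential divergence estimate \Cref{prop:fellow travel}, $\gamma$ fellow-travels $\ell$ for parameter length approximately $\log \tfrac{1}{r}$ around $t$, on either side of which $\gamma$ moves a definite distance away from $\ell$. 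Using the density of straight segments \Cref{prop:straight dense} together with the lower bound on measure of $(\ell_1, \ell_2)$-maximal rectangles \Cref{prop:min area}, I can locate a positive-measure rectangle $R$ with opposite sides in $\overline{\Lambda}_+$ on either side of $\gamma^1(t)$, whose optimal height $z(R)$ differs from $h_\gamma(t)$ by a constant depending only on $\pa$ and $(S_h, \LL)$.

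Next, I would show that the optimal-height cross section $F_{z(R)}(R)$ is a bottleneck that $\overline{\gamma}$ must pass close to. Since $\gamma$ enters and exits $R$, its forward and backward limits in $\partial\widetilde{S}_h$ lie in the closures of opposite quadrants $U$ and $V$ of $R$, and after the Cannon-Thurston map the endpoints of $\overline{\gamma}$ lie in the closures of the opposite suspension flow sets $F(U)$ and $F(V)$. \Cref{prop:quadrant_height} then produces a point on $\overline{\gamma}$ within bounded distance of $F_{z(R)}(R)$, and hence of $\tau_\gamma(t)$, with constants depending only on the lower bound on the measure of $R$. The same argument applies verbatim to points where $h_\gamma(t) \leq -1$ by swapping $\overline{\Lambda}_+$ with $\overline{\Lambda}_-$.

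Finally, I would handle the remaining portions of the test path, those with $|h_\gamma(t)| \leq 1$, by interpolation. Such points lie within bounded vertical distance of $\iota(\gamma) \subset S_0$, and they are sandwiched between (or within bounded range of) two high-height crossings that have already been shown to be within bounded distance of $\overline{\gamma}$; applying the thin-quadrilaterals property in the $\delta_3$-hyperbolic space $\widetilde{S}_h \times \RR$ to the resulting quadrilateral with two sides in $\tau_\gamma$ and $\overline{\gamma}$ and two short connecting sides then delivers the uniform bound. I expect the main obstacle to be the bookkeeping in the first step: ensuring that one can uniformly attach to each high point of the test path a single rectangle of measure bounded below, and that the mismatch between the $\log_k \log \tfrac{1}{r}$ height function and the optimal height $\tfrac{1}{2}\log_k(dy(R)/dx(R))$ of the matched rectangle is bounded uniformly over all extended leaves and all non-exceptional geodesics $\gamma$. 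This uniform bound ultimately rests on compactness of $S$, the finitely many innermost non-rectangular polygons, and the uniform angle bound from \Cref{prop:extended angle bound}.
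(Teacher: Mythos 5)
Your approach is genuinely different from the paper's and contains a serious gap. The paper decomposes $\gamma$ into corner segments and straight segments, proves that corner segments create tame bottlenecks (\Cref{cor:corner distance}), proves that the test path over every straight segment is uniformly quasigeodesic (\Cref{lemma:straight qg}), and then applies the Morse lemma to each straight segment: since its endpoints (corner segments) are within bounded distance of $\overline{\gamma}$, the entire straight segment is. Density of straight segments finishes. Your proposal instead tries to attach a bottleneck to every high-height point directly and interpolate the low-height portions.

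The interpolation step is the fatal gap. A point $\tau_\gamma(t)$ with $|h_\gamma(t)| \leq 1$ is indeed within bounded vertical distance of $\iota(\gamma(t)) \in S_0$, but that does not put it within bounded distance of $\overline{\gamma}$: the geodesic $\overline{\gamma}$ can be far from the base fiber at the point $\gamma(t)$, and $\iota(\gamma)$ is not a quasigeodesic. Your thin-quadrilaterals argument does not apply because two of the four sides — the $\tau_\gamma$-side and the $\overline{\gamma}$-side — are not both geodesics, and even for a quadrilateral with two geodesic sides, boundedness of the two connecting sides constrains the Gromov product at the corners, not the distance between the midpoints of the long sides. Concretely, if $\gamma$ traverses a long straight segment whose test path stays at height near zero, you have no a priori control over how far $\iota(\gamma)$ wanders from $\overline{\gamma}$ inside that segment; the content of \Cref{lemma:straight qg} is precisely that this cannot happen, and your proposal has no substitute for it.

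Your first step also has a gap that is more than bookkeeping. For a high-height point $\gamma(t)$ where the dominant leaf of the extended lamination is a genuine extended leaf (one added inside a non-triangular ideal complementary region), there is no transverse rectangle through $\gamma(t)$ of the kind you want: extended leaves carry zero transverse measure, and \Cref{prop:min area} applies only to leaves of the invariant laminations. The paper applies the bottleneck machinery only at corner segments, where $\gamma$ crosses genuine leaves of \emph{both} laminations (Sections \ref{section:small angle} and \ref{section:large angles}); for the portion of $\gamma$ deep inside a complementary region, the paper instead shows the radius function is dominated by a single leaf of the extended lamination (\Cref{prop:single leaf}), which makes the test path live in a bounded vertical neighborhood of an absolute-value path inside a ladder — a quasigeodesicity statement, not a bottleneck statement. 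You would need some version of this to make either step of your proposal go through.
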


Let $Q_M$ and $c_M$ be the quasi-isometry constants between
$\widetilde{S}_h \times \RR$ and $\HH^3$.  Then $(L, 1)$-uniform
progress in $\HH^3$ will be implied by $(L', M)$-uniform progress in
$\widetilde{S}_h \times \RR$, where $M = Q_M + c_M$.  In fact,
assuming \Cref{lemma:bounded distance}, any point on
$\tau_\gamma$ is distance at most $K$ from $\overline{\gamma}$.
Therefore, it suffices to show that there is a constant $L$, such that
any pair of points distance at least $L$ apart along $\tau_\gamma$ are
distance at least $M + 2K$ apart in $\widetilde{S}_h \times \RR$.

\begin{lemma}\label{lemma:progress}
Suppose that $(S_h, \Lambda)$ is a hyperbolic metric on $S$ together
with a suited pair of measured laminations.  Then for any constant
$M$ there is a constant $L$ such that for any non-exceptional geodesic
$\gamma$, and any two points $p$ and $q$ distance at least $L$ apart
along the test path $\tau_\gamma$, the distance in
$\widetilde{S}_h \times \RR$ between $p$ and $q$ is at least $M$.
\end{lemma}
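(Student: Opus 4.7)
The plan is to exploit the decomposition of $\gamma$ into straight segments joined by corner segments furnished by \Cref{prop:straight}. On each straight segment the test path $\tau_\gamma$ is uniformly quasigeodesic in $\wsr$ via \Cref{lemma:straight qg}, while each corner produces a bottleneck through \Cref{lemma:bottleneck}. The task is to combine these two local estimates into a global progress statement.

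At each corner of $\gamma$, the geodesic enters and exits a common ideal complementary region through two distinct cusps determined by a rectangle $R$ with sides in leaves $\ell_1, \ell_2$ of one lamination bridged by a common positive leaf of the other. By \Cref{prop:min area}, the $(\ell_1,\ell_2)$-maximal rectangle containing $R$ has measure at least $A_\LL$, so \Cref{lemma:bottleneck} yields constants $r, K > 0$, depending only on $(S_h,\Lambda)$, such that the flow sets $F(U), F(V)$ over the opposite quadrants housing the incoming and outgoing portions of $\gamma$ are Cannon--Thurston distance at least $r$ apart, and any $\wsr$-geodesic between them passes within $K$ of the optimal-height rectangle. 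By \Cref{prop:innermost polygon diameter bound} and \Cref{prop:polygon}, the arc length of $\tau_\gamma$ over any single corner region is uniformly bounded above by a constant $C_0$.

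Given $p, q$ on $\tau_\gamma$ at arc-length distance at least $L$ apart, let $n$ be the number of corner segments of $\gamma$ strictly between them. If $n=0$, both $p$ and $q$ lie over a common straight segment, and \Cref{lemma:straight qg} gives $d_{\wsr}(p,q) \ge L/Q_s - c_s$, which exceeds $M$ once $L$ is large. If $n \ge 1$, any geodesic $\sigma$ from $p$ to $q$ in $\wsr$ stays uniformly close to the target geodesic $\overline{\gamma}$ by \Cref{lemma:bounded distance} and hyperbolicity, so $\sigma$ must cross each intermediate bottleneck rectangle $R_i$ in the order in which they appear along $\overline{\gamma}$. The bottleneck property applied at each $R_i$, together with a concatenation argument along $\sigma$, then gives $d_{\wsr}(p,q) \ge nr - O(1)$. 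The arc-length accounting $L \le C_0(n+1) + \Sigma$, where $\Sigma$ is the test-path arc length over the intervening straight segments, combined with \Cref{lemma:straight qg} applied to each straight piece, forces $d_{\wsr}(p,q) \ge M$ provided $L$ is taken large enough.

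The principal obstacle is verifying that successive bottleneck contributions add rather than cancel along $\sigma$: in the hyperbolic space $\wsr$, a geodesic could in principle backtrack through neighborhoods of the bottleneck rectangles. This is ruled out by the cyclic ordering of the corner rectangles along $\overline{\gamma}$ (inherited from the ordering of corners along $\gamma$) together with uniform Cannon--Thurston separation of consecutive corner rectangles, which itself follows from applying \Cref{lemma:straight qg} to the intervening straight segment. Together these ensure that the nearest-point projections of $\sigma$ onto the successive $R_i$ occur in the correct order along $\sigma$, producing the required additive bottleneck estimate and completing the proof.
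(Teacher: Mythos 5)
Your high-level plan is the one the paper pursues: decompose $\gamma$ via \Cref{prop:straight} into straight segments joined by corner segments, treat each via \Cref{lemma:straight qg} and \Cref{cor:corner distance} respectively, and combine. And you correctly identify the central difficulty as the additivity of bottleneck contributions. However, your resolution of that difficulty has a genuine gap, and this is precisely the point where the paper's argument is structured differently.

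You claim that ``uniform Cannon--Thurston separation of consecutive corner rectangles \ldots follows from applying \Cref{lemma:straight qg} to the intervening straight segment.'' This fails: straight segments can be arbitrarily short (a non-exceptional geodesic can clip a small corner of a complementary region), and for a short straight segment $\tau_\gamma(I)$, the bound from \Cref{lemma:straight qg} is $|I|/Q - c$, which is vacuous. So consecutive corner bottlenecks can be Cannon--Thurston-close, and there is no justification that a competing geodesic $\sigma$ picks them up additively. For the same reason, your dichotomy between $n=0$ and $n\ge 1$ is too coarse: when $n$ is small but positive and all the intervening straight pieces are short, the arc-length accounting $L \le C_0(n+1) + \Sigma$ does not yield progress, because \Cref{lemma:straight qg} is only effective on a \emph{single long} straight piece, not on a large sum of short ones.

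The paper avoids both issues by changing the unit of analysis. Instead of counting corners, it splits on whether $[t_1,t_2]$ contains a single straight segment of test-path arc length $\ge L$ (then \Cref{prop:long straight} applies) or not, in which case every test-path subsegment of arc length $L$ contains a corner. In the second case it fixes consecutive intervals $I_1, \dots, I_n$ of test-path arc length $L$, one corner per interval, chooses $L > K$ so that alternate bottleneck segments $\tau_\gamma([u_j,v_j])$ and $\tau_\gamma([u_{j+2},v_{j+2}])$ are disjoint, and then uses the \emph{nesting} $U_j \subset U_{j+2}$, $V_{j+2} \subset V_j$ to force $d(U_j, V_{j+2}) \ge 2r$. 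This nesting, together with \Cref{prop:nesting distance}, is the mechanism that makes the contributions add; it does not require any separation of \emph{consecutive} corners, which is what your argument needs and cannot deliver. As a secondary point, the bound $C_0$ on the test-path arc length over a corner region is the content of the tameness part of \Cref{cor:corner distance} (via the truncated-rectangle analysis), not a consequence of \Cref{prop:innermost polygon diameter bound} or \Cref{prop:polygon}, which only control hyperbolic diameters and neighborhoods in $T^1(S_h)$.
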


There are two key tools which we will use to show that these
conditions hold.

\begin{enumerate}

\item[(1)] \emph{Tame bottlenecks:} Test path segments over corner segments are tame bottlenecks.

\medskip
Recall that an interval $I$ is a \emph{corner segment} if $\gamma(I)$
is embedded in an innermost rectangle and the endpoints of $\gamma(I)$ lie in different laminations. We prove that the test path
segment $\tau_\gamma(I)$ over $\gamma(I)$ is a bottleneck for the flow
sets over the two complementary components of
$\gamma(\RR \setminus I)$.  We also show that the bottleneck is \emph{tame}, that is, $\tau_\gamma(I)$ has bounded length.

\item[(2)] \emph{Straight intervals:} Test path segments over straight intervals are quasigeodesic.

\medskip
Recall that an interval $I$ is \emph{straight} if $\gamma(I)$ contains
exactly two corner segments, each one adjacent to an endpoint of $I$. We prove that the test paths segments over straight intervals are quasigeodesic. There are exactly two cases: either $\gamma(I)$ is the
intersection of $\gamma$ with a single ideal complementary
region, or the intersection with two complementary regions intersecting in a
non-rectangular polygon.

\end{enumerate}

We now state precise versions of the results described in the two
items above, and two additional properties of the construction
we will use.  In the next two sections we use these results to deduce that the test path is quasigeodesic, by showing how Farb's criterion, namely \Cref{theorem:uniform progress h} applies. 

\medskip
We first state the tame bottlenecks result.  We shall prove it in
\Cref{section:tame}.

\begin{lemma}\label{cor:corner distance}
Suppose that $(S_h, \Lambda)$ is a hyperbolic metric on $S$ together
with a suited pair of measured laminations.  Then there are constants
$r > 0$ and $K \ge 0$, such that for any corner segment
$I = [t_1, t_2]$ of any non-exceptional geodesic $\gamma$, there are
parameters $u \le t_1 \le t_2 \le v$ such that the set
$\tau_\gamma([u, v])$ is an $(r, K)$-bottleneck for the ladders over
$\gamma((-\infty, u])$ and $\gamma([v, \infty))$.  Furthermore, the
length of $\tau_\gamma([u, v])$ is at most $K$.
\end{lemma}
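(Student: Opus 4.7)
The idea is to exhibit an auxiliary rectangle $R$ in $\widetilde{S}_h$ with sides in $\Lambda_+\cup\Lambda_-$ of uniformly positive measure so that \Cref{lemma:bottleneck} produces a bottleneck at the optimal-height square $F_z(R)$, and then to transfer this bottleneck to the test path. Let $R_0$ be the innermost rectangle containing $\gamma([t_1,t_2])$, and let $c_0=\alpha_+\cap\alpha_-$ be the corner of $R_0$ where the two adjacent sides containing $\gamma(t_1)$ and $\gamma(t_2)$ meet, with $\alpha_\pm\in\Lambda_\pm$. The two leaves $\alpha_+$ and $\alpha_-$ partition $\widetilde{S}_h$ locally into four quadrants $Q_1,\ldots,Q_4$ at $c_0$, with $Q_1$ the cusp quadrant (containing the corner segment) and $Q_3$ diagonally opposite. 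Since $\gamma$ is a bi-infinite hyperbolic geodesic, it crosses $\alpha_+$ only at $t_1$ and $\alpha_-$ only at $t_2$, which forces $\gamma((-\infty,t_1])\subset Q_2$ and $\gamma([t_2,\infty))\subset Q_4$, the two quadrants adjacent to $Q_1$.

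Using the suited pair structure and \Cref{prop:innermost polygon}, I will find leaves $\beta_+\in\Lambda_+$ and $\beta_-\in\Lambda_-$ on the sides of $\alpha_+,\alpha_-$ opposite the cusp so that $\{\alpha_+,\beta_+,\alpha_-,\beta_-\}$ are the sides of a rectangle $R$ in $Q_3$ with $c_0$ as one corner. The choice of $\beta_\pm$ is coordinated so that $\alpha_\mp$ appears as an outermost common leaf of the $(\alpha_\pm,\beta_\pm)$-maximal rectangle, allowing \Cref{prop:min area} to furnish a uniform lower bound $A_\LL>0$ on the measure of $R$. The opposite quadrants of $R$ at the corners $\alpha_+\cap\beta_-$ and $\alpha_-\cap\beta_+$ are subregions of $Q_2$ (beyond $\beta_-$) and $Q_4$ (beyond $\beta_+$) respectively. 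Taking $u\le t_1$ to be the time at which $\gamma$ crosses $\beta_-$ going backward and $v\ge t_2$ the time at which it crosses $\beta_+$ going forward---both of which exist for $\beta_\pm$ chosen close enough to $\alpha_\pm$, by non-exceptionality of $\gamma$---puts $\gamma((-\infty,u])$ and $\gamma([v,\infty))$ in opposite quadrants of $R$. Then \Cref{lemma:bottleneck} yields constants $r>0$ and $K_0\ge 0$ depending only on $A_\LL$ and $\pa$ such that $F_z(R)$ is an $(r,K_0)$-bottleneck for the ladders over $\gamma((-\infty,u])$ and $\gamma([v,\infty))$.

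To upgrade from $F_z(R)$ to $\tau_\gamma([u,v])$ as the bottleneck set, I show the test path stays in a bounded neighborhood of $F_z(R)$. For $t$ near $[t_1,t_2]$, the tangent vector $\gamma^1(t)$ is close to $\alpha_\pm^1$ in $T^1(S_h)$; combined with \Cref{prop:fellow travel} and the definition of $h_\theta$, this places $h_\gamma(t)$ within a uniformly bounded additive error of the optimal height $z$ of $R$. The uniform upper bound on diameters of innermost polygons (\Cref{prop:innermost polygon diameter bound}) and the uniform upper bound on diameters of rectangles of bounded measure (\Cref{prop:square upper bound}) together produce a constant $K_1$ such that $\tau_\gamma([u,v])\subset N_{K_1}(F_z(R))$ and the length of $\tau_\gamma([u,v])$ is at most $K_1$. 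Setting $K:=K_0+K_1$ transfers the bottleneck from $F_z(R)$ to $\tau_\gamma([u,v])$ while simultaneously giving the required length bound.

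The main obstacle is the coordinated selection of $\beta_\pm$: they must be close enough to $\alpha_\pm$ that the non-exceptional geodesic $\gamma$ crosses them at finite backward and forward times, yet positioned so that the rectangle $R$ satisfies the maximality hypothesis of \Cref{prop:min area} with constants independent of the particular corner segment. The suited pair assumption, together with the absence of common leaves between $\Lambda_+$ and $\Lambda_-$ (\Cref{prop:non common leaves}) and cocompactness of the surface, is what prevents the construction from degenerating and yields the uniform constants asserted in the statement.
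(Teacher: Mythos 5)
Your approach diverges significantly from the paper's, and the divergence creates two related gaps that I don't think can be patched by straightforward means.

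The paper proves this lemma by a case split on the intersection angles at the two ends of the corner segment. If at least one angle is at most $\theta_\LL$, the bottleneck is produced by Corollary~\ref{lemma:small angles}, which rests on the outermost transverse rectangle of \Cref{prop:small angle rectangle}, its $\epsilon$-truncation (\Cref{prop:truncated}), and the quite delicate length bound of \Cref{prop:truncated path bounded length} (which itself uses monotonicity of $\gamma_z$ and the Lipschitz/height-change estimates of \Cref{prop:height change main}). If both angles exceed $\theta_\LL$, the paper uses \Cref{prop:lower bound transverse measure} to find rectangles $R_1, R_2$ containing each intersection point, intersects them, and gets a transverse rectangle $R$ that contains $c_0 = \alpha_+\cap\alpha_-$ \emph{in its interior} and is crossed by $\gamma$ on all four sides. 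In both cases the auxiliary rectangle straddles the corner; yours is placed entirely in the quadrant $Q_3$ diagonally opposite the cusp with $c_0$ as a \emph{corner} of $R$, which is a genuinely different geometric construction.

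The first gap is the measure bound on $R$. You invoke \Cref{prop:min area}, but that proposition bounds the measure of the $(\alpha_+,\beta_+)$-\emph{maximal} rectangle, whose $\Lambda_-$-sides are the outermost common positive leaves and are determined once $\alpha_+,\beta_+$ are fixed; there is no reason these sides lie on $\alpha_-$ and $\beta_-$ simultaneously with the symmetric condition for the $(\alpha_-,\beta_-)$-maximal rectangle. Your phrase ``the choice of $\beta_\pm$ is coordinated'' is doing all the work here, and you never exhibit the choice or explain why the fixed-point-type consistency condition is solvable with a uniform lower bound on $dx(R)\cdot dy(R)$. In fact there is a tension: if $\gamma$ meets $\alpha_+$ at a very small angle $\theta_1$, then going backward from $t_1$ the geodesic fellow-travels $\alpha_+$ for time comparable to $\log\tfrac{1}{\theta_1}$, so either $\beta_+$ is chosen so close to $\alpha_+$ that the strip has small $dx$-measure (killing the uniform lower bound on $R$), or the crossing time $|u-t_1|$ is unbounded.

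The second gap is the length bound on $\tau_\gamma([u,v])$, which is the ``tame'' part of the statement. You derive it from \Cref{prop:innermost polygon diameter bound} and \Cref{prop:square upper bound}, but neither of these bounds the $\gamma$-length of $[u,v]$; the first bounds the corner segment $[t_1,t_2]$, and the second bounds the diameter of the optimal-height square, not how long the test path lingers before crossing $\beta_\pm$. In the small-angle regime, $[u,v]$ can be arbitrarily long in the unit-speed parametrization of $\gamma$, and showing the Cannon--Thurston length of $\tau_\gamma([u,v])$ is nevertheless bounded requires exactly the machinery the paper builds for this purpose: the $\epsilon$-truncation to control $|u - t_1|$ and $|v - t_2|$ proportionally, the height-change estimates along the truncated interval, and the projection to the optimal height fiber via monotonicity. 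That argument occupies all of Section~\ref{section:small angle bottleneck}, and your proposal compresses it into a single sentence that does not follow from the cited propositions. Without it, even granting the bottleneck, you only have an $(r, K)$-bottleneck with no control on the length of the bottleneck set, which is not what the lemma asserts.

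(Minor: the labels of $\beta_\pm$ are swapped in the sentence specifying the crossing times $u, v$ relative to the quadrant geometry you set up, but that is cosmetic compared to the two gaps above.)
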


We now state the result for test paths over straight intervals. We shall prove this in \Cref{section:straight
  qg}.

\begin{lemma}\label{lemma:straight qg}
Suppose that $(S_h, \Lambda)$ is a hyperbolic metric on $S$ together
with a suited pair of measured laminations.  Then there are
constants $Q$ and $c$ such that for any non-exceptional geodesic
$\gamma$, and any straight interval $I$, the test path
$\tau_\gamma(I)$ is $(Q, c)$-quasigeodesic.
\end{lemma}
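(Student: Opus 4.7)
The plan is to reduce the problem to checking quasigeodesicity inside a sequence of ladders $F(\ell)$, each isometric to $\HH^2$, and then to assemble these pieces. By \Cref{prop:straight}, the straight segment $\gamma(I)$ is either contained in a single ideal complementary region $R$ of one of the invariant laminations, or in the union $R_+\cup R_-$ of two regions meeting in a non-rectangular polygon $P$. Breaking symmetry, I focus on the first case with $R$ a region of $\Lambda_+$; the second case is handled by gluing two instances of the first through the bounded-diameter polygon $P$ (\Cref{prop:innermost polygon diameter bound}, \Cref{prop:polygon}). Away from the two corner segments at the endpoints of $I$, the unit tangent $\gamma^1(t)$ stays uniformly far from $\overline{\Lambda}^1_-$ by \Cref{prop:unit tangent bound}, so for $\theta_\LL$ small, only the $\overline{\Lambda}_+$-term in \Cref{def:height function} contributes and $h_\gamma(t)\ge 0$ on the interior of $I$.

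Next, I would decompose the interior of $\gamma(I)$ into exponential intervals (\Cref{def:exponential interval}) with respect to the leaves $\ell$ of $\overline{\Lambda}_+$ which $\gamma^1$ enters within $\theta_\LL$ in $T^1(S_h)$. Using the uniform angle bound $\alpha_\LL$ from \Cref{prop:extended angle bound}, the projection overlap bound \Cref{prop:overlap}, and \Cref{prop:polygon} to forbid simultaneous proximity to several extended leaves inside the innermost non-rectangular polygon of $R$, the close leaves can be linearly ordered along $\gamma$ as $\ell_1,\dots,\ell_n$ with closest-approach times $t_1<\dots<t_n$, minimum distances $\theta_i\le\theta_\LL$, and pairwise disjoint exponential intervals $E_i$ (up to bounded overlap). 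Over each $E_i$ the test path $\tau_\gamma$ is bounded distance from the convex ladder $F(\ell_i)$, which is isometric to $\HH^2$. The apex of the hyperbolic geodesic in $F(\ell_i)$ between the endpoints of $\iota(\gamma(E_i))$ sits at hyperbolic height of order $\log\frac{1}{\theta_i}$, which translates under the ladder identification to Cannon--Thurston height $\log_k\log\frac{1}{\theta_i}$ up to a universal constant—exactly the apex of $h_\gamma$ on $E_i$ predicted by \Cref{def:height function}. Combined with \Cref{prop:fellow travel}, which gives $d_{\PSL(2,\RR)}(\gamma^1(t),\ell_i^1)\asymp\theta_i e^{|t-t_i|}$ on $E_i$, this shows that $\tau_\gamma$ restricted to $E_i$ fellow-travels that hyperbolic geodesic in $F(\ell_i)$ and is itself a $(Q_0,c_0)$-quasigeodesic, with constants independent of $i$ and of $\gamma$.

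Finally, I would assemble the local quasigeodesics with the short height-zero pieces of $\iota(\gamma)$ between consecutive exponential intervals, and invoke Farb's criterion \Cref{theorem:uniform progress h} inside the $\delta_3$-hyperbolic space $\wsr$. The bounded projection of $F(\ell_{i+1})$ onto $F(\ell_i)$, which follows as in the proof of \Cref{cor:ladder projection} from the angle bound \Cref{prop:extended angle bound}, ensures no backtracking and yields uniform progress, upgrading the concatenation to a $(Q,c)$-quasigeodesic with constants depending only on $(S_h,\Lambda)$. The main obstacle is the quantitative matching in the second step between the explicitly defined $h_\gamma$, which involves iterated logarithms and a floor, and the clean apex profile of the hyperbolic geodesic in each ladder: the floor $\lfloor\cdot\rfloor_1$ and the subtraction of $\log\frac{1}{\theta_\LL}$ in \Cref{def:height function} must be calibrated so that the error is bounded uniformly in $\theta_i$ and does not blow up as $\theta_i\to 0$. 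A secondary difficulty is controlling $\tau_\gamma$ while $\gamma$ crosses a non-rectangular polygon, which is handled by absorbing the bounded polygon-crossing contribution (\Cref{prop:innermost polygon diameter bound}, \Cref{prop:polygon}) into the additive quasigeodesic constant.
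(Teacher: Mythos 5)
Your proposal misses the central structural fact that drives the paper's proof: over a single intersection interval $I_R$, the radius function $\rho_\gamma(t)$ is controlled, up to bounded additive error, by a \emph{single} leaf $\ell$ of the extended lamination $\overline{\Lambda}_+$ contained in $R$ (\Cref{prop:single leaf}, \Cref{prop:dominant}). The geometric mechanism is that the nearest-point projections to $\gamma$ of all ideal points of the polygon $R$ are nested inside the projection interval of the leaf joining the two outermost ones, and \Cref{prop:comparing height functions} then forces that leaf's radius function to dominate all others on $I_R$. Consequently, on a long intersection interval the height function has a \emph{single} absolute-value apex, and the paper reduces quasigeodesicity to \Cref{prop:K-neighborhood} applied once in the ladder $F(\ell)$. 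Your picture of pairwise-disjoint exponential intervals $E_1,\dots,E_n$ with multiple apexes inside one complementary region is geometrically wrong; if $\gamma$ stays inside $R$ it cannot repeatedly approach and recede from distinct leaves of $\overline{\Lambda}_+$ in the manner your assembly scheme requires.

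Two secondary gaps are also worth flagging. First, your claim that ``$\gamma^1(t)$ stays uniformly far from $\overline{\Lambda}^1_-$'' away from the corners, deduced from \Cref{prop:unit tangent bound}, does not hold: that proposition only bounds the distance between the two extended laminations from below, not the distance from an arbitrary geodesic inside $R$ to $\overline{\Lambda}^1_-$. Indeed $\gamma(I_R)$ crosses many leaves of $\Lambda_-$ and can come arbitrarily close to extended leaves of $\overline{\Lambda}_-$ inside cusps of $R$. The non-negativity of $h_\gamma$ on a long $I_R$ is the content of \Cref{prop:long segment sign}, whose proof is genuinely nontrivial and uses the overlap bound \Cref{prop:overlap} and the Lipschitz property of the radius function rather than just \Cref{prop:unit tangent bound}. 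Second, your assembly step relies on ``bounded projection of $F(\ell_{i+1})$ onto $F(\ell_i)$'' for extended leaves inside a single polygon, citing \Cref{cor:ladder projection} and \Cref{prop:extended angle bound}; but \Cref{cor:ladder projection} concerns projections onto a loxodromic axis, and distinct extended leaves of $\overline{\Lambda}_+$ inside the same polygon may intersect each other (diagonals of an ideal quadrilateral do), so they need not form a sequence of well-separated bottlenecks. None of this machinery is needed once the single-dominant-leaf lemma is in place; the paper only invokes the bottleneck/Farb apparatus at the scale of whole corner segments and whole straight segments, not within a single intersection interval.
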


\subsubsection{The test path is close to a geodesic}\label{section:close}

In this section, we use \Cref{cor:corner distance} and
\Cref{lemma:straight qg} to show that the test path $\tau_\gamma$ lies in a
bounded neighborhood of the geodesic $\overline{\gamma}$, verifying the first condition in \Cref{theorem:uniform progress h}.

\begin{lemma:bounded distance}
Suppose that $(S_h, \Lambda)$ is a hyperbolic metric on $S$ together
with a suited pair of measured laminations.  Then there is a
constant $K> 0$ such that for any non-exceptional geodesic $\gamma$ in
$S_h$, and for any geodesic $\overline{\gamma}$ in
$\widetilde S_h \times \RR$ connecting the limit points of
$\iota(\gamma)$, the corresponding test path $\tau_\gamma$ is
contained in a $K$-neighborhood of $\overline{\gamma}$, i.e.
\[ \tau_\gamma \subseteq N_K( \overline{\gamma} ).  \]
\end{lemma:bounded distance}

This result is obtained as follows. By \Cref{cor:corner distance}, 
the
test path over any corner segment of $\gamma$ is a bounded distance from the
geodesic $\overline{\gamma}$. By \Cref{prop:straight}, every straight segment  has
both endpoints a bounded distance from the geodesic $\ogamma$.
Stability of quasigeodesics then implies that the entire straight
segment is contained in a bounded neighborhood of $\ogamma$.

\medskip
We now give the details for the proof of \Cref{lemma:bounded
  distance}.

\begin{proof}[Proof (of \Cref{lemma:bounded distance})]
As $\gamma$ is non-exceptional, its intersection with
each ideal complementary region has finite diameter.  The ideal
complementary regions are dense in $\ws_h$, and hence such intersection segments are dense in $\gamma$. By
\Cref{prop:straight}, each such intersection segment of $\gamma$ is contained in a straight segment.

\medskip
By \Cref{cor:corner distance}, there is a constant $K_1$ such that if
$\gamma(t)$ is in a corner segment, then $\tau_\gamma(t)$ is
distance at most $K_1$ from $\overline{\gamma}$.  By
\Cref{lemma:straight qg}, there are constants $Q$ and $c$ such that the
test path over any straight segment is $(Q, c)$-quasigeodesic.  As
each endpoint of a straight segment is within distance $K_1$ of
$\ogamma$, we may extend the straight segment at each end by paths of
length at most $K_1$, so that the endpoints of the extended straight
segment lie on $\ogamma$, and furthermore, this new path is
$(Q, c + K_1)$-quasigeodesic.

\medskip
By stability for quasigeodesics, there is a constant $L$ such that any
$(Q, c + K_1)$-quasigeodesic is contained in an $L$-neighborhood of
any geodesic connecting its endpoints.  In particular, every straight
segment is contained in an $L$-neighborhood of $\ogamma$.  As straight
segments are dense in $\gamma$, the result follows.
\end{proof}

\subsubsection{The test path makes uniform progress}

We now prove that the test path makes uniform
progress, verifying the second condition in \Cref{theorem:uniform progress h}.

\medskip
As the Cannon--Thurston pseudo-metric on $\widetilde{S}_h \times \RR$
is quasi-isometric to the hyperbolic metric, it suffices to show that $\tau_\gamma$ makes
$(L, M)$-uniform progress along $\ogamma$, for any $L > 0$ and for
$M = Q_M+c_M$, where $Q_M$ and $c_M$ are the quasi-isometry constants
between $\widetilde{S}_h \times \RR$ and $\HH^3$.  In fact, we will show:

\begin{proposition}
Suppose that $(S_h, \Lambda)$ is a hyperbolic metric on $S$ together
with a suited pair of measured laminations.  Then for any constant
$M \ge 0$ there is a constant $L \ge 0$ such that for any
non-exceptional geodesic $\gamma$, the test path $\tau_\gamma$ makes
$(L, M)$-uniform progress along $\overline{\gamma}$.
\end{proposition}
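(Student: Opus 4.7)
The plan is to show that $\tau_\gamma$, as an unparametrized curve, is a quasi-geodesic in $\wsr$ with constants depending only on $(S_h, \LL)$. Combined with \Cref{lemma:bounded distance}, which places $\tau_\gamma$ in a universal $K$-neighborhood of the target geodesic $\overline{\gamma}$, this implies that the nearest-point projection from $\tau_\gamma$ onto $\overline{\gamma}$ is a quasi-isometric embedding with universal constants. Uniform progress along $\overline{\gamma}$ then follows: moving arclength $L$ along $\tau_\gamma$ produces at least $L/Q_0 - c_0$ progress along $\overline{\gamma}$ for some universal $Q_0 \ge 1, c_0 \ge 0$, which exceeds any prescribed $M$ once $L$ is chosen large enough.

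To establish that $\tau_\gamma$ is a quasi-geodesic, fix $p = \tau_\gamma(\sigma)$ and $q = \tau_\gamma(\rho)$ with $\sigma < \rho$, and enumerate the corner segments of $\gamma$ strictly between $\sigma$ and $\rho$ as $C_1, \ldots, C_n$, with bottleneck intervals $[u_i, v_i]$ from \Cref{cor:corner distance}. Since $p$ lies in $F(\gamma((-\infty, u_i]))$ and $q$ lies in $F(\gamma([v_i, \infty)))$, the bottleneck property forces the $\wsr$-geodesic from $p$ to $q$ to pass within distance $K$ of each $\tau_\gamma([u_i, v_i])$, and the nested structure $F(\gamma([v_1, \infty))) \supset F(\gamma([v_2, \infty))) \supset \cdots$ ensures these passage points occur in order along $[p,q]$. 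Setting $b_0 := p$, $b_{n+1} := q$, and $b_i := \tau_\gamma(v_i)$ for $1 \le i \le n$, and letting $b_i' \in [p,q]$ be a point within $K$ of $b_i$, we obtain
\[
d_{\wsr}(p, q) = \sum_{i=0}^{n} d_{\wsr}(b_i', b_{i+1}') \ge \sum_{i=0}^{n} d_{\wsr}(b_i, b_{i+1}) - 2K(n+1).
\]
For each $0 \le i \le n$, the points $b_i$ and $b_{i+1} = \tau_\gamma(u_{i+1})$ (setting $u_{n+1} := \rho$, $v_0 := \sigma$) lie in a common straight segment whose test path is $(Q, c)$-quasi-geodesic by \Cref{lemma:straight qg}, yielding $d_{\wsr}(b_i, b_{i+1}) \ge (u_{i+1} - v_i)/Q - c$. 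Since $\sum (v_i - u_i) \le Kn$ by \Cref{cor:corner distance}, summing gives
\[
d_{\wsr}(p, q) \ge \frac{\rho - \sigma}{Q} - n\left(\frac{K}{Q} + c + 2K\right) - O(1).
\]

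The main obstacle is to control the number $n$ of corner segments in terms of the arclength $\rho - \sigma$, since nothing a priori prevents many closely-spaced corner segments from accumulating with only short straight-segment gaps between them. To overcome this, I will combine the above with the direct bottleneck separation $d_{\wsr}(p, q) \ge r$ from \Cref{cor:corner distance}, applied after passing to a maximal sub-collection of corner segments whose straight-segment gaps $(u_{i+1} - v_i)$ exceed a threshold $L_0$ chosen so that the straight-segment estimate $(u_{i+1} - v_i)/Q - c$ dominates the bottleneck separation constant $r$. The dominant bottlenecks then contribute at least a fixed positive amount per index to $d_{\wsr}(p,q)$, while omitted closely-spaced bottleneck clusters are absorbed as bounded per-cluster contributions to the error. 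Balancing the two estimates yields $d_{\wsr}(p, q) \ge A(\rho - \sigma) - B$ for universal constants $A > 0, B \ge 0$, completing the proof that $\tau_\gamma$ is a quasi-geodesic and hence establishing uniform progress along $\overline{\gamma}$.
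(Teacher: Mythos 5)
Your overall strategy is the same decomposition the paper uses: treat corner segments as bottlenecks (\Cref{cor:corner distance}), treat the stretches between them as quasi-geodesics via \Cref{lemma:straight qg}, and combine. The detour through quasi-geodesicity (rather than verifying the uniform-progress condition directly, as the paper does by splitting into a ``long straight segment'' case in \Cref{prop:long straight} and a ``dense corner segment'' case) is logically fine and essentially equivalent. But there are three gaps in the execution.

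First, a parametrization mismatch. The parameters $u_i, v_i, \sigma, \rho$ live in the unit-speed parametrization of $\gamma$ in $\widetilde{S}_h$, so $u_{i+1} - v_i$ is a hyperbolic length. But the constants $(Q,c)$ from \Cref{lemma:straight qg} (which trace back through \Cref{lemma:long segments} to \Cref{prop:K-neighborhood}) govern the \emph{Cannon--Thurston arclength} parametrization of the test path. Your bound $d_{\wsr}(b_i, b_{i+1}) \ge (u_{i+1} - v_i)/Q - c$ should instead involve the CT-arclength of $\tau_\gamma([v_i, u_{i+1}])$. The same mismatch infects your conclusion $d_{\wsr}(p,q) \ge A(\rho-\sigma) - B$: the quantity $\rho-\sigma$ is a hyperbolic length, whereas both ``unparametrized quasi-geodesic'' and the uniform-progress definition (\Cref{def:uniform progress}, ``distance at least $L$ apart along $\tau$'') refer to CT-arclength. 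The paper works consistently in CT-arclength throughout \Cref{prop:long straight} and the final proposition; you need to do the same.

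Second, the nesting of the bottleneck sets, and hence the ordering of the points $b_i'$ on $[p,q]$, requires the intervals $[u_i,v_i]$ to be disjoint. \Cref{cor:corner distance} only gives $u_i \le t_1 \le t_2 \le v_i$ with bounded $\tau_\gamma([u_i,v_i])$-length; nothing prevents $[u_i,v_i]$ and $[u_{i+1},v_{i+1}]$ from overlapping when corner segments are close together. The paper avoids this by taking alternating corner segments (intervals $I_j$ and $I_{j+2}$) spaced at least $L > K$ apart. You would need a similar thinning of your index set before asserting that $F(\gamma([v_1,\infty))) \supset F(\gamma([v_2,\infty))) \supset \cdots$ gives an ordered sequence of passage points.

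Third, the ``balancing'' paragraph is a plan rather than a proof. The quantitative step — choosing the threshold $L_0$, bounding the per-cluster CT-arclength of omitted corner clusters, and showing the surviving bottlenecks contribute enough — is precisely where the work lies, and it is left entirely implicit. The paper's split into \Cref{prop:long straight} (one long straight segment suffices) and the dense-corner case (every CT-arclength-$L$ piece contains a corner, so take every other bottleneck and accumulate $2r$ of separation per pair) resolves this cleanly; your single-pass accounting would need comparable care to close.
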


We now give a brief overview of the argument.

\begin{enumerate}

\item Suppose that the point $\gamma_\tau(t_1)$ is
an $(r, K)$-bottleneck for $U_1 = F(\gamma((-\infty, a_1))$ and
$V_1 = F(\gamma([b_1, \infty)))$, and similarly the point $\gamma_\tau(t_2)$
is an $(r, K)$-bottleneck for $U_2 = F(\gamma((-\infty, a_2))$
and $V_1 = F(\gamma([b_2, \infty)))$.  Then both points $\tau_\gamma(t_1)$ and
$\tau_\gamma(t_2)$ are bottlenecks for $U = U_1 \cap U_2$ and
$V = V_1 \cap V_2$, and furthermore, the distance between $U$ and $V$
is at least $d_{\wsr}( \tau_\gamma(t_1), \tau_\gamma(t_2) ) - 2K$.
This gives lower bounds on the distances between points on
the test path.  It remains to show that for any two points on the test
path sufficiently far apart along
$\tau_\gamma$, there are a pair of bottlenecks a definite distance
apart in $\wsr$.

\item Let $\tau_\gamma(t_1)$ and $\tau_\gamma(t_2)$ be two
points on the test path sufficiently far apart.  Suppose the
corresponding segment $\gamma([t_1, t_2])$ contains a long straight segment $\gamma(I)$.  Then the test path
$\tau_\gamma(I)$ is quasigeodesic, and so it has definite length. Furthermore, 
the corner segments at each end of $\gamma(I)$ give a pair of
bottlenecks.  Therefore, $\tau_\gamma(t_1)$ and $\tau_\gamma(t_2)$ are a definite distance apart in $\wsr$.

\item Now suppose that the corresponding segment $\gamma([t_1, t_2])$ of
the geodesic in $\ws_h$ does not contain any long straight segments.
This implies that there is a constant $L$ such that every segment
of the test path over $\gamma([t_1, t_2])$ with length $L$ contains a corner segment. 
By \Cref{cor:corner distance}, it has a tame bottleneck.  The bottleneck sets for successive tame bottlenecks are nested and so the distance in $\wsr$
increases linearly in the number of bottlenecks. Thus the test path
makes definite progress, as required.

\end{enumerate}

We start with Step 1 by showing that pairs of bottlenecks separate
points along the test path $\tau_\gamma$.

\begin{proposition}\label{prop:nesting distance}
Suppose that $(S_h, \Lambda)$ is a hyperbolic metric on $S$ together
with a suited pair of measured laminations.  Let $\gamma_\tau(t_1)$
be an $(r, K)$-bottleneck for $U_1 = F(\gamma((-\infty, a_1])$ and
$V_1 = F(\gamma([b_1, \infty)))$, and similarly let $\gamma_\tau(t_2)$
be an $(r, K)$-bottleneck for $U_2 = F(\gamma((-\infty, a_2])$ and
$V_1 = F(\gamma([b_2, \infty)))$.

\medskip
Let $a = \min \{ a_1, a_2\}$ and let $b = \max \{ b_1, b_2 \}$.  Then
for any two points $p$ and $q$ separated by $[a, b]$, the distance in
$\widetilde{S}_h \times \RR$ between $\tau_\gamma(p)$ and
$\tau_\gamma(q)$ is at least the distance between $\tau_\gamma(t_1)$
and $\tau_\gamma(t_2)$, up to bounded error $2K$, i.e.
\[ d_{\widetilde{S}_n \times \RR}( \tau_\gamma(p), \tau_\gamma(q)
) \ge d_{\widetilde{S}_n \times \RR}( \tau_\gamma(t_1),
\tau_\gamma(t_2) ) - 2K.\]
\end{proposition}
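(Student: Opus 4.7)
The plan is to exploit the bottleneck property directly. First I would observe that the two test path points $\tau_\gamma(p)$ and $\tau_\gamma(q)$ lie in the intersections $U_1 \cap U_2$ and $V_1 \cap V_2$ respectively. Without loss of generality assume $p \le a$ and $q \ge b$. Since $\tau_\gamma(p) = (\gamma(p), h_\gamma(p))$ lies in the suspension flow line $F(\gamma(p))$, and $\gamma(p)$ lies in $\gamma((-\infty, a])$, we have $\tau_\gamma(p) \in F(\gamma((-\infty, a])) \subseteq U_1 \cap U_2$ because $a = \min\{a_1, a_2\}$. The same reasoning with $b = \max\{b_1, b_2\}$ gives $\tau_\gamma(q) \in V_1 \cap V_2$.

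Next I would fix a geodesic $\eta$ in $\widetilde S_h \times \RR$ from $\tau_\gamma(p)$ to $\tau_\gamma(q)$. Because $\eta$ is a geodesic from $U_1$ to $V_1$, the bottleneck hypothesis for $\tau_\gamma(t_1)$ yields a point $x_1 \in \eta$ with $d_{\wsr}(x_1, \tau_\gamma(t_1)) \le K$. Since $\eta$ is simultaneously a geodesic from $U_2$ to $V_2$, the bottleneck hypothesis for $\tau_\gamma(t_2)$ similarly yields a point $x_2 \in \eta$ with $d_{\wsr}(x_2, \tau_\gamma(t_2)) \le K$.

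Finally, because $x_1$ and $x_2$ lie on the geodesic $\eta$, whose total length is $d_{\wsr}(\tau_\gamma(p), \tau_\gamma(q))$, we have $d_{\wsr}(\tau_\gamma(p), \tau_\gamma(q)) \ge d_{\wsr}(x_1, x_2)$. Applying the triangle inequality,
\[
d_{\wsr}(x_1, x_2) \ge d_{\wsr}(\tau_\gamma(t_1), \tau_\gamma(t_2)) - d_{\wsr}(x_1, \tau_\gamma(t_1)) - d_{\wsr}(x_2, \tau_\gamma(t_2)) \ge d_{\wsr}(\tau_\gamma(t_1), \tau_\gamma(t_2)) - 2K,
\]
which gives the desired inequality. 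There is no substantive obstacle here; the proof is essentially a one-line application of the bottleneck definition combined with the fact that the nesting $p \le a$ and $b \le q$ places the endpoints in the appropriate flow sets for \emph{both} bottleneck pairs simultaneously.
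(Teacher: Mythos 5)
Your proposal is correct and takes essentially the same approach as the paper: place $\tau_\gamma(p)$ and $\tau_\gamma(q)$ in $U_1 \cap U_2$ and $V_1 \cap V_2$ respectively, observe that the connecting geodesic is therefore a geodesic from $U_i$ to $V_i$ for both $i$, and apply the bottleneck property twice followed by the triangle inequality. If anything your write-up is slightly cleaner, since the paper's proof inserts an unnecessary appeal to the corner-segment lemma where the bottleneck hypothesis already supplied in the statement suffices, exactly as you use it.
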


\begin{proof}
We may assume that $p \le a \le b \le q$.  Let $\eta$ be a geodesic in
$\widetilde{S}_h \times \RR$ connecting $\tau_\gamma(p)$ and
$\tau_\gamma(q)$.

\medskip
By \Cref{cor:corner distance}, there are constants $r$ and $K$ such that the
points $\tau_\gamma(t_1)$ and $\tau_\gamma(t_2)$ are $(r, K)$-bottlenecks
for
$U = F(\gamma((-\infty, a_1]) \cap F(\gamma((-\infty, a_2]) =
F(\gamma((\infty, a]))$ and
$V = F(\gamma([b, \infty)) = F(\gamma([b_1, \infty)) \cap
F(\gamma([b_2, \infty)))$.

\medskip
As $\tau_\gamma(p) \in U$ and $\tau_\gamma(q) \in V$, the geodesic
$\eta$ passes within distance $K$ of both $\tau_\gamma(t_1)$ and
$\tau_\gamma(t_2)$, so the length of $\eta$ is at least
$d_{\widetilde{S}_n \times \RR}( \tau_\gamma(t_1), \tau_\gamma(t_2) )
- 2 K$, as required.
\end{proof}

By \Cref{lemma:bounded distance}, the test path $\tau_\gamma$ is contained in a $K$-neighborhood of
$\overline{\gamma}$. Hence, for an point $p$ on the test path, its nearest
point projection to $\overline{\gamma}$ is distance at most $K$ away.
Therefore, the distance between the nearest points on $\ogamma$ of any
points $p$ and $q$ on the test path is at least
$d_{\widetilde{S}_h \times \RR}(p, q) - 2K$ apart, so it suffices to
estimate distance between points on the test path in
$\widetilde{S}_h \times \RR$.

\medskip

For Step 2, suppose that a long segment 
$\tau_\gamma(I)$ contains a long straight subsegment.

\begin{proposition}
\label{prop:long straight}
Suppose that $(S_h, \Lambda)$ is a hyperbolic metric on $S$ together
with a suited pair of measured laminations.  Then for any constant $M$
there is a constant $L$ such that for any straight segment $I$ and any interval $[p,q]$ such that $\tau_\gamma([p, q]) \cap \tau_\gamma(I)$ has
arc length at least $L$, the distance in $\wsr$ between $\tau_\gamma(p)$ and $\tau_\gamma(p)$ is at least $M$.
\end{proposition}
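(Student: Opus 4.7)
The plan is to combine Lemma \ref{lemma:straight qg}, that $\tau_\gamma|_I$ is a $(Q, c)$-quasigeodesic in $\wsr$, with Lemma \ref{cor:corner distance}, which provides tame bottlenecks at the two corners of the straight segment $I$. Writing $I = [a, b]$ with corner segments $[a, a_1]$ and $[b_1, b]$ (Proposition \ref{prop:straight}), Lemma \ref{cor:corner distance} supplies bottleneck intervals $[u_1, v_1] \supseteq [a, a_1]$ and $[u_2, v_2] \supseteq [b_1, b]$ such that each $\tau_\gamma([u_i, v_i])$ has $\wsr$-arc length at most $K$ and is an $(r, K)$-bottleneck for the ladders over $\gamma((-\infty, u_i])$ and $\gamma([v_i, \infty))$.

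Let $[p_0, q_0] := [p, q] \cap I$. By hypothesis $\tau_\gamma([p_0, q_0])$ has $\wsr$-arc length at least $L$. Since $\tau_\gamma|_{[p_0, q_0]}$ is a sub-path of the quasigeodesic $\tau_\gamma|_I$, reparametrising by arc length produces constants $Q', c'$, depending only on $Q, c$, with
\[
d_\wsr\bigl(\tau_\gamma(p_0),\, \tau_\gamma(q_0)\bigr) \;\ge\; L/Q' - c'.
\]
The bulk of the work is then to replace $(p_0, q_0)$ by $(p, q)$ at only a bounded additive cost. I will treat the left endpoint (the right is symmetric): if $p \ge a$ then $p = p_0$; if $p \in [u_1, a]$, then $\tau_\gamma(p)$ and $\tau_\gamma(p_0) = \tau_\gamma(a)$ both lie in the bottleneck set $\tau_\gamma([u_1, v_1])$, which has $\wsr$-diameter at most $K$; finally if $p < u_1$ and $L > K$, then $\tau_\gamma(p) \in F(\gamma((-\infty, u_1]))$, and moreover $q_0 > v_1$ because $\tau_\gamma([p_0, v_1]) \subseteq \tau_\gamma([u_1, v_1])$ contributes at most $K$ to the test-path length, so $\tau_\gamma(q) \in F(\gamma([v_1, \infty)))$. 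The $(r, K)$-bottleneck property then forces any geodesic from $\tau_\gamma(p)$ to $\tau_\gamma(q)$ to pass within $2K$ of $\tau_\gamma(p_0)$, and a triangle inequality estimate yields
\[
d_\wsr(\tau_\gamma(p),\, \tau_\gamma(q)) \;\ge\; d_\wsr(\tau_\gamma(p_0),\, \tau_\gamma(q)) - 4K
\]
uniformly across the three subcases. The analogous argument at the right corner yields $d_\wsr(\tau_\gamma(p_0), \tau_\gamma(q)) \ge d_\wsr(\tau_\gamma(p_0), \tau_\gamma(q_0)) - 4K$.

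Chaining the three estimates gives $d_\wsr(\tau_\gamma(p), \tau_\gamma(q)) \ge L/Q' - c' - 8K$, so the proposition follows by taking $L := \max\{K + 1,\, Q'(M + c' + 8K)\}$. The main delicacy is correctly tracking which side of each bottleneck $\tau_\gamma(p)$ and $\tau_\gamma(q)$ lie on; once the precise nesting inequalities $u_1 \le a \le v_1$ and $u_2 \le b \le v_2$ from Lemma \ref{cor:corner distance} are in hand, together with the observation that the short bottleneck arcs cannot absorb a test-path segment of $\wsr$-length much greater than $K$, this reduces to the short case analysis above.
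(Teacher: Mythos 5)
Your proposal is correct and takes essentially the same approach as the paper: combine quasigeodesicity of the test path over the straight segment (Lemma~\ref{lemma:straight qg}) with the tame-bottleneck property at its two corners (Lemma~\ref{cor:corner distance}), routing a geodesic from $\tau_\gamma(p)$ to $\tau_\gamma(q)$ through the corner bottlenecks when $[p,q]$ extends beyond the straight segment. Your case analysis is slightly more granular than the paper's (which splits only on whether $[p,q]\subseteq [u_1,v_2]$), but the underlying argument and constant bookkeeping are the same.
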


\begin{proof}
Let $I = [t_1, t_2]$.  By \Cref{lemma:straight qg}, there are
constants $Q$ and $c$ such that the test path $\tau_\gamma(I)$ over the straight segment is $(Q, c)$-quasigeodesic.  By \Cref{def:straight}, $\gamma(I)$ has corner segments at both ends. By
\Cref{cor:corner distance}, there are constants $r$ and $K$, and points
$u_i \le t_i \le v_i $ such that the endpoints $\tau_\gamma(t_i)$ are
$(r, K)$-bottlenecks with respect to $U_i = F(\gamma((-\infty, u_i]))$ and
$V_i = F(\gamma([v_i), \infty))$.  Furthermore, the lengths of the
segments $\tau_\gamma([u_i, t_i])$ and $\tau_\gamma([t_i, v_i])$ are
at most $K$.  In particular, the segment $\tau_\gamma([u_1, v_2])$,
lying between the initial quadrant for $t_1$ and the terminal quadrant
for $t_2$, is $(Q, c + 2 K)$-quasigeodesic.

\medskip
Suppose that $[p, q] \subseteq [u_1, v_2]$.  
Since $\tau_\gamma([u_1, v_2])$ is $(Q, c + 2K)$-quasigeodesic and the arc length of $\tau_\gamma([p, q]) \cap \tau_\gamma(I)$ is at least $L$, the distance
in $\wsr$ between $\tau_\gamma(p)$ and $\tau_\gamma(q)$ is at least
$L / Q - c - 2K$.

\medskip
Now suppose that $[p, q]$ is not contained in $[u_1, v_2]$.
Then at least one
endpoint of $[p, q]$ lies outside $[u_1, v_2]$.  Up to reversing the
orientation on $\gamma$, we may assume that $p < u_1$, and hence $\tau_\gamma([p, q]) \cap \tau_\gamma(I) = \tau_\gamma([t_1, q])$. 
By assumption, the
arc length of $\tau_\gamma([t_1, q])$ is at least $L$.
So the distance in
$\wsr$ between $\tau_\gamma(t_1)$ and $\tau_\gamma(q)$ is at least
$L / Q - c - 2 K$.  The point $\tau_\gamma(p)$ lies in $U_1$, and so by the tame bottleneck property any
geodesic from $\tau_\gamma(p)$ to $\tau_\gamma(q)$ passes within
distance $K$ of $\tau_\gamma(t_1)$.
So the distance from $\tau_\gamma(p)$ to $\tau_\gamma(q)$ is at least
$L / Q - c - 3 K$.

\medskip
The result follows if given $M$ we choose $L = Q ( M + c + 3 K)$,
where $Q, c$ and $K$ only depend on $(S_h \Lambda)$.
\end{proof}

We now assume that a segment $\tau_\gamma(I)$ contains no straight segment
of arc length greater than $L$. By \Cref{def:straight}, it follows that every segment $\tau_\gamma(I)$ of arc length $L$
contains a corner segment.  We now show that in this case the test
path makes definite progress.

\begin{proposition}
Suppose that $(S_h, \Lambda)$ is a hyperbolic metric on $S$ together
with a suited pair of measured laminations.  For constants $L$
and $M$ as in \Cref{prop:long straight} there is a constant $N$ such that for any segment $I$ of
length $NL$ in which every segment of $\tau_\gamma(I)$ of arc length
$L$ contains a corner segment, the distance in $\wsr$ between the
endpoints of $\tau_\gamma(I)$ is at least $M$.
\end{proposition}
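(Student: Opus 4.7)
The plan is to partition $I$ into $N$ consecutive subintervals $I_1, \ldots, I_N$ each with $\tau_\gamma(I_j)$ of arc-length $L$. By hypothesis each $I_j$ contains a corner segment, so by \Cref{cor:corner distance} there are parameters $u_j \le t_j \le v_j$ in $I_j$ (after choosing $L > K$) with $\tau_\gamma([u_j, v_j])$ an $(r,K)$-bottleneck of length at most $K$ for the ladders over $\gamma((-\infty, u_j])$ and $\gamma([v_j, \infty))$. With this choice the bottleneck intervals are pairwise disjoint and satisfy $v_j \le u_{j+1}$, so the bottlenecks are nested consistently with $\gamma$. Write $p$ and $q$ for the endpoints of $I$.

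Let $\eta$ be a geodesic in $\wsr$ from $\tau_\gamma(p)$ to $\tau_\gamma(q)$. For each $j$ the endpoints of $\eta$ lie in $F(\gamma((-\infty, u_j]))$ and $F(\gamma([v_j, \infty)))$ respectively, so the bottleneck property forces $\eta$ to pass within distance $K$ of the slab $\tau_\gamma([u_j, v_j])$. Nesting of the past and future flow sets guarantees that the resulting nearest-slab points $w_1, \ldots, w_N$ appear in this order along $\eta$.

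The main step, and what I expect to be the principal obstacle, is amplifying the $N$ ordered near-slab visits into a lower bound for the length of $\eta$ that is linear in $N$. I would track along $\eta$ the last parameter $s_j$ at which $\eta$ lies in $F(\gamma((-\infty, u_j]))$ and the first parameter $s_j'$ at which $\eta$ enters $F(\gamma([v_j, \infty)))$; quasiconvexity of the flow sets in the $\delta_3$-hyperbolic space $\wsr$ makes these well-defined up to an $O(\delta_3)$ error. The $(r, K)$-bottleneck property then gives $s_j' - s_j \geq r$, while the nesting $v_j \leq u_{j+1}$, combined with coarse monotonicity of the projection of $\eta$ to $\gamma$ through the ladder $F(\gamma)$ (which follows from quasiconvexity of ladders and hyperbolicity), yields $s_j' \leq s_{j+1} + O(\delta_3)$. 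Chaining these estimates telescopically gives
\[
\operatorname{length}(\eta) \;\geq\; \sum_{j=1}^{N} (s_j' - s_j) \;\geq\; N r - O(N \delta_3).
\]
For $r > O(\delta_3)$ (arranged by enlarging $L$ and grouping consecutive corners into super-bottlenecks if the constants from \Cref{cor:corner distance} do not already guarantee it) this lower bound grows linearly in $N$, and choosing $N$ with $N r - O(N \delta_3) \geq M$ completes the proof.
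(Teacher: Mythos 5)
Your overall architecture matches the paper's: partition $I$, extract an $(r,K)$-bottleneck from each corner, and argue that the geodesic $\eta$ accumulates length linearly in the number of bottlenecks. But there is a genuine gap in the amplification step, and the proposed fix does not work.

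The problematic point is the per-bottleneck error $O(\delta_3)$ that you introduce via the unproved ``coarse monotonicity of the projection of $\eta$ to $\gamma$'' claim. Your lower bound becomes $Nr - O(N\delta_3)$, which is vacuous if $r \le O(\delta_3)$. Since $r$ is a fixed constant produced by \Cref{cor:corner distance}, it is not under your control: enlarging $L$ has no effect on it. Your suggested remedy — ``grouping consecutive corners into super-bottlenecks'' to boost $r$ — is circular, because the claim that a run of $m$ consecutive corner bottlenecks produces a single bottleneck with separation roughly $mr$ is exactly what this proposition is meant to establish. The paper sidesteps the error term entirely: using only indices $j$ and $j+2$ (so that, since each slab has arc length at most $K \le L$ and $I_{j+1}$ is a buffer, the slabs $\tau_\gamma([u_j,v_j])$ and $\tau_\gamma([u_{j+2},v_{j+2}])$ are disjoint and the flow sets satisfy $U_j \subset U_{j+2}$, $V_{j+2} \subset V_j$), it gets the clean inequality $d_{\wsr}(U_j, V_{j+2}) \ge 2r$ with no hyperbolicity correction, after which $N \ge M/r + 2$ suffices.

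Two smaller issues: \Cref{cor:corner distance} does not place $u_j$ and $v_j$ inside $I_j$ — only the corner segment $[t_1,t_2]$ lies in $I_j$, while $[u_j,v_j]$ can extend past the ends of $I_j$ by up to $K$ in arc length — so your assertion that the bottleneck parameters lie in $I_j$ is unwarranted. Relatedly, your pairwise-disjointness claim for consecutive slabs $j$ and $j+1$ does not follow from $L > K$: two adjacent slabs can both poke into the shared boundary region. This is precisely why the paper works with the pairs $(j, j+2)$ rather than $(j, j+1)$. You should adopt the same indexing and then, more importantly, replace the $O(\delta_3)$-laden chaining with the nesting inequality $d(U_j, V_{j+2}) \ge 2r$.
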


\begin{proof}
Let $I_1 , \ldots , I_n$ be consecutive intervals along $I$ such
that each test path segment $\tau_\gamma(I_j)$ has arc
length $L$, and each segment $\gamma(I_j)$ contains a corner segment.
By \cref{lemma:bottleneck}, each corner segment is an
$(r, K)$-bottleneck, with respect to a pair of sets
$U_j = F( (-\infty, u_j] )$ and $V_j = F( [v_j, \infty) )$, with the
arc length of $\tau_\gamma([u, v])$ at most $K$.  We may assume that
$L$ is greater than $K$, and so for any pair of intervals $I_j$ and
$I_{j+2}$, the segments $\tau_\gamma(u_j, v_j)$ and
$\tau_\gamma(u_{j+2}, v_{j+2})$ are disjoint, and the bottleneck sets
are nested, $U_j \subset U_{j+2}$ and $V_{j+2} \subset V_j$.  In
particular, the distance between $U_j$ and $V_{j+2}$ is at least $2r$.
The result now follows by choosing $N \ge M/r + 2$.
\end{proof}

We conclude the proof that test paths are quasigeodesics, assuming
\Cref{cor:corner distance} and \Cref{lemma:straight qg}.  The
remainder of this section is devoted to the proof of these two
results.

\subsection{Properties of the height function}\label{section:height}

In this section, we specify the exact choice $\theta_\LL$ for the height function in \Cref{def:height function2}.  To do so, we first define a function that is 
intermediate between the height function and the distance in
$\PSL(2, \RR)$, which we call the radius function.  We discuss its
geometric interpretation, and use it to show that both the radius and the corresponding height functions are Lipschitz.  We also show that both the radius and height functions at a point of intersection of a geodesic $\gamma$ with a leaf of an invariant lamination can be
estimated in terms of the angle of intersection.

\subsubsection{The radius function}

In \Cref{def:height function2}, the height function is defined in terms of the distance in $\PSL(2, \RR)$
from the geodesic to the extended laminations.  Intuitively, we can
think of distances in $\PSL(2, \RR)$ as extending the angle of
intersection between the geodesic and the laminations to a continuous
function along the geodesic.  We define an intermediate function, which we call the radius function, to be roughly the exponential of
the height function, or equivalently, the logarithm of the distance
function in $\PSL(2, \RR)$.  Intuitively, this extends the length
of the projection interval to a continuous
function along $\gamma$.  We give below the precise definition, and then
use it to estimate the radius function at an intersection point
between a geodesic $\gamma$ and a lamination in terms of the angle of
intersection.

\medskip
The angle $\theta$ of intersection between $\gamma$ and a leaf
$\ell \in \Lambda$ determines the radius of both the exponential
interval $E_{\ell}$ and the projection interval $I_{\ell}$.  The radii
of these intervals is equal to $\log \tfrac{1}{\theta}$, up to bounded
additive error.  For a single leaf $\ell$, we define the \emph{radius
  function} $\rho_{\gamma, \ell}$ to be
\begin{equation}\label{eq:radius leaf}
\rho_{\gamma, \ell}(t) = \left\lfloor \log \frac{1}{d_{\PSL(2,
    \RR)}(\gamma^1(t), \ell^1)} \right\rfloor_1,
\end{equation}
where $\ell^1$ is the lift of $\ell$ in $\PSL(2, \RR)$.
Up to bounded additive error, the value of the radius function at a point of intersection equals the radius of the projection interval for the leaf of intersection.  At other points $t$, again up to bounded additive error, the value of the
radius function equals the largest radius of an interval centered
at $t$ that is contained in the projection interval.

\medskip
As the distance to one of the extended laminations is the infimum of
the distance to any leaf of the laminations, we may define radius
functions for the extended laminations as follows,
\[ \rho_{\gamma, \overline{\Lambda}_+}(t) = \sup_{\ell \in
  \overline{\Lambda}_+} \rho_{\gamma, \ell}(t) \text{ and }
\rho_{\gamma, \overline{\Lambda}_-}(t) = \sup_{\ell \in
  \overline{\Lambda}_-} \rho_{\gamma, \ell}(t). \]

We now estimate the radius function for a lamination at an intersection point using the radius function for the leaf of
intersection.  The exponential bounds on the distance between the
lifts of two geodesics to the unit tangent bundle, from
\Cref{prop:fellow travel}, become linear bounds for the logarithm of
the reciprocal of the distance function.  In particular, taking
logarithms of \eqref{eq:fellow travel} gives
\begin{equation}\label{eq:one over log}
\log \tfrac{1}{\theta} - |t| - \log L_0 \le \log
\frac{1}{d_{\PSL(2, \RR)}(\gamma^1(t), \ell^1)} \le \log \tfrac{1}{\theta}
- |t| + \log L_0,
\end{equation}
and these bounds hold for $|t| \le \log \tfrac{1}{\theta}$.

\medskip
Suppose that in $\PSL(2, \RR)$ the point on $\gamma$ closes to $\ell$ is $\gamma(t_\ell)$, with
$d_{\PSL(2, \RR)}(\gamma(t), \ell) = \theta_\ell$.  Recall that the
\emph{exponential interval} $E_\ell$ for $\ell$ is
$[t_\ell - \frac{1}{\log \theta_\ell}, t_\ell + \log
\tfrac{1}{\theta_\ell}]$.  For a compact interval $I \subset \RR$ with
length $|I|$ and midpoint $m$, define the \emph{absolute value
  function} $| \cdot |_I$ to be $|t|_I = \lfloor |I| - |t| \rfloor_0$,
as illustrated in \Cref{fig:absolute value function}.  We remark that
for $t \in I$, $|t|_I$ is equal to the distance from $t$ to the
nearest endpoint of $I$, so for any $t \in I$, the interval
$[ t - |t|_I, t + |t|_I ] \subseteq I$.

\begin{figure}[h]
\begin{center}
\begin{tikzpicture}

\tikzstyle{point}=[circle, draw, fill=black, inner sep=1pt]

\draw [arrows=->] (0, 0) -- (8, 0) node [label=below:$t$] {};

\draw [thick, arrows=|-|] (2, 0) -- (6, 0) node [pos=0.2,
label=below:$I$] {};

\draw (4, 0) -- (4, -0.2) node [label=below:$m$] {};

\draw (2, 0) -- (4, 2) -- (6, 0) node [midway, label=right:${|t|_I = \lfloor m
- |t| \rfloor_0}$] {};

\end{tikzpicture}
\end{center}
\caption{An absolute value function.} \label{fig:absolute value function}
\end{figure}
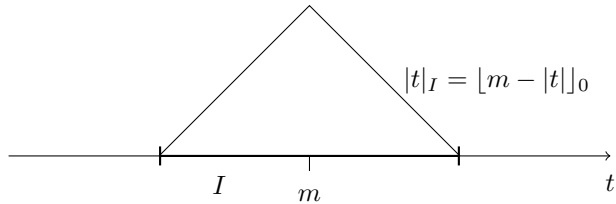

With this notation, we may rewrite \eqref{eq:one over log} as
\begin{equation}\label{eq:interval abs}
|t|_{E_\ell} - K \le \rho_{\gamma, \ell}(t) \le |t|_{E_\ell} + K,
\end{equation}
where $K = \log L_0$.

\medskip
We now use the above observations to show that if the value of the
radius function $\rho_{\gamma, \ell}(t)$ is sufficiently large, then
there is an interval centered at $t$ of radius
$\rho_{\gamma, \ell}(t)$ (up to bounded additive error) contained in
the exponential interval $E_\ell \subset \gamma$ determined by the
leaf $\ell$.

\begin{proposition}\label{prop:radius function}
There is a constant $K$ such that for any geodesics $\gamma$ and
$\ell$ in $\HH^2$, with unit speed parametrizations, then if
$\rho_{\gamma, \ell}(t) \ge K$, then the interval centered at $t$ of
radius $\rho_{\gamma, \ell}(t) - K$ is contained in the exponential
interval $E_\ell \subset \gamma$ determined by $\ell$.
\end{proposition}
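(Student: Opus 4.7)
The plan is to derive this directly from the exponential divergence estimate \Cref{prop:fellow travel} together with the definition of $\rho_{\gamma, \ell}$ and the interval estimate \eqref{eq:interval abs}. Let $\gamma(t_\ell)$ denote the closest point on $\gamma$ to $\ell$ in $\PSL(2, \RR)$, and let $\theta_\ell = d_{\PSL(2, \RR)}(\gamma^1(t_\ell), \ell^1)$, so that $E_\ell = [t_\ell - \log \tfrac{1}{\theta_\ell}, t_\ell + \log \tfrac{1}{\theta_\ell}]$. I will take $K_0 = \log L_0 + 1$, where $L_0$ is the constant from \Cref{prop:fellow travel}, and then show $K = K_0 + 1$ (or a slightly larger constant) works.

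First I would dispense with the case $t \notin E_\ell$. By the second bullet of \Cref{prop:fellow travel}, for such $t$ we have $d_{\PSL(2,\RR)}(\gamma^1(t), \ell^1) \ge 1/L_0$, so $\rho_{\gamma, \ell}(t) \le \lfloor \log L_0 \rfloor_1 \le K_0$. Hence the hypothesis $\rho_{\gamma, \ell}(t) \ge K > K_0$ forces $t \in E_\ell$, and in particular $|t - t_\ell| \le \log \tfrac{1}{\theta_\ell}$, which is the range in which the exponential estimates of \Cref{prop:fellow travel} apply.

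Next I would apply \eqref{eq:interval abs} (which is \Cref{prop:fellow travel} after taking $\log \tfrac{1}{\cdot}$ of both sides of \eqref{eq:fellow travel}) to this point $t$. The key observation is that for $t \in E_\ell$, the quantity $|t|_{E_\ell}$ in the notation of the absolute value function equals $\log \tfrac{1}{\theta_\ell} - |t - t_\ell|$, which is precisely the distance from $t$ to the nearest endpoint of $E_\ell$. The lower bound $|t|_{E_\ell} - K_0 \le \rho_{\gamma, \ell}(t)$ then rearranges to
\[
\text{dist}(t, \partial E_\ell) = |t|_{E_\ell} \ge \rho_{\gamma, \ell}(t) - K_0.
\]
Since $t$ is interior to $E_\ell$ with distance to $\partial E_\ell$ at least $\rho_{\gamma, \ell}(t) - K_0$, the interval centered at $t$ of radius $\rho_{\gamma, \ell}(t) - K_0$ is contained in $E_\ell$, as required.

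This is really just an unpacking of \eqref{eq:interval abs} together with the observation that outside $E_\ell$ the radius function is bounded; the only step requiring mild care is checking that $|t|_{E_\ell}$, with its piecewise linear ``tent'' definition, correctly encodes the distance to the boundary of $E_\ell$ so that the bound on $|t|_{E_\ell}$ translates into the desired inclusion. There is no genuine obstacle here—the substantive geometric input is entirely contained in \Cref{prop:fellow travel}.
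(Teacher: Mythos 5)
Your proof is correct and takes essentially the same route as the paper's: both derive the result from \eqref{eq:interval abs} (itself a consequence of \Cref{prop:fellow travel}) together with the identification of $|t|_{E_\ell}$ with the distance from $t$ to $\partial E_\ell$. The one place you are more careful than the paper is in explicitly invoking the second part of \Cref{prop:fellow travel} to rule out $t\notin E_\ell$ (where $d_{\PSL(2,\RR)}(\gamma^1(t),\ell^1)\ge 1/L_0$ forces $\rho_{\gamma,\ell}(t)\le \lfloor\log L_0\rfloor_1$); the paper leaves this step implicit, and your slightly larger constant simply absorbs the floor-at-$1$ in the definition of $\rho_{\gamma,\ell}$.
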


\begin{proof}
We shall choose $K = \log L_0$, where $L_0$ is the constant from
\eqref{eq:fellow travel}.

\medskip
Suppose that in $\PSL(2, \RR)$ the closest point on $\gamma$ to $\ell$ is $\gamma(t_\ell)$ and let the closest distance be $\theta_\ell$.  Then
the exponential interval is
$E_\ell = [t_\ell - \log \tfrac{1}{\theta_\ell}, t_\ell + \log
\tfrac{1}{\theta_\ell}]$.  Using \eqref{eq:interval abs}, if
$\rho_{\gamma, \ell}(t) \ge K$, then $t \in E_\ell$ and
$\left| \rho_{\gamma, \ell}(t) - |t|_{E_\ell} \right| \le K$.  For
points $t \in E_\ell$, the value of $|t|_{E_\ell}$ is equal to the
distance from $t$ to the nearest endpoint of $E_\ell$, and so
\[ [t - |t|_{E_\ell}, t + |t|_{E_\ell}] \subseteq E_\ell. \]
Again, using \eqref{eq:interval abs} to estimate
$\rho_{\gamma, \ell}(t)$ in terms of the absolute value function
$|t|_{E_\ell}$ gives
\[ [t - ( \rho_{\gamma, \ell}(t) - K), t + ( \rho_{\gamma, \ell}(t) -
K ) ] \subseteq E_\ell, \]
as required.
\end{proof}

We now show that if $\gamma(t_1)$ is an intersection point for
$\gamma$ with a geodesic $\ell_1$, and if the radius function at $t_1$
for another geodesic $\ell_2$ is sufficiently larger than
$\rho_{\gamma, \ell_1}(t_1)$, then the endpoints of $\ell_1$ separate the endpoints of
$\ell_2$ and hence $\ell_1$ and $\ell_2$
intersect.

\begin{proposition}\label{prop:radius implies intersect}
There is a constant $K \ge 1$, such that for any geodesics $\gamma$
and $\ell_1$ in $\HH^2$ which intersect at $\gamma(t_1)$ at angle
$\theta_1$, then for any other geodesic $\ell_2$, if the radius
function at $t_1$ is sufficiently large, i.e.
$\rho_{\gamma, \ell_2}(t_1) \ge \log \tfrac{1}{\theta_1} + K$, then
$\ell_1$ and $\ell_2$ intersect.
\end{proposition}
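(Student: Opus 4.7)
The plan is to apply Proposition \ref{prop:nested implies intersect} (nested projection intervals imply intersection). Concretely, I will show that for $K$ sufficiently large (depending only on the constants $L_0$ from Proposition \ref{prop:fellow travel} and $T_0$ from Proposition \ref{prop:projection interval}), the nearest point projection interval $I_1 = p_\gamma(\ell_1)$ of $\ell_1$ onto $\gamma$ is a strict subinterval of the projection interval $I_2 = p_\gamma(\ell_2)$ of $\ell_2$ onto $\gamma$. Since $\gamma$ and $\ell_1$ intersect by hypothesis, Proposition \ref{prop:nested implies intersect} will then force $\ell_1 \cap \ell_2 \neq \varnothing$.

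First, I will unpack the hypothesis. Let $d_1 = d_{\PSL(2,\RR)}(\gamma^1(t_1), \ell_2^1)$; the assumption $\rho_{\gamma,\ell_2}(t_1) \ge \log(1/\theta_1) + K$ yields $d_1 \le \theta_1 e^{-K}$. Let $t_0 \in \RR$ be the parameter of the closest point on $\gamma$ to $\ell_2$ in $\PSL(2,\RR)$, and set $d_0 = d_{\PSL(2,\RR)}(\gamma^1(t_0), \ell_2^1) \le d_1$. By Proposition \ref{prop:projection interval}, provided $K$ is chosen large enough that $d_1 \le \theta_0$, the projection interval of $\ell_2$ on $\gamma$ is $I_2 = [t_0 - T_2, t_0 + T_2]$ with $T_2 \ge \log(1/d_0)$. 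Similarly, since $\gamma$ intersects $\ell_1$ at $\gamma(t_1)$ at angle $\theta_1$, the projection interval of $\ell_1$ on $\gamma$ is $I_1 = [t_1 - T_1, t_1 + T_1]$ with $T_1 \le \log(1/\theta_1) + T_0$.

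Next, I will bound the displacement $|t_1 - t_0|$ using the exponential fellow traveling in Proposition \ref{prop:fellow travel}. From the lower bound $d_1 \ge \tfrac{1}{L_0} d_0 e^{|t_1-t_0|}$, valid once $|t_1 - t_0| \le \log(1/d_0)$ (which follows from $d_1 < 1/L_0$, itself guaranteed for large $K$), I obtain
\[
|t_1 - t_0| \le \log L_0 + \log(1/d_0) - \log(1/d_1).
\]
Combining with the bound on $T_1$ and using $\log(1/d_1) \ge \log(1/\theta_1) + K$, I estimate
\[
T_2 - (|t_1 - t_0| + T_1) \ge \log(1/d_0) - \log L_0 - \log(1/d_0) + \log(1/d_1) - \log(1/\theta_1) - T_0 \ge K - \log L_0 - T_0.
\]
Choosing $K \ge \log L_0 + T_0 + 1$ (together with the earlier requirement that $\theta_1 e^{-K} \le \min\{\theta_0, 1/L_0\}$, which determines the actual value of $K$) makes this gap at least $1$, so $I_1$ is strictly contained in $I_2$.

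The step I expect to require the most care is the synchronization of the two projection intervals, since $I_1$ is centered at $t_1$ while $I_2$ is centered at the \emph{closest} point $t_0$ on $\gamma$ to $\ell_2$, and these can differ. The estimate on $|t_1 - t_0|$ must be strong enough to absorb the radius $T_1 \sim \log(1/\theta_1)$ inside the larger radius $T_2 \sim \log(1/d_0)$; this is precisely where the hypothesis $\rho_{\gamma,\ell_2}(t_1) \ge \log(1/\theta_1) + K$, rather than merely $\rho_{\gamma,\ell_2}(t_1) \ge \log(1/\theta_1)$, enters critically. Once the nesting is established, Proposition \ref{prop:nested implies intersect} gives the conclusion immediately.
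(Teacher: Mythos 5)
Your proposal is correct and takes essentially the same route as the paper: establish strict nesting $I_1 \subset I_2$ of projection intervals and then invoke \Cref{prop:nested implies intersect}. The only cosmetic difference is that you estimate the offset $|t_1-t_0|$ directly from \Cref{prop:fellow travel}, whereas the paper packages the same exponential estimate into \Cref{prop:radius function} (via the exponential interval $E_{\ell_2}$); one small point to make uniform is that \Cref{prop:projection interval} gives $T_2 \ge \log(1/\theta_{\HH^2})$ in terms of the $\HH^2$ angle/distance rather than $d_0 = d_{\PSL(2,\RR)}(\gamma^1(t_0), \ell_2^1)$, so the claimed $T_2 \ge \log(1/d_0)$ costs an additional bounded additive constant (comparable to $\log L_0$) which is absorbed into $K$, exactly as in the paper's use of $E_{\ell_2} \subseteq I_{\ell_2}$.
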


\begin{proof}
Set $K = T_0 + K_1 + 1$, where $T_0$ and $K_1$ are respective constants from
\Cref{prop:projection interval} and \Cref{prop:radius function}.

\medskip
By \Cref{prop:projection interval}, the projection interval
$I_{\ell_1} \subset \gamma$ for $\ell_1$ is contained in
$[t_1 - \log \tfrac{1}{\theta_1} - T_0, t_1 + \log \tfrac{1}{\theta_1}
+ T_0]$.  As we have chosen $K \ge K_1 + T_0 + 1$, and we have assumed
that $\rho_{\gamma, \ell_2}(t_1) \ge \rho_{\gamma, \ell_1}(t_1) + K$,
\Cref{prop:radius function} implies that the exponential interval
$E_{\ell_2}$ contains the interval centered at $t_1$ of radius
$\log \tfrac{1}{\theta_1} + K - K_1 \ge \log \tfrac{1}{\theta_1} + T_0
+ 1$, so $I_{\ell_1} \subset E_{\ell_2}$, where the inclusion is
strict.

\medskip
As the exponential interval $E_{\ell_2}$ is contained in the
projection interval $I_{\ell_2}$, this implies that the projection
interval $I_{\ell_1} \subset I_{\ell_2}$.  As $\ell_1$ intersects
$\gamma$, \Cref{prop:nested implies intersect} implies that $\ell_1$
and $\ell_2$ intersect, as required.
\end{proof}

For later use, we record the following estimate of the radius function
of the extended lamination at an intersection point in terms of the
radius function for the leaf of intersection.

\begin{proposition}\label{prop:radius estimate}
There is a constant $K \ge 1$, such that for any closed hyperbolic
surface $S_h$ and lamination $\overline{\Lambda}$, if a geodesic
$\gamma$ intersects a leaf $\ell \in \Lambda$ at angle $\theta$ at
$\gamma(t)$, then
\begin{equation}\label{eq:radius estimate}
\rho_{\gamma, \ell}(t) \le \rho_{\gamma, \overline{\Lambda}}(t)
\le \rho_{\gamma, \ell}(t) + K,
\end{equation}
where $\overline{\Lambda}$ is the extended lamination corresponding to
$\Lambda$.
\end{proposition}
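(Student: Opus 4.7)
The lower bound $\rho_{\gamma, \ell}(t) \le \rho_{\gamma, \overline{\Lambda}}(t)$ is immediate: since $\ell \in \Lambda \subseteq \overline{\Lambda}$, the value $\rho_{\gamma, \ell}(t)$ is itself one of the terms in the supremum defining $\rho_{\gamma, \overline{\Lambda}}(t)$.

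For the upper bound, the plan is to bound $\rho_{\gamma, \ell'}(t)$ uniformly over all $\ell' \in \overline{\Lambda} \setminus \{\ell\}$. First note that at the intersection point $\gamma(t) \in \ell$, the tangent to $\ell$ at $\gamma(t)$ is a unit vector differing from $\gamma^1(t)$ by angle $\theta$, so $d_{\PSL(2,\RR)}(\gamma^1(t), \ell^1)$ is comparable to $\theta$ (using \Cref{prop:projection interval}), yielding $\rho_{\gamma, \ell}(t) = \log(1/\theta) + O(1)$. Second, since $\ell$ is a regular leaf of $\Lambda$ and every leaf of $\overline{\Lambda}$ is either another leaf of $\Lambda$ or an extended leaf contained in the closure of some complementary polygon $R$ of $\Lambda$, the leaves $\ell$ and $\ell'$ are always disjoint in the open hyperbolic plane; they may, however, share one or two ideal endpoints precisely when $\ell$ is a boundary leaf of such an $R$. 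The argument thus splits according to whether the closures of $\ell$ and $\ell'$ meet at infinity.

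In the \emph{non-asymptotic case}, when $\ell$ and $\ell'$ have disjoint closures in $\HH^2 \cup \partial \HH^2$, choose $K$ larger than the constant from \Cref{prop:radius implies intersect}. If we had $\rho_{\gamma, \ell'}(t) \ge \rho_{\gamma, \ell}(t) + K$, that proposition (applied with $\ell_1 = \ell$ and $\ell_2 = \ell'$) would force $\ell$ and $\ell'$ to intersect transversely in $\HH^2$, contradicting their disjointness. Hence $\rho_{\gamma, \ell'}(t) < \rho_{\gamma, \ell}(t) + K$ in this case.

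In the \emph{asymptotic case}, when $\ell$ and $\ell'$ share an ideal endpoint $p$, \Cref{prop:radius implies intersect} does not apply directly, since its conclusion of transverse intersection in $\HH^2$ is incompatible with mere asymptoticity, and a direct geometric estimate is required. Normalise coordinates in the upper half-plane so that $p = \infty$: then $\ell$ and $\ell'$ are distinct parallel vertical rays $\{x = 0\}$ and $\{x = a\}$, the unit tangent vector along either is everywhere vertical, and $\gamma^1(t)$ makes angle $\theta$ with the vertical at $\gamma(t) = (0, y_0)$. For a point $(a, y) \in \ell'$, the hyperbolic base distance to $(0, y_0)$ is at least of order $a/y_0$, while after parallel transport to $(0, y_0)$ (which rotates vectors by at most $O(a/y_0)$, by a standard Gauss--Bonnet holonomy estimate in constant curvature $-1$), the vertical direction differs from $\gamma^1(t)$ by at least $\theta - O(a/y_0)$. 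Balancing these two contributions yields a universal lower bound $d_{\PSL(2,\RR)}(\gamma^1(t), \ell'^1) \ge c\theta$, whence $\rho_{\gamma, \ell'}(t) \le \rho_{\gamma, \ell}(t) + O(1)$. Taking $K$ to be the maximum of the constants produced in the two cases completes the proof; the chief technical subtlety is the holonomy estimate in the asymptotic case, though once the relevant leaves are straightened to vertical rays this reduces to elementary planar computations.
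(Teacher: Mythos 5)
Your lower bound and your non-asymptotic case reproduce the paper's argument exactly, but your treatment of the asymptotic case rests on a misreading of the logic, and the extra geometric computation you supply there is both unnecessary and less rigorous than what the paper already has.

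The claim that \Cref{prop:radius implies intersect} ``does not apply directly'' when $\ell$ and $\ell'$ share an ideal endpoint is mistaken. That proposition is a universally quantified implication about \emph{any} pair of geodesics $\ell_1,\ell_2$ in $\HH^2$: if $\rho_{\gamma,\ell_2}(t_1)\ge\log\tfrac{1}{\theta_1}+K$ then $\ell_1$ and $\ell_2$ intersect transversely. When $\ell_1=\ell$ and $\ell_2=\ell'$ are asymptotic, the conclusion is false --- but that is exactly the point: it means the \emph{hypothesis} $\rho_{\gamma,\ell'}(t)\ge\log\tfrac{1}{\theta}+K$ can never be satisfied, and the contrapositive hands you the bound $\rho_{\gamma,\ell'}(t)<\log\tfrac{1}{\theta}+K$ for free. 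This is precisely how the paper uses it: since $\ell\in\Lambda$ is disjoint from every other leaf of $\overline{\Lambda}$ (a leaf of the measured lamination cannot cross an extended leaf), the contrapositive applies uniformly, with no case distinction. If you are uneasy about the vacuity, you can check it directly: the proof of \Cref{prop:radius implies intersect} derives a \emph{strict} nesting $I_{\ell}\subsetneq E_{\ell'}\subseteq I_{\ell'}$ of projection intervals onto $\gamma$, with strictness by at least $1$ at each end; but when $\ell$ and $\ell'$ are asymptotic at $p_\infty$ (and $\gamma$ is transverse to $\ell$, hence does not share $p_\infty$), the two projection intervals $I_{\ell}$ and $I_{\ell'}$ share the common endpoint $p_\gamma(p_\infty)$, so such a strictly nested pair cannot exist, confirming that the hypothesis fails.

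As a consequence, your asymptotic-case paragraph --- the normalization with $\ell,\ell'$ as vertical rays, the parallel-transport/holonomy estimate, and the ``balancing'' of the two contributions to $d_{\PSL(2,\RR)}$ --- is superfluous. It is also weaker than what you are replacing: the balancing step is sketched rather than carried out, and the infimum over $\ell'^1\subset\PSL(2,\RR)$ is not a pointwise comparison at $(a,y_0)$, so the ``universal lower bound $c\theta$'' needs more work than you give it. Dropping the case split and simply applying the contrapositive of \Cref{prop:radius implies intersect} to every $\ell'\in\overline{\Lambda}\setminus\{\ell\}$ at once, exactly as the paper does, yields a shorter and fully rigorous argument.
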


\begin{proof}
The left hand inequality follows directly from
$\rho_{\gamma, \overline{\Lambda}}$ being the supremum of
$\rho_{\gamma, \ell}$ over all leaves $\ell \in \overline{\Lambda}$.

\medskip
If $\ell$ is a leaf of a (non-extended) lamination, then it is
disjoint from all leaves in the corresponding extended
lamination.  \Cref{prop:radius implies intersect} shows that the value
of the radius function determined by $\ell$, at the intersection point
of $\ell$ and $\gamma$, is at least the radius
function at that point determined by any other leaf of the extended
lamination, up additive error at most $K$, where $K$ is the constant
from \Cref{prop:radius implies intersect}.
\end{proof}

\subsubsection{The choice of constant for the height function}\label{section:height function}

The constant $\theta_\LL$ in the definition of the height function
needs to be chosen to be sufficiently small, and it depends on the
hyperbolic metric $S_h$ and the pair of regular laminations $\Lambda$.
We now give an explicit choice of $\theta_\LL$ which suffices for our
purposes.  For closed subsets $A$ and $B$ of the unit tangent bundle
$T^1(S_h)$, let
\[ d_{\PSL(2, \RR)}(A, B) = \min \{ d_{\PSL(2, \RR)}(a, b) \mid a \in A, b \in
B \}. \]

The constant $\theta_\LL$ needs to be less than half the distance
between the extended laminations.  However, our argument uses various
properties of the geometry of the laminations, and
so $\theta_\LL$ will also depend on:

\begin{enumerate}

\item The constant $\alpha_\LL$ from \Cref{prop:angle bound}, giving
the smallest angle of intersection between leaves of the two
laminations.

\item The constant $L_\LL$ from \Cref{prop:cobounded intersections},
giving the diameter of the compact complementary regions of
$S_h \setminus (\Lambda_+ \cup \Lambda_-)$.

\item The constant $T_0$ from \Cref{prop:projection
  interval}, giving the size of the nearest point projection intervals
between geodesics in $\HH^2$.

\item The constant $\rho_\LL$ from \Cref{prop:overlap}, giving an upper bound
on the diameter of the overlap between the nearest point projections
of any two intersecting geodesics $\ell^+ \in \Lambda^-$ and
$\ell^- \in \Lambda^-$ to any other geodesic $\gamma$.

\item The constants $Q_\LL$ and $c_\LL$ from \Cref{prop:qi} giving the
quasi-isometry between the hyperbolic metric $d_{\HH^2}$ and the
Cannon--Thurston pseudometric $d_{\widetilde S_h}$ on $\widetilde S_h$.

\item The constants $\theta_0$ and $L_0$ from 
\Cref{prop:fellow travel}, giving the bi-Lipschitz bounds on the rates of
divergence of lifts of close geodesics.

\item The constant $D_\LL$ from \Cref{prop:innermost polygon diameter
  bound}, giving an upper bound on the diameter of any innermost polygon.

\item The constant $\theta_P$ from \Cref{prop:polygon},
which ensures that the lift of a non-exceptional geodesic is close in $T^1(S_h)$ to at
most one of the extended leaves in an innermost non-rectangular polygon.

\end{enumerate}

All constants above depend only on $(S_h , \Lambda)$, and do not
depend on the non-exceptional geodesic $\gamma$.  We now
define $\theta_\LL$.
\begin{definition}\label{def:theta}
Let
\[ \theta_{\min} = \min \{ \alpha_\LL, \rho_\LL, \tfrac{1}{2}
d_{\PSL(2, \RR)}(\overline{\Lambda}^1_-, \overline{\Lambda}^1_+), \theta_0 ,
\tfrac{1}{L_0} , \theta_P , 1 \}, \]
and then set
\[ \theta_\LL = \theta_{\min}^6 e^{-6( T_0 + L_\LL + 3 \rho_\LL + D_\LL
  + Q_\LL c_\LL)},  \]
where all of these constants from Propositions \ref{prop:cobounded
  intersections}, \ref{prop:projection interval}, \ref{prop:overlap},
\ref{prop:qi}, \ref{prop:fellow travel}, \ref{prop:innermost polygon
  diameter bound} and \ref{prop:polygon} only depend on $(S_h, \LL)$.
\end{definition}

\subsubsection{Estimating the height function}

If $\gamma(t)$ is a point of intersection of $\gamma$ and a leaf
$\ell$ of one of the (non-extended) laminations, then we can use the
angle of intersection between $\gamma$ and $\ell$ to estimate the
value of the height function at the intersection point.  In fact, we
can define and use a (signed) height function for a single leaf.

\medskip
We define the (signed) height function for a single leaf
$\ell$ of one of the invariant laminations, as follows,
\[ h_{\gamma, \ell}(t) = \begin{cases}
\phantom{-} \log_k \left\lfloor \rho_{\gamma, \ell}(t) - \log \tfrac{1}{\theta_\LL}
\right\rfloor_1  & \text{ if } \ell \in \Lambda_+ \\[10pt]
- \log_k \left\lfloor \rho_{\gamma, \ell}(t) - \log
\tfrac{1}{\theta_\LL} \right\rfloor_1 & \text{ if } \ell \in \Lambda_- \end{cases} , \]
where again, by our choice of $\theta_\LL$, at most one of the
terms on the right hand side above may be non-zero.

\medskip
We can also rewrite the height function in terms of the radius
functions for the extended laminations,
\[ h_{\gamma}(t) = \log_k \left\lfloor \rho_{\gamma,
  \overline{\Lambda}_+}(t) - \log \tfrac{1}{\theta_\LL}
\right\rfloor_1 - \log_k \left\lfloor \rho_{\gamma,
  \overline{\Lambda}_-}(t) - \log \tfrac{1}{\theta_\LL} \right\rfloor_1
. \]

We now show that at an intersection point, the signed height function
for the leaf of intersection, which depends only on the angle of
intersection, can be used to approximate the height function for the
invariant laminations.

\begin{proposition}\label{prop:height estimate}
Suppose that $(S_h, \Lambda)$ is a hyperbolic metric on $S$ together
with a suited pair of measured laminations.  Then there is a
constant $K$ such that if $\gamma$ is any non-exceptional geodesic
$\gamma$ in $\widetilde S_h$, and $\ell^+$ is a leaf of the invariant
lamination $\Lambda_+$ which intersects $\gamma$ at $\gamma(t)$ at
angle $\theta$, then
\begin{align*}
h_{\gamma, \ell^+}(t) \le \ & h_\gamma(t) \le h_{\gamma, \ell^+}(t) + K & \text{ if }  \theta \le \theta_\LL \\
& h_\gamma(t) \le K &  \text{ if }  \theta \ge \theta_\LL.
\end{align*}
Similarly, if $\gamma(t)$ is an intersection point of $\gamma$ with a
leaf $\ell^- \in \Lambda_-$ of angle $\theta$, then
\begin{align*}
  h_{\gamma, \ell^-}(t) - K \le \
  & h_\gamma(t) \le h_{\gamma, \ell^-}(t) & \text{ if }  \theta \le \theta_\LL \\
- K \le \   & h_\gamma(t) &  \text{ if }  \theta \ge \theta_\LL.
\end{align*}
\end{proposition}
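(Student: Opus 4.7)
The plan is to analyze the two terms of $h_\gamma(t)$ separately, using \Cref{prop:radius estimate} to control the $\overline{\Lambda}_+$-contribution and using the choice of $\theta_\LL$ from \Cref{def:theta} to control the $\overline{\Lambda}_-$-contribution at the intersection point.

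First, at the intersection point the tangent of $\ell^+$ at $\gamma(t)$ realizes the $\PSL(2,\RR)$-distance from $\gamma^1(t)$ to $\ell^{+,1}$ up to a bi-Lipschitz factor, so $\rho_{\gamma, \ell^+}(t) = \lfloor \log(1/\theta) \rfloor_1$ up to bounded additive error. Applying \Cref{prop:radius estimate} with $\Lambda = \Lambda_+$ then gives
\[
\rho_{\gamma, \ell^+}(t) \le \rho_{\gamma, \overline{\Lambda}_+}(t) \le \rho_{\gamma, \ell^+}(t) + K_0
\]
for a constant $K_0 \ge 1$ depending only on $(S_h, \Lambda)$, which controls the positive term of $h_\gamma(t)$.

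Next, the triangle inequality in $\PSL(2,\RR)$ combined with \Cref{def:theta} gives
\[
d_{\PSL(2,\RR)}(\gamma^1(t), \overline{\Lambda}^1_-) \ge d_{\PSL(2,\RR)}(\overline{\Lambda}^1_+, \overline{\Lambda}^1_-) - d_{\PSL(2,\RR)}(\gamma^1(t), \overline{\Lambda}^1_+) \ge 2\theta_\LL - C_1 \theta,
\]
since $d_{\PSL(2,\RR)}(\gamma^1(t), \overline{\Lambda}^1_+) \le d_{\PSL(2,\RR)}(\gamma^1(t), \ell^{+,1}) \le C_1 \theta$ and $\theta_\LL \le \tfrac{1}{2} d_{\PSL(2,\RR)}(\overline{\Lambda}^1_+, \overline{\Lambda}^1_-)$. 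When $\theta \le \theta_\LL$, the extreme smallness of $\theta_\LL$ relative to the separation distance (it is chosen as a sixth power in \Cref{def:theta}) yields $d_{\PSL(2,\RR)}(\gamma^1(t), \overline{\Lambda}^1_-) \ge \theta_\LL/e$, so $\rho_{\gamma, \overline{\Lambda}_-}(t) \le \log(1/\theta_\LL) + 1$ and the second term of $h_\gamma(t)$ vanishes.

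In the case $\theta \le \theta_\LL$, combining these gives $h_\gamma(t)$ sandwiched between $h_{\gamma, \ell^+}(t)$ and $\log_k \lfloor \rho_{\gamma, \ell^+}(t) + K_0 - \log(1/\theta_\LL) \rfloor_1$, which an elementary check (splitting on whether the argument of $\lfloor \cdot \rfloor_1$ exceeds $1$) bounds by $h_{\gamma, \ell^+}(t) + \log_k(K_0+1)$. In the case $\theta \ge \theta_\LL$, the bound $\log(1/\theta) \le \log(1/\theta_\LL)$ together with \Cref{prop:radius estimate} gives $\rho_{\gamma, \overline{\Lambda}_+}(t) - \log(1/\theta_\LL) \le C_1 + K_0$ up to bounded additive error, so the first term of $h_\gamma(t)$ is at most a constant; the second term is non-negative by definition of $\lfloor \cdot \rfloor_1$, so $h_\gamma(t) \le K$. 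The statements for $\ell^- \in \Lambda_-$ follow by the same argument with the roles of the two terms exchanged, noting that $h_{\gamma, \ell^-} \le 0$ by definition. The only mildly subtle step is the bi-Lipschitz comparison between the intersection angle and the $\PSL(2,\RR)$-distance at the intersection, which is standard for the unit tangent bundle $T^1 \HH^2 = \PSL(2,\RR)$ near a common base point.
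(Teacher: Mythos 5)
Your argument is correct and takes essentially the same route as the paper's, which picks an extremal leaf $\ell_2 \in \overline{\Lambda}_+$ realizing the height and compares $\rho_{\gamma,\ell_2}$ to $\rho_{\gamma,\ell^+}$ via Proposition~\ref{prop:radius implies intersect}; you instead apply Proposition~\ref{prop:radius estimate}, which is a direct repackaging of exactly that step. The paper finishes the arithmetic with the subadditivity $\lfloor x+y\rfloor_1 \le \lfloor x\rfloor_1 + \lfloor y\rfloor_1$ and Lipschitz continuity of $\log_k$, where you do a two-case split — cosmetically different, same content. The one place you go further than the paper is in explicitly verifying that the $\overline{\Lambda}_-$ term of $h_\gamma$ vanishes when $\theta\le\theta_\LL$; the paper's proof of the lower bound disposes of this in a single line that leans on the remark after Definition~\ref{def:height function2}, so your triangle-inequality justification is a genuine improvement in rigor. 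Two small quibbles. First, when you write $d_{\PSL(2,\RR)}(\gamma^1(t), \overline{\Lambda}^1_-) \ge 2\theta_\LL - C_1\theta$ you only use $\theta_\LL \le \tfrac{1}{2} d_{\PSL(2,\RR)}(\overline{\Lambda}^1_+,\overline{\Lambda}^1_-)$, so for $\theta\le\theta_\LL$ this gives $\ge (2-C_1)\theta_\LL$, which is only $\ge \theta_\LL/e$ when $C_1 \le 2-1/e$; to make the step airtight you should instead invoke the much stronger fact from Definition~\ref{def:theta} that $\theta_\LL \le \theta_{\min}$ and $\theta_{\min} \le \tfrac12 d_{\PSL(2,\RR)}(\overline{\Lambda}^1_+,\overline{\Lambda}^1_-)$, with $\theta_\LL$ a sixth power of (and hence far smaller than) $\theta_{\min}$, so $d_{\PSL(2,\RR)}(\gamma^1(t),\overline{\Lambda}^1_-) \ge 2\theta_{\min} - C_1\theta_\LL \ge \theta_{\min} \gg \theta_\LL/e$ regardless of the precise value of $C_1$. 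Second, your assertion that $\rho_{\gamma,\ell^+}(t) = \lfloor\log(1/\theta)\rfloor_1$ up to bounded additive error needs both the upper bound $d_{\PSL(2,\RR)}(\gamma^1(t),\ell^{+,1}) \lesssim \theta$ (rotation in the fiber over the intersection point) \emph{and} a lower bound $d_{\PSL(2,\RR)}(\gamma^1(t),\ell^{+,1}) \gtrsim \theta$ (the infimum over $\ell^{+,1}$ is realized near the intersection); you label this "standard", which it is, but it is worth being aware that the two directions are used at different points of the argument (the upper bound in the triangle inequality, the lower bound for the $\theta\ge\theta_\LL$ case).
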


We remark that in \Cref{prop:height estimate}, although the definition
of the height function depends on the cutoff constant $\theta_\LL$,
the additive error constant $K$ depends only on $(S_h, \Lambda)$.

\medskip
As the radius functions determine the height function, we can now
complete the proof of \Cref{prop:height estimate}, showing that the
value of the height function at an intersection point is equal to the
value of the leafwise height function for the leaf of intersection at
the intersection point, up to bounded additive error.

\begin{proof}[Proof (of \Cref{prop:height estimate})]
Up to reparametrizing $\gamma$ by a translation, suppose that $\gamma(0)$ is an intersection point of $\gamma$
with $\ell_1 \in \Lambda_+$ with angle $\theta$.  The argument is
the same in the other case up to swapping the laminations and
reversing the sign of the height function. 

\medskip
We choose $K = \log_k K_1$, where $K_1 \ge 1$ is the constant from
\Cref{prop:radius implies intersect}.

\medskip
We first show the upper bound.  As $\overline{\Lambda}_+$ is closed,
there is a leaf $\ell_2$ in the extended lamination
$\overline{\Lambda}_+$, realizing the height function,
i.e. $h_\gamma(0) = h_{\gamma, \ell_2}(0)$.  As $\ell_1$ is a leaf of
$\Lambda_+$, is it disjoint from all other leaves in the extended
lamination $\overline{\Lambda}_+$.  Therefore, by \Cref{prop:radius
  implies intersect} the radius function for $\ell_1$ is a coarse
upper bound for the radius function for $\ell_2$ at $t = 0$, i.e.
\[ \rho_{\gamma, \ell_2}(0) \le \rho_{\gamma, \ell_1}(0) + K_1, \]
where $K_1$ is the constant from \Cref{prop:radius implies intersect}.
Subtracting $\log \tfrac{1}{\theta_\LL}$ from each side gives
\[ \rho_{\gamma, \ell_2}(0) - \log \tfrac{1}{\theta_\LL} \le
\rho_{\gamma, \ell_1}(0) - \log \tfrac{1}{\theta_\LL} + K_1. \]
Using the elementary observation that $\lfloor x + y \rfloor_1 \le
\lfloor x \rfloor_1 + \lfloor y \rfloor_1$, and as $K_1 \ge 1$, 
\[ \left\lfloor \rho_{\gamma, \ell_2}(0) - \log \tfrac{1}{\theta_\LL}
\right\rfloor_1 \le \left\lfloor \rho_{\gamma, \ell_1}(0) - \log
\tfrac{1}{\theta_\LL} \right\rfloor_1 + K_1. \]
As $\log_k(x)$ is $(1/\log k)$-Lipschitz for $x \ge 1$,
\[  h_{\gamma, \ell_2}(0) \le  h_{\gamma, \ell_1}(0) + K_1/\log k,   \]
as required.

\medskip
For the lower bound, if $\theta \le \theta_\LL$, then by \Cref{def:height function2}, $h_{\gamma, \ell^+_1}(t)$ is a lower bound for
$h_\gamma(t)$.  However, if $\theta \ge \theta_\LL$, then the
contribution of distance to leaves of $\Lambda_+$ to the height
function may be zero, and the height function may be determined by
distance to leaves of $\overline{\Lambda}_-$, and so then there is no
lower bound.
\end{proof}

\subsubsection{The radius function is Lipschitz}

A function $f \colon \RR \to \RR$ is \emph{$c$-Lipschitz} if
$ | f(x) - f(y) | \le c |x - y|$, and for differentiable functions
this is equivalent to $|f'(x)| \le c$.  In this section, we show that
the radius function is $1$-Lipschitz.

\begin{proposition}\label{prop:exp 1lip}
Suppose that $(S_h, \Lambda)$ is a hyperbolic metric on $S$ together
with a suited pair of measured laminations.  Suppose that $\gamma$
is a non-exceptional geodesic in $\widetilde{S}_h$ with unit speed
parametrization $\gamma(t)$. Then the radius functions
$\rho_\gamma(t), \rho_{\gamma, \Lambda_+}(t)$ and
$\rho_{\gamma, \Lambda_-}(t)$ are all $1$-Lipschitz.
\end{proposition}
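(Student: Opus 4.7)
I would first reduce the statement for the extended-lamination radius functions to the single-leaf case. By definition,
\[
\rho_{\gamma, \overline{\Lambda}_\pm}(t) = \sup_{\ell \in \overline{\Lambda}_\pm} \rho_{\gamma, \ell}(t),
\]
and the pointwise supremum of any family of $1$-Lipschitz functions is $1$-Lipschitz whenever the supremum is finite. So it suffices to prove that $t \mapsto \rho_{\gamma, \ell}(t)$ is $1$-Lipschitz for each individual leaf $\ell$ of $\overline{\Lambda}_+ \cup \overline{\Lambda}_-$.

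For the single-leaf case, the essential observation is that $\gamma^1 \colon \RR \to \PSL(2,\RR)$ is a unit-speed geodesic: the unit-speed parametrization of $\gamma$ in $\HH^2$ lifts to right multiplication by the one-parameter subgroup $a_t$, giving an isometric embedding of $\RR$ into $\PSL(2,\RR)$ with its natural left-invariant metric. The lift $\ell^1$ is likewise the image of a one-parameter subgroup, hence a geodesic in $\PSL(2,\RR)$. Setting $d(t) := d_{\PSL(2,\RR)}(\gamma^1(t), \ell^1)$, the triangle inequality gives that $d$ itself is $1$-Lipschitz.

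The remaining task is to show that $t \mapsto \max\{\log(1/d(t)), 1\}$ is $1$-Lipschitz. Since $\log$ is not Lipschitz near $0$, the naive triangle inequality on $d$ is not enough; we instead need the multiplicative estimate
\[
|\log d(t) - \log d(s)| \le |t - s| \quad \text{whenever } d(s), d(t) \le 1/e.
\]
This should follow from the specific geometry of two geodesics in $\PSL(2,\RR)$: the distance between a unit-speed geodesic and a fixed geodesic satisfies a $\cosh$-type relation, whose logarithmic derivative is bounded by $1$ in absolute value (roughly, $(\log d)' = -\tanh$ up to a shift, and $|\tanh| \le 1$). The transition regime, where one of $d(s), d(t)$ is $\le 1/e$ and the other is not, is handled by the floor at $1$: the truncation $x \mapsto \max\{x,1\}$ is monotone and $1$-Lipschitz, so it cannot create a violation.

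The main obstacle is extracting the multiplicative bound $|\log d(t) - \log d(s)| \le |t-s|$ cleanly and sharply. The exponential divergence estimate in \Cref{prop:fellow travel} only yields this bound up to an additive error of $2\log L_0$, which would give $\rho_{\gamma, \ell}$ as $1$-Lipschitz only coarsely. To obtain the sharp constant, one must use the explicit $\cosh$-type formula for the distance between two geodesics in $\PSL(2,\RR)$ rather than the coarser exponential envelope, and verify directly that $|d'(t)/d(t)| \le 1$ pointwise in the regime $d(t) \le 1/e$.
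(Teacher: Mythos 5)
Your reduction to the single-leaf case via the supremum argument is exactly what the paper does, and your observation that the naive triangle inequality on $d$ is useless because $\log$ fails to be Lipschitz near $0$ is also correct. The problem is what comes next: you identify the needed multiplicative estimate $|\log d(t) - \log d(s)| \le |t-s|$ but do not actually prove it, and the route you sketch — an explicit ``$\cosh$-type relation'' for distances in $\PSL(2,\RR)$ whose logarithmic derivative is $-\tanh$ up to a shift — is both unsubstantiated and unlikely to be tractable. The distance from a point on $\gamma^1$ to the set $\ell^1$ inside the three-dimensional group $\PSL(2,\RR)$ with its left-invariant metric does not satisfy the clean two-dimensional Lambert-quadrilateral formula you seem to be importing from $\HH^2$; there is no reason to expect such an explicit closed form in the group, and you do not produce one. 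You even flag this yourself (``The main obstacle is extracting the multiplicative bound...cleanly and sharply''), so the heart of the argument is missing.

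The paper closes this gap with a soft argument that bypasses explicit formulas entirely. Observe that moving along $\gamma^1$ by time $h$ is precisely applying the geodesic flow $\phi_h$ on $\PSL(2,\RR)$, and that $\phi_h$ is bi-Lipschitz with constant $e^{|h|}$ (this is the standard expansion/contraction bound for the geodesic flow on $\PSL(2,\RR)$, cited from Manning). If $\alpha$ is a geodesic arc realizing $d(t) = d_{\PSL(2,\RR)}(\gamma^1(t), \ell^1)$, then $\phi_h(\alpha)$ is a path from $\gamma^1(t+h)$ to $\ell^1$ of length at most $e^{|h|}\,d(t)$, so $d(t+h) \le e^{|h|}\,d(t)$; the reverse inequality follows by symmetry. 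Combining the two gives $|\log d(t+h) - \log d(t)| \le |h|$ with the sharp constant $1$, which is exactly the estimate you needed, and the truncation by the floor $\lfloor \cdot \rfloor_1$ (monotone and $1$-Lipschitz) preserves it. This is the idea your sketch was groping toward but never reached; the geodesic-flow Lipschitz property is the replacement for the explicit formula that does not exist.
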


In fact, as the derivative of $\log_k (x)$ takes values between $0$
and $1/\log k$ for $x \ge 1$, \Cref{prop:exp 1lip} also implies that
the height function, which is defined in terms of $\log$ of the radius
function, is $(1/\log k)$-Lipschitz, which we record for future
reference.

\begin{corollary}\label{cor:height lipschitz}
Suppose that $(S_h, \Lambda)$ is a hyperbolic metric on $S$ together
with a suited pair of measured laminations.  The for any
non-exceptional geodesic $\gamma$ with unit speed parametrization
$\gamma(t)$, the height function $h_\gamma(t)$ is
$(1/\log k)$-Lipschitz. \qed
\end{corollary}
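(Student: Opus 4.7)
The plan is to deduce the corollary as a straightforward composition argument from \Cref{prop:exp 1lip}, with the only genuine subtlety being how the two terms in the definition of $h_\gamma$ combine without doubling the Lipschitz constant.

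First I would record three elementary Lipschitz facts: (i) the map $x \mapsto \lfloor x \rfloor_1 = \max\{x,1\}$ is $1$-Lipschitz as a pointwise maximum of two $1$-Lipschitz functions; (ii) adding a constant preserves Lipschitz constants; (iii) the restriction of $\log_k$ to $[1, \infty)$ is $(1/\log k)$-Lipschitz since its derivative is $1/(x \log k)$, which is bounded above by $1/\log k$ on that interval. Combined with \Cref{prop:exp 1lip}, which gives that $\rho_{\gamma, \overline{\Lambda}_+}$ and $\rho_{\gamma, \overline{\Lambda}_-}$ are $1$-Lipschitz, I conclude by composition that each of the two summands
\[
\phi_\pm(t) = \log_k \left\lfloor \rho_{\gamma, \overline{\Lambda}_\pm}(t) - \log \tfrac{1}{\theta_\LL} \right\rfloor_1
\]
is $(1/\log k)$-Lipschitz as a function of $t$.

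The one step that requires care, and which I expect to be the main obstacle, is passing from the Lipschitz bounds on $\phi_+$ and $\phi_-$ individually to a Lipschitz bound on their difference $h_\gamma = \phi_+ - \phi_-$. A naive triangle inequality would only yield a constant of $2/\log k$. The key observation, flagged immediately after \Cref{def:height function}, is that the choice of $\theta_\LL$ in \Cref{def:theta} is small enough (in particular, less than half the distance in $\PSL(2,\RR)$ between the two extended laminations) that the supports of $\phi_+$ and $\phi_-$ are disjoint: at every $t$ at most one of the two terms is nonzero.

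Given this disjoint-support property, I would argue as follows for any two parameter values $s < t$. If $\phi_+$ (respectively $\phi_-$) is zero throughout $[s,t]$, then $h_\gamma$ agrees with $-\phi_-$ (respectively $\phi_+$) on $[s,t]$ and the bound is immediate. Otherwise, by continuity one can locate points $s \le u \le v \le t$ with $\phi_+(u) = \phi_-(v) = 0$ separating any support-region of $\phi_+$ in $[s,t]$ from any support-region of $\phi_-$, so that on $[s,u]$ the function $h_\gamma$ equals $\phi_+$, on $[v,t]$ it equals $-\phi_-$, and on $[u,v]$ it vanishes identically. Applying the single-term Lipschitz bound on $[s,u]$ and $[v,t]$ and using $|s-u| + |v-t| \le |s-t|$, I obtain $|h_\gamma(s) - h_\gamma(t)| \le (1/\log k)|s-t|$, which is the claim.
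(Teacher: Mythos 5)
Your overall strategy matches the paper's: decompose $h_\gamma$ into the two one-sided terms $\phi_\pm(t) = \log_k \left\lfloor \rho_{\gamma, \overline{\Lambda}_\pm}(t) - \log\tfrac{1}{\theta_\LL}\right\rfloor_1$, note that each is $(1/\log k)$-Lipschitz by composing \Cref{prop:exp 1lip} with the elementary Lipschitz facts you list, and then combine. You are right that the combination step is the crux: the paper's own proof dispatches it with the assertion that ``a sum or difference of $(1/\log k)$-Lipschitz functions is $(1/\log k)$-Lipschitz,'' which is false in general (a sum of $L$-Lipschitz functions is only $2L$-Lipschitz), and the disjoint-support property noted after \Cref{def:height function} is exactly what is needed to recover the stated constant. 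So you have correctly identified the real content of the corollary, and something close to your argument is the honest proof.

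There is, however, an error in your final step. The claim that one can find $s \le u \le v \le t$ with $\phi_+(u) = \phi_-(v) = 0$ such that $h_\gamma$ equals $\phi_+$ on $[s,u]$, vanishes identically on $[u,v]$, and equals $-\phi_-$ on $[v,t]$, presupposes that the support of $\phi_+$ inside $[s,t]$ lies entirely to the left of the support of $\phi_-$. Nothing forces this: a non-exceptional geodesic can fellow-travel leaves of $\Lambda_+$ and of $\Lambda_-$ in alternation, so the support components of $\phi_+$ and $\phi_-$ may interleave within $[s,t]$, and then no such $u,v$ exist.

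The fix is small. In the genuinely two-sided case---say $\phi_+(s) > 0$ and $\phi_-(t) > 0$---take $u$ to be the \emph{first} zero of $\phi_+$ in $[s,t]$ and $v$ the \emph{last} zero of $\phi_-$ in $[s,t]$; both exist by continuity since $\phi_+(t) = 0 = \phi_-(s)$ by disjointness of supports. Since $\phi_+ > 0$ on $[s,u)$ forces $\phi_- = 0$ there, continuity gives $\phi_-(u) = 0$, hence $h_\gamma(u) = 0$; similarly $h_\gamma(v) = 0$. One has $u \le v$: otherwise the nonempty interval $(v,u)$ would lie in $[s,u)$, where $\phi_+ > 0$, and also in $(v,t]$, where $\phi_- > 0$, contradicting disjointness. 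Then
\[
|h_\gamma(s) - h_\gamma(t)| = \phi_+(s) + \phi_-(t) = |\phi_+(s) - \phi_+(u)| + |\phi_-(v) - \phi_-(t)| \le \tfrac{1}{\log k}\bigl((u-s) + (t-v)\bigr) \le \tfrac{1}{\log k}|s - t|,
\]
with no claim needed about $h_\gamma$ on $[u,v]$. The remaining cases, where $h_\gamma(s)$ and $h_\gamma(t)$ have the same sign or one of them vanishes, are handled by the single-term bound exactly as you describe.
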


\begin{proof}
Recall the definition of the height function,
\[ h_{\gamma}(t) = \log_k \left\lfloor \rho_{\gamma,
  \overline{\Lambda}_+}(t) - \log \tfrac{1}{\theta_\LL}
\right\rfloor_1 - \log_k \left\lfloor \rho_{\gamma,
  \overline{\Lambda}_-}(t) - \log \tfrac{1}{\theta_\LL}
\right\rfloor_1 . \]
If $f(x)$ is $1$-Lipschitz, then $\lfloor f(x) - a \rfloor_b$ is also
$1$-Lipschitz for any $a$ and $b$.  As the derivative of $\log_k(x)$
takes values in $(0, 1/\log k]$ for $x \ge 1$, each term on the right
hand side above is $(1/\log k)$-Lipschitz.  As a sum or difference of
$(1/\log k)$-Lipschitz functions is $(1/\log k)$-Lipschitz, the result
follows.
\end{proof}

Suppose that $\gamma$ is a geodesic with unit speed parametrization
$\gamma \colon \RR \to \widetilde{S}_h$.
Suppose that $R$ is either an
ideal complementary region of one of the invariant laminations, or a
compact complementary region of their union.  We define the
\emph{intersection interval} $I_R$ to be the closure of the pre-image
$\gamma^{-1}(R)$.  If $R$ is an innermost polygon, then we say that
$I_R$ is an \emph{innermost intersection interval}, i.e. the interior
of $\gamma(I_R)$ is disjoint from the invariant laminations.

\medskip
From \Cref{cor:height lipschitz} we deduce below that the test path
over an innermost intersection interval has bounded arc length.

\begin{corollary}\label{prop:innermost bounded length}
Suppose that $(S_h, \Lambda)$ is a hyperbolic metric on $S$ together
with a suited pair of measured laminations.  Then there is a
constant $K$, such that for any innermost intersection interval
$\gamma(I_R)$, the arc length of $\tau_\gamma(I_R)$ is at most $K$.
\end{corollary}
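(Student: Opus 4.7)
The plan is to exploit that on an innermost intersection interval $I_R$, the geodesic $\gamma$ meets $\Lambda_+ \cup \Lambda_-$ only at the (closures of its) endpoints. In particular, the transverse measures $dx$ and $dy$ will vanish on $\gamma'(t)$ for $t$ in the interior of $I_R$, so that along the test path $\tau_\gamma(t) = (\gamma(t), h_\gamma(t))$ the infinitesimal Cannon--Thurston pseudometric \eqref{eq:ct metric} will reduce to
\begin{equation*}
\frac{ds}{dt} = |\log k|\cdot |h'_\gamma(t)|.
\end{equation*}
This identifies the arc length of $\tau_\gamma(I_R)$ with $|\log k|$ times the total variation of $h_\gamma$ on $I_R$.

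Next, I would invoke \Cref{cor:height lipschitz}, which tells us that $h_\gamma$ is $(1/\log k)$-Lipschitz; integrating then gives
\begin{equation*}
\mathrm{length}(\tau_\gamma(I_R)) = |\log k| \int_{I_R} |h'_\gamma(t)|\, dt \le |I_R|.
\end{equation*}
Finally, since $\gamma$ is parametrized with unit hyperbolic speed and $\gamma(I_R) \subseteq R$, the bound on the hyperbolic diameter of innermost polygons from \Cref{prop:innermost polygon diameter bound} gives $|I_R| \le D_\LL$, so taking $K = D_\LL$ will finish the proof.

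No serious obstacle arises. The one point worth emphasizing is that when $R$ is a non-rectangular innermost polygon, its interior can still be crossed by extended leaves; however, extended leaves carry no transverse measure, so they do not contribute to $dx$ or $dy$ and affect only $h_\gamma$, whose variation is already absorbed by the Lipschitz estimate. The main work of the section has in fact already been done in establishing that $h_\gamma$ is Lipschitz; the present corollary is just the first clean payoff.
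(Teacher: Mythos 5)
Your proposal is correct and follows essentially the same route as the paper: observe that along an innermost interval only the $dz$-term of the Cannon--Thurston metric contributes, then apply the $(1/\log k)$-Lipschitz bound on $h_\gamma$ (\Cref{cor:height lipschitz}) together with a uniform bound on the hyperbolic length of $\gamma(I_R)$. The one small difference is the constant you use to bound $|I_R|$: you invoke the innermost-polygon diameter bound $D_\LL$ from \Cref{prop:innermost polygon diameter bound}, whereas the paper cites $L_\LL$ from \Cref{prop:cobounded intersections}. Your choice is actually the more natural fit here, since \Cref{prop:cobounded intersections} is stated for segments of leaves rather than arbitrary geodesic segments, while $\gamma(I_R) \subseteq R$ directly gives $|I_R| \le D_\LL$ with no gap to fill. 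Your remark about extended leaves carrying zero transverse measure (so they only affect $h_\gamma$, not $dx$ or $dy$) is correct and worth making explicit.
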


\begin{proof}
By \Cref{prop:cobounded intersections}, as the interior of
$\gamma(I_R)$ is disjoint from both laminations, the hyperbolic length
of $\gamma(I_R)$ is at most $L_\LL$.  By \Cref{cor:height lipschitz}
the height function is $(1/\log k)$-Lipschitz.  As $\gamma(I_R)$ is
disjoint from the invariant laminations, its length is determined by
the vertical $z$-coordinate, so the length of $\tau_\gamma(I_R)$ is at
most $K = L_\LL / \log k$, which only depends on $(S_h, \LL)$, as
required.
\end{proof}

We prove below that the radius function determined by a single leaf is $1$-Lipschitz. Since the height is defined in terms of distance in the unit
tangent bundle to the two extended invariant laminations, and the distance to an extended lamination is the infimum of the distance
to all of the leaves in the lamination, the required Lipschitz property for the the height function will follow from the fact that the supremum of
$1$-Lipschitz functions is $1$-Lipschitz.

\begin{proposition}\label{prop:ell lip}
Suppose that $(S_h, \Lambda)$ is a hyperbolic metric on $S$ together
with a suited pair of measured laminations.  Then for any geodesic
$\gamma$ with unit speed parametrization $\gamma(t)$, and any distinct
geodesic $\ell$, the radius function $\rho_{\gamma, \ell}(t)$ is
$1$-Lipschitz.
\end{proposition}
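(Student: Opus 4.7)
The plan is to reduce the Lipschitz bound on $\rho_{\gamma,\ell}$ to an estimate on the distance function $d_{\PSL(2,\RR)}(\gamma^1(t), \ell^1)$, using that $\gamma^1$ is itself a geodesic in $\PSL(2,\RR)$.

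First, I would observe that the lift $\gamma^1 \colon \RR \to \PSL(2,\RR)$ of a unit speed geodesic in $\HH^2$ is an orbit of the one-parameter subgroup generating the geodesic flow, and so is a unit speed geodesic in $\PSL(2,\RR)$ with the standard left-invariant metric. By the triangle inequality,
\[
\left| d_{\PSL(2,\RR)}(\gamma^1(t), \ell^1) - d_{\PSL(2,\RR)}(\gamma^1(s), \ell^1) \right| \leq d_{\PSL(2,\RR)}(\gamma^1(t), \gamma^1(s)) \leq |t - s|,
\]
so $t \mapsto d_{\PSL(2,\RR)}(\gamma^1(t), \ell^1)$ is $1$-Lipschitz.

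Next I would upgrade this to a $1$-Lipschitz bound on $\log(1/d(\gamma^1(t), \ell^1))$ using Proposition \ref{prop:fellow travel}. Let $\gamma^1(t_\ell)$ be the closest point on $\gamma^1$ to $\ell^1$, at distance $\theta$. For $|t - t_\ell| \leq \log(1/\theta)$, Proposition \ref{prop:fellow travel} gives
\[
\tfrac{1}{L_0}\, \theta\, e^{|t - t_\ell|} \leq d_{\PSL(2,\RR)}(\gamma^1(t), \ell^1) \leq L_0\, \theta\, e^{|t-t_\ell|},
\]
so $\log\bigl(1/d(\gamma^1(t), \ell^1)\bigr)$ agrees with the exactly $1$-Lipschitz function $\log(1/\theta) - |t-t_\ell|$ up to an additive $\log L_0$. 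Outside the exponential interval, the lower bound $d \geq 1/L_0$ gives $\log(1/d) \leq \log L_0$, so the floor $\lfloor\, \cdot\, \rfloor_1$ forces $\rho_{\gamma,\ell}$ to be constant equal to $1$ there (and hence trivially $1$-Lipschitz). In the interior regime, the derivative of the distance function satisfies $|d'(t)| \leq d(t)$ by differentiating the fellow travel exponential estimate in $t$ (or, more directly, by a first variation calculation for the distance from a point moving along one geodesic to a second geodesic in $\PSL(2,\RR)$, noting that the derivative equals the cosine of the angle between $\dot\gamma^1(t)$ and the unit vector to the closest point). Then $\left| \tfrac{d}{dt} \log(1/d(\gamma^1(t), \ell^1)) \right| = |d'(t)|/d(t) \leq 1$, giving $1$-Lipschitz before flooring.

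Finally, I would note that the floor operation $\lfloor x \rfloor_1 = \max\{x, 1\}$ is itself $1$-Lipschitz, so $\rho_{\gamma,\ell}(t) = \lfloor \log(1/d(\gamma^1(t), \ell^1)) \rfloor_1$ is $1$-Lipschitz as claimed. The main subtlety is in confirming the sharp inequality $|d'(t)| \leq d(t)$, which looks borderline from the fellow travel estimate alone (it gives only $|d'| \leq d$ up to a multiplicative $L_0$); the cleanest way to close this gap is to run the first-variation calculation directly in $\PSL(2,\RR)$ and exploit that along a geodesic in a Riemannian manifold the squared distance to a second geodesic is real-analytic, with its rate of change controlled by the sine of the angle between $\dot\gamma^1(t)$ and the perpendicular direction to $\ell^1$, together with the value of $d$ itself.
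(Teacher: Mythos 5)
Your reduction is exactly right: it suffices to show $|\tfrac{d}{dt}\log d(t)| \le 1$ where $d(t) = d_{\PSL(2,\RR)}(\gamma^1(t), \ell^1)$, and then note the floor operation $\lfloor\,\cdot\,\rfloor_1$ is $1$-Lipschitz. But the justification you propose for the key estimate $|d'(t)| \le d(t)$ does not work. The first variation formula gives $d'(t) = -\cos\theta_t$, where $\theta_t$ is the angle between $\dot\gamma^1(t)$ and the initial direction of the minimizing geodesic to $\ell^1$; this yields only $|d'| \le 1$, the same as your triangle-inequality observation, not the sharper $|d'| \le d$. Turning $|d'|\le 1$ into $|d'|\le d$ would require a separate estimate $|\cos\theta_t| \le d(t)$, and neither real-analyticity of the distance function nor the fellow-travel estimate of Proposition \ref{prop:fellow travel} (which is bilipschitz only up to the constant $L_0$) delivers this. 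You honestly flagged this as the main subtlety, but the proposed fix doesn't close it.

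The paper closes the gap differently, and more cleanly, by using the geodesic flow rather than a pointwise variation. Writing $\phi_h$ for the geodesic flow on $\PSL(2,\RR)$, one has $\phi_h(\gamma^1(t)) = \gamma^1(t+h)$, and crucially $\phi_h$ maps the lift $\ell^1$ of the bi-infinite geodesic $\ell$ to itself. The geodesic flow on $\PSL(2,\RR)$ expands or contracts distances by at most a factor of $e^{|h|}$ (this is the Anosov Lipschitz bound, cited in the paper to Manning). So the image under $\phi_h$ of a path realizing $d(t)$ is a path from $\gamma^1(t+h)$ to $\ell^1$ of length at most $e^{|h|} d(t)$, giving $d(t+h) \le e^{|h|} d(t)$; applying $\phi_{-h}$ similarly gives $d(t) \le e^{|h|} d(t+h)$. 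Taking logarithms yields $|\log d(t+h) - \log d(t)| \le |h|$, which is exactly the $1$-Lipschitz bound you want, without any infinitesimal or angle calculation. The essential point you were missing is that $\ell^1$ is flow-invariant, so one can compare $d(t)$ and $d(t+h)$ by transporting the entire minimizing path by $\phi_h$ rather than varying only the endpoint $\gamma^1(t)$.
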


\begin{proof}
We simplify notation by setting
$d(t) = d_{\PSL(2, \RR)}(\gamma^1(t), \ell^1)$.  It suffices to bound the
derivative of $\log d_{\PSL(2, \RR)}( \gamma^1(t), \ell^1 ) = \log d(t)$,
as this is equal to the negative of the radius function where $d(t)
\le 1/e$.  When $d(t) \ge 1/e$, the radius function is the constant
function $1$, and is automatically $1$-Lipschitz.

\medskip Let $\alpha$ be a geodesic arc in $\PSL(2, \RR)$, realizing
$d(t) = d_{\PSL(2, \RR)}(\gamma^1(t), \ell^1)$, i.e.
$\text{length}(\alpha) = d(t)$.  Let
$\phi_t \colon \PSL(2, \RR) \to \PSL(2, \RR)$ be the geodesic flow on
$\PSL(2, \RR)$.  Then $\phi_h \alpha$ is a path in $\PSL(2, \RR)$ from
$\gamma(t+h)$ to $\ell$.  It is well known that the geodesic flow
$\phi_h$ in $\PSL(2, \RR)$ expands or contracts distances by at most
$e^h$, see for example \cite{manning}*{page 75}.  So
$\text{length}(\phi_h \alpha) \le e^h d(t)$.  Since the length of
$\phi_t \alpha$ is an upper bound for the distance from $\gamma(t+h)$
to $\ell$, we have
\begin{equation}\label{eq:1lip upper}
d(t+h) \le e^h d(t).
\end{equation}

Similarly, let $\beta$ be a geodesic in $\PSL(2, \RR)$ realizing the
distance from $\gamma(t+h)$ to $\ell$.  Then $\phi_{-h} \beta$ is a
path from $\gamma(t)$ to $\ell$.
This gives an upper bound of
$e^h \text{length}(\beta)$ on the distance from $\gamma(t)$ to
$\ell$, and hence
\begin{equation}\label{eq:1lip lower}
d(t) \le e^h d(t+h).
\end{equation}

Combining \eqref{eq:1lip upper} and \eqref{eq:1lip lower} gives
\[ \log e^{-h} d(t) - \log d(t) \le \log d(t+h) - \log d(t) \le \log
e^h d(t) - \log d(t) \]
which simplifies to
\[ \left| \log d(t+h) - \log d(t) \right| \le | h |. \]
Thus, the radius function determined by a single geodesic is
$1$-Lipschitz, as required.
\end{proof}

We may now complete the proof of \Cref{prop:exp 1lip}.

\begin{proof}[Proof of \Cref{prop:exp 1lip}]
As
$\rho_\gamma(t) = \max \{ \rho_{\gamma, \Lambda_+}(t), \rho_{\gamma,
  \Lambda_-}(t) \}$ it suffices to show that
$\rho_{\gamma, \Lambda_+}(t)$ and $\rho_{\gamma, \Lambda_-}(t)$ are
$1$-Lipschitz.  We give the argument for $\Lambda_+$, the same
argument works for $\Lambda_-$.

The distance from $\gamma^1(t)$ to the extended lamination
$\overline{\Lambda}^1_+$ is the infimum of distances to each leaf
$\ell^1 \in \overline{\Lambda}^1_+$, that is
\[ d_{\PSL(2, \RR)}(\gamma^1(t), \overline{\Lambda}^1_+) = \inf_{\ell \in
  \overline{\Lambda}_+} d_{\PSL(2, \RR)}(\gamma^1(t), \ell^1). \]

\medskip

Recall the definition of the radius function,
\begin{align*}
\rho_{\gamma, \Lambda_+}(t) = & \left\lfloor \log \frac{1}{d_{\PSL(2, \RR)}( \gamma^1(t),
  \overline{\Lambda}^1_+)} \right\rfloor_1, \\
\intertext{as the reciprocal function is decreasing, and the
  logarithm function is increasing,}
\rho_{\gamma, \Lambda_+}(t) = & \sup_{\ell \in
                      \overline{\Lambda}_+} \rho_{\gamma, \ell}(t).
\end{align*}
The radius function for an individual leaf is $1$-Lipschitz by
\Cref{prop:ell lip}, and a supremum of $1$-Lipschitz functions is
$1$-Lipschitz, and so the radius function is $1$-Lipschitz, as
required.
\end{proof}

\subsection{Tame bottlenecks}\label{section:tame}

In this section, we prove \Cref{cor:corner distance} that corner segments create tame bottlenecks. To do so, we first define transverse rectangles for a geodesic, namely rectangles of positive measure such that the geodesic crosses all leaves containing the sides of the rectangle.
We then show that transverse rectangles give rise to bottlenecks.  
Our construction does not come with a bound on the arc length of
the test path segment between the bottleneck sets. However, we show
that by making the rectangles smaller and thus increasing the size of the
bottleneck sets, there is a transverse rectangle with a bound on arc length of the test path segment, i.e. the
bottlenecks are tame.

\subsubsection{Transverse rectangles}

\begin{definition}
\label{def:transverse rectangle}
Given a geodesic $\gamma$, we say that a rectangle of positive measure
is a \emph{transverse rectangle} for $\gamma$ if $\gamma$ crosses all
leaves containing the sides of the rectangle.
\end{definition}

\medskip

\begin{figure}[h]
\begin{center}
\begin{tikzpicture}[scale=0.75]

\tikzstyle{point}=[circle, draw, fill=black, inner sep=1pt]

\def\boundary{(0, 0) circle (4)}

\def\redone{(270:11.70) circle (11)}
\def\redtwo{(90:11.70) circle (11)}
\def\greenone{(180:11.70) circle (11)}
\def\greentwo{(0:11.70) circle (11)}

\begin{scope}
    \clip \boundary;
    \clip \greenone;
    \draw [color=white, fill=black!20] \redone;
\end{scope}

\draw (-2.25, -2) node {$U$};

\begin{scope}
    \clip \boundary;
    \clip \greentwo;
    \draw [color=white, fill=black!20] \redtwo;
\end{scope}

\draw (1.5, 2) node {$V$};

\begin{scope}
\clip \boundary;
\draw [color=red] \redone;
\draw [color=red] \redtwo;
\draw [color=ForestGreen] \greenone;
\draw [color=ForestGreen] \greentwo;
\end{scope}

\draw (-1.5, -4.5) node [color=ForestGreen] {$\alpha^+$};
\draw (1.5, -4.5) node [color=ForestGreen] {$\beta^+$};
\draw (4.5, -1.5) node [color=red] {$\alpha^-$};
\draw (4.5, 1.5) node [color=red] {$\beta^-$};

\draw [thick, arrows=->] (235:4) -- (45:4) node [label=right:$\gamma$] {};

\draw (-2.5, 0) node {$W_1$};
\draw (-2, 2) node {$W_2$};
\draw (0, 2.5) node {$W_3$};

\draw (-0.25, 0.25) node {$R$};

\draw \boundary;

\end{tikzpicture}
\end{center}
\caption{A transverse rectangle.} \label{fig:transverse rectangle}
\end{figure}
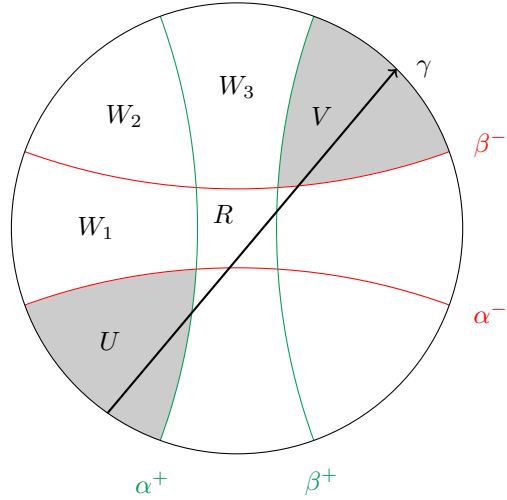

A choice of unit speed parametrization for a geodesic $\gamma$ orders the leaves of the laminations intersecting $\gamma$. Using the ordering and our conventions for rectangles illustrated in \Cref{fig:transverse rectangle}, we specify some notation for transverse rectangles. Note that the geodesic need not itself intersect the rectangle.  

\begin{definition}\label{def:leaf order}
Suppose that $\gamma$ is a non-exceptional geodesic parametrized with
unit speed.  Suppose that $\gamma$ intersects leaves $\ell_1$ and
$\ell_2$ in either invariant lamination $\Lambda_+ \cup \Lambda_-$ at
points $\gamma(t_1)$ and $\gamma(t_2)$, respectively.  If
$t_1 \le t_2$ then we say that $\ell_1 \le_\gamma \ell_2$.
\end{definition}

Suppose that $R$ is a rectangle with positive measure transverse to a
geodesic $\gamma$.  The rectangle has two sides contained in leaves of
$\Lambda_+$, which we shall label $\alpha^+$ and $\beta^+$ so that
$\alpha^+ \le_\gamma \beta^+$.  Similarly, the rectangle has two sides
contained in leaves of $\Lambda_-$, which we shall label $\alpha^-$
and $\beta^-$ so that $\alpha^- \le_\gamma \beta^-$.

\medskip

As $\gamma$ is oriented, it has an initial limit point
$\overline{\gamma}_-$ and a terminal limit point
$\overline{\gamma}_+$.  We call the quadrant whose limit set contains
$\overline{\gamma}_-$ the \emph{initial quadrant}, and the quadrant
whose limit set contains $\overline{\gamma}_+$ the \emph{terminal
  quadrant}.  Using our notation illustrated in
\Cref{fig:transverse rectangle}, the region $U$, with boundary
contained in $\alpha^+$ and $\alpha^-$, is the initial quadrant, and
the region $V$, with boundary contained in $\beta^+$ and $\beta^-$, is
the terminal quadrant.

\medskip
Suppose that a non-exceptional geodesic $\gamma$ is transverse to a
rectangle $R$ with optimal height $z$.  The main result of this
section is that the optimal height rectangle $F_z(R)$ is a
\emph{$(r, K)$-bottleneck} with respect to the flow sets over the initial and terminal
quadrants.  The constant $K$ depends on the measure of the rectangle
(as well as various constants depending on $(S, \LL)$), and tends to
infinity as the area of the rectangle tends to zero.  In particular,
the geodesic $\overline{\gamma}$ in $\widetilde S_h \times \RR$ with
the same limit points as the path $\iota(\gamma)$ passes within a
bounded distance of the square $F_z(R)$.

\begin{lemma}\label{lemma:transverse}(Transverse rectangles create
bottlenecks.)  Suppose that $(S_h, \Lambda)$ is a hyperbolic metric on
$S$ together with a suited pair of measured laminations.  Suppose
that $\gamma$ is a non-exceptional geodesic that intersects a
rectangle $R$ with measure at least $A > 0$ and optimal height $z$.
Then there are constants $r > 0$ and $K \ge 0$ (that depends on $\LL$
and $A$) such that the optimal height rectangle
$F_z(R) = R \times \{ z \}$ is an $(r, K)$-bottleneck for the flow
sets $F(U)$ and $F(V)$ over the initial and terminal quadrants of $R$.

\medskip
In particular, the geodesic $\overline{\gamma}$ in
$\widetilde S_h \times \RR$ with the same limit points as
$\iota(\gamma)$ passes within Cannon--Thurston distance $K$ of the
optimal height rectangle $F_z(R)$ in $\widetilde S_h \times \RR$.
\end{lemma}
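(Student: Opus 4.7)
The plan is to reduce the statement to the opposite-quadrant bottleneck results already established, namely \Cref{lemma:bottleneck} and \Cref{prop:quadrant_height}. The only substantive new step is verifying that the initial quadrant $U$ and the terminal quadrant $V$ are in fact opposite quadrants of $R$ whenever $\gamma$ is transverse to $R$.

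First I would establish this identification. By the labeling from \Cref{def:leaf order}, $\gamma$ crosses $\alpha^+$ before $\beta^+$ and $\alpha^-$ before $\beta^-$. Picturing the four leaves in the affine coordinates of $R$, where leaves of $\Lambda_+$ and $\Lambda_-$ appear as vertical and horizontal lines, this ordering forces $\gamma$ to travel monotonically ``into'' $R$ from the direction of the corner $\alpha^+ \cap \alpha^-$ and ``out of'' $R$ toward the corner $\beta^+ \cap \beta^-$. Tracing $\gamma$ to its endpoints at infinity places $\gamma_-$ in the limit set of the quadrant meeting $\alpha^+ \cap \alpha^-$ (which is $U$) and $\gamma_+$ in the limit set of the quadrant meeting $\beta^+ \cap \beta^-$ (which is $V$). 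As these two corners are diagonal in $R$, the quadrants $U$ and $V$ are opposite. With this identification, the lower bound $d_{\widetilde S_h \times \RR}(F(U), F(V)) \ge r$ is immediate from \Cref{prop:transverse separate}, and the bottleneck containment of any Cannon--Thurston geodesic from $F(U)$ to $F(V)$ in $N_K(F_z(R))$ is exactly the conclusion of \Cref{prop:quadrant_height}; both constants depend only on $\LL$ and $A$.

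For the final ``in particular'' assertion about the target geodesic $\overline{\gamma}$, I would pass to a limit. The images $\iota(\gamma_\pm)$ lie in the limit sets of $F(U)$ and $F(V)$ respectively, since every suspension flow line based in $U$ (resp.\ $V$) accumulates at $\iota(\gamma_-)$ (resp.\ $\iota(\gamma_+)$) in one direction. Selecting basepoints $p_n \in F(U)$ and $q_n \in F(V)$ converging in $\widetilde S_h \times \RR \cup \partial (\widetilde S_h \times \RR)$ to $\iota(\gamma_-)$ and $\iota(\gamma_+)$, the finite geodesics $[p_n, q_n]$ converge to $\overline{\gamma}$ uniformly on compact sets. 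Each $[p_n, q_n]$ meets $N_K(F_z(R))$ by the bottleneck property, and since $F_z(R)$ is compact this forces $\overline{\gamma}$ itself to meet $N_K(F_z(R))$. The only mildly delicate point is the opposite-quadrant identification, but it follows unambiguously from the monotone ordering $\alpha^\pm \le_\gamma \beta^\pm$ together with the fact that $\gamma$ crosses each of the four defining leaves exactly once; I expect no serious obstacle.
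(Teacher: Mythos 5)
Your proof is correct and follows essentially the route the paper sets up: it reduces the statement to \Cref{prop:transverse separate} and \Cref{prop:quadrant_height} (the ingredients behind \Cref{lemma:bottleneck}), together with the observation that the initial and terminal quadrants are opposite. The paper asserts the opposite-quadrant identification only via \Cref{fig:transverse rectangle} and does not spell out the argument; your justification from the crossing order $\alpha^\pm \le_\gamma \beta^\pm$ (so that $\gamma_-$ is separated from both $\beta^+,\beta^-$ by $\alpha^+,\alpha^-$ respectively, placing it in the quadrant at $\alpha^+\cap\alpha^-$, and symmetrically for $\gamma_+$) correctly fills that in.

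One small inaccuracy in the final paragraph: you justify $\iota(\gamma_\pm)\in\overline{F(U)},\overline{F(V)}$ by claiming that \emph{every} suspension flow line based in $U$ accumulates at $\iota(\gamma_-)$; this is false — a flow line over a generic point of $U$ has limit points determined by the leaves through that point, not by $\gamma$. What you actually need is only that $\iota(\gamma_-)\in\overline{F(U)}$, which follows from continuity of the Cannon--Thurston map on $\overline{\widetilde S_h}$: take $p'_n\in U$ with $p'_n\to\gamma_-$; then $\iota(p'_n)\in F(U)$ and $\iota(p'_n)\to\iota(\gamma_-)$. With that corrected, your limiting argument works; one should also note that passing from the finite geodesics $[p_n,q_n]$ to $\overline{\gamma}$ may enlarge $K$ by a term of order $\delta_3$ (from eventual $2\delta_3$-fellow-traveling near the compact set $F_z(R)$), which is harmless since the resulting constant still depends only on $\LL$ and $A$.
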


\subsubsection{Outermost rectangles for small angles}\label{section:small angle}

Suppose that a leaf $\ell$ intersects $\gamma$ at $\gamma(t)$. 
Since the laminations are closed, there is a
unique transverse rectangle containing $\gamma(t)$ whose side along $\ell$ has the largest measure among all such rectangles.  We
call this the \emph{outermost} transverse rectangle determined by
$\gamma$ and $\ell$. We show that as long as the
angle between $\ell$ and $\gamma$ is sufficiently small, the area of the outermost rectangle is bounded below.

\begin{definition}
Suppose that $\gamma$ is a non-exceptional geodesic intersecting a leaf 
$\ell_- \in \Lambda_-$ at a point $p$.  We say that a transverse rectangle $R$
containing $p$ is \emph{outermost} if it has the following properties.
\begin{itemize}
\item The sides $\alpha_+ \le_\gamma \beta_+$ of $R$ are contained in the outermost
leaves of $\Lambda_+$ intersecting both $\gamma$ and
$\ell_-$.
\item The sides $\alpha_- \le_\gamma \beta_-$ are contained in the outermost leaves
of $\Lambda_-$ intersecting all $\alpha_+, \beta_+$ and
$\gamma$.
\end{itemize}
Similarly, if $p$ is an intersection point of $\gamma$ and a leaf
$\ell_+$ of $\Lambda_+$, we say a transverse rectangle $R$ is
\emph{outermost} if it has the properties above with the two invariant
laminations swapped.
\end{definition}

\begin{proposition}\label{prop:small angle rectangle}
Suppose that $(S_h, \Lambda)$ is a hyperbolic metric on $S$ together
with a suited pair of measured laminations.  Let $Q_\LL$ and
$c_\LL$ be the constants from \Cref{prop:qi} and let $\theta_\LL$ be
the constant from \Cref{def:theta}.  Then there are positive constants
$A > 0$ and $K \ge 0$ such that for any non-exceptional geodesic
$\gamma$ in $S_h$, with unit speed parametrization, and with
$\gamma(t)$ a point of intersection between $\gamma$ and a leaf
$\ell \in \Lambda_+$ with angle $\theta \le \theta_\LL$, then the
outermost transverse rectangle $R$ determined by $\gamma \cap \ell$
has the following properties.
%
\begin{thmenum}[label={(\ref{prop:small angle rectangle}.\arabic*)}]
\item \label{prop:long side} Let $\ell$ have a unit speed
parametrization in the hyperbolic metric.  Then the segment $\ell$
lying between $\alpha_-$ and $\beta_-$ is an interval
$\ell([- r_1, r_2])$ where $r_i = \log \tfrac{1}{\theta}$ up to
additive error at most
$\tfrac{1}{6} \log \tfrac{1}{\theta_\LL} - c_\LL$, where $\ell$ has
unit speed parametrization with intersection point $\ell(0)$ with $\gamma$.
\item \label{prop:long side measure} The sides of $R$ in $\Lambda_+$
have measure $dy(R)$ satisfying
\[ 0 < \tfrac{5}{3 Q_\LL} \log \tfrac{1}{\theta_\LL} \le
\tfrac{2}{Q_\LL} ( \log \tfrac{1}{\theta} - \tfrac{1}{6} \log
\tfrac{1}{\theta_\LL} ) \le dy(R) \le 2 Q_\LL ( \log \tfrac{1}{\theta}
+ \tfrac{1}{6} \log \tfrac{1}{\theta_\LL} ). \]
\item \label{prop:area bound} The measure of the rectangle $R$ is at
least $A = A_\LL / (3 Q^2_\LL) > 0$.
\item \label{prop:short side} The sides of $R$ in $\Lambda_-$ have
measure $dx(R)$ satisfying
\[ 0 < \frac{ A_f }{ 2 Q_\LL (\log \tfrac{1}{\theta} + \tfrac{1}{6} \log
  \tfrac{1}{\theta_\LL}) } \le dx(R) \le \frac{A_\LL}{
  \tfrac{2}{Q_\LL} ( \log \tfrac{1}{\theta} - \tfrac{1}{6} \log
  \tfrac{1}{\theta_\LL} ) } , \]
where $A_\LL > 0$ is the constant from \Cref{prop:min area}.
\item \label{prop:small angle optimal height} The rectangle $R$ has optimal height
$\log_k \log \frac{1}{\theta}$ up to additive error at most $K$.
%
\end{thmenum}

\medskip The same holds for $\gamma$ intersecting a leaf $\ell$ of
$\Lambda_-$, except bounds on the measures of the sides in $\Lambda_+$
and $\Lambda_-$ are swapped, and the optimal height of the outermost
transverse rectangle is $- \log_k \log \frac{1}{\theta}$ up to
additive error at most $K$.
\end{proposition}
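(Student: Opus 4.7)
The plan is to combine the hyperbolic fellow-traveling of $\gamma$ and $\ell$ near their intersection with the quasi-isometry of \Cref{prop:qi} and the area bound of \Cref{prop:min area}. Throughout, parametrize both $\ell$ and $\gamma$ with unit hyperbolic speed so that $\ell(0) = \gamma(0)$ is the intersection point.

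To establish (1), I would apply \Cref{prop:projection interval} and \Cref{prop:fellow travel}: the lifts satisfy $d_{\PSL(2,\RR)}(\gamma^1(t), \ell^1) \le L_0 \theta e^{|t|}$ for $|t| \le \log(1/\theta)$ and $\ge 1/L_0$ otherwise. Since a leaf of $\Lambda_-$ crossing $\ell$ does so at angle at least $\alpha_\LL$ (by \Cref{prop:angle bound}), it also crosses $\gamma$ whenever $\gamma$ is sufficiently close to $\ell$ in $T^1(S_h)$; conversely, once the two geodesics have diverged by a definite amount, any leaf crossing $\ell$ necessarily peels away from $\gamma$ before meeting it. Combining this with \Cref{prop:cobounded intersections} (which guarantees a crossing of $\Lambda_-$ in every $\ell$-segment of length $L_\LL$) locates the outermost such leaves $\alpha_-, \beta_-$ at parameters $\ell(-r_1), \ell(r_2)$ with $|r_i - \log(1/\theta)|$ bounded by $T_0 + L_\LL$ plus lower-order terms depending only on $(S_h, \Lambda)$. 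The choice of $\theta_\LL$ in \Cref{def:theta} is sized precisely so that this additive error is absorbed into $\tfrac{1}{6}\log(1/\theta_\LL) - c_\LL$.

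Statement (2) follows by applying \Cref{prop:qi} to the hyperbolic length $r_1 + r_2$, since the Cannon--Thurston pseudo-metric length of any $\Lambda_+$-leaf segment between $\alpha_-$ and $\beta_-$ equals $dy(R)$, independently of the choice of leaf across the strip. For (4), the lower bound on $dx(R)$ is obtained by applying \Cref{prop:min area} to the $(\alpha_+, \beta_+)$-maximal rectangle $R_+$, which shares the $\Lambda_+$-sides of $R$, so $dx(R_+) = dx(R)$, while $dy(R_+)$ exceeds $dy(R)$ only by a bounded extension controlled via \Cref{prop:innermost polygon diameter bound} and \Cref{prop:qi}; the upper bound uses \Cref{prop:square upper bound} together with the lower bound on $dy(R)$ from (2). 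Then (3) follows by multiplying the lower bounds of (2) and (4): for $\theta \le \theta_\LL$ the product evaluates to at least $\tfrac{5}{7} \cdot A_\LL/Q_\LL^2$, comfortably exceeding $A_\LL/(3 Q_\LL^2)$. Finally, (5) is a direct substitution of the bounds (2) and (4) into $z(R) = \tfrac{1}{2}\log_k(dy(R)/dx(R))$: the leading $\log(1/\theta)$ factors combine to give $\log_k \log(1/\theta) + O(1)$.

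The main technical obstacle will be (1), in pinning down the $O(1)$ additive error in $r_i$ uniformly in terms of $(S_h, \Lambda)$ alone. Two points require care: first, ruling out the possibility that a leaf of $\Lambda_-$ crosses $\ell$ far outside the fellow-travel region yet still reaches $\gamma$, which requires combining the angle bound of \Cref{prop:angle bound} with the exponential divergence estimate of \Cref{prop:fellow travel}; second, ensuring that leaves of $\Lambda_-$ actually exist near the expected parameter values, which uses the density statement \Cref{prop:cobounded intersections}. Once (1) is established with the tight error bound, the remaining statements follow by routine manipulation of the already-established results about rectangles in the Cannon--Thurston metric.
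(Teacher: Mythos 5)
Your overall plan is sound, and the derivation of (1), (2), and (5) tracks the paper closely: both you and the paper combine the projection-interval estimate of \Cref{prop:projection interval}, the exponential divergence of \Cref{prop:fellow travel}, the cobounded-crossing property \Cref{prop:cobounded intersections}, and the quasi-isometry \Cref{prop:qi} to pin down the $\Lambda_-$-sides $\alpha_-,\beta_-$ near parameter $\pm\log(1/\theta)$ on $\ell$ and then read off $dy(R)$ and the optimal height.

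The place where your argument diverges from the paper's and, as written, has a gap is the step where you establish the lower bound on $dx(R)$ in (4). You propose to apply \Cref{prop:min area} to the $(\alpha_+,\beta_+)$-maximal rectangle $R_+$, noting $dx(R_+)=dx(R)$, and then deduce a lower bound on $dx(R)$ from $dx(R_+)dy(R_+)\ge A_\LL$ once you bound $dy(R_+)$ above. You assert that $dy(R_+)$ exceeds $dy(R)$ ``only by a bounded extension controlled via \Cref{prop:innermost polygon diameter bound} and \Cref{prop:qi}.'' This is the unjustified step. The $\Lambda_-$-sides of $R_+$ are the outermost common positive leaves of $\alpha_+$ and $\beta_+$, and they are \emph{not} required to cross $\gamma$ or $\ell$; there is no a~priori control on how far from $\gamma$ they lie, so the segment of $\alpha_+$ between $R$'s $\Lambda_-$-side and $R_+$'s $\Lambda_-$-side could be long. \Cref{prop:innermost polygon diameter bound} bounds the diameter of a single innermost complementary polygon, which does not bound this extension. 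Without an upper bound on $dy(R_+)$ commensurate with $\log(1/\theta)$, you cannot extract the required $dx(R)\gtrsim 1/\log(1/\theta)$, and then your multiplication argument for (3) also falls through.

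The paper sidesteps this by working with explicitly constructed auxiliary leaves rather than the maximal rectangle of $\alpha_+,\beta_+$. It picks four intervals $I_1,\dots,I_4$ on $\ell$ of length $L_\LL$: two just outside the projection interval $I_\gamma$ (at distance $T_1=T_0+\log(1/\alpha_\LL)$), two just inside. By \Cref{prop:cobounded intersections}, each meets a leaf of $\Lambda_-$; the inner leaves $\ell^-_2,\ell^-_3$ must cross $\gamma$ (nested projection intervals) and become $\alpha_-,\beta_-$, while the outer leaves $\ell^-_1,\ell^-_4$ miss $\gamma$ but are at controlled distance along $\ell$. The $(\ell^-_1,\ell^-_4)$-maximal rectangle then has $dy$-measure bounded above by $2Q_\LL(T_\gamma+T_2)+c_\LL$, so \Cref{prop:min area} applied to \emph{that} rectangle yields the needed $dx$ lower bound. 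The area estimate (3) then follows by multiplying the two bounds, and (4) is read off from (1) and (3). If you want to keep your ordering, replace $R_+$ in your argument by the $(\ell^-_1,\ell^-_4)$-maximal rectangle for suitably chosen outer leaves; that is exactly what makes the upper bound on the long side available.
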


\begin{proof}
Given a hyperbolic metric $S_h$ and a suited pair of laminations
$\LL$, we recall various constants from previous results.  Let
$\theta_\LL > 0$ be the constant from \Cref{def:theta}.  In this
argument, we will use the fact that
$\theta_\LL \le \alpha_\LL^2 e^{-2T_0 - 2 L_\LL - 2 Q_\LL c_\LL}$,
%
%
where $\alpha_\LL$ is defined in \Cref{prop:angle bound}, $T_0$ is the
constant from \Cref{prop:projection interval}, $L_\LL$ is the constant
from \Cref{prop:cobounded intersections}, and $Q_\LL$ and $c_\LL$ are
the quasi-isometry constants from \Cref{prop:qi}.

\medskip
To simplify expressions, we will define a sequence of
constants $T_i$ during the argument.  They will all depend only on $\PSL(2, \RR)$ or
$(S_h, \LL)$ and in particular, not on
$\theta$.

\medskip
Suppose that a leaf $\ell$ of $\Lambda_-$ intersects a non-exceptional geodesic
$\gamma$ at the point $\gamma(t)$ with angle $\theta \le \theta_\LL$.
We parametrize $\ell$ with unit speed so that the intersection
point is $\ell(0)$.

\medskip
By \Cref{prop:projection interval}, the image of $\gamma$
under nearest point projection to $\ell$ is equal to an interval
$\ell( I_\gamma )$, where $I_\gamma = [-T_\gamma, T_\gamma]$ is a
symmetric interval about $t=0$, where $T_\gamma$ satisfies
$\log \tfrac{1}{\theta} \le T_\gamma \le \log \tfrac{1}{\theta} +
T_0$.

\medskip
By \Cref{prop:cobounded intersections}, there is a constant
$L_\LL >0$ such that every segment of $\ell$ of length $L_\LL$ intersects
a leaf of $\Lambda_-$.
In particular, for any interval of $\ell$ with length $L_\LL$,
nested sufficiently far inside the projection interval for $\gamma$
onto $\ell$, 
\begin{itemize}
    \item there will be a leaf of $\Lambda_-$ intersecting the 
     interval, and
     \item the nearest point projection interval of this leaf
     will be contained in the nearest point projection interval for
     $\gamma$ on $\ell$.
\end{itemize}
It follows that the endpoints of this leaf and the endpoints of
$\gamma$ are linked on the boundary circle, and so $\gamma$ and the
leaf intersect, as in \Cref{prop:nested implies intersect}.

\medskip
Similarly, for any interval of $\ell$ of length $L_\LL$, sufficiently
far from the projection interval for $\gamma$, the leaves that
intersect the interval will have projection intervals onto $\ell$
which are disjoint from the projection interval of $\gamma$ onto
$\ell$, and so these leaves are disjoint from $\gamma$.

\medskip
We wish to produce leaves of $\Lambda_-$ intersecting $\ell$ close to
the endpoints of $\ell(I_\gamma)$.  
To do so, we begin by picking four intervals $I_1, \ldots , I_4$ in $\ell$, all of length $L_\LL$, chosen so that
\begin{itemize}
    \item the intervals $\ell(I_1)$ and $\ell(I_4)$ are the innermost among intervals in $\ell - \ell(I_\gamma)$ such that any leaf of $\Lambda_+$ intersecting $\ell(I_1)$ or $\ell(I_4)$ does not intersect $\gamma$; and
    \item the intervals $\ell(I_2)$ and $\ell(I_3)$ are the outermost among intervals in $\ell(I_\gamma)$ so that any leaf of $\Lambda_+$ that intersects them also intersects $\gamma$.
\end{itemize}

We claim that the choice of the intervals is possible by sufficient
nesting that does not depend on $\theta$. We choose $T_1$ to be an
upper bound on the radius of the nearest projection of any leaf
$\ell_+ \in \Lambda_+$ to any leaf $\ell_- \in \Lambda_+$.  As the
angle of intersections of leaves in $\Lambda_+$ with leaves in
$\Lambda_-$ is at most $\alpha_\LL$ by \Cref{prop:angle bound}, we may
choose
\begin{equation}\label{eq:T1}
T_1 = T_0 + \log \tfrac{1}{\alpha_\LL},
\end{equation}
by \Cref{prop:neighbourhood}.  This choice of $T_1$ does not depend on
the angle $\theta$ between $\gamma$ and $\ell$.  We will choose the
outer intervals $I_1$ and $I_4$ to be distance $T_1$ outside
$I_\gamma$, and we will choose the inner intervals $I_2$ and $I_3$ to
be nested distance $T_1$ inside $I_\gamma$.  This is illustrated in
\Cref{fig:ell intervals}.

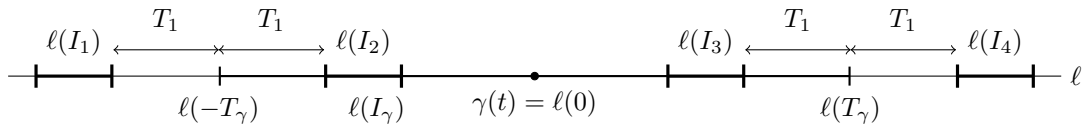
\begin{figure}[h]
\begin{center}
\begin{tikzpicture}[scale=0.7]

\tikzstyle{point}=[circle, draw, fill=black, inner sep=1pt]

\draw (-5, 3) -- (15, 3) node [right] {$\ell$};

\draw (5, 3) node [point, label=below:${\gamma(t) = \ell(0)}$] {};

\draw [thick, arrows=|-|] (-1, 3) node
[label=below:$\ell(-T_\gamma)$] {} -- (11, 3) node
[label=below:$\ell(T_\gamma)$] {} node [pos=0.25,
label=below:$\ell(I_\gamma)$] {};

\draw [very thick, arrows=|-|] (-4.5, 3) -- (-3, 3) node [midway,
label=above:$\ell(I_1)$] {};

\draw [very thick, arrows=|-|] (1, 3) -- (2.5, 3) node [midway,
label=above:$\ell(I_2)$] {};

\draw [very thick, arrows=|-|] (7.5, 3) -- (9, 3) node [midway,
label=above:$\ell(I_3)$] {};

\draw [very thick, arrows=|-|] (13, 3) -- (14.5, 3) node [midway,
label=above:$\ell(I_4)$] {};

\draw [arrows=<->] (-3, 3.5) -- (-1, 3.5) node [midway, label=above:$T_1$] {};

\draw [arrows=<->] (-1, 3.5) -- (1, 3.5) node [midway, label=above:$T_1$] {};

\draw [arrows=<->] (9, 3.5) -- (11, 3.5) node [midway, label=above:$T_1$] {};

\draw [arrows=<->] (11, 3.5) -- (13, 3.5) node [midway, label=above:$T_1$] {};

\end{tikzpicture}
\end{center}
\caption{Projection intervals on $\ell$.} \label{fig:ell intervals}
\end{figure}

\medskip
%
%
In order for our construction to work, the projection interval
$I_\gamma$ for $\gamma$ must be sufficiently large.  We will require
$T_\gamma \ge T_1 + L_\LL$.  Equivalently,
$T_\gamma = \log \tfrac{1}{\theta} \ge T_0 + \log
\tfrac{1}{\alpha_\LL} + L_\LL$, which is satisfied as long as
$\theta \le \alpha_\LL e^{-T_0 - L_\LL}$.  This is guaranteed by our
choice of $\theta_\LL$ from \Cref{def:theta}, and so $T_\gamma$ is
large enough to construct the nested intervals.

\medskip
Consider now the first interval $\ell(I_1)$.  Since its length is
$L_f$, there is a leaf $\ell^-_1$ of $\Lambda_-$ which intersects
$\ell$, say at $\ell(t_1)$ for $t_1$ in $I_1$.  Since the angle of
intersection between any two leaves is at least $\alpha_\LL$, the
nearest point projection interval $\ell(I_{\ell^-_1})$ onto $\ell$ is
contained in a symmetric interval of radius
$\log \tfrac{1}{\alpha_\LL} + T_0$ centered at $t_1$.  This radius is
equal to our choice of $T_1$, and so $\ell(I_{\ell^-_1})$ is disjoint
from the projection interval $\ell(I_\gamma)$ for $\gamma$.  In
particular, $\ell^-_1$ is disjoint from $\gamma$.  This is illustrated
in \Cref{fig:angle rectangle}.  Exactly the same argument shows that
there is a leaf $\ell^-_4$ of $\Lambda_-$ intersecting $\ell(I_4)$
which is disjoint from $\gamma$.  In \Cref{fig:angle rectangle} we
have drawn $\gamma(t)$ in the interior of the rectangle we construct,
but $\gamma(t)$ may in fact lie on the boundary, as $\ell$ may be a
boundary leaf of the rectangle.

\medskip
Now consider the second interval $\ell(I_2)$.  As this interval has
length $L_f$, there is a leaf $\ell^-_2$ of $\Lambda_-$ which
intersects $\ell$, say at $\ell(t_2)$ in this interval $\ell(I_2)$.
Again, as the angle of intersections is at least $\alpha_\LL$, the
nearest point projection interval $\ell(I_{\ell^-_2})$ for this leaf
onto $\ell$ is contained in a symmetric interval of radius
$\log \tfrac{1}{\alpha_\LL} + T_0$ centered at $t_2$.  This radius is
equal to our choice of $T_1$, and so $\ell(I_{\ell^-_2})$ is contained
inside the projection interval $\ell(I_\gamma)$ for $\gamma$.  Hence
$\ell^-_2$ and $\gamma$ intersect.  Exactly the same argument shows
that there is a leaf $\ell^-_3$ of $\Lambda_-$ intersecting
$\ell(I_3)$ which also intersects $\gamma$.

\begin{figure}[h]
\begin{center}
\begin{tikzpicture}[scale=0.7]

\tikzstyle{point}=[circle, draw, fill=black, inner sep=1pt]
\tikzstyle{redpoint}=[circle, draw, fill=red, inner sep=1pt]

\draw [color=white, fill=black!20] (-0.25, 2) -- (9.75, 2) -- (10.25, 4) -- (0.25, 4) -- cycle;

\draw [color=ForestGreen] (-0.5, 1) -- (10.5, 1);
\draw [color=ForestGreen] (-0.5, 5) -- (10.5, 5);

\draw [color=ForestGreen] (-2.5, 2) -- (12.5, 2);
\draw [color=ForestGreen] (-3, 3) -- (13, 3) node [right] {$\ell$};
\draw [color=ForestGreen] (-2.5, 4) -- (12.5, 4);

\draw [color=red] (-0.75, 0) -- (0.75, 6) node [redpoint, midway, label=above
right:$\ell(t_2)$] {} node [above] {$\ell^-_2$};

\draw [color=red] (9.25, 0) -- (10.75, 6) node [redpoint, midway, label=below
right:$\ell(t_3)$] {} node [above] {$\ell^-_3$};

\draw [color=red] (-2.5, 1) -- (-1.5, 5) node [redpoint, midway, label=below
right:$\ell(t_1)$] {} node [above] {$\ell^-_1$};

\draw [color=red] (11.5, 1) -- (12.5, 5) node [redpoint, midway, label=below
right:$\ell(t_4)$] {} node [above] {$\ell^-_4$};

\draw [thick] (-1.75, -0.5) -- (-0.5, 2.5) -- (10.5, 3.5) -- (11.75, 6.5)
node [label=right:$\gamma$] {};

\draw (5, 3) node [point, label=below:${\gamma(t) = \ell(0)}$] {};

\draw (7, 3.5) node {$\theta$};

\draw ([shift=(0:2cm)]5,3) arc (0:5:2cm);

\draw [thick, arrows=<->] (-0.75, -0.5) -- (9.25, -0.5) node [midway,
label=below:$\ge 2 T_\gamma - 2 T_2$] {};

\draw [thick, arrows=<->] (-2.5, -1.5) -- (11.5, -1.5) node [midway,
label=below:$\le 2 T_\gamma + 2 T_2$] {};

\draw [thick, arrows=<->] (14, 4) -- (14, 2) node
[label=below:$\ge \frac{A_\LL}{2 T_\gamma + 2 T_2}$] {};


\end{tikzpicture}
\end{center}
\caption{Small angles give long transverse rectangles.} \label{fig:angle rectangle}
\end{figure}
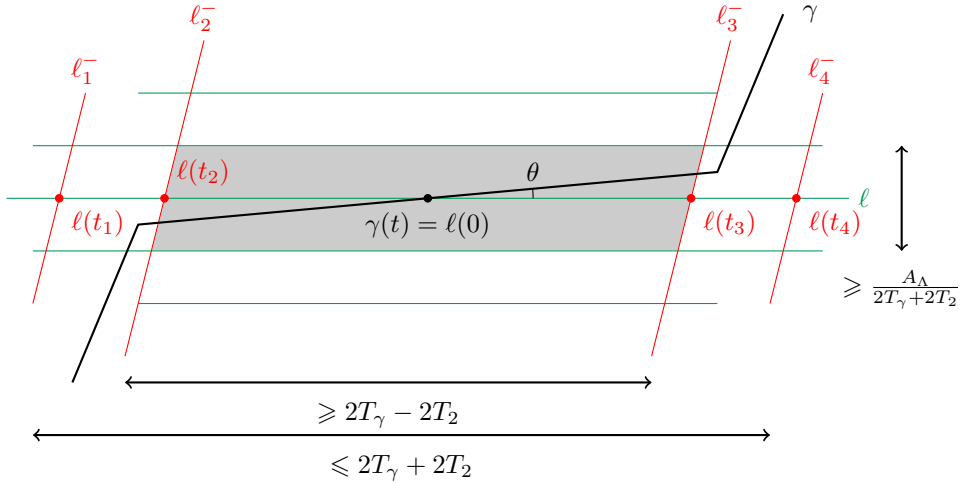

\medskip
The subinterval of $\ell$ between $\ell^-_2$ and $\ell^-_3$ will be
contained in the rectangle we construct.  We now verify that this
subinterval satisfies the bounds on hyperbolic length from
\Cref{prop:long side}.  Let $\rho_\theta$ be the minimum distance from
$\ell(0)$ to either of the leaves $\ell^-_2$ and $\ell^-_3$.  By our
choice of intervals above,
\begin{align*}
  \log \tfrac{1}{\theta} - T_1 - L_\LL & \le \rho_\theta \le \log
\tfrac{1}{\theta} + T_0 - T_1. \\
\intertext{Recall that $T_1 = T_0 + \log \tfrac{1}{\alpha_\LL}$, which
  gives}
  \log \tfrac{1}{\theta} - T_0 - \log \tfrac{1}{\alpha_\LL} - L_\LL & \le \rho_\theta \le \log
  \tfrac{1}{\theta} - \log \tfrac{1}{\alpha_\LL}.
\end{align*}  
In order to show the bounds from \Cref{prop:long side}, it therefore
suffices to show that
\begin{align}
\tfrac{1}{6} \log \tfrac{1}{\theta_\LL} - c_\LL & \ge T_0 + \log
\tfrac{1}{\alpha_\LL} + L_\LL. \nonumber \\
\intertext{Equivalently,}
\log \tfrac{1}{\theta_\LL} & \ge 6 ( T_0 + \log
\tfrac{1}{\alpha_\LL} + L_\LL + c_\LL ). \label{eq:theta pa estimate} \\
\intertext{
  and \eqref{eq:theta pa estimate} follows directly from our choice of
$\theta_\LL$ in \Cref{def:theta}, which in fact satisfies the stronger
estimate
}
%
%
  \log \tfrac{1}{\theta_\LL} & \ge 6( T_0 + \log \tfrac{1}{\alpha_\LL}
                               + L_\LL + Q_\LL c_\LL
                               ), \label{eq:theta pa estimate 2}
\end{align}
as $Q_\LL \ge 1$.

\medskip
The bound on the hyperbolic length of the subinterval of $\ell$
between $\ell^-_2$ and $\ell^-_3$ immediately gives the bound on the
measure of the subinterval from \Cref{prop:long side measure}, using
the quasi-isometry between the hyperbolic and flat metric,
\Cref{prop:qi}.  In particular, as $\theta \le \theta_\LL$, the
measure of this subinterval of $\ell$ is at least
$\tfrac{2}{Q_\LL}(\log \tfrac{1}{\theta} - \tfrac{1}{6} \log
\tfrac{1}{\theta_\LL}) \ge \frac{5}{3 Q_\LL} \log
\tfrac{1}{\theta_\LL}$, which is positive.  Hence, by \Cref{prop:min
  area}, the leaves $\ell^-_2$ and $\ell^-_3$ bound a maximal
rectangle of area at least $A_\LL$ containing this subinterval of
$\ell$.

\medskip
By our choice of intervals, the distance between $t_2$ and $t_3$ is at
least $2( T_\gamma - T_1 - L_\LL)$.  Similarly, the distance between
$t_1$ and $t_4$ is at most $2( T_\gamma + T_1 + L_\LL)$.  For
notational convenience, set
\begin{equation}\label{eq:T2}
T_2 = T_1 + L_\LL.
\end{equation}

\medskip

Since the arc of $\ell$ between the intersection points with
$\ell^-_1$ and $\ell^-_4$ contains the arc of $\ell$ between
$\ell^-_2$ and $\ell^-_3$, by \Cref{prop:min area}, the leaves
$\ell^-_1$ and $\ell^-_4$ also bound a rectangle of area at least
$A_\LL$.  Note that the hyperbolic length of the arc of $\ell$ between
$\ell^-_1$ and $\ell^-_4$ is at most $2(T_\gamma + T_2)$.  So the
horizontal measure of the rectangle bounded by $\ell^-_1$ and
$\ell^-_4$ is at most $2 Q_\LL(T_\gamma + T_2) + c_\LL$.  For
notational convenience, set $T_3 = T_2 + \tfrac{1}{2} Q_\LL c_\LL$,
and observe that \eqref{eq:theta pa estimate 2} shows that
\begin{equation}\label{eq:tgt3}
\log \tfrac{1}{\theta_\LL} \ge 6 T_3 \ge 2 T_3,
\end{equation}
where we use the weaker lower bound to simplify constants in the
following calculations.  In particular,
$T_\gamma - T_3 \ge \log \tfrac{1}{\theta_\LL} - T_3 \ge T_3 > 0$ is
positive.

The vertical measure of the rectangle bounded by $\ell^-_1$ and
$\ell^-_4$ is therefore at least
$A_\LL / ( 2 Q_\LL ( T_\gamma + T_2 ) + c_\LL ) \ge A_\LL / ( 2 Q_\LL
( T_\gamma + T_3 ) )$, where the last inequality follows by our choice
of $T_3$, and the fact that $Q_\LL \ge 1$.

\medskip

The intersection of the two rectangles above is a transverse rectangle
for $\gamma$.  This is because, by construction, the leaves $\ell^-_2$
and $\ell^-_3$ intersect $\gamma$.  Furthermore, as $\ell^-_1$ and
$\ell^-_4$ are disjoint from $\gamma$, and lie on opposite sides of
$\gamma$, every leaf of $\Lambda_+$ which intersects both $\ell^-_1$
and $\ell^-_4$ also intersects $\gamma$.  The measure of the arc of
$\ell$ between $\ell^-_2$ and $\ell^-_3$ is therefore equal to the
measure of each side of the rectangle in $\Lambda_+$, and so
\Cref{prop:long side measure} holds.

\medskip
By the measure estimates above, the measure of the transverse rectangle is at least
\[ A_\LL \frac{ \tfrac{2}{Q_\LL} ( T_\gamma - T_3 ) }{ 2 Q_\LL (
  T_\gamma + T_3 ) } = \frac{A_\LL}{Q^2_\LL} \ \frac{ T_\gamma - T_3
}{ T_\gamma + T_3 }. \] Since $T_\gamma \ge 2 T_3$, by
\eqref{eq:tgt3}, it follows that the measure of the transverse
rectangle is at least $\tfrac{1}{3 Q_\LL^2} A_\LL$, so we may choose
$A_1 = \tfrac{1}{3 Q_\LL^2} A_\LL > 0$.  This completes the proof of
\Cref{prop:area bound}.  \Cref{prop:short side} is an immediate
consequence of \Cref{prop:long side} and \Cref{prop:area bound}.

\medskip

We now estimate the optimal height of the rectangle to show the final
statement \Cref{prop:small angle optimal height}.  Recall that the
optimal height $z$ is $\tfrac{1}{2} \log_k ( y/x )$, where $x$ and $y$
are the measures of the horizontal and vertical sides.  Therefore
\begin{align}\label{eq:rectangle optimal height}
\tfrac{1}{2} \log_k \frac{ \tfrac{2}{Q_\LL}(T_\gamma - T_3) }{
  \frac{A_\LL}{ \tfrac{2}{ Q_\LL } (T_\gamma - T_3) } } & \le z \le
\tfrac{1}{2} \log_k \frac{ 2 Q_\LL (T_\gamma + T_3) }{ \frac{A_\LL}{ 2
    Q_\LL (T_\gamma + T_3) } }, \\
\intertext{which we may rewrite as}
\label{eq:rectangle optimal height2}
\log_k \tfrac{2}{Q_\LL \sqrt{A_\LL}} ( T_\gamma - T_3 ) & \le z \le
\log_k 2 \tfrac{ Q_\LL }{ \sqrt{A_\LL} } ( T_\gamma + T_3 ).
\intertext{We will use the following elementary bounds: for $x \ge a$,
$x/2 \le x - a$; and for $a \ge 2, b \ge 2 $, $\log_k(a+b) \le \log_k a +
                                                        \log_k b$. This gives}
\log_k T_\gamma - \log_k Q_\LL \sqrt{A_\LL} & \le z \le \log_k T_\gamma +
\log_k 2 \tfrac{ Q_\LL }{ \sqrt{A_\LL} } + \log_k T_3 \nonumber \\
\intertext{Using $\log \tfrac{1}{\theta} \le T_\gamma \le \log \tfrac{1}{\theta}
+ T_0$ gives}
\label{eq:rectangle optimal height3}
\log_k \log \tfrac{1}{\theta} - \log_k Q_\LL \sqrt{A_\LL} & \le z \le
\log_k \log \tfrac{1}{\theta} + \log_k T_0 + \log_k 2 \tfrac{ Q_\LL }{
  \sqrt{A_\LL} } + \log_k T_3.
\end{align}

We may choose
$K = \max \{ \log_k Q_\LL \sqrt{A_\LL}, \log_k T_0 + \log_k 2 \tfrac{ Q_\LL
}{ \sqrt{A_\LL} } + \log_k T_3 \} $, as required.

\medskip
Finally, if $\gamma$ intersects a leaf of $\Lambda_-$ instead of
$\Lambda_+$, the bounds on the lengths of the horizontal and vertical
measures of the rectangle are swapped but otherwise the entire
argument above goes through.  The bounds on the optimal height from
\eqref{eq:rectangle optimal height} then become
\[ \tfrac{1}{2} \log_k \frac{ 2 Q_\LL (T_\gamma + T_3) }{ \frac{A_\LL}{
    2 Q_\LL (T_\gamma + T_3) } } \le z \le \tfrac{1}{2} \log_k \frac{
  \frac{A_\LL}{ \tfrac{2}{ Q_\LL } (T_\gamma - T_3) } }{
  \tfrac{2}{Q_\LL}(T_\gamma - T_3) }. \]
Multiplying the line above by $-1$ reverses the inequalities and takes
the reciprocals of the fractions inside the log which exactly gives \eqref{eq:rectangle optimal height2}, except with the $z$ replaced by
$-z$.  In particular, the bounds from \eqref{eq:rectangle optimal height3} hold for $-z$, as required.
\end{proof}

\subsubsection{Truncated rectangles for small angles}\label{section:truncated}

Suppose that $\gamma(t)$ is an intersection point with a sufficiently small angle of a non-exceptional geodesic with a leaf of an invariant lamination.  
By \Cref{lemma:bottleneck} and \Cref{prop:small angle rectangle}, the corresponding point $\tau_\gamma(t)$ is a bottleneck. However, in \Cref{prop:small angle rectangle}, the geodesic $\gamma$ may
intersect the boundaries of the initial and terminal quadrants
arbitrarily far from the outermost transverse rectangle $R$. Hence, there 
is no upper bound on the length of $\gamma([u, v])$. 
We now show how to truncate $R$ to achieve a \emph{tame} bottleneck, i.e. a bottleneck where
there is a bound on the length of the path $\tau_\gamma([u, v])$
between the initial and terminal quadrants.
The measure of the sides of the truncated rectangle will be a definite proportion of the measure of the sides of $R$.

\begin{definition}
\label{def:epsilon-nested}
We say a rectangle $R^0 \subset R$ is \emph{$\epsilon$-nested} inside
$R$, if the leaves containing the sides of $R^0$ divide $R$ into
subrectangles $R^i$, all of which have measures of sides bounded below
as follows:
\[ dx(R^i) \ge \epsilon \ dx(R) \text{ and } dy(R^i) \ge
\epsilon \ dy(R).  \]
\end{definition}

Obviously, \Cref{def:epsilon-nested} needs $\epsilon < 1/3$ and our eventual choice in \Cref{prop:truncated} is smaller.

\begin{definition}
\label{def:epsilon truncated}
We say a rectangle $R^0 \subset R$ is an \emph{$\epsilon$-truncated rectangle}, if
it has the following properties.
\begin{enumerate}
\item The rectangle $R^0$ has the same optimal height as $R$.
\item The rectangle $R^0$ is $\epsilon$-nested inside $R$.
\end{enumerate}
\end{definition}

By definition the measure of the truncated rectangle is bounded below
in terms of the measure of the original rectangle.  In particular,
$dx(R^0)dy(R^0) \ge \epsilon^2 dx(R) dy(R)$.  We will label the sides
of $R^0$ using the same convention as the sides for $R$, using
superscripts to distinguish them, i.e. the sides in $\Lambda_+$ are
$\alpha^0_+$ and $\beta^0_+$, and the sides in $\Lambda_-$ are
$\alpha^0_-$ and $\beta^0_-$.  This is illustrated in
\Cref{fig:truncated optimal}.

\begin{figure}[h]
\begin{center}
\begin{tikzpicture}[scale=0.5]

\tikzstyle{point}=[circle, draw, fill=black, inner sep=1pt]
\tikzstyle{redpoint}=[circle, draw, fill=red, inner sep=1pt]

\draw [color=white, fill=black!10] (0, 0) rectangle (8, 8);

\draw [color=white, fill=black!20] (2, 2) rectangle (6, 6);

\draw [color=white, fill=black!20] (-1.5, -1) rectangle (2, 2);

\draw [color=white, fill=black!20] (6, 6) rectangle (15.5, 9);

\draw [color=white, fill=black!30] (-1.5, -1) rectangle (0, 0);

\draw [color=white, fill=black!30] (8, 8) rectangle (15.5, 9);

\draw (1, 4) node {$R$};

\draw (4, 5) node {$R^0$};

\draw (12.5, 7) node {$R^{\beta}$};

\draw (-1, 1) node {$U^0$};

\draw (7, 7) node {$V^0$};

\draw (-1, -0.5) node {$U$};

\draw (9, 8.5) node {$V$};

\draw [color=ForestGreen] (-1.5, 0) node [label=left:${\alpha_+}$] {} -- (10, 0);
\draw [color=ForestGreen] (-1.5, 8) node [label=left:${\beta_+}$] {} -- (15.5, 8);

\draw [color=ForestGreen] (-1.5, 2) node [label=left:${\alpha^0_+}$] {} -- (10, 2);
\draw [color=ForestGreen] (-1.5, 6) node [label=left:${\beta^0_+}$] {} -- (15.5, 6);

\draw [color=red] (0, -1) -- (0, 9) node [label=above:{$\alpha_-$}] {};
\draw [color=red] (8, -1) -- (8, 9) node [label=above:{$\beta_-$}] {};

\draw [color=red] (2, -1) -- (2, 9) node [label=above:{$\alpha^0_-$}] {};
\draw [color=red] (6, -1) -- (6, 9) node [label=above:{$\beta^0_-$}]
{};

\begin{scope}[xshift=0.5cm]
\draw [color=red] (13, 5) -- (13, 9) node [label=above:{$\rho^1_-$}] {};

\draw [color=red] (13.5, 5) node [label=below:{$\rho^2_-$}] {} -- (13.5, 6.75) -- (15, 6.75);

\draw [color=red] (13.5, 9) -- (13.5, 7.25) -- (15, 7.25);

\draw [color=ForestGreen] (13, 6.25) -- (15, 6.25);

\draw [color=ForestGreen] (13, 7.75) -- (15, 7.75);

\draw [color=ForestGreen] (15, 6.5) -- (14, 6.5) -- (14, 7.5) -- (15, 7.5);
\end{scope}

\draw [thick] plot coordinates {(-0.5, -1) (0, -0.2) (1, 1) (2, 1.6) (3, 2.8) (4, 4.3)
  (5, 4.9) (6, 5.2) (7, 5.4) (8.5, 6) (8.75, 6.7) (9, 7.2) (10, 8) (11,
  8.3) (11.7, 9) };

\draw (0, -0.2) node [point, label=right:$\gamma(u)$] {};

\draw (2, 1.6) node [point, label=right:$\gamma(u^0)$] {};

\draw (8.5, 6) node [point, label=below right:$\gamma(v^0)$] {};

\draw (10, 8) node [point, label=below right:$\gamma(v)$] {};

\end{tikzpicture}
\end{center}
\caption{A truncated rectangle at optimal height.} \label{fig:truncated optimal}
\end{figure}
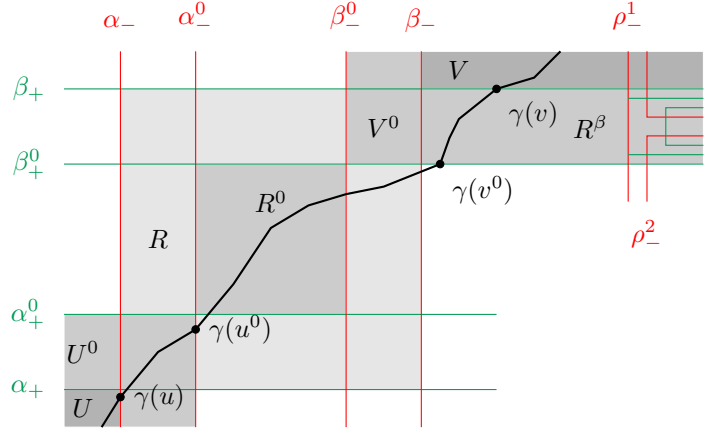

\begin{proposition}\label{prop:truncated}
Let $(S_h, \Lambda)$ be a choice of hyperbolic metric and suited pair
of a laminations.  Then there are constants $\theta_\LL > 0$ and
$\epsilon > 0$ such that for any point of intersection $\gamma(t)$
between a non-exceptional geodesic $\gamma$ and leaf
$\ell \in \Lambda_+$ of angle $\theta \le \theta_\LL$, and any
corresponding outermost rectangle $R$, there is an
$\epsilon$-truncated rectangle $R^0 \subset R$ such that the segment
of $\ell$ lying between $\alpha^0_-$ and $\beta^0_-$ is
$\ell([- r_1, r_2])$, where
$r_i = \tfrac{1}{2} \log \tfrac{1}{\theta_\LL}$, up to additive error
at most $\tfrac{1}{6} \log \tfrac{1}{\theta_\LL} - c_\LL$, where here
$\ell$ has a unit speed parametrization such that $\ell(0)$ is the
intersection point with $\gamma$.
\end{proposition}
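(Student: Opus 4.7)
The plan is to construct $R^0$ by specifying its four bounding leaves in two stages: first choose the horizontal sides $\alpha^0_-, \beta^0_-$ in $\Lambda_-$ to set the prescribed $\ell$-length, then choose the vertical sides $\alpha^0_+, \beta^0_+$ in $\Lambda_+$ so that the resulting rectangle inherits the optimal height of $R$. At each stage, we place the sides centrally in $R$ to leave uniform ``nesting buffer.''

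We first pick the horizontal sides. Parametrize $\ell$ with unit speed so that $\ell(0) = \gamma(t)$ and set $T^\ast = \tfrac{1}{2}\log\tfrac{1}{\theta_\LL}$. By \Cref{prop:cobounded intersections}, every sub-segment of $\ell$ of hyperbolic length $L_\LL$ meets a leaf of $\Lambda_-$, so there exist $\alpha^0_-, \beta^0_- \in \Lambda_-$ crossing $\ell$ at positions $\ell(-r_1), \ell(r_2)$ with $|r_i - T^\ast| \le L_\LL$. The lower bound on $\log\tfrac{1}{\theta_\LL}$ imposed by \Cref{def:theta} gives $L_\LL \le \tfrac{1}{6}\log\tfrac{1}{\theta_\LL} - c_\LL$, so these $r_i$ fall within the stated tolerance window. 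Moreover $T^\ast + L_\LL$ is much smaller than the lower bound $\log\tfrac{1}{\theta} - \tfrac{1}{6}\log\tfrac{1}{\theta_\LL}$ for the $R$-level radii from \Cref{prop:long side}, so the crossing points lie strictly inside the $\ell$-segment of $R$. A nesting argument using \Cref{prop:projection interval} with \Cref{prop:nested implies intersect} then ensures that $\alpha^0_-, \beta^0_-$ cross both $\alpha_+, \beta_+$, so they genuinely bound a sub-rectangle of $R$.

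Next we pick the vertical sides. The quantity $dy(R^0)$ equals the Cannon--Thurston length of the $\ell$-segment in $R^0$, and by \Cref{prop:qi} this is controlled by the hyperbolic length $r_1 + r_2$ up to the QI constants $Q_\LL, c_\LL$. The optimal height condition dictates $dx(R^0) = dy(R^0) \cdot dx(R)/dy(R)$. Since $\Lambda_+$ is minimal with no isolated leaves, the transverse measure $dx$ is non-atomic, so the cumulative $dx$-distribution along $\alpha^0_-$ is continuous on $[0, dx(R)]$; and since the support of $dx$ is $\Lambda_+$ itself, leaves of $\Lambda_+$ occur at a dense subset of positions. We may therefore pick $\alpha^0_+, \beta^0_+ \in \Lambda_+$ with $dx$-separation along $\alpha^0_-$ equal to the prescribed $dx(R^0)$ up to arbitrarily small error (which we absorb into the various tolerances), and placed so that each sits at $dx$-distance at least a fixed fraction of $dx(R)$ from $\alpha_-$ and $\beta_-$.

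The final step is verifying $\epsilon$-nesting, which is the main obstacle. Using \Cref{prop:long side measure} and \Cref{prop:short side} together with \Cref{prop:qi}, the ratios $dy(R^0)/dy(R) = dx(R^0)/dx(R)$ (equal by the optimal height equation) are controlled by $\log\tfrac{1}{\theta_\LL}/\log\tfrac{1}{\theta}$ times constants depending on $Q_\LL, c_\LL, A_\LL, A_{\max}$. By our flexibility in choosing $r_1 + r_2$ within the tolerance window, and exploiting the area lower bound from \Cref{prop:min area} with the upper bound from \Cref{prop:square upper bound}, we arrange the ratio to be bounded uniformly below $1$ by a fixed constant depending only on $(S_h, \LL)$. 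Combined with the central placement of the sides, this ensures each border subrectangle of $R \setminus R^0$ carries at least an $\epsilon$-fraction of the corresponding measure of $R$, yielding a uniform $\epsilon > 0$. The delicate bookkeeping --- in particular the interaction with the QI constants in the ``tight'' regime $\theta$ close to $\theta_\LL$ --- relies crucially on the fact that $\log\tfrac{1}{\theta_\LL}$ was chosen in \Cref{def:theta} to dominate every relevant geometric constant, so additive errors in the QI comparison are negligible compared to the leading logarithmic terms.
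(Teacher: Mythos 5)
Your high-level plan matches the paper's proof: choose the $\Lambda_-$ sides of $R^0$ by finding nearby crossings of $\Lambda_-$ with $\ell$ via \Cref{prop:cobounded intersections}, then choose the $\Lambda_+$ sides by non-atomicity and density of the transverse measure so that $dx(R^0) = dy(R^0)\,dx(R)/dy(R)$, and finally verify $\epsilon$-nesting. The structural decomposition is the same as what the paper does.

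However, there is a genuine gap, and it stems from taking the proposition statement too literally. You set $T^\ast = \tfrac{1}{2}\log\tfrac{1}{\theta_\LL}$, so the $\ell$-segment inside $R^0$ has hyperbolic length on the order of a \emph{constant} $\log\tfrac{1}{\theta_\LL}$, independent of $\theta$. By contrast, \Cref{prop:long side} tells you the $\ell$-segment inside the outermost rectangle $R$ has hyperbolic length on the order of $2\log\tfrac{1}{\theta}$, and hence $dy(R)$ grows without bound as $\theta \to 0$. Your $dy(R^0)$ is therefore $O(1)$ while $dy(R) \to \infty$, so $dy(R^0)/dy(R) \to 0$, and since $R^0$ itself is one of the subrectangles that \Cref{def:epsilon-nested} requires to carry an $\epsilon$-fraction of the side measure, no fixed $\epsilon > 0$ works. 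Your final paragraph correctly identifies that the ratio behaves like $\log\tfrac{1}{\theta_\LL}/\log\tfrac{1}{\theta}$ times bounded constants, but then asserts that this can be ``arranged to be bounded uniformly below $1$''; that is not the bound you need — you need it bounded away from zero, and it is not. The appeals to \Cref{prop:min area} and \Cref{prop:square upper bound} do not rescue this: the area bounds constrain $dx(R)\,dy(R)$, but $dy(R)$ alone is still unbounded.

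The fix is to take $T = \tfrac{1}{2}\log\tfrac{1}{\theta}$ (the angle of the given intersection), not $\tfrac{1}{2}\log\tfrac{1}{\theta_\LL}$ (the fixed cutoff). The occurrence of $\theta_\LL$ in the leading term of the proposition statement appears to be a typo: the paper's own proof works with $T = \tfrac{1}{2}\log\tfrac{1}{\theta}$, and so do the downstream uses of the proposition (for example, the estimate \eqref{eq:a0 bound} in \Cref{prop:height change one} reads $\text{length}_{S_h}(\ell([a^0, 0])) \le \tfrac{1}{2}\log\tfrac{1}{\theta} + \tfrac{1}{6}\log\tfrac{1}{\theta_\LL} - c_\LL$). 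With this choice, $dy(R^0)$ and $dy(R)$ are both of order $\log\tfrac{1}{\theta}$, the ratio stabilizes as $\theta \to 0$, and one can take $\epsilon = 1/(18 Q_\LL^2)$ as in the paper. Once you notice that the $\epsilon$-nesting computation forces $r_i$ to scale with $\log\tfrac{1}{\theta}$, the intended reading of the statement becomes clear; the remainder of your argument — placement by \Cref{prop:cobounded intersections}, density for the $\Lambda_+$ sides, and the QI bookkeeping via \Cref{prop:qi} together with \Cref{prop:long side measure} and \Cref{prop:short side} — would then go through along the same lines as the paper.
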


\begin{proof}
Suppose that $\gamma(t)$ is an intersection point between $\gamma$ and
$\ell_+ \in \Lambda_+$ of angle $\theta \le \theta_\LL$, and let $R$
be the corresponding outermost rectangle.  We shall choose
$\epsilon = 1/(18 Q^2_\LL) > 0$, which only depends on
$(S_h, \Lambda)$.

\medskip
We now construct a smaller rectangle $R_0$ strictly contained inside
$R$.  We shall label the sides of $R$ and $R^0$ using the notation
from \Cref{fig:truncated optimal}.  We first choose the sides
$\alpha^0_-$ and $\beta^0_-$ of $R_0$ in $\Lambda_-$.  As before, give
$\ell$ a unit speed parametrization with the intersection point being
$\ell(0) = \gamma(t)$.  Let $a < a^0 < 0 < b^0 < b$ be the parameters
giving the intersections of $\ell$ with the sides of $R$ and $R^0$,
i.e.  $\ell(a) = \ell \cap \alpha_-$,
$\ell(a^0) = \ell \cap \alpha^0_-$, $\ell(b^0) = \ell \cap \beta^0_-$
and $\ell(b) = \ell \cap \beta_-$.

\medskip
Choose intervals $I_1 = \ell(-T - L_\LL, -T )$ and
$I_2 = \ell(T, T + L_\LL)$, where
$T = \tfrac{1}{2} \log \tfrac{1}{\theta}$, where $L_\LL$ is the
constant from \Cref{prop:cobounded intersections}.  Let $\alpha^0_-$
be the last leaf of intersection of $I_1$ with $\Lambda_-$, and let
$\beta^0_-$ be the first leaf of intersection of $I_2$ with
$\Lambda_-$.  Then the segment $\ell([ a^0, b^0 ] )$ between
$\alpha^0_-$ and $\beta^0_-$ is of the form $\ell(-r_1, r_2)$, where
$r_i = \tfrac{1}{2} \log \tfrac{1}{\theta}$, up to additive error
$L_\LL$.  The required bound on the additive error follows as
$L_\LL \le \tfrac{1}{6} \log \tfrac{1}{\theta_\LL} - c_\LL$, by our
choice of $\theta_\LL$ from \Cref{def:theta}.
%
%

\medskip
We now verify that we may construct a rectangle $R^0 \subset R$ which
is $\epsilon$-truncated.  Let $\ell([a, a^0])$ be the segment of
$\ell$ between $\alpha_-$ and $\alpha^0_-$.  The hyperbolic length of
$\ell([a, a^0])$ is bounded below by
\[ \text{length}_{S_h}(\ell([a, a^0]) \ge \tfrac{1}{2} \log
\tfrac{1}{\theta} - \tfrac{1}{3} \log \tfrac{1}{\theta_\LL} + 2
c_\LL, \]
and so using the quasi-isometry between the hyperbolic and
Cannon--Thurston metrics, the measure of this segment is at least
\[ dy(\ell([a, a^0])) \ge \tfrac{1}{Q_\LL}( \tfrac{1}{2} \log
\tfrac{1}{\theta} - \tfrac{1}{3} \log \tfrac{1}{\theta_\LL} + c_\LL
), \]
which is positive as
$\theta \ge \theta_\LL$.  Using the upper bound on $dy(R)$ from
\Cref{prop:long side measure}, the ratio of the measure of this
segment and the measure of $dy(R)$ is then bounded by
\[ \frac{dy(\ell([a, a^0]))}{dy(R)} \ge \frac{ \tfrac{1}{Q_\LL}(
  \tfrac{1}{2} \log \tfrac{1}{\theta} - \tfrac{1}{3} \log
  \tfrac{1}{\theta_\LL} + c_\LL ) }{ 2 Q_\LL ( \log \tfrac{1}{\theta}
  + \tfrac{1}{2} \log \tfrac{1}{\theta_\LL} ) } \ge \frac{ 1 }{ 18
  Q^2_\LL} = \epsilon > 0, \]
where the right hand inequality follows as $Q_\LL \ge 1$.  The same
argument applies to the segment $\ell([b^0, b])$ between $\beta^0_-$
and $\beta_-$.

\medskip
Finally, a lower bound on the ratio $dy(R^0) / dy(R)$ is given by
\[ \frac{dy(R^0)}{dy(R)} \ge \frac{ \tfrac{1}{Q_\LL}( \log
  \tfrac{1}{\theta} - \tfrac{1}{3} \log \tfrac{1}{\theta_\LL} + c_\LL
  ) }{ 2 Q_\LL ( \log \tfrac{1}{\theta} + \tfrac{1}{2} \log
  \tfrac{1}{\theta_\LL} ) } \ge \frac{ 2 }{ 9 Q^2_\LL} = 4 \epsilon >
\epsilon > 0, \]
and as the measures of the three segments $\ell([a, a^0])$,
$\ell([a^0, b^0])$ and $\ell([b^0, b])$ sum to $dy(R)$, we obtain
\begin{equation}\label{eq:ratio bounds}
0 < 4 \epsilon \le  \frac{dy(R^0)}{dy(R)} \le 1 - 2\epsilon < 1.
\end{equation}

We may choose the other two sides $\alpha^0_+$ and $\beta^0_+$ of
$R_0$ such that the measure $dx(R^0) = dy(R^0) dx(R) / dy(R)$, so both
$R$ and $R^0$ have the same optimal height.  Furthermore, we may
choose the sides so that the leaf of $\Lambda_+$ that bisects $R$ in
terms of measure, also bisects $R^0$ in terms of measure.  In
particular, the bounds on the ratios of the measures of
$dy(R^0) / dy(R)$ from \eqref{eq:ratio bounds} applies to the ratio
of the measures $dx(R^0) / dx(R)$.  As $R^0$ is centered symmetrically
in $R$ with respect to the measure $dx$, each of the complementary
rectangles formed from the intersections of $\alpha^0_+$ and
$\beta^0_+$ with $R$ have sides with $dx$-measure ratio at least
$\epsilon$, as required.
\end{proof}

\subsubsection{Small angles create tame bottlenecks}%
\label{section:small angle bottleneck}

In this section, we show that the truncated rectangles give rise to tame bottlenecks.

\begin{corollary}\label{lemma:small angles}
Suppose that $(S_h, \Lambda)$ is a hyperbolic metric on $S$ together
with a suited pair of measured laminations.  Then there are
positive constants $\theta_\LL > 0$ (from \Cref{def:theta}), $r > 0$
and $K \ge 0$ such that for any non-exceptional geodesic $\gamma$ in
$S_h$ with unit speed parametrization $\gamma(t)$, if $\gamma$
intersects a leaf $\ell$ at $\gamma(t)$ with angle
$\theta \le \theta_\LL$, then there are parameters $u < t < v$ such
that the segment of the test path $\tau_\gamma([u, v])$ is an
$(r, K)$-bottleneck for the ladders over $\gamma((-\infty, u])$ and
$\gamma([v, \infty))$, and furthermore, the length of the test path
$\tau_\gamma([u, v])$ is at most $K$.
\end{corollary}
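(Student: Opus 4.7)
The plan combines Propositions \ref{prop:small angle rectangle} and \ref{prop:truncated} with the bottleneck Lemma \ref{lemma:transverse}. From the intersection $\gamma(t) \in \ell$ of angle $\theta \le \theta_\LL$, Proposition \ref{prop:small angle rectangle} produces an outermost transverse rectangle $R$ of measure at least $A_\LL/(3Q_\LL^2)$ and optimal height $z$ with $|z| = \log_k \log \tfrac{1}{\theta} + O(1)$, and then Proposition \ref{prop:truncated} yields an $\epsilon$-truncated rectangle $R^0 \subseteq R$ of the same optimal height whose $\ell$-side has hyperbolic length $\log \tfrac{1}{\theta_\LL} + O(1)$. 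Since the measure of $R^0$ remains bounded below by a constant depending only on $(S_h, \LL)$, Lemma \ref{lemma:transverse} produces constants $r > 0$ and $K_1 \ge 0$ such that $F_z(R^0)$ is an $(r, K_1)$-bottleneck for the flow sets $F(U^0)$ and $F(V^0)$ over the initial and terminal quadrants of $R^0$.

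I then define $u$ to be the time at which $\gamma$ exits $U^0$ and $v$ the time it enters $V^0$; both are well-defined and satisfy $u < t < v$ since $\gamma(t) \in R^0$ is in neither quadrant. Because $\gamma$ is a geodesic, $\gamma((-\infty, u]) \subseteq U^0$ and $\gamma([v, \infty)) \subseteq V^0$, so their ladders are contained in $F(U^0)$ and $F(V^0)$ and inherit the $(r, K_1)$-bottleneck property at $F_z(R^0)$. By Proposition \ref{prop:height estimate} combined with the optimal-height estimate of \Cref{prop:small angle optimal height}, we have $|h_\gamma(t) - z| = O(1)$, so since $\gamma(t) \in R^0$, the test-path point $\tau_\gamma(t)$ lies within bounded CT-distance of $F_z(R^0)$. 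Absorbing this constant shows that $\tau_\gamma([u, v])$ itself serves as an $(r, K)$-bottleneck for the two tail ladders.

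The main technical obstacle is bounding the CT-length of $\tau_\gamma([u, v])$. The arc length $|v - u|$ is of order $\log \tfrac{1}{\theta}$, because $\gamma$ is nearly parallel to $\ell$ and so takes a long time to cross the outer $\Lambda_+$-sides $\alpha^0_+, \beta^0_+$, yet the CT-length stays uniformly bounded. Writing the CT-length as
\[
\int_u^v \sqrt{k^{2h_\gamma}(dx(\gamma'))^2 + k^{-2h_\gamma}(dy(\gamma'))^2 + (\log k)^2 (h_\gamma')^2}\, ds
\]
and using that $dx(\gamma') = O(\theta)$ and $dy(\gamma') = O(1)$, the first two integrands evaluate to $O(\theta(\log \tfrac{1}{\theta})^2) = o(1)$ and $O(1)$ respectively, once one knows that $k^{\pm h_\gamma} = O((\log \tfrac{1}{\theta})^{\pm 1})$ throughout $[u,v]$. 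The hard part will be bounding the total variation of $h_\gamma$ on $[u, v]$, which amounts to verifying that $h_\gamma(s)$ remains within $O(1)$ of its peak value $\log_k \log \tfrac{1}{\theta}$ throughout $[u, v]$: near $\gamma(t)$ by proximity to $\ell$ and Proposition \ref{prop:fellow travel}; near $\gamma(u)$ and $\gamma(v)$ by proximity to $\alpha^0_+$ and $\beta^0_+$; and at intermediate points by the density of extended $\Lambda_+$-leaves together with Proposition \ref{prop:radius estimate}, with the cleanness of transitions guaranteed by Proposition \ref{prop:polygon} via the choice $\theta_\LL \le \theta_P$. This bounded variation bounds the $(\log k)|h_\gamma'|$ contribution and completes the proof.
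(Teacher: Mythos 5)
Your high-level plan matches the paper's: invoke \Cref{prop:small angle rectangle} to get the outermost transverse rectangle $R$, \Cref{prop:truncated} to get the truncated rectangle $R^0$ at the same optimal height, \Cref{lemma:transverse} (equivalently \Cref{lemma:bottleneck}) to get the bottleneck, and then bound the length of the test path segment between the quadrants. That last step is exactly the paper's \Cref{prop:truncated path bounded length}, and this is where your argument has a genuine gap.

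Your length estimate hinges on the claimed pointwise bounds $dx(\gamma') = O(\theta)$ and $dy(\gamma') = O(1)$, but these cannot be used as stated. The transverse measures $dx$ and $dy$ of the pseudo-Anosov invariant laminations are singular (their supports are Cantor sets in the transverse direction), so there is no meaningful pointwise density ``$dx(\gamma'(s))$''; the integral you write must be interpreted against a singular Borel measure. More concretely, the scale of your claim is wrong: if $dx(\gamma') = O(\theta)$ held pointwise over an interval of length $O(\log\tfrac{1}{\theta})$, the total $dx$-measure of $\gamma([u,v])$ would be $O(\theta\log\tfrac{1}{\theta})$, but \Cref{prop:small angle rectangle} shows $dx(R)$ is comparable to $1/\log\tfrac{1}{\theta}$ (both upper and lower bounds), and $\gamma([u^0,v^0])$ crosses essentially that much of $\Lambda_+$. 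So the true total $dx$-measure is orders of magnitude larger than what your pointwise estimate would predict. (Numerically the length bound still works out, because $k^{h_\gamma}\,dx(\gamma([u,v])) \approx \log\tfrac{1}{\theta}\cdot\tfrac{1}{\log\tfrac{1}{\theta}} = O(1)$, but you would need to establish the $1/\log\tfrac{1}{\theta}$ upper bound on the total measure, not a fictitious pointwise density bound.) Establishing those total measure bounds is nontrivial because $\gamma([u^0,v^0])$ is not contained in $R^0$, or even in $R$: it extends past the maximal rectangle into neighboring regions, which is precisely what the paper controls in the proof of \Cref{prop:optimal height bounded length} via the auxiliary rectangle $R^\beta$ and the innermost non-rectangular polygon $P$. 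The paper circumvents all of this by the monotonicity argument (\Cref{prop:geodesic is monotonic} and \Cref{prop:monotonic length bound}): the arc length of $\gamma_z([u^0,v^0])$ at the optimal height is bounded by the sum of two leafwise distances, which are in turn bounded by square side measures; it never needs to examine how the measure is distributed along $\gamma$.

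You correctly isolate the remaining piece — showing $h_\gamma$ stays within $O(1)$ of $\log_k\log\tfrac{1}{\theta}$ on $[u,v]$ — as genuinely hard, and your sketch (use proximity to $\ell$ near the middle, proximity to $\alpha^0_\pm,\beta^0_\pm$ near the ends, the cleanness coming from $\theta_\LL \le \theta_P$ and \Cref{prop:polygon}) is in the right spirit and corresponds to the paper's \Cref{prop:height change main}, though the latter requires two separate sub-arguments (\Cref{prop:height change one} for the portion between the $\Lambda_-$-sides and \Cref{prop:height change two} for the portion beyond them), neither of which is as short as your sketch suggests.
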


Except for the bound on the arc length of the segment of $\tau_\gamma$ between the bottleneck sets given by the initial and terminal
quadrants, all properties in \Cref{lemma:small angles} follow from \Cref{prop:truncated} and \Cref{prop:small angle rectangle}. We derive the arc length bound now. 

\medskip
A truncated rectangle at optimal height is shown in
\Cref{fig:truncated optimal}.  We shall write $U^0$ and $V^0$ for the
initial and terminal quadrants of $R^0$. Since 
$U \subset U^0$ and $V \subset V^0$, it follows that if $R$ is a
transverse rectangle for $\gamma$, then the truncated rectangle $R^0$
is also a transverse rectangle for $\gamma$.  We shall write
$\gamma(u^0)$ and $\gamma(v^0)$ for the points at which $\gamma$
intersects the boundaries of the initial and terminal quadrants $U^0$
and $V^0$.

\medskip
To prove \Cref{lemma:small angles}, it therefore suffices to
prove that the segment $\gamma_\tau([u^0, v^0])$ has bounded length.

\begin{proposition}\label{prop:truncated path bounded length}
Suppose that $(S_h, \Lambda)$ is a hyperbolic metric on $S$ together
with a suited pair of measured laminations.  There is a constant
$\theta_\LL$ such that for any $0 < \epsilon < 1$ there is a constant
$K$ such that for any $\epsilon$-truncated outermost rectangle $R^0$,
determined by an intersection point of $\gamma$ with a leaf of an
invariant lamination $\ell$ of angle at most $\theta_\LL$, the arc
length of $\tau_\gamma([u^0, v^0])$ is at most $K$.
\end{proposition}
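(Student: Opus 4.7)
The strategy is to bound the Cannon--Thurston length of $\tau_\gamma([u^0, v^0])$ in two steps: first, bound the hyperbolic chord length $|v^0 - u^0|$; second, bound the integrand in the Cannon--Thurston length formula pointwise along the test path.

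For the first step, I will show that $R^0 \subseteq R$ has hyperbolic diameter bounded by a constant depending only on $(S_h, \Lambda)$. By \Cref{prop:truncated}, the hyperbolic length of $\ell \cap R^0$ is at most $\log\tfrac{1}{\theta_\LL} + O(1)$, which is a constant. The hyperbolic width of $R$ transverse to $\ell$ is controlled by $dx(R)$ via the quasi-isometry \Cref{prop:qi}, so is at most $Q_\LL\, dx(R) + c_\LL$; by \Cref{prop:short side} together with $\theta \le \theta_\LL$ we have $dx(R) \le \tfrac{3 A_\LL Q_\LL}{5 \log(1/\theta_\LL)}$, again a constant. Hence $R^0$ has hyperbolic diameter at most some $D_0$, and since $\gamma([u^0, v^0]) \subseteq R^0$ (as a geodesic chord of a convex-ish region), $|v^0 - u^0| \le D_0$.

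For the second step, the Cannon--Thurston length is bounded above by
\[
\int_{u^0}^{v^0} \bigl( k^{h_\gamma(t)} |\dot x(t)| + k^{-h_\gamma(t)} |\dot y(t)| + \log k\, |\dot h_\gamma(t)| \bigr) \, dt.
\]
The third term is bounded by $|v^0 - u^0| \le D_0$ using the Lipschitz estimate of \Cref{cor:height lipschitz}. For the other two, I will use \Cref{prop:fellow travel} together with the choice of $\theta_\LL$ in \Cref{def:theta}: throughout $[u^0, v^0]$, the unit tangent $\gamma^1(t)$ stays within $\PSL(2,\RR)$-distance $L_0 \sqrt{\theta_\LL}$ of $\ell^1$, which is strictly less than the separation between distinct extended leaves of $\overline{\Lambda}_+$ guaranteed by \Cref{prop:unit tangent bound}. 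Hence $\ell$ remains the nearest leaf of $\overline{\Lambda}_+$, and setting $d_+(t) = d_{\PSL(2,\RR)}(\gamma^1(t), \ell^1)$, we obtain $k^{h_\gamma(t)} \le \log(1/d_+(t)) \le \log(1/\theta) - |t - t_0| + O(1)$ from the lower bound $d_+(t) \ge (1/L_0)\theta e^{|t-t_0|}$. The same $\PSL(2,\RR)$-closeness of $\gamma^1(t)$ to $\ell^1$ controls the angle between the tangent of $\gamma$ and the $\Lambda_+$-direction (the tangent direction of $\ell$), giving $|\dot x(t)| \lesssim L_0 \theta e^{|t-t_0|}$. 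The product $k^{h_\gamma(t)} |\dot x(t)|$ is therefore bounded by $\theta e^{|t-t_0|} \cdot \log(1/\theta) \le \sqrt{\theta_\LL}\,\log(1/\theta_\LL)$ uniformly (using that $\theta e^{|t-t_0|} \le \sqrt{\theta_\LL}$ on $[u^0, v^0]$ and $x \log(1/x)$ is bounded on $(0, 1]$). The $k^{-h_\gamma} |\dot y|$ integrand is bounded similarly by $O(1)/\log(1/\theta_\LL)$. Integrating these uniformly bounded integrands over an interval of length at most $D_0$ yields the required bound $K$.

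The main obstacle will be making precise the estimate $|\dot x(t)| \lesssim d_+(t)$, which quantifies how closeness of $\gamma^1$ to a $\Lambda_+$-leaf in $\PSL(2,\RR)$ controls the rate of $\Lambda_+$-transverse measure along $\gamma$. This passes through two facts: the $\PSL(2,\RR)$-distance bounds both the spatial distance and the angular discrepancy between tangent directions; and the $\Lambda_+$-leaves near $\ell$ have tangent directions that vary continuously in $\PSL(2,\RR)$, so that ``angle with nearest $\Lambda_+$-leaf'' is comparable to ``angle with $\ell$.'' The continuity property follows from compactness of $S_h$ and the fact that the lift of $\Lambda_+$ to $T^1(S_h)$ is a closed subset.
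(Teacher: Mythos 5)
There is a genuine gap in the first step that undermines the whole argument. You claim that the hyperbolic length of $\ell \cap R^0$ is $\log\tfrac{1}{\theta_\LL} + O(1)$, hence constant, and that $\gamma([u^0,v^0]) \subseteq R^0$ has bounded hyperbolic diameter. Neither holds. The side of $R^0$ along $\ell$ has hyperbolic length comparable to $\log\tfrac{1}{\theta}$, not $\log\tfrac{1}{\theta_\LL}$: in the proof of \Cref{prop:truncated} the truncation parameter is $T = \tfrac{1}{2}\log\tfrac{1}{\theta}$, so the segment $\ell([a^0,b^0])$ has length roughly $\log\tfrac{1}{\theta}$, which is unbounded as $\theta \to 0$. (The statement of \Cref{prop:truncated} appears to have a typo writing $\theta_\LL$ where the proof uses $\theta$.) Likewise by \Cref{prop:long side} the outermost rectangle $R$ has a side of hyperbolic length $\sim 2\log\tfrac{1}{\theta}$. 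Moreover, $\gamma([u^0, v^0])$ need not lie in $R^0$ at all: as \Cref{fig:truncated optimal} shows, $\gamma(v^0)$ lies on $\beta^0_+$ outside $R^0$, and $\gamma$ can exit through $\beta^0_-$ and cross the intermediate region before reaching $V^0$. So the Euclidean-time interval $|v^0 - u^0|$ over which you integrate is not bounded by any constant $D_0$; it grows like $\log\tfrac{1}{\theta}$, and multiplying your pointwise integrand bound by this length gives an unbounded quantity.

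The paper's proof sidesteps the hyperbolic-length issue entirely by working in the Cannon--Thurston pseudometric at the \emph{optimal height} $z$ of $R$. At that height the rectangle is a square whose side measures are bounded by $\sqrt{A_{\max}}$ (\Cref{prop:square upper bound}), and the length of $\gamma_z([u^0,v^0])$ is then controlled via monotonicity (\Cref{prop:geodesic is monotonic}, \Cref{prop:monotonic length bound}) together with bounds on the distance from the endpoints $\gamma_z(u^0), \gamma_z(v^0)$ to $R^0_z$ coming from the maximal rectangles and non-rectangular polygons (\Cref{prop:optimal height bounded length}). This gives a bound on the Cannon--Thurston length that does not see the large hyperbolic length. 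It then shows the height function varies by a bounded amount on $[u^0,v^0]$ (\Cref{prop:height change main}), so the vertical projection to height $z$ distorts the test-path length by a bounded factor $k^{K}$, and the claim follows. You would need to re-derive both of these ingredients to make your Step 2 work; in particular, without the bounded-interval claim, your pointwise integrand bound would have to be sharpened to an integrable bound in $s = |t - t_0|$ and the integral estimated carefully rather than by multiplication by $D_0$.
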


We first verify \Cref{lemma:small angles} from
\Cref{prop:truncated path bounded length}.

\begin{proof}[Proof (of \Cref{lemma:small angles})]
By \Cref{prop:small angle rectangle}, there are constants $\theta_\LL$
and $A > 0$ such that if $\gamma(t)$ is an intersection point of
$\gamma$ with $\ell$ of angle $\theta \le \theta_\LL$, then there is
an outermost rectangle $R$ containing $\gamma(t)$ of area at least
$A$.
By \Cref{prop:truncated}, there is an $\epsilon > 0$ such that the
rectangle $R$ contains an $\epsilon$-truncated outermost rectangle
$R_0$ of area at least $\epsilon^2 A$ which is transverse to $\gamma$,
giving an $(r, K_1)$-bottleneck.
By \Cref{prop:truncated path bounded length}, the segment of the test
path $\tau_\gamma(u^0, v^0)$ between the bottleneck sets for $R^0$ has
length at most $K_2$, where $K_2$ depends only on $(S_h, \Lambda)$.
The result then follows for $K = K_1 + K_2$.
\end{proof}

The rest of this section contains the proof of
\Cref{prop:truncated path bounded length}, which has the
following two steps.

\begin{enumerate}
\item We consider $\gamma_z([u^0, v^0])$, the image of
$\gamma([u^0,v^0])$ at the optimal height $z$ for the rectangle $R^0$, and
show that the length of this path is bounded.
\item  We show that the height function varies by a bounded amount
over $\gamma([u^0, v^0])$, so projecting $\tau_\gamma([u^0, v^0])$ to
$\gamma_z([u^0, v^0])$ increases length by a bounded factor.
\end{enumerate}

The graph of a monotonic function on the unit interval
$f \colon I \to I$ has bounded path length.  We will use the
following analog of monotonicity for real valued functions obtained from
paths on surfaces with a suited pair of measured
laminations.

\begin{definition}
\label{def:monotonic}
Let $\widetilde{S}_h$ be the universal cover of a closed hyperbolic
surface, and let $\Lambda_+$ and $\Lambda_-$ be the lifts of a suited pair of
 measured laminations to $\widetilde{S}_h$.  Let
$\ell_+ \in \Lambda_+$ and $\ell_- \in \Lambda_-$ be two leaves which
intersect at a point $c$.  Let $\gamma$ be a path from a point $a$ on
$\ell_+$ to $b$ on $\ell_-$ such that the interior of $\gamma$ is
disjoint from the two leaves.  We say the path $\gamma$ is
\emph{monotonic} if
\begin{itemize}
\item Every leaf of $\Lambda_+$ which intersects $\ell_-$ between $c$
and $b$ intersects $\gamma$ in exactly one point, and $\gamma$ intersects no
other leaves of $\Lambda_+$. 
\item Every leaf of $\Lambda_-$ which intersects $\ell_+$ between $c$
and $a$ intersects $\gamma$ in exactly one point, and $\gamma$ intersects no
other leaves of $\Lambda_-$. 
\end{itemize}
\end{definition}

As the above properties are preserved by the vertical flow, a geodesic
segment $\gamma$ in $\ws_h = S_0$ is monotonic with respect to
$\ell_+$ and $\ell^-$ if an only if for all $z$, the path
$F_z(\gamma)$ in $S_z$ is monotonic with respect to $F_z(\ell_+)$ and
$F_z(\ell_-)$.
We now show that any geodesic segment in $\ws_h$ with endpoints on
intersecting leaves is monotonic.

\begin{proposition}\label{prop:geodesic is monotonic}
Suppose that $(S_h, \Lambda)$ is a hyperbolic metric on $S$ together
with a suited pair of measured laminations.  Let $\gamma$ be a
segment of a geodesic in $\ws_h$ with endpoints on two intersecting
leaves $\ell_+$ and $\ell_-$.  Then $\gamma$ is monotonic with respect
to $\ell_+$ and $\ell_-$.
\end{proposition}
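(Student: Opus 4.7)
The plan is to complete $\gamma$ into a geodesic triangle and analyze where leaves of $\Lambda_\pm$ cross its boundary. Let $a$ and $b$ be the endpoints of $\gamma$ on $\ell_+$ and $\ell_-$ respectively, and let $c = \ell_+ \cap \ell_-$. Assuming $\gamma$ is non-degenerate (in particular, not a subsegment of $\ell_+$ or $\ell_-$), the geodesic carrying $\gamma$ is distinct from both $\ell_\pm$, so by the standard fact that any two distinct geodesics in $\HH^2$ meet at most once, the interior of $\gamma$ is disjoint from $\ell_+$ and from $\ell_-$, as required by \Cref{def:monotonic}. The three segments $\gamma$, $[a,c] \subset \ell_+$, and $[c,b] \subset \ell_-$ then bound a compact (hence convex) geodesic triangle $T \subset \widetilde{S}_h = \HH^2$.

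To verify the first bullet of \Cref{def:monotonic}, fix a leaf $m \in \Lambda_+$ distinct from $\ell_+$. Since leaves of a measured lamination are pairwise disjoint, $m \cap \ell_+ = \varnothing$, so $m$ lies in a single open half-plane of $\HH^2 \setminus \ell_+$. Suppose $m$ meets the interior of $T$. Then $m$ enters and exits $T$ across its boundary, and the only sides available to it are $\gamma$ and $[c,b] \subset \ell_-$, because $m$ avoids the side $[a,c] \subset \ell_+$. As $m$, the geodesic extending $\gamma$, and $\ell_-$ are mutually distinct geodesics in $\HH^2$, each pairwise intersection is transverse and consists of at most one point. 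It follows that $m$ cannot enter and leave through the same side, so $m$ crosses $\gamma$ if and only if it crosses $[c,b]$ strictly between $c$ and $b$, and each such crossing occurs exactly once.

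This proves both halves of the first bullet simultaneously: every leaf of $\Lambda_+$ meeting $\ell_-$ strictly between $c$ and $b$ meets $\gamma$ in exactly one point, and no other leaf of $\Lambda_+$ meets $\gamma$ (the leaf $m = \ell_+$ meets $\gamma$ only at the endpoint $a$). The second bullet, for $\Lambda_-$, follows by the symmetric argument with the roles of $\Lambda_+$ and $\Lambda_-$, and of the sides $[a,c]$ and $[c,b]$, exchanged. The degenerate cases $a = c$ or $b = c$ collapse $T$ to a segment and render the relevant conditions vacuous. No serious obstacle is anticipated; the entire argument is a planar topology exercise in $\HH^2$ relying only on uniqueness of intersection of distinct geodesics and on the disjointness of distinct leaves within each lamination.
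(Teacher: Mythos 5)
Your proof is correct and follows essentially the same approach as the paper: both form the geodesic triangle with vertices $a$, $b$, $c = \ell_+ \cap \ell_-$, use disjointness of distinct leaves in each lamination to forbid a leaf of $\Lambda_+$ (resp.\ $\Lambda_-$) from crossing the side on $\ell_+$ (resp.\ $\ell_-$), and invoke uniqueness of intersection of distinct geodesics in $\HH^2$ to conclude that entering the triangle across one available side forces a unique exit across the other. Your version spells out a few planar-topology details (interior of $\gamma$ avoids $\ell_\pm$, a leaf cannot enter and leave through the same side) that the paper leaves implicit, but the core argument is the same.
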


\begin{proof}
Suppose that $c$ is a point of intersection of leaves $\ell_+$ and $\ell_-$.
Let $a$ be the endpoint of
$\gamma$ on $\ell_+$ and $b$ the endpoint of $\gamma$ on
$\ell_-$.  For the geodesic triangle with vertices $a, b$ and
$c$, let $\alpha$ be its side along $\ell_+$ and $\beta$ its side along $\ell_-$. Any leaf of $\Lambda_+$ that intersects $\beta$ crosses the triangle. Since the leaf cannot intersect $\alpha$ it must intersect $\gamma$ exactly once. Similarly, any leaf of
$\Lambda_-$ which intersects $\alpha$ intersects $\gamma$ exactly once.  Finally, a leaf of either
lamination which intersects $\gamma$, may only intersect it once, and
so must intersect the side contained in the other lamination.
\end{proof}

We now show that the arc length of a monotonic path is bounded by the
lengths of the other two sides of the triangle that it forms with the
leaves of intersection.

\begin{proposition}\label{prop:monotonic length bound}
Suppose that $(S_h, \Lambda)$ is a hyperbolic metric on $S$ together
with a suited pair of measured laminations.  Let $\gamma$ be a
monotonic path in $\ws_h$ with respect to the two intersecting leaves
$\ell_+$ and $\ell_-$.  Let $a$ and $b$ be the endpoints of $\gamma$,
and let $c$ be the intersection point of the two leaves.  Then for any
$z$,
\[ \textrm{length}(\gamma_z) \le d_{S_z}(a, c) + d_{S_z}(c, b),  \]
where $d_{S_z}$ is the Cannon--Thurston pseudometric on $S_z$, and
$length(\gamma_z)$ is the arc length of $\gamma_z$ in this metric.
\end{proposition}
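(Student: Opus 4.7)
The plan is to bound $\mathrm{length}(\gamma_z)$ by splitting the infinitesimal line element into its $dx$ and $dy$ contributions, then use the monotonicity hypothesis to identify each of these contributions with the $S_z$-length of an arc running along one of the two leaves $\ell_\pm$, and finally observe that these arcs actually realize the pseudo-metric distances $d_{S_z}(a,c)$ and $d_{S_z}(c,b)$.

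The first step starts from the infinitesimal formula on $S_z$, namely $ds_z^2 = k^{2z}dx^2 + k^{-2z}dy^2$, together with the pointwise inequality $\sqrt{A^2+B^2} \le A + B$ for $A, B \ge 0$. Integrating gives
\[
\mathrm{length}(\gamma_z) \;\le\; k^{z}\!\int_\gamma |dx| \;+\; k^{-z}\!\int_\gamma |dy|.
\]
The second step uses monotonicity. Since $\gamma$ meets each leaf of $\Lambda_+$ that crosses the arc $[c,b] \subset \ell_-$ in a single point and meets no other leaf of $\Lambda_+$, the $x$-coordinate (defined via the transverse measure $dx$) varies monotonically along $\gamma$. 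Consequently $\int_\gamma |dx|$ equals the total $dx$-transverse measure of $[c,b]$ on $\ell_-$; the symmetric statement identifies $\int_\gamma |dy|$ with the $dy$-measure of $[c,a]$ on $\ell_+$.

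The third step is to show that the arcs along the leaves are length-minimizing. Along the $\ell_-$ arc from $c$ to $b$, one has $dy \equiv 0$, so its $S_z$-length equals $k^z \cdot dx([c,b])$, providing the upper bound $d_{S_z}(c,b) \le k^{z}\, dx([c,b])$. For the matching lower bound, any rectifiable path $\sigma$ from $c$ to $b$ satisfies $\mathrm{length}(\sigma) \ge k^z \int_\sigma |dx|$, and $\int_\sigma |dx|$ is at least the transverse measure of the set of leaves of $\Lambda_+$ separating $c$ and $b$ in $\widetilde{S}_h$, which is exactly $dx([c,b])$. Hence $d_{S_z}(c,b) = k^{z}\, dx([c,b])$, and symmetrically $d_{S_z}(a,c) = k^{-z}\, dy([c,a])$. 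Combining with Step~2 gives the required inequality.

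The main technical obstacle is Step~3, specifically the justification that the transverse measures really yield the claimed lower bounds on $S_z$-path lengths — one has to argue that a path from $c$ to $b$ cannot avoid the leaves of $\Lambda_+$ separating them, and that its $|dx|$-integral is bounded below by the full transverse measure of that separating family. A secondary subtlety is verifying that the ``one leaf, one crossing'' condition in \Cref{def:monotonic} really forces monotonicity of the $x$-coordinate along $\gamma$ (rather than merely a count), which uses that $dx$ has no atoms and is supported on the lamination.
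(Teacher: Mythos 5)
Your proposal is correct and takes an infinitesimal route where the paper uses partitions, but the underlying arguments are substantially equivalent. The paper works directly with the partition definition of arc length: it moves partition points of $\gamma$ to lamination intersection points (using density of complementary regions, whose points are pseudo-distance zero from each other), bounds each $d_{S_z}(x'_i, x'_{i+1})$ by the triangle inequality through the staircase point $c'_i = \ell^i_+ \cap \ell^{i+1}_-$, and telescopes. Your Step~1, applying $\sqrt{A^2+B^2}\le A+B$ to the line element and integrating, is precisely what that staircase bound establishes at each partition scale. Since $\gamma$ is only assumed to be a monotonic path (not smooth), and its pullbacks of the transverse measures $dx$ and $dy$ may be mutually singular measures on $\gamma$, the expression $\int_\gamma\sqrt{k^{2z}dx^2+k^{-2z}dy^2}$ requires interpretation; the cleanest way to give it meaning is exactly the partition-and-staircase argument, so your Step~1 and the paper's proof converge once made precise. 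Your framing (collapsed flat picture, Manhattan bound for monotone paths) captures the geometry more transparently; the paper's framing avoids any discussion of singular integrands. Your Steps~2--3 then match the paper's telescoping and its use of the identification of leafwise Cannon--Thurston distance with transverse measure. One point where you are more careful than the paper: the paper states that ``the Cannon-Thurston distance along a leaf is equal to the measure of the leaf with respect to the other invariant lamination'' without a separate argument, whereas your Step~3 spells this out — the upper bound from the leaf arc having $dy\equiv 0$, and the lower bound because any path from $c$ to $b$ must cross every leaf of $\Lambda_+$ separating them, so its $|dx|$-integral is at least the transverse measure of that family. You are right to flag this as the technical crux, and the paper leaves it implicit.
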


\begin{proof}
In the triangle with vertices $a$, $b$, and $c$ we denote the other two sides as
$\alpha = [a, c]$ and $\beta = [b, c]$.  
By definition, the arc length of $\gamma_z$ is
\begin{equation}\label{eq:rectifiable}
\text{length}(\gamma_z) = \lim_{|P| \to 0} \sum_{i = 0}^n d_{S_z}(x_i,
x_{i+1}),
\end{equation}
where the limit is taken over partitions $P = \{ a = x_0 < x_1 < \cdots < x_{n+1} = b\}$ with  $|P| = \max \{ x_{i+1} - x_i \}$. 

\medskip
By \Cref{def:monotonic}, the union of intervals $\gamma_z \cap R_z$ over all regions $R$ complementary to both laminations is open and dense in $\gamma_z$. Hence, for any partition $P = \{ a = x_0 < x_1 < \cdots < x_{n+1} = b\}$ there exists a partition $P' = \{ a = x'_0 < x'_1 < \cdots < x'_{n+1} = b\}$ such that $|P'| < 2 |P|$, and each $x'_i$ lies in a region $R_i$ complementary to both laminations. In particular, $|P'| \to 0$ as $|P| \to 0$.

\medskip
By \Cref{def:monotonic} again, the intersection $\gamma_z \cap R_i$ cuts off a single vertex of $R_i$. 
Since the pseudometric distances between any pair of points in $R_i$ is zero, we may replace $x'_i$ by this vertex without changing each term in the sum in \Cref{eq:rectifiable} for the partition $P'$. 
In particular, we may now assume that each $x'_i$ is an intersection point of the two invariant laminations.

\medskip
Up to reversing the orientation of $\gamma$, we may assume that the
initial point of $\gamma$ lies on $\ell_+$ and the terminal point of
$\gamma$ lies on $\ell_-$.  Consider a pair of adjacent points $x'_i$
and $x'_{i+1}$, and let $\ell^i_+$ be the leaf of $\Lambda_+$ through
$x'_i$, and let $\ell^{i+1}_-$ be the leaf of $\Lambda_-$ through
$x'_{i+1}$.

\medskip
By monotonicity, $\ell^i_+$ intersects both $\gamma$ and $\ell_-$
exactly once, and separates $x'_{i+1}$ from $\ell^+$.  Therefore the
pair of leaves $\ell^i_+$ and $\ell^{i+1}_-$ intersect.  Let $c'_i$ be
the point of intersection.  By the triangle inequality in the
Cannon--Thurston metric,
$d_{S_z}(x'_i, x'_{i+1}) \le d_{S_z}(x'_i, c'_i) + d_{S_z}(c'_i, x'_{i+1})$.
The Cannon--Thurston distance along a leaf is equal to the measure of
the leaf with respect to the other invariant lamination, so
\[ \text{length}(\gamma_z) \le \lim_{|P| \to 0} \sum_{i = 0}^n \left(
d_{S_z}(x'_i, c'_i) + d_{S_z}(c'_i, x'_{i+1}) \right) = d_{S_z}(a, c) +
d_{S_z}(c, b), \]
as required.
\end{proof}

We now complete Step 1 by showing that the length of
$\gamma_z([u^0, v^0])$ in $S_z$ is bounded.

\begin{proposition}\label{prop:optimal height bounded length}
Suppose that $(S_h, \Lambda)$ is a hyperbolic metric on $S$ together
with a suited pair of measured laminations.  Then there is a
constant $\theta_\LL$ such that for any $0 < \epsilon < 1$ there is a
constant $K$ such that for any $\epsilon$-truncated outermost
rectangle $R^0$, with optimal height $z$, determined by an
intersection point of $\gamma$ with a leaf $\ell$ of one of the
invariant laminations of angle at most $\theta_\LL$, the length of
$\gamma_z([u^0, v^0])$ is at most $K$.
\end{proposition}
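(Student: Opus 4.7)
The plan is to decompose $\gamma([u^0, v^0])$ into a bounded number of monotonic sub-arcs, apply \Cref{prop:monotonic length bound} to each, and use the fact that at the optimal height $z = z(R^0) = z(R)$ the transverse-measure scalings $k^{\pm z}$ turn all the relevant arcs into bounded $d_{S_z}$-distances. The key initial observation is that at height $z$ the rectangles $R$ and $R^0$ become squares in the flat pseudometric on $S_z$ of side lengths $s = \sqrt{dx(R)\, dy(R)} \le \sqrt{A_{\max}}$ (by \Cref{prop:square upper bound}) and $s_0 = \sqrt{dx(R^0)\, dy(R^0)} \le s$. Equivalently, $k^z\, dx(R) = k^{-z}\, dy(R) = s$, so any arc of $dx$-transverse measure $O(dx(R))$ on a leaf of $\Lambda_-$, or of $dy$-transverse measure $O(dy(R))$ on a leaf of $\Lambda_+$, has $d_{S_z}$-length $O(s)$.

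Since $R^0$ is transverse to $\gamma$, the geodesic crosses each of the four boundary leaves $\alpha^0_\pm, \beta^0_\pm$ at a unique point; these four crossings, which include $\gamma(u^0)$ and $\gamma(v^0)$, split $[u^0, v^0]$ into three sub-intervals $[u^0, u^1]$, $[u^1, v^1]$, $[v^1, v^0]$. In each sub-interval the two endpoints lie on two distinct boundary leaves of $R^0$; in the generic case these leaves meet at a corner $c$ of $R^0$, so \Cref{prop:geodesic is monotonic} identifies the sub-arc as monotonic and \Cref{prop:monotonic length bound} bounds its $d_{S_z}$-length by $d_{S_z}(\gamma(a), c) + d_{S_z}(c, \gamma(b))$. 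If instead both endpoints of the middle piece lie on parallel sides of $R^0$, I will further split that piece at its intersection with $\ell$ itself, producing two monotonic sub-arcs whose end-leaves meet on $\ell$. In every case each resulting distance is measured along a single lamination leaf, hence equals $k^z$ times a $dx$-measure or $k^{-z}$ times a $dy$-measure of an arc.

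For every arc contained wholly inside $R$ --- which includes every corner-to-corner distance and every distance involving the interior crossings $\gamma(u^1), \gamma(v^1)$ --- the transverse measure is bounded by $dx(R)$ or $dy(R)$, so the $S_z$-length is at most $s$. The main step, and the main technical obstacle, is controlling the corresponding arcs from $\gamma(u^0)$ and $\gamma(v^0)$ to the associated corners of $R^0$, which can protrude beyond $R$ along extensions of boundary leaves of $R^0$. The hard part is to show that even then the relevant transverse measure is still $O(dy(R))$ or $O(dx(R))$. For this I will invoke \Cref{prop:fellow travel} applied to $\gamma$ and $\ell$: the exponential divergence $d_{\PSL(2,\RR)}(\gamma^1(t), \ell^1) \asymp \theta e^{|t|}$ combined with \Cref{prop:qi} forces any crossing of $\gamma$ with a boundary leaf of $R^0$ sufficiently close in $\PSL(2,\RR)$ to $\ell^1$ to occur at hyperbolic arc length $|t| \le \log \tfrac{1}{\theta} + O(1)$ from $\ell(0)$; the $\epsilon$-nested construction of \Cref{prop:truncated} guarantees this closeness uniformly for all four leaves $\alpha^0_\pm, \beta^0_\pm$. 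Translating back via \Cref{prop:qi}, the transverse measure of each such outer arc is $O(\log \tfrac{1}{\theta}) = O(dy(R))$ along a $\Lambda_+$-leaf and $O(dx(R))$ along a $\Lambda_-$-leaf, which at the optimal height rescale to $O(s)$. Summing the uniformly bounded number of sub-arc contributions produces the desired constant $K$ depending only on $\epsilon$ and $(S_h, \Lambda)$.
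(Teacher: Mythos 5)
Your outline matches the paper's through the first two steps: at the optimal height all sides of $R^0_z$ and $R_z$ have measure at most $\sqrt{A_{\max}}$ by \Cref{prop:square upper bound}, and the distances internal to $R$ are bounded by monotonicity via \Cref{prop:geodesic is monotonic} and \Cref{prop:monotonic length bound}. The gap is in the step you yourself flag as the main technical obstacle, and the fellow-travel mechanism you propose there does not close it.

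First, the claim that "the $\epsilon$-nested construction of \Cref{prop:truncated} guarantees this closeness [in $\PSL(2,\RR)$ to $\ell^1$] uniformly for all four leaves $\alpha^0_\pm, \beta^0_\pm$" is false as stated: $\alpha^0_-$ and $\beta^0_-$ are leaves of the opposite lamination that cross $\ell$ transversally at angle at least $\alpha_\LL$, so their lifts are nowhere close to $\ell^1$ in $\PSL(2,\RR)$; and whether $\alpha^0_+, \beta^0_+$ are close to $\ell^1$ at the relevant points is exactly what needs to be proved, not a consequence of the construction. Second, and more seriously, the crossing points $\gamma(u^0)$ and $\gamma(v^0)$ — the boundary crossings of the initial and terminal quadrants $U^0, V^0$ — can occur \emph{outside} the exponential interval of $\gamma$ with respect to $\ell$ (\Cref{fig:truncated optimal} shows exactly this: $\gamma(v^0)$ on $\beta^0_+$ lies past both $\beta^0_-$ and $\beta_-$). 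Once $t$ is past $\log\frac{1}{\theta}$, \Cref{prop:fellow travel} gives no useful two-sided control, so the exponential-divergence estimate cannot constrain where along $\beta^0_+$ (or $\beta^0_-$) the crossing $\gamma(v^0)$ occurs, nor the $dy$- (or $dx$-) measure of the arc from there back to $R^0$.

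What the paper uses in place of fellow-travel at this step is a combinatorial constraint that has no analogue in your outline: the $(\beta_+, \beta^0_+)$-maximal rectangle $R^\beta$ is, by maximality, flanked by an innermost non-rectangular polygon $P$, and there is a specific leaf $\rho^2_-$ of $P$ which, if crossed by $\gamma$, would force $\gamma$ to be disjoint from the terminal quadrant $V$ of the original rectangle $R$, contradicting transversality. This confines $\gamma_z(v^0)$ to the extent of $R^\beta$ along $\beta^0_+$, and the side of $R^\beta$ is then bounded at height $z$ by combining \Cref{prop:square upper bound} ($dx(R^\beta_z)\, dy(R^\beta_z) \le A_{\max}$) with the $\epsilon$-nesting lower bound $dy(R^\beta_z) \ge \epsilon\sqrt{A_\LL}$ coming from \Cref{prop:min area}. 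Your approach cannot recover this without that topological ingredient, so the proposal is missing the key idea of the proof.
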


\begin{proof}
Up to switching the laminations, we may assume that
$\ell = \ell_+ \in \Lambda_+$.  By \Cref{def:epsilon truncated}, the outermost
transverse rectangle $R$ and the $\epsilon$-truncated rectangle $R^0$ have the same optimal height, which we shall denote by 
$z$.  As $\gamma$ is a geodesic in $S_h$, by \Cref{prop:geodesic is monotonic}, it is monotonic with respect to any two intersecting
sides of the rectangle $R$.  At height $z$, the measures of
the sides of $R_z$ are equal. Since the measure of any rectangle is bounded above by $A_{\max}$, the measure of each side
of $R_z$ is at most $\sqrt{A_{\max}}$.  Therefore, by
\Cref{prop:monotonic length bound}, it suffices to show that the
distances in $S_z$ from $\gamma_z(u^0)$ to $R^0_z$ and from
$\gamma_z(v^0)$ to $R^0_z$ are bounded.

\medskip
We will follow the notation illustrated in \Cref{fig:truncated
  optimal}.  Each endpoint of $\gamma([u^0, v^0])$ may lie
in either invariant lamination; for definiteness we have drawn
in \Cref{fig:truncated optimal} the case where $\gamma(u^0)$ lies in $\Lambda_-$ and $\gamma(v^0)$ lies in
$\Lambda_+$.

\medskip
Suppose that $\gamma_z (v^0)$ lies in $\Lambda_+$, as in \Cref{fig:truncated optimal}. We will derive a bound for the distance of $\gamma_z (v^0)$ from $R^0_z$ by using properties of the sides in $\Lambda_+$ of the $(\beta_+, \beta^0_+)$-maximal rectangle $R^{\beta}$. Exactly the same argument works for
when $\gamma_z (v^0)$ in $\Lambda_-$, in which case we use the sides in $\Lambda_-$ of the $(\beta_-, \beta^0_-)$-maximal
rectangle. 

\begin{figure}[h]
\begin{center}
\begin{tikzpicture}[scale=1.25]

\tikzstyle{point}=[circle, draw, fill=black, inner sep=1pt]
\tikzstyle{redpoint}=[circle, draw, fill=red, inner sep=1pt]

\draw [color=white, fill=black!20] (7, 6) rectangle (13.5, 8);

\draw (11.5, 7) node {$R^{\beta}$};

\draw (14.5, 7.25) node {$P$};



\draw [color=ForestGreen] (7, 8) node [label=left:${\beta_+}$] {} -- (16.5, 8);

\draw [color=ForestGreen] (7, 6) node [label=left:${\beta^0_+}$] {} -- (16.5, 6);

\draw [color=red] (8, 5) -- (8, 9) node [label=above:{$\beta_-$}] {};


\begin{scope}[xshift=0.5cm]
\draw [color=red] (13, 5) -- (13, 9) node [label=above:{$\rho^1_-$}] {};

\draw [very thick, color=red] (13, 6.25) -- node [midway,
label=right:{$s^1_-$}] {} (13, 7.75);

\begin{scope}[xshift=1cm]
\draw [color=red] (13.5, 5) node [label=below:{$\rho^2_-$}] {} -- (13.5, 6.75) -- (15, 6.75);

\draw [very thick, color=red] (13.5, 6.25) -- (13.5, 6.75) node
[label=left:${s^2_-}$] {} -- (14, 6.75);

\draw [color=red] (13.5, 9) -- (13.5, 7.25) -- (15, 7.25);

\draw [color=ForestGreen] (15, 6.5) -- (14, 6.5) -- (14, 7.5) -- (15, 7.5);
\end{scope}

\draw [color=ForestGreen] (13, 6.25) -- (16, 6.25);

\draw [color=ForestGreen] (13, 7.75) -- (16, 7.75);
\end{scope}

\draw [thick] plot coordinates {(7, 5.4) (8.5, 6) (8.75, 6.7) (9, 7.2) (10, 8) (11,
  8.3) (11.7, 9) };

\draw (8.5, 6) node [point, label=below right:$\gamma(v^0)$] {};

\draw (10, 8) node [point, label=below right:$\gamma(v)$] {};

\end{tikzpicture}
\end{center}
\caption{One side of the $(\beta_+, \beta^-_+)$-maximal rectangle $R^{\beta}$ from
  \Cref{fig:truncated optimal}.} \label{fig:one side of Rbeta}
\end{figure}
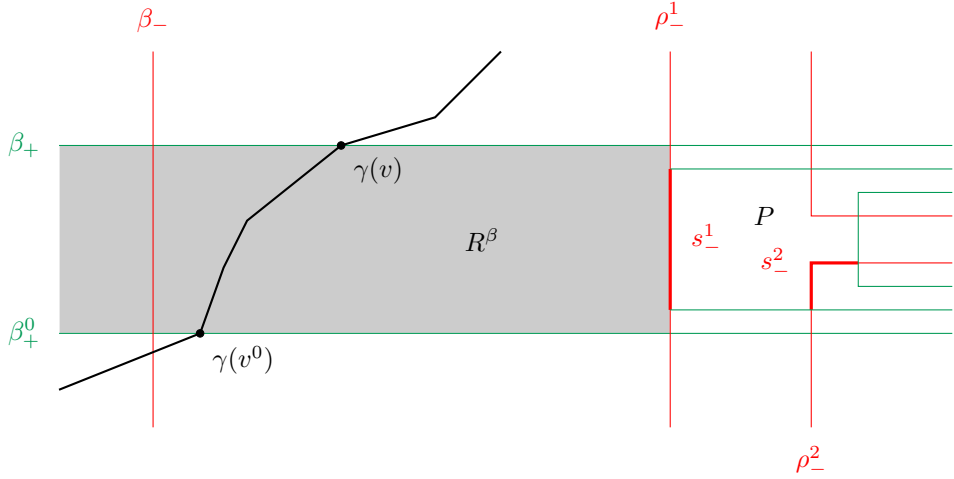

\medskip Let $\rho^1_-$ be the side of $R^{\beta}$ separated from
$\alpha_-$ by $\beta_-$. By maximality of $R^{\beta}$, $\rho^1_-$
contains a side $s^1_-$ of a non-rectangular polygon $P$.
This is illustrated in \Cref{fig:one side of Rbeta} above.  Note that
that shading in \Cref{fig:one side of Rbeta} is different from the
shading in \Cref{fig:truncated optimal};  in \Cref{fig:one side of
  Rbeta} we have only shaded the rectangle $R^{\beta}$.
In a cyclic
order on the sides in $\Lambda_-$ of the non-rectangular polygon $P$, let
$\rho^2_-$ be the leaf of $\Lambda_-$ such that $\rho^2_-$ contains a
side $s^2_-$ adjacent to $s^1_-$, and $\rho^2_-$ intersects $\beta_+^0$.
Suppose that
$\gamma_z$ intersects $\rho^2_-$.  Then $\gamma$ is disjoint from the
terminal quadrant $V$, a contradiction.  It follows that along
$\beta^0_+$, the intersection point $\gamma_z (v^0)$ is before the
intersection with $\rho^2_-$.

\medskip
The measure of the segment of $\beta^0_+$ from $\rho^1_-$ to
$\rho^2_-$ is zero, so the distance in $S_z$ from $R^0_z$ to
$\gamma_z(v^0)$ is at most the horizontal measure of $R^{\beta}$.  There is
an upper bound $A_{\max}$ on the measure of any rectangle, so
\begin{equation}\label{eq:r1 area}
dx(R^{\beta}_z) dy(R^{\beta}_z) \le A_{\max}.
\end{equation}
As $R^0$ is $\epsilon$-truncated with respect to $R$,
$dy(R^1) \ge \epsilon dy(R_z)$. Since $dy(R^{\beta}) = dy (R^1)$, we have $dy(R^{\beta}) \ge \epsilon dy(R_z)$.
At the optimal height
$dx(R_z) = dy(R_z)$, and by \Cref{prop:min area}, the area of $R_z$ is at least $A_\LL$. Hence $dy(R_z)^2 \ge A_\LL$. It follows that
\begin{equation}\label{eq:r1 side}
dy(R^{\beta}_z) \ge \epsilon \sqrt{A_\LL}.
\end{equation}
Combining \eqref{eq:r1 area} and \eqref{eq:r1 side} gives the
following upper bound on the measure of the other side of $R^{(1)}_z$,
\[ dx(R^{\beta}_z) \le \frac{A_{\max}}{\epsilon \sqrt{A_\LL}}. \]
For convenience, set $K_1 = A_{\max} / (\epsilon \sqrt{A_\LL})$, which
only depends on $(S_h, \Lambda), \theta_\LL$ and $\epsilon$.

\medskip
As $\alpha^0_-$ also intersects $R^{\beta}$, this gives the same bound on
the distance from $\gamma_z(v^0)$ to $\alpha^0_-$, i.e.
\[ d_{S_z}(\alpha^0_-, \gamma_z(v^0)) \le K_1.  \]
By reversing the orientation on $\gamma$, the same bounds holds for
the distance in $S_z$ from $R^0$ to $\gamma_z(u^0)$, i.e.
\[ d_{S_z}(\beta^0_+, \gamma_z(u^0)) \le K_1.  \]
As $\gamma_z([u^0, v^0])$ is monotonic in $S_0$, it is also monotonic
in $S_z$, so by \Cref{prop:monotonic length bound}, the arc length of
$\gamma_z([u^0, v^0])$ is at most
\[ \text{length}(\gamma_z([u^0, v^0])) \le d_{S_z}(\alpha^0_-,
\gamma_z(v^0)) + d_{S_z}(\beta^0_+, \gamma_z(u^0)) \le 2K_1 , \]
which only depends on $(S_h, \Lambda), \theta_\LL$ and $\epsilon$, as
required.
\end{proof}

We now finish Step 2 by showing that the height function changes by
a bounded amount along $\gamma([u^0, v^0])$.

\begin{proposition}\label{prop:height change main}
Suppose that $(S_h, \Lambda)$ is a hyperbolic metric on $S$ together
with a suited pair of measured laminations.  There is a constant
$\theta_\LL$ such that for any $0 < \epsilon < 1$ there is a constant
$K$ such that for any $\epsilon$-truncated outermost rectangle $R^0$,
with optimal height $z$, determined by an intersection point of
$\gamma$ with a leaf $\ell$ of an invariant lamination of angle
$\theta \le \theta_\LL$, for any $t \in [u^0, v^0]$, the value of the
height function $h_\gamma(t)$ is equal to
$\log_k \log \tfrac{1}{\theta}$ up to additive error at most $K$.
\end{proposition}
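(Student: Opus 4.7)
The plan is to first compute $h_\gamma(t_0)$ at the intersection point, and then show that $h_\gamma(t)$ stays within bounded additive error of this value throughout $[u^0, v^0]$, via a combination of the exponential fellow-traveling of $\gamma$ with $\ell$ and the $1$-Lipschitz property of the radius function.

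At $t_0$, \Cref{prop:height estimate} combined with $\rho_{\gamma,\ell}(t_0) = \log\tfrac{1}{\theta} + O(1)$ and \Cref{prop:radius estimate} gives $h_\gamma(t_0) = \log_k\log\tfrac{1}{\theta} + O(1)$. To transfer this estimate to all of $[u^0, v^0]$, the key input is an upper bound on $|t - t_0|$. The sides of $R^0$ contained in $\Lambda_-$ cross $\ell$ at parameters approximately $\pm\tfrac{1}{2}\log\tfrac{1}{\theta}$, by the construction in the proof of \Cref{prop:truncated}. Since $\gamma$ fellow-travels $\ell$ throughout the exponential interval $E_\ell$ (\Cref{prop:fellow travel}) and the nearest point projection from $\ell$ to $\gamma$ is a near-isometry on this subinterval, the crossings of $\gamma$ with these sides occur at parameters within bounded additive error of $t_0 \pm \tfrac{1}{2}\log\tfrac{1}{\theta}$. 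A similar estimate applies if $\gamma(u^0)$ or $\gamma(v^0)$ instead lies on a $\Lambda_+$-side of $R^0$: those sides are even closer to $\ell$, so the crossings happen even nearer to $t_0$. This yields $|t - t_0| \le \tfrac{1}{2}\log\tfrac{1}{\theta} + K_1$ for all $t \in [u^0, v^0]$, where $K_1$ depends only on $(S_h, \Lambda)$ and $\epsilon$.

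Having bounded the interval, I estimate both radius functions appearing in $h_\gamma(t)$. For $\Lambda_+$: by \Cref{prop:exp 1lip}, $\rho_{\gamma,\overline{\Lambda}_+}$ is $1$-Lipschitz, so
\[ \rho_{\gamma,\overline{\Lambda}_+}(t) \in \bigl[\tfrac{1}{2}\log\tfrac{1}{\theta}, \tfrac{3}{2}\log\tfrac{1}{\theta}\bigr] + O(1). \]
Applying $\log_k \lfloor\,\cdot\,-\log\tfrac{1}{\theta_\LL}\rfloor_1$ to either endpoint yields $\log_k\log\tfrac{1}{\theta} + O(1)$, since the multiplicative constants inside become additive $O(1)$ after $\log_k$. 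For $\Lambda_-$: by \Cref{prop:fellow travel}, $d_{\PSL(2,\RR)}(\gamma^1(t), \ell^1) \le L_0\theta e^{|t - t_0|} = O(\sqrt{\theta})$ throughout $[u^0, v^0]$. Since \Cref{prop:unit tangent bound} gives $d_{\PSL(2,\RR)}(\ell^1, \overline{\Lambda}^1_-) \ge \alpha_\LL$, the triangle inequality forces $d_{\PSL(2,\RR)}(\gamma^1(t), \overline{\Lambda}^1_-) \ge \alpha_\LL/2$ for all sufficiently small $\theta$, which is guaranteed by the choice of $\theta_\LL$ in \Cref{def:theta}. Hence $\rho_{\gamma,\overline{\Lambda}_-}(t) \le \log(2/\alpha_\LL) \le \log\tfrac{1}{\theta_\LL}$, and the floor cuts the second term of $h_\gamma(t)$ to zero. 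Combining the two contributions gives the claim.

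The main obstacle is precisely locating $u^0$ and $v^0$ in the second step. The sides of $R^0$ cross $\ell$ at controlled positions, but showing that $\gamma$ crosses them at the corresponding positions (up to bounded additive error) requires the quantitative fellow-traveling of \Cref{prop:fellow travel} together with an explicit comparison of nearest point projection parameters on $\ell$ versus on $\gamma$. A technical subtlety is that $\gamma(u^0)$ might lie on a $\Lambda_+$-side instead of a $\Lambda_-$-side of $R^0$; in that situation the crossing is even closer to $t_0$ because $\alpha^0_+, \beta^0_+$ sit within $\ell$'s fellow-travel neighborhood, so this case only improves the bound.
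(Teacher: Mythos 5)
Your strategy—pin down $h_\gamma(t_0)$ at the intersection point, then control its variation over $[u^0,v^0]$ by bounding $|t-t_0|$ and invoking the $1$-Lipschitz property of the radius function—is essentially the paper's argument for the sub-case treated in \Cref{prop:height change one}, where $\gamma([u^0,v^0])$ stays between the two $\Lambda_-$-sides of $R^0$. However, the remark you dismiss as a ``technical subtlety'' is in fact the harder of the two sub-cases, and the way you dispatch it is wrong.

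You assert that if $\gamma(v^0)$ (or $\gamma(u^0)$) lies on a $\Lambda_+$-side such as $\beta^0_+$, then the crossing is \emph{even closer} to $t_0$. It is not: $\gamma$ enters the terminal quadrant $V^0$ through $\beta^0_+$ only after it has already crossed $\beta^0_-$, so $v^0$ is strictly \emph{farther} from $t_0$ than the $\beta^0_-$-crossing (see \Cref{fig:truncated optimal}). Moreover the extra stretch is not an additive constant. Bounding the rectangle $R^{\beta}$ spanned by $\beta_+$ and $\beta^0_+$ via \Cref{prop:square upper bound} and \Cref{prop:min area} gives an estimate of the form $d_{S_h}(\gamma\cap\beta^0_-,\,\gamma(v^0)) \le K_3\log\tfrac{1}{\theta}+K_4$ with $K_3$ comparable to $Q_\LL^2 A_{\max}/(\epsilon A_\LL)$, which in general exceeds $1$. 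Once $|t-t_0|$ can be a multiple of $\log\tfrac{1}{\theta}$ with multiplier larger than $1$, the Lipschitz lower bound $\rho_{\gamma,\overline{\Lambda}_+}(t)\ge\rho(t_0)-|t-t_0|$ degenerates to a non-positive quantity, so your conclusion $\rho_{\gamma,\overline{\Lambda}_+}(t)\ge\tfrac12\log\tfrac{1}{\theta}+O(1)$ fails; and the fellow-travel control of the $\Lambda_-$-distance likewise only holds on the exponential interval $|t-t_0|\le\log\tfrac{1}{\theta}$, so that step breaks in the same regime. This is precisely why the paper splits off \Cref{prop:height change two}: when $\gamma$ exits through $\beta^0_+$, one compares the projection intervals $I_\ell\subseteq_{\mathrm{coarse}} I_{\beta_+}$ onto $\gamma$ (both leaves meet $\alpha^0_-$ at an angle bounded below) and uses \Cref{prop:comparing height functions} to control the radius function near $v^0$ directly, rather than propagating the value at $t_0$ by Lipschitz continuity.
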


Up to swapping the laminations, we may assume the leaf of intersection
$\ell = \ell_+$ lies in $\Lambda_+$.

\medskip
We prove \Cref{prop:height change main} in two
parts. In \Cref{prop:height change one} we bound the change in the
height function along the segment of $\gamma_z$ lying between the
sides $\alpha^0_-$ and $\beta^0_-$ of $R^0$.
Note that if both endpoints of $\gamma([u^0, v^0])$ lie in $\Lambda_-$ then $\gamma([u^0, v^0])$ equals this segment and hence \Cref{prop:height change main} follows. 

\medskip
So we may suppose that one of the endpoints of $\gamma([u^0, v^0])$ is in $\Lambda_+$.
In this case, we use a  different set of estimates in $S_h$ to bound the change in the height
function in \Cref{prop:height change
  two}. This covers all cases and so
\Cref{prop:height change main} is an immediate consequence.
The constants labeled $K$ constructed
in each of \Cref{prop:height change one} and \Cref{prop:height change
  two} may be different, but we can just take their maximum for
\Cref{prop:height change main}.

\begin{proposition}\label{prop:height change one}
Suppose that $(S_h, \Lambda)$ is a hyperbolic metric on $S$ together
with a suited pair of measured laminations.  There is a constant
$\theta_\LL$ such that for any $0 < \epsilon < 1$ there is a constant
$K$ such that for any $\epsilon$-truncated outermost rectangle $R^0$,
with optimal height $z$, determined by an intersection point of
$\gamma$ with a leaf $\ell$ of an invariant lamination of angle
$\theta \le \theta_\LL$, for any $t$ such that $\gamma(t)$ lies
between the sides $\alpha^0_-$ and $\beta^0_-$ of $R^0$ which lie in
$\Lambda_-$, the value of the height function $h_\gamma(t)$ is equal
to $\log_k \log \tfrac{1}{\theta}$ up to additive error at most $K$.
\end{proposition}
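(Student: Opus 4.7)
The plan is to reduce the height estimate to an estimate on the positive radius function $\rho_{\gamma,\overline{\Lambda}_+}(t)$, and then pin it to an interval comparable to $\log\tfrac{1}{\theta}$. Write $\ell=\ell_+\in\Lambda_+$ for the leaf of intersection and parametrize $\ell$ with unit hyperbolic speed so that $\gamma(t_0)=\ell(0)$. By Proposition~\ref{prop:fellow travel}, for $|t-t_0|\le\log\tfrac{1}{\theta}$ one has $d_{\PSL(2,\RR)}(\gamma^1(t),\ell^1)\le L_0\theta e^{|t-t_0|}$, so $\gamma^1(t)$ stays well inside a small neighborhood of $\overline{\Lambda}_+^1$ in $T^1(S_h)$. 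By the choice of $\theta_\LL$ in Definition~\ref{def:theta}, $d_{\PSL(2,\RR)}(\gamma^1(t),\overline{\Lambda}_-^1)\ge\theta_\LL$, hence $\rho_{\gamma,\overline{\Lambda}_-}(t)\le\log\tfrac{1}{\theta_\LL}$ and the negative contribution to $h_\gamma(t)$ vanishes after the floor, reducing the problem to estimating the first term.

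The key step is the parameter bound $|t-t_0|\le\tfrac{1}{2}\log\tfrac{1}{\theta}+O(1)$ for any $t$ with $\gamma(t)$ between $\alpha^0_-$ and $\beta^0_-$. By Proposition~\ref{prop:truncated}, the sides $\alpha^0_\pm$ meet $\ell$ at parameters within $\tfrac{1}{2}\log\tfrac{1}{\theta}+O(1)$ of $\ell(0)$. By Proposition~\ref{prop:angle bound} they cross $\ell$ at angle at least $\alpha_\LL$, and by Proposition~\ref{prop:projection interval} the geodesic $\gamma$ lies within hyperbolic distance $3/2$ of $\ell$ whenever $|t-t_0|\le\log\tfrac{1}{\theta}$. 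Combining these through a hyperbolic right-triangle estimate forces any $\gamma(t)$ in the strip between $\alpha^0_-$ and $\beta^0_-$ to have its foot on $\ell$ within $O(1)$ of $[\ell(-r_1),\ell(r_2)]$, and near-parallelism of $\gamma$ with $\ell$ (small angle $\theta$) then converts this to the required bound on $|t-t_0|$. I expect this bookkeeping, together with the case analysis of whether $\gamma$ exits the strip through a vertical or horizontal side, to be the main technical obstacle.

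With the parameter bound in hand, the exponential fellow-travel estimate gives
\[
\rho_{\gamma,\ell}(t)\in\Big[\tfrac{1}{2}\log\tfrac{1}{\theta}-O(1),\,\log\tfrac{1}{\theta}+O(1)\Big],
\]
and since $\ell\in\overline{\Lambda}_+$, the lower bound transfers immediately to $\rho_{\gamma,\overline{\Lambda}_+}(t)$. For the upper bound, apply Proposition~\ref{prop:radius estimate} at $t_0$ to get $\rho_{\gamma,\overline{\Lambda}_+}(t_0)\le\rho_{\gamma,\ell}(t_0)+O(1)\le\log\tfrac{1}{\theta}+O(1)$, and then use the $1$-Lipschitz property from Proposition~\ref{prop:exp 1lip} together with the parameter bound to extend this to $\rho_{\gamma,\overline{\Lambda}_+}(t)\le\tfrac{3}{2}\log\tfrac{1}{\theta}+O(1)$. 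Since $\log_k$ applied to any quantity sandwiched between constant multiples of $\log\tfrac{1}{\theta}$ differs from $\log_k\log\tfrac{1}{\theta}$ only by an additive constant, evaluating $\log_k\lfloor\rho_{\gamma,\overline{\Lambda}_+}(t)-\log\tfrac{1}{\theta_\LL}\rfloor_1$ yields $h_\gamma(t)=\log_k\log\tfrac{1}{\theta}+O(1)$, as required.
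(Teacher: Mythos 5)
Your high-level structure matches the paper's: reduce the height function to the positive radius function $\rho_{\gamma,\overline{\Lambda}_+}$, establish a parameter bound $|t-t_0|\lesssim \tfrac{1}{2}\log\tfrac{1}{\theta}$ for every $t$ with $\gamma(t)$ in the strip, and then push that through the Lipschitz property and the outer $\log_k$. Where you differ genuinely is in the mechanism for the parameter bound. You propose: any point of the strip has its foot on $\ell$ within $O(1)$ of $[\ell(-r_1),\ell(r_2)]$ (a separation fact using the bounded projection intervals of $\alpha^0_-$, $\beta^0_-$ onto $\ell$), and then near-parallelism converts the foot bound into a parameter bound along $\gamma$. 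The paper instead fixes an auxiliary point $p$ on $\ell$ at distance $2\rho_\LL$ past $\ell(a^0)$, uses the fellow-travel estimate to show the nearest point $q\in\gamma$ to $p$ is within $\tfrac{1}{4}\rho_\LL$, deduces that $q$ projects to $\ell$ outside the projection interval of $\alpha^0_-$ so that $q$ lies past $\gamma(a)$, and bounds $d(\gamma(a),\gamma(t_0))\le d(q,p)+d(p,\ell(0))$. The paper's route is a direct one-sided estimate that never needs an a priori bound on where $\gamma$ exits the strip; yours is more symmetric and geometric but requires controlling the footprint before you know $\gamma(t)$ is in the fellow-travel range.

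That is where your proposal, as stated, has a circularity you should resolve explicitly. To convert the foot bound into the parameter bound you invoke \Cref{prop:projection interval}, which only controls $d(\gamma(t),\ell)$ when $|t-t_0|\le\log\tfrac{1}{\theta}$ --- the very kind of bound you are trying to prove. This is fixable: the strip is convex, so $\{t:\gamma(t)\in\text{strip}\}$ is an interval $[a,b]\ni t_0$; if, say, $b>t_0+\tfrac{1}{2}\log\tfrac{1}{\theta}+C$ with $C$ a suitable constant, then because $\tfrac{1}{2}\log\tfrac{1}{\theta}+C\le\log\tfrac{1}{\theta}$ for $\theta\le\theta_\LL$ small, the fellow-travel estimate applies at $t_0+\tfrac{1}{2}\log\tfrac{1}{\theta}+C$ and places that point's foot well past $\ell(r_2)$ and hence past the projection interval of $\beta^0_-$, so the point lies outside the strip, a contradiction. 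The same bootstrapping problem taints your opening paragraph: you justify the vanishing of the $\overline{\Lambda}_-$ term by asserting $\gamma^1(t)$ stays in a small $T^1$-neighborhood of $\overline{\Lambda}_+^1$, but for $|t-t_0|$ near $\log\tfrac{1}{\theta}$ the bound $L_0\theta e^{|t-t_0|}$ is of order $L_0$, not small; this only becomes a small neighborhood once you have $|t-t_0|\lesssim\tfrac{1}{2}\log\tfrac{1}{\theta}$, so you should prove the parameter bound first and then dispose of the $\overline{\Lambda}_-$ term. With those two adjustments your argument is sound and gives the same conclusion as the paper's.
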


\begin{proof}
We use the notation in \Cref{fig:truncated optimal}.  Up to
swapping the laminations, we may assume that $\gamma$ makes angle at
most $\theta_\LL$ at an intersection point $\gamma(t)$ with a leaf
$\ell_+ \in \Lambda_+$. Let $R^0$ be the corresponding
$\epsilon$-truncated outermost rectangle.  We denote the segment of $\gamma$ between the sides $\alpha^0_-$ and $\beta^0_-$ by $\gamma([a, b])$, as illustrated in
\Cref{fig:gamma truncated}. We will estimate the
change in the height function along $\gamma([a, b])$. 

\begin{figure}[h]
\begin{center}
\begin{tikzpicture}[scale=0.7]

\tikzstyle{point}=[circle, draw, fill=black, inner sep=1pt]
\tikzstyle{redpoint}=[circle, draw, fill=red, inner sep=1pt]
\tikzstyle{greenpoint}=[circle, draw, fill=ForestGreen, inner sep=1pt]

\draw [color=ForestGreen] (-3, 3) -- (13, 3) node [right] {$\ell$};

\draw [color=red] (-0.75, 0.5) -- (0.75, 5.5) node [redpoint, midway, label=above
right:$\ell(a^0)$] {} node [above] {$\alpha^0_-$};

\draw [color=red] (9.25, 0.5) -- (10.75, 5.5) node [redpoint, midway, label=below
right:$\ell(b^0)$] {} node [above] {$\beta^0_-$};

\draw [thick] (-1.75, 0.5) -- (-0.5, 2.5) -- (10.5, 3.5) -- (11.75, 5.5)
node [label=right:$\gamma$] {};

\draw (5, 3) node [point, label=below:${\gamma(t) = \ell(0)}$] {};

\draw (7, 3.5) node {$\theta$};

\draw ([shift=(0:2cm)]5,3) arc (0:5:2cm);

\draw [thick, arrows=<->] (-0.75, -0.5) -- (9.25, -0.5) node [midway,
label=below:$\sim \log \tfrac{1}{\theta_\LL}$] {};

\draw (-0.11, 2.55) node [point, label=below right:$\gamma(a)$] {};

\draw (10.125, 3.475) node [point, label=above left:$\gamma(b)$] {};

\draw [color=ForestGreen] (-1, 3) node [greenpoint, label=above:$p$] {};

\draw (-1, 1.7) node [point, label=left:$q$] {};

\end{tikzpicture}
\end{center}
\caption{Estimating the length of $\gamma$ between the sides of a
  truncated rectangle.} \label{fig:gamma truncated}
\end{figure}
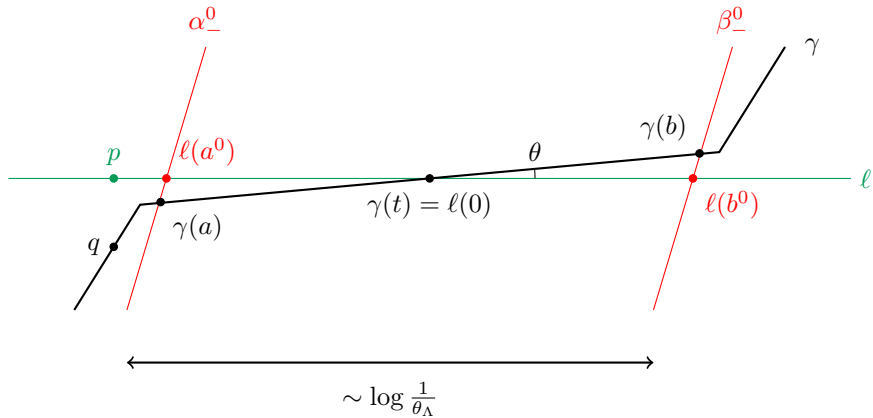

\medskip
By \Cref{prop:radius estimate}, the value of the radius function at
the intersection point $\gamma(t)$ is roughly equal to
$\log \tfrac{1}{\theta}$.  More precisely, using \eqref{eq:radius
  estimate} from \Cref{prop:radius estimate},
\[ \rho_{\gamma, \ell}(t) \le \rho_{\gamma, \Lambda_+}(t) \le
\rho_{\gamma, \ell}(t) + K_1, \]
where $K_1$ is the constant from \Cref{prop:radius estimate}, which
depends only on $(S_h, \Lambda)$.  Using the definition of the
radius function \eqref{eq:radius leaf}, gives
\[ \log \tfrac{1}{\theta} \le \rho_{\gamma, \Lambda_+}(t) \le \log
\tfrac{1}{\theta} + K_1. \]

We now find an upper bound for the hyperbolic length of the segment $\gamma[a,t]$.  The same argument will give an upper
bound for the hyperbolic length of $\gamma([t, b])$.  Let $\ell(a^0)$
be the point of intersection between $\ell$ and $\alpha^0_-$, and let
$\ell(b^0)$ be the point of intersection between $\ell$ and
$\beta^0_-$.  By \Cref{prop:truncated}, then length of
$\ell([a^0, 0])$ is bounded above by
\begin{equation}\label{eq:a0 bound}
\text{length}_{S_h}(\ell([a^0, 0])) \le \tfrac{1}{2} \log
\tfrac{1}{\theta} + \tfrac{1}{6} \log \tfrac{1}{\theta_\LL} - c_\LL
.
\end{equation}
By the triangle inequality, 
\[ d_{S_h}(\gamma(a), \gamma(t)) \le d_{S_h}(\gamma(a), \ell(a^0)) +
d_{S_h}(\ell(a^0), \ell(0) ) .  \]

We now find an upper bound for the hyperbolic distance from
$\gamma(a)$ to $\ell(a^0)$.  Let $p$ be a point on $\ell$ distance
$2 \rho_\LL$ from $\ell(a^0)$, such that $\ell(a^0)$ separates $p$
from $\ell(0)$, and let $q$ be the closest point on $\gamma$ to $p$.

\medskip
By \Cref{prop:projection interval} the distance from $p$ to $\gamma$
is at most $\theta e^{t}$, where
$t \le \tfrac{1}{2} \log \tfrac{1}{\theta} + \tfrac{1}{4} \log
\tfrac{1}{\theta_\LL} - c_\LL + 2 \rho_\LL$, so
\[ d_{S_h}(p, q) \le \theta e^{ \tfrac{1}{2} \log \tfrac{1}{\theta} + \tfrac{1}{4} \log
\tfrac{1}{\theta_\LL} - c_\LL + 2 \rho_\LL  },  \]
which simplifies to
\[ d_{S_h}(p, q) \le \theta^{\tfrac{1}{2}} \theta_\LL^{-\tfrac{1}{4}}
e^{-c_\LL + 2 \rho_\LL}, \]
and as $\theta \le \theta_\LL$,
\[ d_{S_h}(p, q) \le \theta_\LL^{\tfrac{1}{4}} e^{-c_\LL + 2 \rho_\LL}.  \]
%
%
By our choice of $\theta_\LL$,
\begin{equation}\label{eq:pq}
d_{S_h}(p, q) \le \tfrac{1}{4} \rho_\LL,
\end{equation}
and so the nearest point projection of $q$ to $\ell$ lies within distance
$\tfrac{1}{2} \rho_\LL$ of $p$. In particular, it lies outside the
nearest point projection interval of $\alpha^0_-$ to $\ell$.  This
means that $q$ lies past $\gamma(a)$ from $\gamma(t)$, and so the
distance from $\gamma(t)$ to $q$ is an upper bound on the distance
from $\gamma(t)$ to $\gamma(a)$, i.e.
\[ d_{S_h}(\gamma(a), \gamma(t)) \le d_{S_h}(q , \gamma(t)),  \]
and then by the triangle inequality,
\[ d_{S_h}(\gamma(a), \gamma(t)) \le d_{S_h}(q , p) + d_{S_h}(p , \gamma(t)).  \]
Using \eqref{eq:pq}, and the fact that $\gamma(t) = \ell(0)$, and
$\ell(a^0)$ lies between $p$ and $\ell(0)$, gives 
\[ d_{S_h}(\gamma(a), \gamma(t)) \le \tfrac{1}{4} \rho_\LL + d_{S_h}(p
, \ell(a^0) ) + d_{S_h}( \ell(a^0) , \ell(0) ).  \]
By our choice of $p$, the distance from $p$ to $\ell(a^0)$ is $2
\rho_\LL$. Using the upper bound from \eqref{eq:a0 bound}, we get 
\[ d_{S_h}(\gamma(a), \gamma(t)) \le \tfrac{1}{2} \log
\tfrac{1}{\theta} + \tfrac{1}{6} \log \tfrac{1}{\theta_\LL} - c_\LL +
\tfrac{9}{4} \rho_\LL.  \]
%
%
Our choice of $\theta_\LL$ ensures that
$\log \tfrac{1}{\theta_pa} \ge \tfrac{9}{4} \rho_\LL$, so we simplify
the inequality above to
\[ d_{S_h}(\gamma(a), \gamma(t)) \le \tfrac{1}{2} \log
\tfrac{1}{\theta} + \tfrac{1}{3} \log \tfrac{1}{\theta_\LL} .  \]
Similarly, we get exactly the same bound for
$d_{S_h}(\gamma(b), \gamma(t))$.

\medskip
As the radius function is $1$-Lipschitz with respect to the hyperbolic
metric, for any $t \in [a, b]$ the value of the radius function at
$\gamma(t)$ is at least
\begin{equation}\label{eq:radius lower}
\rho_{\gamma, \Lambda_+}(t) \ge \tfrac{1}{2} \log
\tfrac{1}{\theta} - \tfrac{1}{3} \log \tfrac{1}{\theta_\LL} \ge
\tfrac{1}{6} \log \tfrac{1}{\theta_\LL} > 0,
\end{equation}
and the value of the radius function at $t$ is at most
\begin{equation}\label{eq:radius upper}
\rho_{\gamma, \Lambda_+}(t) \le \tfrac{3}{2} \log
\tfrac{1}{\theta} + \tfrac{1}{3} \log \tfrac{1}{\theta_\LL} + K_1.
\end{equation}

As the height function is $\log_k$ of a floor function of the radius
function, the change in the height function is therefore bounded by the
logarithm base $k$ of the ratio between the values of the height
function along $\gamma([a, b])$.  In particular, using
\eqref{eq:radius lower} and \eqref{eq:radius upper} the change in
height function along $\gamma([a, b])$ is at most
\[ \log_k \frac{ \tfrac{3}{2} \log \tfrac{1}{\theta} + \tfrac{1}{3}
  \log \tfrac{1}{\theta_\LL} + K_1 }{ \tfrac{1}{6} \log
  \tfrac{1}{\theta_\LL} } \le \log_k \left( 11 \log
\tfrac{1}{\theta_\LL} + 6 K_1 \right) = K_2, \]
where the right hand side $K_2$ only depends on $(S_h, \Lambda)$, as
required.
\end{proof}

We now consider the case in which an endpoint of $\gamma([u^0, v^0])$  lies in $\Lambda_+$, and so  $\gamma([u^0, v^0])$ is not contained between the sides of $R^0$ that are in $\Lambda_-$.

\begin{proposition}\label{prop:height change two}
Suppose that $(S_h, \Lambda)$ is a hyperbolic metric on $S$ together
with a regular pair of measured laminations.  There is a constant
$\theta_\LL$ such that for any $0 < \epsilon < 1$ there is a constant
$K$ such that for any $\epsilon$-truncated outermost rectangle $R^0$,
with optimal height $z$, determined by an intersection point of
$\gamma$ with a leaf $\ell$ of an invariant lamination of angle
$\theta \le \theta_\LL$, for any $t$ such that $\gamma(t)$ lies
outside the sides $\alpha^0_-$ and $\beta^0_-$ of $R^0$ which lie in
$\Lambda_-$, the value of the height function $h_\gamma(t)$ is equal
to $\log_k \log \tfrac{1}{\theta}$ up to additive error at most $K$.
\end{proposition}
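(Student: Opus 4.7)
The plan is to extend the reference-point argument of \Cref{prop:height change one} to the corner region of the geodesic. I focus on the $\beta$-corner, where $\gamma(v^0)$ lies on the $\Lambda_+$-side $\beta^0_+$; the $\alpha$-corner is symmetric, and if $\gamma(v^0) \in \beta^0_-$ then $\gamma([u^0, v^0])$ is already covered by \Cref{prop:height change one}. In this case, the relevant sub-interval is $[b, v^0]$, where $\gamma(b) \in \beta^0_-$ is the last crossing inside the central strip.

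The upper bound $h_\gamma(t) \leq \log_k \log \tfrac{1}{\theta} + O(1)$ follows quickly: by \Cref{prop:radius estimate}, $\rho_{\gamma, \overline{\Lambda}_+}(t_0) \leq \log \tfrac{1}{\theta} + O(1)$; by the 1-Lipschitz property (\Cref{prop:exp 1lip}) together with the diameter bound $d_{\HH^2}(\gamma(t), \gamma(t_0)) \leq Q_\LL(dx(R) + dy(R)) + c_\LL = O(\log \tfrac{1}{\theta})$ via \Cref{prop:qi}, one has $\rho_{\gamma, \overline{\Lambda}_+}(t) = O(\log \tfrac{1}{\theta})$, which is enough.

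For the lower bound $h_\gamma(t) \geq \log_k \log \tfrac{1}{\theta} - O(1)$ on $[b, v^0]$, I combine two estimates. First, at $t = v^0$, the geodesic $\gamma$ meets $\beta^0_+ \in \Lambda_+$ at some angle $\theta_v$, and $\rho_{\gamma, \beta^0_+}(v^0) = \log \tfrac{1}{\theta_v} + O(1)$. The leaves $\ell_+$ and $\beta^0_+$ are disjoint, so \Cref{prop:radius implies intersect} applied at $v^0$ gives $\rho_{\gamma, \ell_+}(v^0) \leq \rho_{\gamma, \beta^0_+}(v^0) + O(1)$, while \Cref{prop:fellow travel} combined with the position of $v^0$ relative to $t_0$ gives the matching lower bound $\rho_{\gamma, \ell_+}(v^0) \geq \log \tfrac{1}{\theta} - |v^0 - t_0| - O(1)$. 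Second, for $t$ in the interior of $[b, v^0]$, I interpolate using the 1-Lipschitz property of $\rho_{\gamma, \overline{\Lambda}_+}$ from the lower bounds at $b$ (given by \Cref{prop:height change one}) and at $v^0$, using a bound on $|v^0 - b|$ from the geometry of the corner rectangle $R^\beta$ (whose side measures are controlled by $A_{\max}$ and $\epsilon\sqrt{A_\LL}$, as in the proof of \Cref{prop:optimal height bounded length}).

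The main obstacle is bounding $|v^0 - t_0|$ by $\log \tfrac{1}{\theta}$ minus a large multiple of $\log \tfrac{1}{\theta_\LL}$; without this, the propagation of $\rho_{\gamma, \ell_+}$ to $v^0$ degenerates (since $\gamma$ could leave the exponential interval $E_{\ell_+}$), and $\theta_v$ cannot be forced to be at most $\theta_\LL$. I expect this is handled by a reference-point argument directly analogous to the one in the proof of \Cref{prop:height change one}, but now placing the reference point $p$ on the $\Lambda_+$-leaf $\beta^0_+$ near the corner $\beta^0_+ \cap \beta^0_-$: by \Cref{prop:projection interval} the distance from $p$ to $\gamma$ is at most $\theta e^{t}$ for an explicit $t$ depending on the corner geometry of $R^0$, and the choice of $\theta_\LL$ in \Cref{def:theta} makes this distance strictly smaller than the hyperbolic gap provided by the truncation, forcing $\gamma$ to cross $\beta^0_+$ within the required range of $t_0$.
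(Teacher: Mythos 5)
Your overall decomposition is sensible and shares ingredients with the paper (the $1$-Lipschitz property for the upper bound, the rectangle $R^\beta$ to control the corner, the relation between $\rho_{\gamma,\ell}$ and $\rho_{\gamma,\beta^0_+}$ at $v^0$). But the lower-bound mechanism you propose is genuinely different from the paper's, and the part you flag as an ``obstacle'' is in fact where it breaks.

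The obstacle is real, and your proposed fix does not resolve it. The corner excursion $|v^0-b|$, controlled by the $(\beta^0_+,\beta_+)$-maximal rectangle $R^\beta$, is bounded by roughly $K_3\log\tfrac{1}{\theta}+K_4$ where $K_3$ is on the order of $Q_\LL^2 A_{\max}/(\epsilon A_\LL)$. This constant need not be less than $1$, so $|v^0-t_0|$ can exceed $\log\tfrac{1}{\theta}$. Once $\gamma(v^0)$ leaves the exponential interval $E_\ell$, the fellow-travel estimate $\rho_{\gamma,\ell}(v^0)\geq\log\tfrac{1}{\theta}-|v^0-t_0|-O(1)$ degenerates to nothing useful, and no choice of $\theta_\LL$ helps: $\theta_\LL$ absorbs errors of size $\log\tfrac{1}{\theta_\LL}$, not errors of size $K_3\log\tfrac{1}{\theta}$. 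The reference-point argument in \Cref{prop:height change one} places the reference point on $\ell$ within the truncated strip, where a controlled excess of only $\tfrac{1}{3}\log\tfrac{1}{\theta_\LL}$ over $\tfrac{1}{2}\log\tfrac{1}{\theta}$ arises; placing the reference point on $\beta^0_+$ near the corner does not reproduce this, because the corner segment is not confined to a region whose diameter is a small fraction of $\log\tfrac{1}{\theta}$.

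The paper avoids this propagation issue entirely. Instead of propagating $\rho_{\gamma,\ell}$ along $\gamma$ out to $v^0$, it compares the radius functions of $\ell$ and $\beta_+$ \emph{pointwise}, via coarse nesting of their projection intervals onto $\gamma$: since $\ell$ and $\beta_+$ both cross $\alpha^0_-$ at angle bounded below by $\alpha_\LL$, and $\alpha^0_-$ crosses $\gamma$, the projection interval $I_\ell$ is coarsely contained in $I_{\beta_+}$. By \Cref{prop:comparing height functions} this gives $\rho_{\gamma,\ell}(t)\leq\rho_{\gamma,\beta_+}(t)+L$ for \emph{all} $t$, without any need to keep $v^0$ inside $E_\ell$. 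The $R^\beta$ measure bound is then used only to bound the final ratio, not to keep $v^0$ near $t_0$. This is the step your proposal is missing.

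Two smaller points. Your upper bound's diameter estimate $d_{\HH^2}(\gamma(t),\gamma(t_0))\leq Q_\LL(dx(R)+dy(R))+c_\LL$ covers only the interior of $R$; for $t\in[b,v^0]$ you must add the corner contribution from $R^\beta$, which you invoke for the lower bound but not here. Also, the constants $A_{\max}$ and $\epsilon\sqrt{A_\LL}$ you cite for $R^\beta$ are the optimal-height estimates from \Cref{prop:optimal height bounded length}; the present proposition works at height zero, where the relevant estimate is $dx(R^\beta)\geq\epsilon\, dx(R)$ combined with $dx(R)\geq A_\LL/dy(R)$ and \Cref{prop:long side measure}, as in the paper.
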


\begin{proof}
Up to reversing the orientation on $\gamma$, we may assume that the endpoint of $\gamma([u^0, v^0])$ that lies in $\Lambda_+$ is $\gamma(v^0)$, as illustrated in
\Cref{fig:truncated optimal}.

\medskip
The radius function $\rho_\ell$ for $\ell$ at $\gamma(t)$ has a local
maximum of height bounded below by $\log \tfrac{1}{\theta}$.

\medskip
Let $I_\ell$ be the projection interval for $\ell$ onto $\gamma$, and
$I_{\beta_+}$ the nearest point projection interval of
$\beta_+$ to $\gamma$.

\medskip
The leaves $\ell$ and $\beta_+$ meet $\alpha^0_-$ at a bounded angle, and
since $\alpha^0_-$ intersects $\gamma$, the nearest point projection
interval $I_\ell$ is coarsely contained in the nearest point
projection interval $I_{\beta_+}$.

\medskip
We now find an upper bound on the distance from $R^0$ to
$\gamma(v^0)$.  By \Cref{prop:square upper bound}, the measure of the rectangle $R^1$ is at most
$dx(R^1) dy(R^1) \le A_{\max}$, so
\[ dy(R^1) \le \frac{A_{\max}}{dx(R^1)}.  \]
The height of the rectangle $R^1$ is at least
$dx(R^1) \ge \epsilon dx(R)$ and the measure of $R$ is at least
$dx(R) dy(R) \ge A_\LL$, so
\[ dy(R^1) \le \frac{A_{\max}}{\epsilon A_\LL} dy(R).  \]
Using the bound on the measure of the side of the outermost rectangle
from \Cref{prop:long side measure} gives 
\[ dy(R^1) \le \frac{A_{\max}}{\epsilon A_\LL} 2 Q_\LL ( \log
\tfrac{1}{\theta} + \tfrac{1}{6} \log \tfrac{1}{\theta_\LL} ).  \]
Using the quasi-isometry between the hyperbolic metric and the
Cannon--Thurston metric gives
\[ d_{S_h}(\beta^0_-, \gamma(v^0)) \le Q_\LL \frac{A_{\max}}{\epsilon
  A_\LL} 2 Q_\LL ( \log \tfrac{1}{\theta} + \tfrac{1}{6} \log
\tfrac{1}{\theta_\LL} ) + c_\LL.
\]
By the triangle inequality, the distance along $\gamma$ from
$\beta^0_-$ to $\gamma(v^0)$ is at most $d_{S_h}(\beta^0_-,
\gamma(v^0))$ plus the distance along $\beta^0_-$ from $\beta^0_+$ to
$p = \gamma \cap \beta^0_-$.  This latter distance is bounded by the
side length of $R_0$ contained in $\beta^0_-$.  This gives
\[ d_{S_h}(p, \gamma(v^0)) \le Q_\LL \frac{A_{\max}}{\epsilon A_\LL} 2
Q_\LL ( \log \tfrac{1}{\theta} + \tfrac{1}{6} \log
\tfrac{1}{\theta_\LL} ) + c_\LL + \frac{ Q_\LL A_\LL }{ \tfrac{2}{Q_\LL} (
  \log \tfrac{1}{\theta} - \tfrac{1}{6} \log \tfrac{1}{\theta_\LL} )
}  + c_\LL. \]
Using the fact that $\theta \le \theta_\LL$ gives
\[ d_{S_h}(p, \gamma(v^0)) \le K_3 \log \tfrac{1}{\theta} + K_4,  \]
where $K_3$ and $K_4$ depend only on $\LL$.

\medskip
The radius function $\rho_{\beta_+}$ is a coarse lower bound for the
radius function $\rho_\gamma$, so in particular, $\rho_\gamma(v^0)$ is
at most $K_3 \log \tfrac{1}{\theta} + K_4$. Therefore the height difference between $\gamma(t)$ and $\gamma(v^0)$
is at most
\[ \log_k \frac{ (K_3 + 1) \log \tfrac{1}{\theta}+ K_4 }{ \log
  \tfrac{1}{\theta} } \le \log_k ( K_3 + 1 + K_4  ), \]
which depends only on $(S_h, \Lambda)$, as required.
\end{proof}

We finally show that the length of $\tau_\gamma([u^0, v^0])$ is
bounded.

\begin{proof}[Proof (of \Cref{prop:truncated path bounded length})]
By \Cref{prop:height estimate}, there is a constant $K_1$ such that
the value of the height function at the intersection point is equal to
$\log_k \log \tfrac{1}{\theta}$ up to additive error at most $K_1$.
By \Cref{prop:height change main}, there is a constant $K_2$ such that
the variation in height along $\gamma([u^0, v^0])$ is at most $K_2$.
Therefore projecting $\tau_\gamma([u^0, v^0])$ to
$\gamma_z([u^0, v^0])$ changes the length by at most a factor of
$k^{K_1 + K_2}$.

\medskip
By \Cref{prop:optimal height bounded length}, there is a constant $K_3$
such that $\gamma_z([u^0, v^0])$ has arc length at most $K_3$.
Hence, $\gamma_\tau([u^0, v^0])$ has arc length at most
$k^{K_1 + K_2} K_3$, where all the constants depend only on
$(S_h, \Lambda)$, as required.
\end{proof}

\subsubsection{Corner segments create tame bottlenecks}\label{section:large angles}

By \Cref{def:corner segment}, a \emph{corner segment} is be a segment of $\gamma$ that cuts off a
corner of an innermost rectangle.  For some terminology, we now distinguish between two possibilities for the corner segments.

\begin{definition}
A segment $\gamma(I)$ of a non-exceptional geodesic is a
\emph{positive length corner segment} if it is a corner segment with positive length. A segment $\gamma(I)$ is a \emph{zero length corner
segment} if $I$ consists of a single point, and $\gamma(I)$ is the
vertex of an innermost rectangle. 
\end{definition}

In either case, a corner segment determines exactly two angles of
intersection, one with each invariant lamination.  In this section, we
will show that a corner segment creates a bottleneck in $\wsr$.

\begin{cor:corner distance}
Suppose that $(S_h, \Lambda)$ is a hyperbolic metric on $S$ together
with a suited pair of measured laminations.  Then there are
constants $r$ and $K$, such that for any corner segment
$I = [t_1, t_2]$ of any non-exceptional geodesic $\gamma$, there are
parameters $u \le t_1 \le t_2 \le v$ such that for any $t \in [u, v]$,
the point $\tau_\gamma(t)$ is a $(r, K)$-bottleneck for the ladders
over $\gamma((-\infty, u])$ and $\gamma([v, \infty))$.  Furthermore,
the length of $\tau_\gamma([u, v])$ is at most $K$.
\end{cor:corner distance}

If at least one angle is small, then it suffices to observe that the corner segment is contained in the interval $[u,v]$ in \Cref{lemma:small
  angles}, and \Cref{cor:corner distance} follows.

\medskip
It therefore remains to show that if a corner segment meets both invariant laminations with large angles, then it creates a
bottleneck.

\begin{proposition}\label{prop:transverse corner}
Suppose that $(S_h, \Lambda)$ is a hyperbolic metric on $S$ together
with a suited pair of measured laminations.  Let $\theta_\LL$ be
the constant from \Cref{def:theta}.  Then there are constants $r > 0$
and $K \ge 0$ such that for any non-exceptional geodesic $\gamma$ in
$S_h$, for any corner segment $\gamma([t_1, t_2])$, meeting both
invariant laminations at angles at least $\theta_\LL$, there are
parameters $u \le t_1 \le t_2 \le v$ such that for any $t \in [u, v]$,
the point $\tau_\gamma(t)$ is a $(r, K)$-bottleneck for the ladders
over $\gamma((-\infty, u])$ and $\gamma([v, \infty))$.  Furthermore,
the length of $\tau_\gamma([u, v])$ is at most $K$.
\end{proposition}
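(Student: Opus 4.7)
The plan is to reduce Proposition \ref{prop:transverse corner} to Lemma \ref{lemma:transverse} (together with Lemma \ref{lemma:bottleneck}) by constructing, for each such corner segment, a transverse rectangle of measure bounded below and sides of hyperbolic length bounded above, both in terms of constants depending only on $(S_h, \Lambda)$. Throughout, I will use in an essential way the hypothesis that both angles of intersection of $\gamma$ with the two laminations at the endpoints of the corner segment are at least $\theta_\LL$.

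First I would observe that the corner segment $\gamma([t_1, t_2])$ has hyperbolic length bounded above by a constant $D$ depending only on $(S_h, \Lambda)$. Indeed, by \Cref{prop:innermost polygon diameter bound} the innermost rectangle containing $\gamma([t_1, t_2])$ has diameter at most $D_\LL$, and then extending slightly past the two endpoints, by \Cref{prop:cobounded intersections}, one encounters further transverse intersections of $\gamma$ with each invariant lamination within arc length $L_\LL$ of the respective endpoint. This locates an interval $[u_0, v_0] \supset [t_1, t_2]$ of arc length at most $D_\LL + 2L_\LL$ whose endpoints lie on a leaf $\ell^+_{\text{in}}\in \Lambda_+$ before $t_1$ and a leaf $\ell^-_{\text{out}}\in \Lambda_-$ after $t_2$ (the labelling depending on which lamination contains which endpoint of the corner segment; the other case is symmetric).

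Next I would produce a transverse rectangle of positive area. Since the two endpoints of $\gamma([t_1, t_2])$ lie on adjacent sides of an innermost rectangle, and since the angles of $\gamma$ with both containing leaves are at least $\theta_\LL$, one shows (using that in hyperbolic geometry a geodesic ray leaving a leaf at definite angle diverges at a definite rate, together with \Cref{prop:cobounded intersections}) that the two leaves containing the endpoint sides of the innermost rectangle are outermost leaves of $\Lambda_+$ and $\Lambda_-$ intersecting $\gamma([u_0, v_0])$ on their respective sides. Together with the outermost leaves on the opposite sides found just before, this produces four leaves bounding an $(\ell_1,\ell_2)$-maximal rectangle (as in \Cref{prop:min area}) of measure at least $A_\LL$ which is transverse to $\gamma$ and contains $\gamma([t_1, t_2])$ in the central cross. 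The measures of the sides are also bounded above by $A_{\max}/\sqrt{A_\LL}$ (via \Cref{prop:square upper bound}), and hence the optimal height of the rectangle is bounded in absolute value in terms of $(S_h, \Lambda)$ alone.

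Applying Lemma \ref{lemma:transverse} to this rectangle now yields constants $r > 0$ and $K_1 \geq 0$ such that the optimal-height square is an $(r,K_1)$-bottleneck for the suspension flow sets over the initial and terminal quadrants; choosing $u \leq t_1$ and $v \geq t_2$ so that $\gamma([u,v])$ is exactly the intersection of $\gamma$ with these quadrants, $\tau_\gamma([u,v])$ is then the bottleneck segment. The main obstacle I foresee is bounding the arc length of $\tau_\gamma([u,v])$, that is, showing this is a \emph{tame} bottleneck. For this, I would argue that on $\gamma([u,v])$ the geodesic remains at definite angle $\geq \theta_\LL$ from both invariant laminations (this is where the hypothesis on angles is crucial, combined with the fact that $[u,v]$ has bounded arc length and the innermost rectangle structure prevents close approach to extended leaves in complementary regions). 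By \Cref{prop:height estimate} this gives a uniform bound on $|h_\gamma(t)|$ over $t\in[u,v]$, and since $\gamma([u,v])$ itself has hyperbolic length bounded above, the induced test path $\tau_\gamma([u,v])$ has bounded Cannon--Thurston arc length. Enlarging $K_1$ by this bound gives the desired constant $K$.
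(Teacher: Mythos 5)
Your overall strategy mirrors the paper's: produce a transverse rectangle with lower-bounded measure, invoke the bottleneck machinery, then bound the Cannon--Thurston arc length of $\tau_\gamma([u,v])$ by combining a hyperbolic length bound on $\gamma([u,v])$ with a bound on $|h_\gamma|$. However, the execution has real gaps at each of the three key steps.

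\emph{The rectangle.} Your construction via ``outermost leaves'' is not clearly a transverse rectangle for $\gamma$, and \Cref{prop:min area} gives a lower measure bound only for the $(\ell_1,\ell_2)$-maximal rectangle determined by two leaves, not for an arbitrary rectangle spanned by four leaves you choose. You need a rectangle all four of whose bounding leaves cross $\gamma$ and whose measure is bounded below uniformly. The paper does this via \Cref{prop:lower bound transverse measure}: around each intersection point $\gamma\cap\ell^+$ and $\gamma\cap\ell^-$, and using that the angle is at least $\theta_\LL$, it produces a rectangle $R_1$ (respectively $R_2$) with $dy(R_1)$ (resp.\ $dx(R_2)$) bounded below, each transverse to $\gamma$ and containing the other leaf in its interior; then $R = R_1 \cap R_2$ is the desired transverse rectangle with $dx(R)dy(R)$ bounded below. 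Your argument does not establish that each of the four leaves crosses $\gamma$, which is essential.

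\emph{The hyperbolic length of $\gamma([u,v])$.} Once you apply the bottleneck lemma, $u$ and $v$ are determined by where $\gamma$ exits the initial and terminal quadrants of $R$, not by extending $[t_1,t_2]$ by $L_\LL$ on each side. Bounding the diameter of the innermost rectangle by $D_\LL$ says nothing directly about how far $\gamma(u)$ and $\gamma(v)$ lie from $\gamma([t_1,t_2])$. The paper closes this by observing $\gamma([u,v])$ is monotonic relative to $\alpha^-$ and $\beta^+$ (\Cref{prop:geodesic is monotonic}) and then using \Cref{prop:monotonic length bound} together with an explicit nearest point projection estimate onto $\ell^-$.

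\emph{The height bound.} Your claim that $\gamma$ ``remains at definite angle $\ge\theta_\LL$ from both invariant laminations'' throughout $[u,v]$ is both unjustified and not quite the right notion: the height function is defined via the $\PSL(2,\RR)$-distance to the extended laminations, which at non-intersection points is not an angle, and which can become small even away from intersection points. The correct and simpler argument, which the paper uses, is local-to-global: \Cref{prop:height estimate} plus the angle hypotheses bound $|h_\gamma(t_1)|$ and $|h_\gamma(t_2)|$; then $(1/\log k)$-Lipschitz continuity of $h_\gamma$ (\Cref{cor:height lipschitz}) together with the already-established bound on the hyperbolic length of $\gamma([u,v])$ bounds $|h_\gamma|$ on the whole interval.
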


We start by showing that any geodesic which intersects a leaf of a
lamination intersects a rectangle intersecting that leaf, with a lower
bound on its transverse measure, which depends only on the angle of
intersection.  The final result will then follow from constructing
these rectangles for both leaves at each end of the corner segment,
and then taking the intersection of these two rectangles.

\begin{proposition}\label{prop:lower bound transverse measure}
Suppose that $(S_h, \Lambda)$ is a hyperbolic metric on $S$ together
with a suited pair of measured laminations.  Then for any
non-exceptional geodesic $\gamma$ intersecting a leaf of a lamination
$\ell \in \Lambda_+$ at angle $0 < \theta \le \pi/2$, there are leaves
$\ell_1$ and $\ell_2$ in the other lamination $\Lambda_-$ with the
following properties.

\begin{enumerate}

\item The leaf $\ell$ is a common leaf of intersection for $\ell_1$
and $\ell_2$.

\item The nearest point projections of $\gamma, \ell_1$ and $\ell_2$
to $\ell$ are all disjoint.

\item The $(\ell_1, \ell_2)$-maximal rectangle $R$ intersects both
$\ell$ and $\gamma$.

\item The transverse measure of the rectangle is at least
\[ dx(R) \ge \frac{A_\LL}{2 Q_\LL( \log \tfrac{1}{\theta} + \log
  \tfrac{1}{\alpha} + 2 T_0 + L_\LL ) + c_\LL}, \]
where $Q_\LL$ and $c_\LL$ are the constants defined in \Cref{c:Q_pa},
$T_0$ is the constant defined in \Cref{prop:projection interval}, and $L_\LL$ is the
constant defined in \Cref{c:L_pa}, and all these constants depend only
on the choice of $(S_h, \Lambda)$.

\end{enumerate}

The same result holds with the invariant measured laminations
$(\Lambda_+, dx)$ and $(\Lambda_-, dy)$ swapped.
\end{proposition}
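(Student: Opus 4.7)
The plan is to mirror the construction from Proposition \ref{prop:small angle rectangle}, but without exploiting a small-angle assumption. Parametrize $\ell$ with unit speed so that $\ell(0) = \gamma \cap \ell$. By Proposition \ref{prop:projection interval}, the nearest point projection interval of $\gamma$ onto $\ell$ is $\ell([-T_\gamma, T_\gamma])$ with $\log\tfrac{1}{\theta} \le T_\gamma \le \log\tfrac{1}{\theta} + T_0$. Set $T_1 = \log\tfrac{1}{\alpha_\LL} + T_0$ and choose two intervals $I_1 = \ell([-T_\gamma - T_1 - L_\LL, -T_\gamma - T_1])$ and $I_2 = \ell([T_\gamma + T_1, T_\gamma + T_1 + L_\LL])$, each of length $L_\LL$, positioned symmetrically just outside $p_\ell(\gamma)$. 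By Proposition \ref{prop:cobounded intersections}, there exist leaves $\ell_1, \ell_2 \in \Lambda_-$ crossing $I_1$ and $I_2$ respectively.

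Since each $\ell_i$ meets $\ell$ at an angle at least $\alpha_\LL$ by Proposition \ref{prop:angle bound}, Proposition \ref{prop:projection interval} shows that the projection interval of $\ell_i$ onto $\ell$ has radius at most $T_1$ about its intersection point. By construction, this forces the three nearest point projection intervals $p_\ell(\gamma)$, $p_\ell(\ell_1)$ and $p_\ell(\ell_2)$ to be pairwise disjoint, with $p_\ell(\ell_1)$ and $p_\ell(\ell_2)$ on opposite sides of $p_\ell(\gamma)$. A separation argument (the two projections of $\gamma$ to $\ell_i$ lying past those of $\gamma$'s projection, combined with Proposition \ref{prop:nested implies intersect}) then shows that $\gamma$ crosses between $\ell_1$ and $\ell_2$.

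Next, verify that $\ell$ is a common positive leaf for the pair $\ell_1, \ell_2$: the subsegment of $\ell$ lying between $\ell_1$ and $\ell_2$ contains $\ell(0)$ and has hyperbolic length between $2(T_\gamma + T_1)$ and $2(T_\gamma + T_1 + L_\LL)$; its $dy$-measure is positive since $\Lambda_-$ has no isolated leaves and the segment crosses many leaves of $\Lambda_-$. Apply Proposition \ref{prop:min area} to obtain an $(\ell_1, \ell_2)$-maximal rectangle $R$ of measure at least $A_\LL$. By transversality of the laminations, the crossing segment of $\ell$ through $R$ is precisely the portion of $\ell$ between $\ell_1$ and $\ell_2$, so in particular $R$ contains $\ell(0)$ and hence intersects both $\ell$ and $\gamma$.

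Finally, to bound $dy(R)$ from above, use Proposition \ref{prop:qi} to convert the hyperbolic length estimate into a Cannon-Thurston measure bound:
\[
dy(R) \le Q_\LL \cdot 2(T_\gamma + T_1 + L_\LL) + c_\LL \le 2Q_\LL\bigl(\log\tfrac{1}{\theta} + \log\tfrac{1}{\alpha_\LL} + 2T_0 + L_\LL\bigr) + c_\LL.
\]
Combining with $dx(R) \cdot dy(R) \ge A_\LL$ gives the claimed lower bound on $dx(R)$. The symmetric statement for $\ell \in \Lambda_-$ is identical after swapping the laminations. The main technical point, and the place where care is needed, is the identification of $dy(R)$ with the $dy$-measure of the segment of $\ell$ between $\ell_1$ and $\ell_2$: this requires checking that $\ell$ runs across $R$ from the $\ell_1$-side to the $\ell_2$-side entirely inside $R$, which relies on the outermost common positive leaves bounding $R$ being located outside $\ell$ with respect to the transverse ordering -- a consequence of the fact that $\ell$ itself is one of the common positive leaves lying in the interior strip.
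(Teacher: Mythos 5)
Your argument matches the paper's own proof essentially step by step: place two length-$L_\LL$ windows on $\ell$ at distance $T_1 = \log\tfrac{1}{\alpha_\LL} + T_0$ just outside $p_\ell(\gamma)$, produce $\ell_1, \ell_2 \in \Lambda_-$ crossing them so that $p_\ell(\gamma)$, $p_\ell(\ell_1)$, $p_\ell(\ell_2)$ are disjoint, apply Proposition \ref{prop:min area} to get an $(\ell_1,\ell_2)$-maximal rectangle of measure at least $A_\LL$, and bound $dy(R)$ by the hyperbolic length of the $\ell$-arc via the quasi-isometry of Proposition \ref{prop:qi}. One small remark: the appeal to Proposition \ref{prop:nested implies intersect} to conclude that $\gamma$ ``crosses between'' $\ell_1$ and $\ell_2$ is unnecessary and a bit off target (that proposition concerns projection intervals nested on a common base geodesic), since $\ell(0) = \gamma \cap \ell$ already lies on the $\ell$-arc between $\ell_1$ and $\ell_2$ and hence in $R$, giving item (3) directly; dropping that sentence only tightens the write-up.
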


\begin{proof}
Up to swapping the laminations we may assume that $\ell$ is in
$\Lambda_+$.

\medskip
We parametrize $\ell$ with a unit speed such that the intersection
point with $\gamma$ is $\ell(0)$.  Then the nearest point projection
of $\gamma$ to $\ell$ is contained in the interval
$I = \ell(- \log \tfrac{1}{\theta} - T_0 , \log \tfrac{1}{\theta} +
T_0 )$.

\medskip
Consider intervals $I_1$ and $I_2$ of length $L_\LL$ on
either side of $I$, distance $\log \tfrac{1}{\alpha} + T_0$ from
$I$.  Each interval $I_i$ intersects a leaf $\ell^i_-$ of $\Lambda_-$, whose nearest point projection to $\ell$ is disjoint from
$I$.  The leaf $\ell$ is then common for both $\ell^1_-$ and
$\ell^2_-$, so there is a $(\ell^1_-, \ell^2_-)$-maximal rectangle $R$
with area at least $A_\LL$.  As the
measure of the side of the rectangle parallel to $\ell$ is at most
$2 Q_\LL( \log \tfrac{1}{\theta} + \log \tfrac{1}{\alpha} + 2 T_0 +
L_\LL ) + c_\LL$, the result follows.
\end{proof}

We now complete the proof of \Cref{prop:transverse corner}.

\begin{proof}[Proof (of \Cref{prop:transverse corner})]
Let $\ell^+$ and $\ell^-$ be the leaves of intersection at each end of
the corner segment. 
Up to reversing the orientation on $\gamma$, we may
assume that $\gamma$ hits $\ell^-$ first, as illustrated in
\Cref{fig:corner segment}.

\medskip
By \Cref{prop:lower bound transverse measure}, the intersection point
$\gamma \cap \ell^-$ is contained in a rectangle $R_1$ with measure
\[ dy(R_1) \ge \frac{A_\LL}{2Q_\LL ( \log \tfrac{1}{\theta_\LL} + \log
  \tfrac{1}{\alpha} + 2 T_0 + L_|pa ) + c_\LL} := K_1.  \]
such that the $\Lambda_+$ sides of $R_1$ have nearest point projections
to $\ell^-$ disjoint from the nearest point projection of
$\gamma$ to $\ell^-$.  In particular, $\ell^+$ intersects the interior of $R_1$, and the sides of $R_1$ in
$\Lambda_-$ intersect $\gamma$.

\medskip
Similarly by \Cref{prop:lower bound transverse measure}, the other
intersection point $\gamma \cap \ell^+$ is contained in a rectangle
$R_2$ with measure $dx(R_2) \ge K_1$ such that $\ell^-$ intersects the
interior of $R_2$, and the sides of $R_2$ in $\Lambda_+$ intersect
$\gamma$.

\medskip
The intersection $R = R_1 \cap R_2$ is thus a rectangle such that
$R$ contains the point of intersection of $\ell^+$ and $\ell^-$, the measure of $R$ satisfies $dx(R) dy(R) \ge K_1^2$, and all sides of $R$ intersect
$\gamma$. Hence $R$ is a transverse rectangle for $\gamma$.  We use the
previous notation for the sides of $R$, so the initial side of $R$ in
$\Lambda_-$ is $\alpha^-$, and the terminal side of $R$ in $\Lambda_+$
is $\beta^+$.

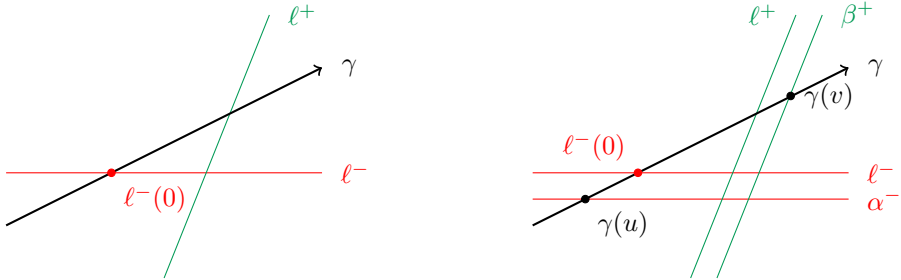
\begin{figure}[h]
\begin{center}
\begin{tikzpicture}[scale=0.7]

\tikzstyle{point}=[circle, draw, fill=black, inner sep=1pt]

\tikzstyle{redpoint}=[circle, draw, fill=red, inner sep=1pt]

\draw [color=red] (0, 0) -- (6, 0) node [label=right:$\ell^-$] {};
\draw [color=ForestGreen] (3, -2) -- (5, 3) node [label=right:$\ell^+$] {};

\draw [thick, arrows=->] (0, -1) -- (6, 2) node [label=right:$\gamma$] {};

\draw [color=red] (2, 0) node [color=red, redpoint, label=below right:$\ell^-(0)$] {};

\begin{scope}[xshift=10cm]

\draw [color=red] (0, 0) -- (6, 0) node [label=right:$\ell^-$] {};

\draw [color=red] (0, -0.5) -- (6, -0.5) node [label=right:$\alpha^-$] {};

\draw [color=ForestGreen] (3, -2) -- (5, 3) node [label=left:$\ell^+$] {};

\draw [color=ForestGreen] (3.5, -2) -- (5.5, 3) node [label=right:$\beta^+$] {};

\draw [thick, arrows=->] (0, -1) -- (6, 2) node [label=right:$\gamma$] {};

\draw [color=red] (2, 0) node [color=red, redpoint, label=above
left:$\ell^-(0)$] {};

\draw (1, -0.5) node [point, label=below right:$\gamma(u)$] {};

\draw (4.9, 1.46) node [point, label=right:$\gamma(v)$] {};

\end{scope}

\end{tikzpicture}
\end{center}
\caption{A corner segment with large angles.} \label{fig:corner segment}
\end{figure}

Let $U_R$ and $V_R$ be the initial and terminal quadrants for $R$, and
choose $u$ and $v$ such that
$U = U_R \cap F(\gamma) = F(\gamma((-\infty, u]))$ and
$V = V_R \cap F(\gamma) = F(\gamma([v, \infty)))$.  By
\Cref{lemma:bottleneck} there are constants $r$ and $K_2$ such that any point
$t \in [u, v]$ is a $(r, K_2)$-bottleneck for $U$ and $V$.

\medskip
We now bound the length of $\gamma([u, v])$.

\medskip
Let $p$ be the intersection point between $\alpha^-$ and $\beta^+$.
As $\gamma([u, v])$ is monotonic with respect to $\alpha^-$ and
$\beta^+$, by \Cref{prop:monotonic length bound} the length of $\gamma([u, v])$ is bounded by
$d(\gamma(u), p) + d(p, \gamma(v))$.

\medskip
Let $q$ be the nearest point projection of $\gamma(u)$ to $\ell^-$.
The point $\gamma(u)$ lies on $\gamma$, and so is contained in the
nearest point projection of $\gamma$ to $\ell^-$.

\medskip
By construction, the radius of the nearest point projection interval
of $\alpha^-$ to $\ell$ is at least $\log \tfrac{1}{\alpha} + T_0$,
and $q$ is at least distance $\log \tfrac{1}{\alpha} + T_0$ from the
endpoints of the projection interval.  Therefore the distance in
$PSL(2, \RR)$ between $\gamma(u)$ and $q$ is at most $\alpha$, so the
distance in $\HH^2$ is at most $\alpha + \pi$.

\medskip
Therefore $d(\gamma(u), p) \le \log \tfrac{1}{\theta_\LL} + T_0 +
\alpha + \pi$.

\medskip
Therefore the length of $\gamma([u, v])$ is at most
$K_3 := 2 ( \log \tfrac{1}{\theta_\LL} + T_0 + \alpha + \pi )$.

\medskip
We now bound the value of the height function along the corner
segment $I$.  As $\gamma(I)$ meets $\Lambda_-$ at angle at least
$\theta_\LL$ at $t_1$, the height function at $t_1$ is coarsely
non-positive, i.e.  $h_\gamma(t_1) \le K_4$, where $K_4$ is the
constant from \Cref{prop:height estimate}.  Similarly, as $\gamma(I)$
meets $\Lambda_+$ at angle at least $\theta_\LL$ at $t_2$, the height
function at $t_2$ is coarsely non-negative, i.e.
$h_\gamma(t_1) \ge - K_4$.  By \Cref{prop:ell lip}, the height function
is $(1/\log k)$-Lipschitz, and the length of $I$ is at most $L_\LL$,
where $L_\LL$ is the constant from \Cref{prop:cobounded
  intersections}.  Therefore, for any $t \in I$, the value of the
height function is bounded above and below, i.e.
$| h_\gamma(t) | \le K_4 + L_\LL / \log k =: K_5$.

\medskip
The lengths of $\gamma([u, t_1])$ and $\gamma([t_2, u])$ are at most
$K_3$, and the height function is $(1/\log k)$-Lipschitz.  Therefore,
the value of the height function on $[u, v]$ is at most
$| h_\gamma(t) | \le K_5 + K_3/\log k =: K_6$.

\medskip
Nearest point projection of $\tau_\gamma$ to $S_0$ therefore distorts
distance by at most a factor of $k^{K_6}$.

\medskip
Therefore the length of $\tau_\gamma([u, v])$ is at most
$K_3 k^{K_6}$.
\end{proof}

\subsection{Straight segments are quasigeodesic}\label{section:straight qg}

In this section we prove that segments of the test path over straight
segments are quasigeodesic.

\begin{lemma:straight qg}
Suppose that $(S_h, \Lambda)$ is a hyperbolic metric on $S$ together
with a suited pair of measured laminations.  Then there are
constants $Q$ and $c$ such that for any non-exceptional geodesic
$\gamma$, and any straight interval $I$, the test path
$\tau_\gamma(I)$ is $(Q, c)$-quasigeodesic.
\end{lemma:straight qg}

In \Cref{section:short}, we show that if the intersection
interval of $\gamma$ with an ideal complementary region $R$ is short, then the
test path over that interval is also short.  We may then consider
segments $\gamma \cap R$ which are reasonably long.  In
\Cref{section:long segments sign}, we show that if $\gamma \cap R$ is
sufficiently long, then the height function does not change sign along
this segment.  Inside the flow set $F(R)$, the Cannon--Thurston metric is quasi-isometric to the union of a
number of hyperbolic halfspaces glued along a common bi-infinite boundary
geodesic.  In \Cref{section:qg paths}, we show that specific paths in
these halfspaces are quasigeodesic.  In \Cref{section:long segments},
we show that the test path over a long segment $\gamma \cap R$ is
close to one of these quasigeodesic paths, and is hence quasigeodesic.
We then complete the final step to show that this also
applies to a straight segment that is a union of two intersection
segments meeting in a non-rectangular polygon.

\subsubsection{Short segments in complementary regions}\label{section:short}

In this section, we show that segments of bounded length in
ideal complementary regions give rise to bounded length segments of the test
path.

\begin{proposition}\label{prop:short segments}
Suppose that $(S_h, \Lambda)$ is a hyperbolic metric on $S$ together
with a suited pair of measured laminations.  Then for any constant
$L > 0$ there is a constant $K > 0$ such that for any non-exceptional
geodesic $\gamma$ in $\widetilde{S}_h$ and for any ideal complementary
region $R$, if the length of the intersection interval $I_R$ is at
most $L$, then the length of the test path $\tau_\gamma(I_R)$ over
that interval is at most $K$.
\end{proposition}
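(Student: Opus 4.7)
The plan is to bound the arc length of $\tau_\gamma(I_R)$ by decomposing it into horizontal and vertical contributions, and to reduce the whole estimate to a bound on the quantity $M := \sup_{t \in I_R} |h_\gamma(t)|$. Recall that the Cannon--Thurston infinitesimal metric is
\begin{equation*}
ds^2 = k^{2z} dx^2 + k^{-2z} dy^2 + (\log k)^2 dz^2.
\end{equation*}
At a point $(\gamma(t), h_\gamma(t))$ on the test path, the horizontal length element is scaled by at most $k^{|h_\gamma(t)|} \le k^M$ relative to its value at height zero, while the vertical contribution is bounded by $(\log k)|h_\gamma'(t)| \le 1$ thanks to the Lipschitz property from \Cref{cor:height lipschitz}. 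Since the restricted Cannon--Thurston pseudo-metric on $\widetilde{S}_h$ is $\pi_1(S)$-invariant and quasi-isometric to the hyperbolic metric by \Cref{prop:qi}, the Cannon--Thurston length of $\iota(\gamma(I_R))$ is bounded by a constant $A = A(S_h, \Lambda)$ times the hyperbolic length $L$. Combining these yields
\begin{equation*}
\mathrm{length}(\tau_\gamma(I_R)) \le A k^M L + L,
\end{equation*}
so the proposition reduces to producing a bound on $M$ depending only on $L$ and $(S_h, \Lambda)$.

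To bound $M$, suppose it is attained at $t^* \in I_R$ with $h_\gamma(t^*) = M > 0$; the case $M < 0$ is symmetric after swapping $\Lambda_+$ and $\Lambda_-$. By \Cref{def:height function2} combined with \Cref{prop:radius estimate}, there is a leaf $\ell$ of the extended lamination $\overline{\Lambda}_+$ with $\rho_{\gamma, \ell}(t^*) \ge k^M + \log(1/\theta_\LL)$ up to an additive constant depending only on $(S_h, \Lambda)$. Applying \Cref{prop:radius function} produces an interval of radius comparable to $k^M$ centered at $t^*$ that lies in the exponential interval $E_\ell$, and along this interval \Cref{prop:projection interval} forces $\gamma$ to stay within hyperbolic distance $3/2$ of $\ell$. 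The key claim is that this fellow-travelling portion must lie inside $I_R$ up to a bounded error: using \Cref{prop:angle bound} and the diameter bound \Cref{prop:innermost polygon diameter bound}, a geodesic cannot remain in a bounded neighborhood of $\ell$ after it exits $R$ for longer than a constant depending only on $\alpha_\LL$, $L_\LL$ and $D_\LL$. Consequently $k^M \le L + C$ for some $C = C(S_h, \Lambda)$, so $M \le \log_k(L + C)$.

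Combining the two steps yields $K = A(L + C)L + L$, which depends only on $L$ and $(S_h, \Lambda)$, proving the proposition. The main difficulty is making the ``fellow-travel fits inside $I_R$'' claim rigorous, particularly when $\ell$ is an extended leaf in a complementary region distinct from $R$, or when $\ell$ lies in the lamination transverse to the one whose complementary region $R$ is. The argument uses that a hyperbolic geodesic crosses each geodesic leaf at most once, together with the separation of leaves of $\overline{\Lambda}_+$ and $\overline{\Lambda}_-$ from \Cref{prop:disjoint leaves}, to conclude that $\gamma$ cannot repeatedly re-enter $R$ while remaining close to $\ell$; hence the fellow-travel region of length comparable to $k^M$ essentially fits inside $I_R$.
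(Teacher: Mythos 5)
Your reduction to bounding $M = \sup_{t\in I_R}|h_\gamma(t)|$ does not work, because $M$ is not bounded in terms of $L$ and $(S_h,\Lambda)$. Concretely, if $R$ is an ideal complementary region of $\Lambda_+$ and $\gamma$ crosses a boundary leaf of $R$ near one of its ideal vertices at an arbitrarily small angle $\theta$, the height at the crossing time is roughly $\log_k\log\tfrac{1}{\theta}$ and blows up, yet $I_R$ can have bounded hyperbolic length because the cusp is thin and $\gamma$ exits through the adjacent boundary leaf within a bounded distance. The fellow-travel interval of $\gamma$ with the boundary leaf lies mostly outside $R$, so your key claim that it ``fits inside $I_R$'' fails; and the constant $D_\LL$ you invoke from \Cref{prop:innermost polygon diameter bound} bounds the diameter of compact innermost polygons of $\Lambda_+\cup\Lambda_-$, not of $R$, which is an ideal polygon of infinite diameter.

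The proposition is true for a reason your reduction throws away: the horizontal scaling depends on the \emph{sign} of $h_\gamma$, not on $|h_\gamma|$. Since the interior of $\gamma(I_R)$ only crosses $\Lambda_-$ (so $dx=0$ there), the horizontal contribution is $\int_{I_R}k^{-h_\gamma}\,|dy|$, and $h_\gamma$ large and positive is harmless because $k^{-h_\gamma}$ is then tiny. Only $h_\gamma$ very negative is a problem, and the paper treats exactly that case: by the $(1/\log k)$-Lipschitz property of $h_\gamma$, if it is very negative anywhere on $I_R$ it is uniformly so, which forces $\gamma$ to cross outermost leaves of $\Lambda_-$ in $R$ at tiny angles; the uniform upper bound on rectangle measures (\Cref{prop:square upper bound}) then shows the $dy$-measure of $\gamma(I_R)$ is small enough to cancel the $k^{-h_\gamma}$ blowup. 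Your estimate $A k^M L$ discards this cancellation, and since $M$ is genuinely unbounded the argument cannot be completed as written.
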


\begin{proof}
Up to swapping laminations and reversing the sign on the
height function, we may assume that the complementary region $R$ has boundary in $\Lambda_+$. 

\medskip
The interior of the intersection interval $I_R$ is disjoint from
$\Lambda_+$. By the quasi-isometry between the hyperbolic and Cannon--Thurston metrics (\Cref{prop:qi}), the $\Lambda_-$ measure of $I_R$ is at most $Q_\LL L + c_\LL$. 

\medskip
If the interior of $I_R$ is disjoint from both laminations, then $I_R$
is an innermost segment. By \Cref{prop:innermost bounded length}, the arc length of $\tau_\gamma(I_R)$
is then at most $K_1$. 
So we may assume that the intersection interval $I_R$ intersects
$\Lambda_-$.  The metric on the ladder $F(\gamma(I_R))$ is then
determined by the $z$-coordinate, and the measure of the intersections
with $\Lambda_-$.

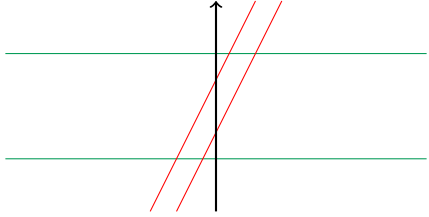
\begin{figure}[h]
\begin{center}
\begin{tikzpicture}[scale=0.7]

\tikzstyle{point}=[circle, draw, fill=black, inner sep=1pt]

\draw [color=ForestGreen] (0, 0) -- (8, 0);

\draw [color=ForestGreen] (0, 2) -- (8, 2);

\draw [color=red]  (2.75, -1) -- (4.75, 3);

\draw [color=red]  (3.25, -1) -- (5.25, 3);

\draw [thick, arrows=->] (4, -1) -- (4, 3);

\end{tikzpicture}
\end{center}
\caption{Outermost transverse rectangles determined by
  $\ell^+_1$ and $\ell^+_2$.} \label{fig:outermost two leaves}
\end{figure}

Moving in the positive
$z$-direction scales the $\Lambda_-$-measure by a factor of $k^{-z}$.
Thus if the height function along $I_R$ is bounded below by
$L_1 = -2 L -L/\log k$, then the arc length of $\tau_\gamma(I_R)$ in
the Cannon--Thurston metric is at most $(Q_\LL L + c_\LL)k^{L_1}$.

\medskip
Now suppose there is a point $t$ on $I_R$ with height function
$h_\gamma(t)$ less than $L_1$.  As the height function is
$(1/\log k)$-Lipschitz, this implies that the height function is at
most $- 2 L$ on all of $I_R$.

\medskip
Let $\ell^-_1$ and $\ell^-_2$ be outermost leaves of $\Lambda_-$
intersecting $I_R$ at $\gamma(t_1)$ and $\gamma(t_2)$ with angles
$\theta_1$ and $\theta_2$.  By \Cref{prop:height estimate}, the height
function determines the angle up to bounded error, so both $\theta_1$
and $\theta_2$ are at most $\theta_\LL e^{-k^{2L - K_1}}$, where $K_1$
is the constant from \Cref{prop:height estimate}.

\medskip
Consider the maximal rectangle with sides contained in $\ell_1$ and
$\ell_2$.  These two sides fellow travel for distance at least $\log
\tfrac{1}{\theta_1}$, up to additive error.

\medskip
Therefore by \Cref{prop:square upper bound} the measure of the other sides is at most
$A_{\max} / \log \tfrac{1}{\theta_1}$.

As the height function is $(1/\log k)$-Lipschitz, the length of
$\tau_\gamma(I_R)$ is at most $L / \log k + k^{A_{\max} / \log
  \tfrac{1}{\theta_1}} \le L / \log k + k^{A_{\max} / \log
  \tfrac{1}{\theta_\LL}}$.
\end{proof}

\subsubsection{Long intersection intervals have height functions with
  consistent signs}
\label{section:long segments sign}

In this section, we show that if $R_+$ is an ideal complementary region of
$\Lambda^+$, and if the length of the intersection interval is
sufficiently long, then the height function along the intersection
interval is non-negative.  Swapping laminations a similar statement holds for ideal complementary regions of $\Lambda_-$. 

\begin{proposition}\label{prop:long segment sign}
Suppose that $(S_h, \Lambda)$ is a hyperbolic metric on $S$ together
with a suited pair of measured laminations.  Then there is a
constant $L_R$, such that for any ideal polygon $R$ with boundary in
$\Lambda_+$, and any non-exceptional geodesic $\gamma$ crossing $R$ in
an intersection interval $I_R$ of length at least $L_R$, the height
function along the intersection interval $I_R$ satisfies
$h_\gamma(t) \ge 0$.  Similarly, if $R$ has ideal boundary contained
in $\Lambda_-$, then the height function along $I_R$ satisfies
$h_\gamma(t) \le 0$.
\end{proposition}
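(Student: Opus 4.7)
The plan is to argue that when $|I_R|$ is sufficiently large, the tangent vector $\gamma^1(t)$ stays uniformly closer in $T^1(S_h)$ to $\overline{\Lambda}_+^1$ than to $\overline{\Lambda}_-^1$ throughout $I_R$. By monotonicity of $\log \tfrac{1}{x}$ and the definition of $h_\theta$ (\Cref{def:height function2}), this immediately forces $h_\gamma(t) \ge 0$ as required, and the case of $R$ with boundary in $\Lambda_-$ follows by swapping the roles of the two laminations.

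Let $p_{\mathrm{in}}, p_{\mathrm{out}} \in \partial R$ be the entry and exit points of $\gamma$ on sides $s_{\mathrm{in}}, s_{\mathrm{out}} \in \Lambda_+$ with crossing angles $\phi_{\mathrm{in}}, \phi_{\mathrm{out}}$. Standard ideal-polygon geometry together with \Cref{prop:projection interval} yields a uniform bound
\[
|I_R| \le \log \tfrac{1}{\phi_{\mathrm{in}}} + \log \tfrac{1}{\phi_{\mathrm{out}}} + C,
\]
where $C$ depends only on the finitely many $\pi_1(S)$-orbits of ideal complementary regions of $\Lambda_+$ in $\widetilde{S}_h$. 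Choosing $L_R$ large enough forces both entry angles to be very small, and hence each of $\gamma_-, \gamma_+$ to accumulate in a cusp of $R$ at a well-defined ideal vertex $v_{\mathrm{in}}, v_{\mathrm{out}}$ of $R$. Let $\ell$ be the geodesic in $\HH^2$ joining $v_{\mathrm{in}}$ and $v_{\mathrm{out}}$: a boundary leaf of $R$ when these vertices are adjacent (or coincide, in which case we pick either of the two sides meeting at the shared vertex), and an extended leaf inside $R$ when they are non-adjacent. In all cases $\ell \in \overline{\Lambda}_+$, and since $\gamma_\pm$ approximate the ideal endpoints of $\ell$, one verifies that along all of $\gamma(I_R)$ the $T^1(S_h)$-distance from $\gamma^1(t)$ to $\ell^1$ is bounded by some $\epsilon(L_R)$ tending to $0$ as $L_R \to \infty$.

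Combined with the separation $d_{\PSL(2,\RR)}(\overline{\Lambda}_+^1, \overline{\Lambda}_-^1) \ge \alpha_\LL$ from \Cref{prop:unit tangent bound} and the triangle inequality, this gives
\[
d_{\PSL(2,\RR)}(\gamma^1(t), \overline{\Lambda}_+^1) \le \epsilon(L_R) < \alpha_\LL - \epsilon(L_R) \le d_{\PSL(2,\RR)}(\gamma^1(t), \overline{\Lambda}_-^1),
\]
provided $L_R$ is taken so that $\epsilon(L_R) < \alpha_\LL/2$. Reading off the definition of $h_\theta$ (with $\theta = \theta_\LL$), this strict inequality of $T^1$-distances forces the first logarithmic term in \Cref{def:height function2} to dominate the second, whence $h_\gamma(t) \ge 0$ for every $t \in I_R$.

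The main technical hurdle is the claim that large $|I_R|$ forces \emph{both} crossing angles to be small, rather than only the sum $\log\tfrac{1}{\phi_{\mathrm{in}}} + \log\tfrac{1}{\phi_{\mathrm{out}}}$; in principle one angle could be merely moderate while the other is extremely small, so that $\gamma_+$ (say) fails to cluster near any ideal vertex of $R$ and the extended-leaf tracking argument breaks down near the exit. Ruling out such a mixed configuration for $L_R$ large requires sharpening the length estimate: a geodesic that does not enter cusps at \emph{both} ends can only traverse the bounded thick part of $R$ plus one cusp excursion, bounding $|I_R|$ by $\log\tfrac{1}{\phi_{\mathrm{in}}} + D_\LL$ with $D_\LL$ from \Cref{prop:innermost polygon diameter bound}. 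Alternatively, one splits the exit segment of bounded length and uses \Cref{prop:angle bound} to directly verify the $T^1(S_h)$ comparison there, since any leaf of $\overline{\Lambda}_-$ approaching $\gamma^1$ on that segment must cross $s_{\mathrm{out}}$ at angle $\ge \alpha_\LL$, bounding $d_{\PSL(2,\RR)}(\gamma^1(t), \overline{\Lambda}_-^1)$ below.
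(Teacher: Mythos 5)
Your strategy is to establish that $\gamma^1(t)$ stays uniformly close in $T^1(S_h)$ to $\overline{\Lambda}^1_+$ throughout $I_R$, and then deduce a lower bound on $d_{\PSL(2,\RR)}(\gamma^1(t), \overline{\Lambda}^1_-)$ from the separation constant. This intermediate claim is genuinely stronger than what the proposition asserts, and it is likely false. Here is the problem: as you note, the length estimate $|I_R| \le \log\tfrac{1}{\phi_{\mathrm{in}}} + \log\tfrac{1}{\phi_{\mathrm{out}}} + C$ permits one crossing angle to be tiny while the other remains moderate. In that configuration $\gamma$ fellow-travels a boundary leaf near the entry, exits the cusp, traverses the bounded-diameter innermost polygon of $R$, and exits at moderate angle. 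Along the traverse of the innermost polygon, $\gamma^1$ can be roughly perpendicular to all leaves of $\overline{\Lambda}_+$, so it is not close to $\overline{\Lambda}^1_+$ at all; there $h_\gamma(t)$ should simply equal zero because the floor functions in \Cref{def:height function2} both saturate at $1$. Your first attempted repair does not rule this out: the sharpened bound $|I_R| \le \log\tfrac{1}{\phi_{\mathrm{in}}} + D_\LL$ is still unbounded as $\phi_{\mathrm{in}} \to 0$, so the mixed configuration persists for every choice of $L_R$. Your second repair — that any $\ell^- \in \overline{\Lambda}_-$ close to $\gamma^1$ near the exit must cross $s_{\mathrm{out}}$ at angle $\ge \alpha_\LL$ — does not bound $d_{\PSL(2,\RR)}(\gamma^1(t), (\ell^-)^1)$ from below: $\gamma$ and $\ell^-$ can both cross $s_{\mathrm{out}}$ at definite angles and still be nearly parallel to each other there, so the angle at which each meets $s_{\mathrm{out}}$ says nothing directly about the angle between $\gamma$ and $\ell^-$.

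The paper instead proves the contrapositive: suppose $h_\gamma(t_1) < 0$ at some $t_1 \in I_R$, so that $\gamma^1(t_1)$ is within $\theta_\LL$ of some $(\ell^-_1)^1 \in \overline{\Lambda}^1_-$. Moving at most $D_\LL$ along $\gamma$ (across one innermost polygon) one reaches an actual leaf $\ell^-_2 \in \Lambda_-$, whose projection interval onto $\gamma$ has radius $\gtrsim \log\tfrac{1}{\theta_\LL}$ by the $1$-Lipschitz property of the radius function (\Cref{prop:exp 1lip}) together with \Cref{prop:radius estimate}. Since the arc $\ell^-_2 \cap R$ has length at most $L_\LL$, a boundary leaf $\ell^+$ of $R$ meets $\ell^-_2$ within $L_\LL$ of $\gamma(t_2)$, and the overlap bound \Cref{prop:overlap} (together with the large projection radius of $\ell^-_2$) forces $\ell^+$ to intersect $\gamma$ within bounded distance of $t_1$. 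Repeating in the other direction bounds $|I_R|$ above, contradicting $|I_R| \ge L_R$. The essential geometric input your argument misses is exactly this one: closeness to $\Lambda_-$ somewhere on $I_R$ forces $\gamma$ to exit $R$ nearby in both directions — a statement about the \emph{exit time}, not about closeness to $\overline{\Lambda}_+$, which need not hold in the middle of the interval.
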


\begin{proof}
Up to swapping laminations and reversing the sign of the height function, we may assume that $R$ is an ideal complementary region of $\Lambda_+$.
We choose $L_R > 2D_\LL + 4 L_\LL + 2 \rho_\LL$, where
$D_\LL$ is the constant from \Cref{prop:innermost polygon diameter
  bound}, $L_\LL$ is the constant from \Cref{prop:cobounded
  intersections} and $\rho_\LL$ is the constant from
\Cref{prop:overlap}.
We denote $\gamma \cap R$ by $\gamma(I_R)$ and assume that the interval $\gamma(I_R)$ has length at least $L_R$.
Suppose that the interior of $\gamma(I_R)$ is disjoint from $\Lambda_-$. Then $\gamma(I_R)$ is disjoint from both laminations. By \Cref{prop:innermost polygon diameter bound}, it is an innermost interval with
length at most $D_\LL$ which is not possible as $L_R > D_\LL$.
Thus $\gamma(I_R)$ intersects $\Lambda_-$.

\medskip
Suppose that $\gamma(t_1)$ is a point on $\gamma(I_R)$ such that $h_\gamma (t_1) < 0$.  Since extended laminations are closed, by \Cref{def:height function} there is a leaf $\ell^-_1$ of $\overline{\Lambda}_-$ with distance in $\PSL(2, \RR)$ at most $\theta_\LL$ from $\gamma(t_1)$. 

\medskip
Up to reversing the orientation of $\gamma$, let $t_2 \ge t_1$ be the smallest time such that there is a leaf $\ell^-_2$ of $\Lambda^-$ intersecting $\gamma(I_R)$ at the point $\gamma(t_2)$.
If $t_2 \neq t_1$ then $\ell^-_2$ is a boundary leaf of an ideal complementary region $R'$ of $\Lambda_-$ and the segment $\gamma(t_1, t_2)$ is contained in 
$R'$. It follows that $t_2 - t_1 < D_\LL$.
By
\Cref{prop:exp 1lip}, the radius function is $1$-Lipschitz.
Therefore, the difference between the values 
$\rho_\gamma(t_1)$ and $\rho_\gamma(t_2)$ of the radius function is at most $D_\LL$.
Therefore the radius of the projection interval for $\ell^-_2$ is at
least $\log \tfrac{1}{\theta_\LL} - D_\LL$.

\medskip
There is a boundary leaf $\ell^+$ of $R$ that intersects 
$\ell^-_2$ within distance $L_\LL$ of $\gamma(t_2)$.
Let the intersection point be $p$, and let $q$ be the
nearest point on $\gamma$ to $p$.  
 We now show that $\ell^+$
intersects $\gamma$, as illustrated in \Cref{fig:intersection point
  close to negative height function}, where we have drawn $\ell^+$
intersecting $\gamma$ at $\gamma(t_3)$.

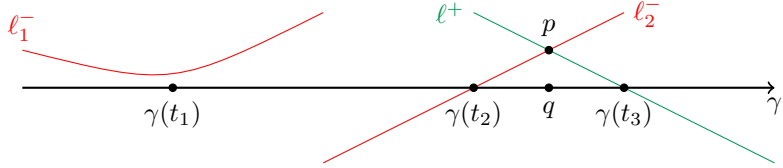
\begin{figure}[h]
\begin{center}
\begin{tikzpicture}

\tikzstyle{point}=[circle, draw, fill=black, inner sep=1pt]

\draw [color=red] (0, 0.5) node [color=red, above] {$\ell^-_1$} .. controls (2, 0) .. (4, 1);

\draw [color=red] (4, -1) -- (8, 1) node [color=red, right] {$\ell^-_2$};

\draw [color=ForestGreen] (6, 1) node [left] {$\ell^+$} -- (10, -1);

\draw [thick, arrows=->] (0, 0) -- (10, 0) node [below] {$\gamma$};

\draw (2, 0) node [point, label=below:$\gamma(t_1)$] {};

\draw (6, 0) node [point, label=below:$\gamma(t_2)$] {};

\draw (8, 0) node [point, label=below:$\gamma(t_3)$] {};

\draw (7, 0.5) node [point, label=above:$p$] {};

\draw (7, 0) node [point, label=below:$q$] {};

\end{tikzpicture}
\end{center}
\caption{An intersection point close to a negative value of the height
  function.} \label{fig:intersection point close to negative height
  function}
\end{figure}

\medskip Suppose $\ell^+$ does not intersect $\gamma$.  Then one of
the endpoints of $\ell^+$ lies between the endpoints of $\ell^-_2$ and
$\gamma_+$ on the boundary circle at infinity. This means that the
projection interval for $\ell^+$ onto $\gamma$ must extend past the
end of the projection interval for $\ell^-_2$ onto $\gamma$ in the
direction of the endpoint $\gamma_+$.  The distance from $\gamma(t_2)$
to $q$ is at most $2 L_\LL$.  Thus the length of the overlap of the
projection intervals for $\ell^-_2$ and $\ell^+$ to $\gamma$ is at
least $\log \tfrac{1}{\theta_\LL} - D_\LL - 2 L_\LL $,
%
%
which by our choice of $\theta_\LL$ is greater than
$\rho_\LL$, the maximal overlap between projections of leaves to
$\gamma$, contradicting
\Cref{prop:overlap}.

\medskip
We conclude that $\ell^+$ intersects $\gamma(I_R)$ at a point $\gamma(t_3)$. 
The point $\gamma(t_3)$ lies in the nearest point projection interval
of $\ell^+$ to $\gamma$. So the distance from $q$ to $\gamma(t_3)$ is
at most $\rho_\LL$. 
Therefore the distance from $\gamma(t_1)$ to
$\gamma(t_3)$ is at most $D_\LL + 2 L_\LL + \rho_\LL$.
As this argument
applies to traveling along $\gamma$ in the other direction, this
shows that the length of $I_R$ is at most $2(D_\LL + 2 L_\LL + \rho_\LL)$ which is less than $L_R$, a contradiction. 
\end{proof}

\subsubsection{Quasigeodesic paths in the upper half space
  model}\label{section:qg paths}

Recall that the metric on the ladder $F(\ell_-)$ is not the standard metric on
the upper half space but is quasi-isometric to it under the map
$(x, z) \mapsto (x, k^{-z})$.  

\medskip In this section, we show that paths in the upper half space
arising as graphs of 1-Lipschitz functions $\RR \to \RR_+$ are
quasi-geodesic.  To begin with, a line with slope one in the upper
half space model stays a constant distance from a vertical line, and
is hence a quasigeodesic. This is illustrated in both models in
\Cref{fig:metrics}. From this we will deduce that absolute value
functions are quasi-geodesic. We will then show that
$1$-Lipschitz functions are contained in bounded neighborhoods of
certain absolute value functions, to get the required conclusion.

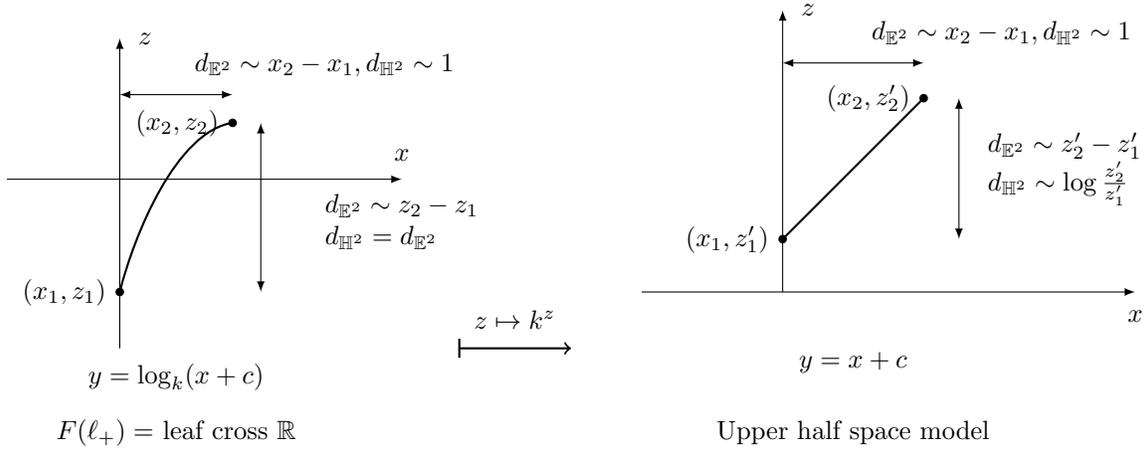
\begin{figure}[h]
\begin{center}
\begin{tikzpicture}[scale=0.75]

\tikzstyle{point}=[circle, draw, fill=black, inner sep=1pt]

\draw [-latex] (-1, -3) -- (-1, 2.5) node [label=right:$z$] {};

\draw [-latex] (-3, 0) -- (4, 0) node [label=above:$x$] {};

\draw (-1, -2) node [point, label=left:${(x_1, z_1)}$] {};
\draw (1, 1) node [point, label=left:${(x_2, z_2)}$] {};

\draw [thick] (-1, -2) .. controls (-0.75, -1) and (0, 0.85) .. (1, 1);

\draw [latex-latex] (-1, 1.5) -- (1, 1.5) node [midway,
label=above right:${d_{\mathbb{E}^2} \sim x_2 - x_1, d_{\mathbb{H}^2} \sim 1}$] {};

\draw [latex-latex] (1.5, -2) -- (1.5, 1);

\draw (4, -0.75) node [align=left] {$d_{\mathbb{E}^2} \sim z_2 - z_1$
  \\ $d_{\mathbb{H}^2} = d_{\mathbb{E}^2}$};

\draw (0, -3.5) node {$y = \log_k ( x + c )$};

\draw (0, -4.5) node {$F(\ell_+) = $ leaf cross $\mathbb{R}$};

\begin{scope}[xshift=12cm, yshift=-2cm, scale=1.25]

\draw [-latex] (-3, 0) -- (4, 0) node [label=below:$x$] {};
\draw [-latex] (-1, 0) -- (-1, 4) node [label=right:$z$] {};

\begin{scope}[yshift=-0.25cm]

\draw [thick] (-1, 1) -- (1, 3);

\draw (-1, 1) node [point, label=left:${(x_1, z'_1)}$] {};

\draw (1, 3) node [point, label=left:${(x_2, z'_2)}$] {};

\draw [latex-latex] (-1, 3.5) -- (1, 3.5) node [midway, label=above
right:${d_{\mathbb{E}^2} \sim x_2 - x_1, d_{\mathbb{H}^2} \sim 1}$] {};

\draw [latex-latex] (1.5, 1) -- (1.5, 3);

\draw (3, 2) node [align=left] {$d_{\mathbb{E}^2} \sim z'_2 -
  z'_1$ \\ $d_{\mathbb{H}^2} \sim \log \frac{z'_2}{z'_1}$};
  
\end{scope}

\draw (0, -1) node {$y = x + c$};

\draw (0, -2) node {Upper half space model};

\end{scope}

\draw [thick, arrows=|->] (5, -3) -- (7, -3) node [midway,
label=above:$z \mapsto k^z$] {};

\end{tikzpicture}
\end{center}
\caption{Quasigeodesics in two models for $\HH^2$.} \label{fig:metrics}
\end{figure}

\medskip
In subsequent sections, we will show that the height function over a single leaf is contained in the above class of functions. Hence, a test path in the
upper half space model is a 
quasigeodesic.  

\begin{definition}
Suppose that $I \subseteq \RR$ is a (possibly infinite) subinterval of $\RR$.
Suppose that $a \colon I \to \RR_+$ is a function. 
We call the path $\alpha$ in
the upper half space model of $\HH^2$ given by
$\alpha(t) = (t, a(t))$ \emph{the path determined
  by the function $a$}.
\end{definition}

We consider a specific collection of paths determined by absolute
value functions.

\begin{definition}
Given constants $h > 1$ and $x$, an \emph{absolute value path} is
the path in the upper half space determined by $a(t) = h - |t - x|$
defined on the interval $I = |t - x| \le h - 1$.  We have chosen $I$
so that $a(t) \ge 1$ for all $t \in I$.
\end{definition}

We start by showing that the paths determined by these functions are
quasigeodesic.

\begin{proposition}\label{prop:abs val qg}
There are constants $Q \ge 1$ and $c \ge 0$ such that every absolute
value path in the upper half space model, with unit speed
parametrization, is $(Q, c)$-quasigeodesic.
\end{proposition}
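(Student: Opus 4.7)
The plan is to compute the distance $d(\alpha(s_1), \alpha(s_2))$ directly in the upper half plane model using the unit-speed parametrization of the two slope $\pm 1$ halves, and to verify the $(Q,c)$-quasigeodesic inequality with universal constants $(Q,c) = (\sqrt 2,\ 2\log 2)$. After centering so that the peak is at $(0, h)$, the relation $ds^2 = 2\,dt^2/y^2$ along a slope $\pm 1$ line immediately yields a unit-speed parametrization in which the height is $h e^{\pm s/\sqrt 2}$; thus the unit-speed domain is $[-\sqrt 2 \log h,\ \sqrt 2 \log h]$. The upper inequality $d(\alpha(s_1), \alpha(s_2)) \le |s_2 - s_1|$ is immediate from unit speed. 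The content lies in the lower bound.

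I will split into two cases. For $s_1, s_2$ both in the same half, substituting the coordinates $(x_i, y_i) = (h(e^{s_i/\sqrt 2}-1),\ h e^{s_i/\sqrt 2})$ into the distance formula $\cosh d = 1 + ((x_1-x_2)^2 + (y_1-y_2)^2)/(2 y_1 y_2)$ produces
\[
\cosh d(\alpha(s_1), \alpha(s_2)) = 2\cosh\!\bigl((s_2 - s_1)/\sqrt 2\bigr) - 1,
\]
in which the parameter $h$ cancels. Using $\cosh d \ge e^\theta/4$ for $\theta = (s_2 - s_1)/\sqrt 2 \ge \log 4$ gives $d \ge \theta - \log 4$; the bound is trivial below this threshold by enlarging the additive constant. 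For $s_1 \le 0 \le s_2$ on opposite halves, the analogous substitution gives
\[
\cosh d(\alpha(s_1), \alpha(s_2)) = 1 + \bigl((1 - e^{s_1/\sqrt 2})^2 + (1 - e^{-s_2/\sqrt 2})^2\bigr)\, e^{(s_2 - s_1)/\sqrt 2},
\]
again independent of $h$. Once $\max(-s_1, s_2)/\sqrt 2 \ge \log 2$, one of the factors $(1 - e^{\mp s_i/\sqrt 2})^2$ is at least $1/4$, so $\cosh d \ge e^{(s_2 - s_1)/\sqrt 2}/4$ and hence $d \ge (s_2 - s_1)/\sqrt 2 - \log 4$; below that threshold the total displacement $|s_2 - s_1|$ is bounded by $2\sqrt 2 \log 2$, and the inequality holds trivially. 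Taking $(Q, c) = (\sqrt 2,\ 2\log 2)$ then yields the claim with constants independent of $h > 1$ and $x$.

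The main subtle case is opposite halves when both points cluster near the peak, where the fellow-traveling of each half with a vertical geodesic gives no useful information. Here the explicit identity above is essential, and geometrically it encodes the fact that the hyperbolic geodesic joining the two endpoints of $\alpha$ itself passes within a universal hyperbolic distance of the peak $(0, h)$: its semicircular apex $(0, \sqrt{(h-1)^2 + 1})$ lies within $\log\bigl(h/\sqrt{(h-1)^2+1}\bigr) \le \log 2$ of the peak for all $h \ge 1$, so the ``turn'' of $\alpha$ at the peak contributes only a universal amount of backtracking and does not destroy the quasigeodesic inequality.
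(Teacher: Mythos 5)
Your proof is correct but takes a genuinely different route from the paper's. The paper first reduces by a hyperbolic isometry to the standard tent $1 - |t|$ on $(-1,1)$, then argues abstractly: each half is a slope-$\pm 1$ ray, which is invariant under the scaling isometries $z \mapsto \lambda z$ and therefore lies at bounded distance from the vertical $y$-axis geodesic, hence is a quasigeodesic; the full path is then a concatenation of two quasigeodesics at the peak, and the paper invokes the general principle that such a concatenation is quasigeodesic provided the Gromov product of the two far endpoints at the junction is bounded. Your proof instead works explicitly in the upper half-plane distance formula $\cosh d = 1 + \tfrac{(x_1 - x_2)^2 + (y_1 - y_2)^2}{2 y_1 y_2}$ and verifies the lower bound directly, case by case. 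Both identities you compute are correct, and the trick of observing that $h$ cancels is exactly the coordinate-level manifestation of the paper's first reduction step. What your approach buys is explicit universal constants $(\sqrt 2,\ 2\log 2)$ and a self-contained, elementary argument requiring no Morse-lemma or quasigeodesic-concatenation machinery; what the paper's approach buys is brevity and a proof that generalizes immediately to other paths built from scaling-invariant pieces, at the cost of leaving the constants implicit. The geometric remark you close with — that the semicircular geodesic between the endpoints of the tent passes within $\log 2$ of the peak — is precisely the Gromov-product boundedness the paper's concatenation argument relies on, so the two proofs are quantifying the same phenomenon from opposite directions.
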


\begin{proof}
Set $z_0 = x + hi$.
By the isometry $z \to \tfrac{1}{h}(z-z_0)$ of the upper half space, every absolute value path is isometric to a subpath of the path 
determined by $1 - |t|$ for $|t| < 1$. 
So it suffices to show that the
path determined by $1 - |t|$ for $|t| < 1$ is quasigeodesic.

\medskip
The line $y = x$ in the upper half space is invariant under the
isometry $z \mapsto \lambda z$, and so is a constant distance $c_1$
from the vertical geodesic given by the vertical $y$-axis.  Nearest
point projection from $y = x$ to the vertical axis contracts distances
by a constant factor $Q_1$, so $y = x$ is a
$(Q_1, c_1)$-quasigeodesic.

\medskip
The absolute value path $1 - |t|$ is therefore a union of two
quasigeodesic paths with distinct endpoints, and so is
$(Q_2, c_2)$-quasigeodesic, where the constants depends on $Q_1$ and
$c_1$, and the Gromov product of the limit points of the two paths
based at their common point.
\end{proof}

\begin{definition}
Suppose that $\alpha(t) = (t, a(t))$ is the path determined by the function $a \colon I \to \RR_+$. 
Given a constant $K \ge 0$, the \emph{vertical $K$-neighborhood of
  $\alpha$}, which we shall denote $V_K(\alpha)$, consists of all
points whose vertical distance to $\alpha$ (along lines with fixed real part in the upper half space model) in the Euclidean metric is
at most $K$.
\end{definition}

We now show that $1$-Lipschitz functions contained in bounded vertical
neighborhoods of absolute value paths are quasigeodesic.

\begin{proposition}\label{prop:K-neighborhood}
Given $K > 0$ there are constants $Q > 0$ and $c \ge 0$ such that for any $1$-Lipschitz function $b \colon I \to \RR_+$ for which
\begin{itemize}
    \item $b(t) \ge 1$, and
    \item the path $\beta(t) = (t, b(t))$ determined by $b$ is  contained in a vertical $K$-neighborhood of an absolute value path,
\end{itemize}
the unit speed parametrization of $\beta$ is $(Q, c)$-quasigeodesic. 
\end{proposition}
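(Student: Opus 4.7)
The plan is to transfer the quasigeodesicity of absolute value paths (Proposition \ref{prop:abs val qg}) to $\beta$, by showing $\beta$ lies uniformly close to the absolute value path $\alpha(t) = (t,a(t))$ it shadows vertically, and that the arc lengths of $\alpha$ and $\beta$ over any common $t$-interval are comparable within a multiplicative constant depending only on $K$.

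First I would control the pointwise hyperbolic distance. For each $t$ in the common domain, $(t,a(t))$ and $(t,b(t))$ lie on a vertical line in the upper half space, and the hypotheses $a(t),b(t)\geq 1$ and $|a(t)-b(t)|\leq K$ give
\[
d_{\HH^2}(\alpha(t),\beta(t)) = \left| \log\frac{a(t)}{b(t)} \right| \leq \log(1+K),
\]
since $a/b, b/a \le 1+K$. In particular $\beta$ is contained in a $\log(1+K)$-hyperbolic neighborhood of $\alpha$.

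Next I would compare arc lengths in the $t$-parameter. The hyperbolic arc length element for $\alpha$ is $\sqrt{1+(a'(t))^2}/a(t)\,dt = \sqrt{2}/a(t)\,dt$ (since $|a'|=1$ where defined), while for $\beta$ it is $\sqrt{1+(b'(t))^2}/b(t)\,dt$ with $|b'|\leq 1$ by the Lipschitz hypothesis. Combining $1 \leq \sqrt{1+(b')^2} \leq \sqrt{2}$ with $a \asymp b$ within factor $1+K$, one obtains
\[
\frac{1}{C}\cdot\frac{\sqrt{2}}{a(t)} \ \leq\ \frac{\sqrt{1+(b'(t))^2}}{b(t)} \ \leq\ C \cdot \frac{\sqrt{2}}{a(t)},
\]
for $C = \sqrt{2}(1+K)$. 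Integrating, the hyperbolic arc lengths of $\alpha$ and $\beta$ over any common $t$-interval are comparable within a factor of $C$.

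Finally, I would unit-speed parametrize. Let $s$ and $u$ be unit-speed parameters for $\beta$ and $\alpha$ respectively, normalized at a common $t$-value. For $s_1<s_2$ corresponding to $t_1<t_2$, and $u_1,u_2$ the matching unit-speed parameters of $\alpha$, the arc-length comparison gives $|s_2-s_1|/C \leq |u_2-u_1| \leq C|s_2-s_1|$. By \Cref{prop:abs val qg}, $\alpha$ is $(Q_0,c_0)$-quasigeodesic with constants independent of the particular absolute value path, so combining with the pointwise bound from Step~1 and the triangle inequality,
\[
d_{\HH^2}(\beta(s_1),\beta(s_2)) \ \geq\ d_{\HH^2}(\alpha(u_1),\alpha(u_2)) - 2\log(1+K) \ \geq\ \tfrac{1}{Q_0 C}|s_2-s_1| - c_0 - 2\log(1+K).
\]
The opposite inequality $d_{\HH^2}(\beta(s_1),\beta(s_2)) \leq |s_2-s_1|$ is immediate from unit-speed parametrization, so $\beta$ is $(Q_0 C,\, c_0+2\log(1+K))$-quasigeodesic with constants depending only on $K$. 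The argument is essentially computational; the only subtlety is that the lower bound $b(t)\geq 1$ in the hypothesis, together with $a(t)\geq 1$ coming from the definition of an absolute value path, is what prevents the ratio $a/b$ (and hence the hyperbolic distance) from blowing up.
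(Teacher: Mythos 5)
Your proposal is correct and follows essentially the same strategy as the paper's proof: bound the pointwise hyperbolic distance between $\beta(t)$ and its vertical shadow $\alpha(t)$ using $a,b \ge 1$ and $|a-b| \le K$, compare arc lengths along the two paths via the vertical projection, then combine the triangle inequality with the quasigeodesicity of the absolute value path from \Cref{prop:abs val qg} to get the lower bound (the upper bound being immediate from unit speed). The only differences are cosmetic: you make the length-comparison explicit via the arc-length densities $\sqrt{1+(a')^2}/a$ and $\sqrt{1+(b')^2}/b$ and obtain the sharper constants $\log(1+K)$ and $\sqrt{2}(1+K)$, where the paper uses the cruder but simpler $e^{K}$; the underlying argument is identical.
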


\begin{proof}
Let $\alpha(t)$ be a unit speed parametrization of the absolute value
path, and let $\beta(t)$ be a unit speed parametrization of $\beta$.
As $\beta$ is a unit speed parametrization,
$d_{\HH^2}(\beta(s), \beta(t)) \le t - s$.  We now find a lower bound
on distances along the path $\beta$.

\medskip
Let $\alpha(s')$ 
be the vertical projection of $\beta(s)$, and let
$\alpha(t')$ be the vertical projection of $\beta(t)$.  The vertical
projection from $\beta$ to $\alpha$ changes distances by at most a
factor of $e^{K}$, so
\begin{equation}\label{eq:length}
e^{-K} \text{length} (\alpha([s', t'])) - 2 K \le
\text{length}( \beta([s, t] )) \le e^{K} \text{length} (\alpha([s',
t'] )) + 2 K.
\end{equation}
As both $\alpha$ and $\beta$ have unit speed parametrizations, this shows
\[ e^{-K} (t' - s') - 2 K \le t - s \le e^{K} ( t' - s' ) + 2 K.  \]
As $\beta$ lies in a vertical $K$-neighborhood of $\alpha$,
\[ d_{\HH^2}( \beta(s), \beta(t)) \ge d_{\HH^2}( \alpha(s'),
\alpha(t') ) - 2K.  \]
By \Cref{prop:abs val qg}, the path $\alpha$ is
$(Q, c)$-quasigeodesic, so
\[ d_{\HH^2}( \beta(s), \beta(t)) \ge \tfrac{1}{Q} (t' - s') -
c - 2K.  \]
Using \eqref{eq:length} gives
\[ d_{\HH^2}( \beta(s), \beta(t)) \ge \tfrac{1}{Q} (
\tfrac{1}{e^{K}}(t - s) - 2 K ) - c - 2K,  \]
so $\beta$ is $(Q, c)$-quasigeodesic, as required.
\end{proof}

 \subsubsection{Long segments in complementary regions}
\label{section:long segments}

We now complete the proof of \Cref{lemma:straight qg}, showing that
segments of the test path over straight segments are quasigeodesic.
Recall that there are exactly two types of straight intervals.  Either
a straight interval is an intersection interval, with both endpoints
in the cusps of the ideal complementary region, or the straight
interval is the union of two intersection intervals, for ideal
complementary regions intersecting in a non-rectangular polygon.

\medskip
We shall start by showing the result for intersection intervals for a
single ideal complementary region.

\begin{lemma}\label{lemma:intersection interval qg}
Suppose that $(S_h, \Lambda)$ is a hyperbolic metric on $S$ together
with a suited pair of measured laminations.  Then there are
constants $Q$ and $c$ such that for any non-exceptional geodesic
$\gamma$, and any ideal complementary region $R$ with boundary in
either $\Lambda_+$ or $\Lambda_-$, the test path over the intersection
interval $\tau_\gamma(I_R)$ is $(Q, c)$-quasigeodesic.
\end{lemma}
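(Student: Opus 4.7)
The plan is to split into two cases based on the length of $I_R$. If $|I_R| \leq L_R$, where $L_R$ is the constant from \Cref{prop:long segment sign}, then \Cref{prop:short segments} gives an upper bound $K_0$ on the arc length of $\tau_\gamma(I_R)$ depending only on $(S_h, \Lambda)$. A path of length at most $K_0$ is trivially a $(1, K_0)$-quasigeodesic.

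For the long case $|I_R| > L_R$, assume without loss of generality that $R$ has boundary in $\Lambda_+$. By \Cref{prop:long segment sign}, $h_\gamma(t) \geq 0$ for all $t \in I_R$, so the test path is confined to the upper half $\widetilde{S}_h \times [0, \infty)$. I would pick out a leaf $\ell_+ \in \overline{\Lambda}_+$ (either a boundary leaf of $R$ or an extended leaf inside $R$) to which $\gamma$ comes closest, in the sense that it realizes (up to bounded error) the peak of $h_\gamma$ over $I_R$, and work in the upper half space model of the ladder $F(\ell_+)$ via the quasi-isometry $(y, z) \mapsto (y, k^{-z})$. The test path, read off in these coordinates, is the graph of the function $t \mapsto k^{-h_\gamma(t)}$ against the $y$-coordinate of $\gamma(t)$.

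The comparison to an absolute value path proceeds as follows. The radius function $\rho_{\gamma, \ell_+}(t)$ for the single leaf $\ell_+$ is essentially an absolute value function in $t$ by \eqref{eq:interval abs}: $\rho_{\gamma, \ell_+}(t) = |t|_{E_{\ell_+}} + O(1)$. By \Cref{prop:height estimate} and \Cref{prop:radius estimate}, wherever $h_\gamma(t)$ is sufficiently large, $h_\gamma(t)$ is well-approximated by $\log_k \rho_{\gamma, \ell_+}(t)$, so $k^{-h_\gamma(t)}$ is within a bounded multiplicative factor of $1/|t|_{E_{\ell_+}}$. Together with the Lipschitz control from \Cref{cor:height lipschitz} and \Cref{prop:exp 1lip}, this should place the graph of the test path in a bounded vertical neighborhood of an appropriate absolute value path $a(t) = h - |t - x|$ in the upper half space. \Cref{prop:K-neighborhood} then yields that the test path, with its unit speed parametrization in the Cannon--Thurston metric, is $(Q, c)$-quasigeodesic for constants depending only on $(S_h, \Lambda)$.

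The main obstacle is handling non-triangular regions, where $\gamma$ may pass near several different leaves of $\overline{\Lambda}_+$ as it traverses $R$. The key tool will be \Cref{prop:polygon}: inside the innermost non-rectangular polygon of $R$, the lift $\gamma^1$ is $\theta_P$-close to at most one extended leaf at a time. This ensures a single leaf $\ell_+$ governs $h_\gamma$ at each moment of high altitude, and any two peaks of $h_\gamma$ corresponding to different leaves are separated by a descent to a bounded height (of order $\log_k \log \tfrac{1}{\theta_\LL}$). Between successive peaks the test path therefore has bounded hyperbolic length by the Lipschitz bound, so these descents can be absorbed into the quasigeodesic constants. Concretely, the test path decomposes into a bounded number of bumps—each close to its own absolute value path by the argument above—joined by short transitions, and concatenating the resulting quasigeodesic estimates via \Cref{theorem:uniform progress h} (or directly checking uniform progress along the obvious comparison geodesic in $F(\ell_+)$) completes the proof.
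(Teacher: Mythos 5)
Your short/long split, the use of \Cref{prop:short segments}, the sign-consistency from \Cref{prop:long segment sign}, and the comparison to absolute value paths via \Cref{prop:K-neighborhood} all match the paper's strategy. But there is a genuine gap in how you handle non-triangular complementary regions, and it is precisely the place where the paper invests its main technical effort.

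You propose that the test path over $I_R$ might consist of several ``bumps,'' each governed by a different extended leaf, separated by descents to bounded height, and you hope to concatenate the resulting quasigeodesic estimates. This is not how the paper argues, and as stated the concatenation step is unjustified: a concatenation of quasigeodesic segments joined by short arcs is not automatically quasigeodesic --- one also needs a bound on the Gromov products at the joints (this is exactly the issue the paper confronts separately in \Cref{prop:union qg} for joining two adjacent complementary regions, where it requires a careful half-space argument). You have not supplied such a bound, nor justified ``bounded number of bumps.'' The paper avoids this entirely via \Cref{prop:single leaf} and \Cref{prop:dominant}: it shows there is a \emph{single} dominant leaf $\ell$ --- namely the extended leaf connecting the two ideal points of $R$ whose nearest-point projections onto $\gamma$ are outermost --- such that $\rho_\gamma(t)$ equals $\rho_{\gamma,\ell}(t)$ up to a uniform additive error on \emph{all} of $I_R$. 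Establishing this requires comparing radius functions of coarsely nested projection intervals (\Cref{prop:comparing height functions}) and a careful case analysis for leaves in the components of $\widetilde{S}_h \setminus R$ that $\gamma$ enters. The single-dominant-leaf result is the content your ``pick out a leaf realizing the peak'' step needs but does not prove: the selected leaf must dominate throughout $I_R$, not just at its own peak, and your choice (peak of $h_\gamma$) is not shown to coincide with the leaf that actually has this global domination property. \Cref{prop:polygon} alone (at most one close leaf at a time inside the innermost polygon) does not give what you need, since $I_R$ is not confined to the innermost polygon and the relevant control must come from coarse nesting of projection intervals onto $\gamma$, not from the behavior in a single compact polygon.
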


Once we have shown this, it will be simple to show that the test path
over the union of two intersection segments meeting in a
non-rectangular polygon is also quasigeodesic.

\begin{proposition}\label{prop:union qg}
Suppose that $(S_h, \Lambda)$ is a hyperbolic metric on $S$ together
with a suited pair of measured laminations.  Then there are
constants $Q \ge 1$ and $c > 0$ such that for any non-exceptional
geodesic $\gamma$ intersecting an innermost polygon
$P = R_+ \cap R_-$, the test path over the union of the two
intersection intervals $\tau_\gamma(I_{R_+} \cup I_{R_-})$ is
$(Q, c)$-quasigeodesic.
\end{proposition}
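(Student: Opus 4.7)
The plan is to concatenate the two quasigeodesic halves $\tau_\gamma(I_{R_+})$ and $\tau_\gamma(I_{R_-})$ supplied by \Cref{lemma:intersection interval qg}, exploiting the sign constraints on the height function.

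First I would reduce to the case in which both intersection intervals are long. If $|I_{R_+}| \le L_R$ or $|I_{R_-}| \le L_R$ (where $L_R$ is the constant from \Cref{prop:long segment sign}), then by \Cref{prop:short segments} the test path over the short interval has uniformly bounded arc length; prepending a bounded-length path to a $(Q, c)$-quasigeodesic produces a $(Q, c')$-quasigeodesic, so the claim follows from \Cref{lemma:intersection interval qg}. Assume therefore $|I_{R_\pm}| \ge L_R$. By \Cref{prop:long segment sign}, $h_\gamma \ge 0$ on $I_{R_+}$ and $h_\gamma \le 0$ on $I_{R_-}$, so both constraints force $h_\gamma \equiv 0$ on the overlap $I_P = I_{R_+} \cap I_{R_-}$. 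Pick any $t_0 \in I_P$ and set $p_0 = \tau_\gamma(t_0)$; then $p_0$ lies in the base fiber $S_0$, and $\tau_\gamma([a, t_0]) \subseteq \tau_\gamma(I_{R_+})$ and $\tau_\gamma([t_0, c]) \subseteq \tau_\gamma(I_{R_-})$ are $(Q, c)$-quasigeodesics joined at $p_0$.

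To conclude I would apply the standard concatenation criterion in a $\delta$-hyperbolic space: two $(Q, c)$-quasigeodesics meeting at a point $p_0$ form a $(Q', c')$-quasigeodesic provided the Gromov product $(\tau_\gamma(a) \mid \tau_\gamma(c))_{p_0}$ is bounded by a constant depending only on $(S_h, \Lambda)$. Equivalently, every geodesic $\bar\gamma$ in $\wsr$ connecting the endpoints must pass within bounded Cannon--Thurston distance of $p_0$.

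I expect the main obstacle is establishing this Gromov product bound, since a priori $\bar\gamma$ could take a horizontal shortcut that avoids $p_0 \in S_0$. To address it I would use the $(1/\log k)$-Lipschitz dependence of the $z$-coordinate on Cannon--Thurston arc length, which yields the vertical separation
\[
d_{\wsr}\bigl(\tau_\gamma(s_1), \tau_\gamma(s_2)\bigr) \ge \log k \cdot \bigl(h_\gamma(s_1) + |h_\gamma(s_2)|\bigr)
\]
for $s_1 \in [a, t_0]$ and $s_2 \in [t_0, c]$, and combine it with the quasigeodesicity of each half to verify Farb's uniform progress criterion (\Cref{theorem:uniform progress h}) directly along $\bar\gamma$. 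The key observation is that whenever a pair of points contributes little vertical separation, they must both be close in arc length to $p_0$, and then the quasigeodesicity of each half takes over. An alternative route is to invoke the flaring property of the vertical flow---the Bestvina--Feighn condition underlying the hyperbolicity of $\wsr$---to bound the distance from $p_0$ to $\bar\gamma$ directly.
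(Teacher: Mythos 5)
Your reduction to the long-interval case via \Cref{prop:short segments}, and the observation that \Cref{prop:long segment sign} forces $h_\gamma \equiv 0$ on the overlap $I_P = I_{R_+} \cap I_{R_-}$, are both correct and cleanly set up the concatenation problem. The displayed vertical-separation inequality is also correct, since the $(\log k)^2\,dz^2$ term in the Cannon--Thurston metric makes the $z$-coordinate $(1/\log k)$-Lipschitz.

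The gap is in the ``key observation.'' It is not true that a pair $(s_1, s_2)$ with small vertical separation must be close in arc length to $p_0$. By \Cref{prop:dominant} (or \Cref{prop:single leaf}), the height function on a long $I_{R_+}$ behaves like $\log_k$ of a truncated absolute value function: it rises from $\approx 0$ near one endpoint of $I_{R_+}$ to a peak and then falls back to $\approx 0$ near the other endpoint. So a point $s_1$ near the endpoint of $I_{R_+}$ \emph{away} from $P$ has $h_\gamma(s_1) \approx 0$ while $\tau_\gamma(s_1)$ is far from $p_0$ (it has gone up and come back down, so the quasigeodesicity of that half forces $d_{\wsr}(\tau_\gamma(s_1), p_0)$ to be large). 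Taking $s_2$ symmetrically near the far end of $I_{R_-}$, the vertical separation $h_\gamma(s_1) + |h_\gamma(s_2)|$ is close to zero, yet both points are far from $p_0$. Your lower bound is therefore vacuous in exactly the configuration where you need it, and the Gromov product bound does not follow.

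What is missing is a source of \emph{horizontal} divergence between the two halves, which is where the paper's proof invests its effort. It uses \Cref{prop:dominant} to locate dominant extended leaves $\ell_+ \subset R_+$ and $\ell_- \subset R_-$, shows each half of the test path lies in a bounded neighborhood of a half-space of the corresponding ladder $F(\ell_+)$ resp. $F(\ell_-)$, and glues the two half-spaces along the common flow line $F(q)$ over $q = \ell_+ \cap \ell_-$ to obtain a space quasi-isometric to $\HH^2$. The two halves then project to opposite half-rays of $F(q)$, which is what bounds the Gromov product at $p_0$. Your alternative suggestion of appealing to Bestvina--Feighn flaring is pointing at the right phenomenon, but it is not fleshed out and would in any case have to be localized to this picture; the Lipschitz height bound alone does not close the argument.
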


\Cref{lemma:straight qg} is then an immediate consequence of
\Cref{lemma:intersection interval qg} and \Cref{prop:union qg}.

\medskip
We prove \Cref{lemma:intersection interval qg} first.  By \Cref{prop:short segments}, if $\gamma(I_R)$ has bounded length then $\tau_\gamma(I_R)$ also has bounded length. So it suffices to show that
test paths over sufficiently long intersection intervals are quasigeodesic.

\begin{lemma}\label{lemma:long segments}
Suppose that $(S_h, \Lambda)$ is a hyperbolic metric on $S$ together
with a suited pair of measured laminations.  Then there are
constants $L_R> 0, Q > 0$ and $c \ge 0$ such that for any ideal
complementary region $R$ of an invariant lamination, and any
non-exceptional geodesic $\gamma$ that intersects $R$ in a segment of
length at least $L_R$, the test path restricted to the intersection
interval $I_R$ is $(Q, c)$-quasigeodesic in
$\widetilde S_h \times \RR$.
\end{lemma}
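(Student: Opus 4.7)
My plan is to exploit \Cref{prop:long segment sign} together with the isometric model of $F(\gamma(I_R))$ as a strip in the upper half plane, and then to identify $\tau_\gamma(I_R)$ as lying within a uniformly bounded vertical neighborhood of a single absolute value path, whereupon \Cref{prop:K-neighborhood} completes the proof.

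First I would restrict to the case where $R$ has boundary in $\Lambda_+$ (the case of $\Lambda_-$ is symmetric), and choose $L_R$ at least as large as the constant from \Cref{prop:long segment sign}, so that $h_\gamma \geq 0$ throughout $I_R$. Because the interior of $R$ is disjoint from $\Lambda_+$, the geodesic $\gamma$ accumulates only $dy$-measure along $I_R$, so the induced Cannon-Thurston metric on the flow set $F(\gamma(I_R))$ is exactly $ds^2 = k^{-2z}\,dy^2 + (\log k)^2\,dz^2$, which is isometric to a vertical strip in the upper half plane via $(y,z)\mapsto(y,k^z)$. Since $F(\gamma)$ is quasi-convex in $\wsr$ by \cite{mitra}*{Lemma 4.1}, and the strip embeds isometrically into $F(\gamma)$, it will suffice to show $\tau_\gamma(I_R)$ is quasigeodesic inside this upper half plane strip.

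Parametrizing by $t$ (hyperbolic arc-length along $\gamma$), the test path in the upper half plane model is $(u(t),v(t))$ with $u(t)$ the cumulative $dy$-measure (bi-Lipschitz in $t$ by \Cref{prop:qi}) and $v(t)=k^{h_\gamma(t)}\geq 1$. By \Cref{cor:height lipschitz}, $h_\gamma$ is $(1/\log k)$-Lipschitz in $t$, so after reparametrizing the image by arc-length the curve satisfies the hypotheses of \Cref{prop:K-neighborhood}, and it remains only to produce an absolute value path whose vertical $K$-neighborhood contains the image. Whenever $\gamma$ is within $\theta_\LL$ of a leaf $\ell$ of $\overline{\Lambda}_+$, \Cref{prop:height estimate} combined with \eqref{eq:interval abs} shows that $v(t)$ is equal, up to bounded additive error, to the interval-absolute-value function $|t|_{E_\ell}$. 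Hence near each close approach $v$ is an affine ``tent'' of slope $\pm 1$ (in $t$), and away from all close approaches $v\equiv 1$.

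The main obstacle will be checking that on a sufficiently long $I_R$, all contributions with $v(t)>1$ are subordinate to a single global tent. This will require three structural facts: (i) the thick part of $R$, being the projection of a bounded-diameter region in the compact surface $S_h$, has uniformly bounded diameter, so the stretch of $\gamma$ outside cusp excursions is bounded in $u$-extent; (ii) $R$ contains only boundedly many extended leaves, yielding at most a bounded number of interior tents, and by \Cref{prop:polygon} these are disjointly localized so their combined $u$-extent is bounded; and (iii) by \Cref{prop:cobounded intersections} together with the exponential fellow-traveling in \Cref{prop:fellow travel}, the two cusp excursions at the entry and exit of $R$ produce two dominant half-tents whose peaks occur at (or near) the endpoints of $I_R$ and whose linear slopes agree with the slope of an absolute value path centered near the global maximum of $h_\gamma$. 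Choosing $L_R$ to dwarf the bounded middle-and-interior contributions, the entire $v(t)$ profile fits inside a uniform vertical neighborhood of the absolute value path with peak equal to $\max_{I_R} h_\gamma$ (in the $u$-variable, after correcting by $Q_\LL$ from \Cref{prop:qi}). Applying \Cref{prop:K-neighborhood} then yields uniform quasigeodesic constants $(Q,c)$ depending only on $(S_h,\Lambda)$, completing the proof.
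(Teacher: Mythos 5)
Your overall strategy matches the paper's: use \Cref{prop:long segment sign} to fix the sign of the height function, view $F(\gamma(I_R))$ as a half-plane strip, and reduce to showing the test path lies in a vertical $K$-neighborhood of a single absolute value path before invoking \Cref{prop:K-neighborhood}. The gap is in step (iii), where the real work lies. The radius function $\rho_\gamma(t) = \sup_\ell \rho_{\gamma,\ell}(t)$ is a supremum of ``tents'' contributed by the boundedly many leaves (boundary and extended) of $R$, and the two boundary tents peak at \emph{opposite} ends of $I_R$, with opposite slopes; patching their ``half-tents'' together with the bounded middle contribution produces, a priori, a W-shaped profile rather than a single tent. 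Nothing in your (i)--(iii) rules this out. The paper's resolution (\Cref{prop:single leaf} / \Cref{prop:dominant}) is to identify the specific leaf $\ell$ of $\overline{\Lambda}_+$ joining the two ideal points of $R$ whose nearest-point projections to $\gamma$ are outermost: the exponential interval of every other leaf of $R$ is nested inside that of $\ell$ (via \Cref{prop:comparing height functions}), so $\rho_{\gamma,\ell}$ is a coarse upper bound for the whole supremum and the profile is a single tent up to additive error.

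Two smaller points. Your use of \Cref{prop:polygon} in (ii) is misplaced: that proposition concerns the finitely many extended leaves inside a compact innermost non-rectangular polygon, not the extended leaves of an ideal complementary region $R$; the bound on the number of relevant leaves in $R$ comes instead from $R$ having boundedly many sides. And the assertion that the dominant peak sits ``at the global maximum of $h_\gamma$'' glosses over exactly the fact that needs proving, namely that the level of $\rho_\gamma$ at each endpoint of $I_R$ is bounded by the single dominant tent evaluated there; this is the nesting comparison supplied by \Cref{prop:comparing height functions}, which your sketch does not invoke.
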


The $R$ subscript for $L_R$ is to distinguish the constant in
\Cref{lemma:long segments} from similar constants in other parts of
the paper. Since there are finitely many complementary regions, the
order of quantifiers in \Cref{lemma:long segments} is correct, and
$L_R$ depends only on $(S_h, \LL)$.  We shall choose $L_R$ to be the
constant $L$ from \Cref{prop:long segment sign}, which implies that
the sign of the height function does not change on $I_R$.

\medskip

An ideal complementary region contains finitely many boundary leaves and extended leaves.  We will show that the height function for points of $\gamma(I_R)$ in an ideal
complementary region $R$ depends only on the distances to these leaves. To be precise, we show that up to
bounded error, the height function is determined by distance to a
single leaf of the extended lamination contained in that region.  Breaking symmetry, we give the argument for an ideal complementary region $R$ of $\Lambda_+$. The same holds for $\Lambda_-$ with the sign of the height function reversed.

\begin{definition}
Suppose that $S_h$ a hyperbolic metric and $\LL$ a suited pair of
laminations.  Given a constant $K \ge 0$, an ideal polygon $R$ in
$\widetilde{S}_h \setminus \Lambda_+$, a non-exceptional geodesic
$\gamma$ intersecting $R$, we say that a leaf $\ell$ of the extended
lamination $\overline{\Lambda}_+$ is \emph{$K$-dominant} on the
intersection interval $I_R$ if
\begin{itemize}
    \item $\ell$ is contained in $R$, and
    \item up to an additive error of $K$, the radius function
    $\rho_\gamma(t)$ on the intersection interval $I_R$ equals the
    radius function $\rho_{\gamma, \ell}(t)$ for the leaf $\ell$.
\end{itemize}
\end{definition}

\begin{proposition}\label{prop:dominant}
Suppose that $(S_h, \Lambda)$ is a hyperbolic metric on $S$ together
with a suited pair of measured laminations.  Then there are
constants $L > 0, K > 0$ such that for any ideal polygon $R$ in
$\widetilde{S}_h \setminus \Lambda_+$ and any non-exceptional geodesic
$\gamma$ intersecting $R$ in a segment of length at least $L$, there
is a leaf $\ell$ of the extended lamination $\overline{\Lambda}_+$
such that $\ell$ is $K$-dominant on the intersection interval $I_R$.
\end{proposition}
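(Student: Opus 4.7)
My plan is to pin down the dominant leaf $\ell$ as the unique leaf of $\overline{\Lambda}_+$ in $R$ whose ideal endpoints coincide with the two cusps of $R$ that $\gamma$ enters and exits through, and then verify dominance by direct hyperbolic computations in the upper half plane.

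First I will choose $L$. The compact ``core'' of any ideal polygon of $\Lambda_+$ (the polygon minus its cusp neighborhoods of uniformly bounded thickness) has diameter that is uniformly bounded, since there are only finitely many polygons up to the $\pi_1(S)$-action. Taking $L$ strictly larger than twice this uniform diameter forces both endpoints of $\gamma \cap R$ to lie deep in cusps of $R$, and since $\gamma$ is a geodesic it cannot re-enter the same cusp, so these cusps lie at distinct ideal vertices $v_1 \ne v_2$. I define $\ell$ to be the leaf of $\overline{\Lambda}_+$ in $R$ with endpoints $v_1, v_2$; this is a boundary leaf of $R$ if $v_1$ and $v_2$ are adjacent ideal vertices, and an extended leaf otherwise.

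Dominance reduces to bounding $\rho_{\gamma,\ell'}(t) \le \rho_{\gamma,\ell}(t) + K$ for every $\ell' \in \overline{\Lambda}_+$ and every $t \in I_R$. I will split into three cases. First, any $\ell' \not\subset R$ is separated from $\gamma(t) \in R$ by some boundary leaf $\ell_j$ of $R$; since $\ell_j$ lies between $\gamma(t)$ and $\ell'$ in $\HH^2$, one obtains $\rho_{\gamma,\ell'}(t) \le \rho_{\gamma,\ell_j}(t) + O(1)$, reducing to leaves in $R$. Second, leaves $\ell' \subset R$ with ideal endpoints disjoint from $\{v_1, v_2\}$ are uniformly far from $\gamma$ in $\PSL(2,\RR)$, because $\gamma$ is pinned to the asymptotic region of $v_1$ and $v_2$; with only finitely many polygon types, $\rho_{\gamma,\ell'}(t)$ is bounded by a universal constant. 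Third, for leaves $\ell' \subset R$ sharing exactly one endpoint with $\ell$, say $v_1$, I will place $v_1 = \infty$ and $v_2 = 0$ in the upper half plane so that $\ell$ is the imaginary axis, $\ell'$ is a vertical line $x = c$ with $|c|$ bounded below by the polygon geometry, and $\gamma$ has foot near $\delta$ close to $0$. At height $y$, the radius functions satisfy $\rho_{\gamma,\ell}(t) \sim \log(y/|\delta|)$ and $\rho_{\gamma,\ell'}(t) \sim \log(y/|c - \delta|)$, so $\rho_{\gamma,\ell}(t) - \rho_{\gamma,\ell'}(t) \sim \log(|c-\delta|/|\delta|) \ge -O(1)$, uniformly. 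The cusp at $v_2$ is handled symmetrically.

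The main obstacle is this third case: when $\ell'$ is asymptotic to $\ell$ at $v_1$, both radius functions grow unboundedly as $\gamma$ descends into the cusp, and a naive estimate based on pointwise distances to $\ell$ alone gives no information. The key geometric input is that $\ell$ is distinguished among leaves through $v_1$ because its second endpoint $v_2$ matches the other cusp of $\gamma$, so the cross-ratio $(c-\delta)/\delta$ is forced to be large in our favor, with a uniform lower bound on $|c|$ coming from the finiteness of polygon types modulo $\pi_1(S)$. Collecting the additive constants from all three cases (together with the compatibility constant from the boundary-outside reduction in Case 1) and taking $K$ to be their maximum gives the required uniform dominance constant depending only on $(S_h, \Lambda)$.
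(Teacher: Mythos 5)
Your choice of the dominant leaf~$\ell$ (joining the entry and exit cusps) coincides, after the right normalization, with the paper's choice (joining the ideal vertices of~$R$ whose nearest point projections to $\gamma$ are outermost). The overall strategy — pin down~$\ell$ and then show every other leaf has a smaller radius function up to additive error — is also the right one. However, there are two substantial gaps.

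First, your Case~1 reduction does not follow from what you write. You assert that if a boundary leaf $\ell_j$ of $R$ separates $\gamma(t)$ from $\ell'$ in $\HH^2$, then $\rho_{\gamma,\ell'}(t) \le \rho_{\gamma,\ell_j}(t) + O(1)$. But the radius function is defined via the distance in $\PSL(2,\RR)$, which measures both position and \emph{direction}, not via the distance in $\HH^2$. Separation only gives $d_{\HH^2}(\gamma(t),\ell') \ge d_{\HH^2}(\gamma(t),\ell_j)$; it does not bound $d_{\PSL(2,\RR)}(\gamma^1(t),\ell'^1)$ from below by a fixed multiple of $d_{\PSL(2,\RR)}(\gamma^1(t),\ell_j^1)$, which is what a bounded additive error on the $\log$ scale requires. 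Concretely, $\gamma$ could cross $\ell_j$ at a large angle (so $\rho_{\gamma,\ell_j}$ is small near the crossing) while becoming nearly tangent to $\ell'$ a short time later; the separation picture alone does not rule out $\rho_{\gamma,\ell'}(t)$ being large at a time $t\in I_R$ near the boundary. The correct tool is the \emph{nested projection interval} comparison of \Cref{prop:comparing height functions}: you need the projection interval of $\ell'$ onto $\gamma$ to be coarsely contained in that of $\ell$. This holds cleanly for leaves outside $R$ on the side \emph{disjoint} from $\gamma$, but for leaves $\ell'$ in the component of $\widetilde{S}_h\setminus R$ that $\gamma$ enters, the projection interval of $\ell'$ may extend past that of $\ell$; the paper handles this by restricting attention to $t\in I_R$ and exploiting the fact that on the relevant sub-interval both radius functions are on their increasing leg, with $\ell'$'s leg beginning strictly later. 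Your sketch does not address this.

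Second, you prove bounds only for leaves $\ell'$ in $\overline{\Lambda}_+$, but $K$-dominance is a statement about $\rho_\gamma = \max\{\rho_{\gamma,\overline{\Lambda}_+},\rho_{\gamma,\overline{\Lambda}_-}\}$, so you must also show $\rho_{\gamma,\overline{\Lambda}_-}$ is uniformly bounded on $I_R$. This is not automatic: while $\gamma(t)$ is inside a complementary region of $\Lambda_+$ it may still be very close in $\PSL(2,\RR)$ to a leaf of $\Lambda_-$. In the paper this is supplied by \Cref{prop:long segment sign}, which shows that for $I_R$ sufficiently long the height function is nonnegative there, i.e.\ $\rho_{\gamma,\overline{\Lambda}_-}$ stays below the threshold $\log\tfrac{1}{\theta_\LL}$. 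Your proposal omits this step entirely. Your Cases~2 and~3 are plausible but remain heuristic sketches (in particular Case~3 treats $\gamma$ as a vertical ray when it is in fact a bounded semicircle, so the estimates $\rho_{\gamma,\ell}\sim\log(y/|\delta|)$ need the fellow-travel estimates of \Cref{prop:fellow travel} to be made rigorous, and Case~2 requires a normalization argument before ``finiteness of polygon types'' can be invoked); but the two gaps above are the ones that would sink the argument as written.
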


We first show that \Cref{lemma:long segments} follows from
\Cref{prop:dominant}.

\begin{proof}
Suppose that $\gamma$ intersects an ideal polygon $R$ in $\widetilde{S}_h \setminus \Lambda_+$.
By \Cref{prop:dominant}, there is a leaf $\ell$ of the
extended lamination $\overline{\Lambda}_+$ such that $\ell$ is
$K$-dominant on $I_R$.
As $\gamma(I_R)$ lies in a complementary region, the pseudo-metric
distance from $\gamma$ to $\ell$ is zero.

\medskip
As $\ell \in \overline{\Lambda}_+$, the radius function for $\ell$ is
\[ \rho_{\gamma, \ell}(t) = \log \frac{1}{d_{\PSL(2,
    \RR)}(\gamma^1(t), \ell^1)}.  \]

Let $\gamma(t_\ell)$ be the closest point on $\gamma$ to $\ell$, and
assume that the closest distance in $\PSL(2, \RR)$ is $\theta_\ell$.
Since the height function vanishes if the angle is greater than
$\theta_\LL$ and since we have chosen $\theta_f \le \theta_0$ in \Cref{def:theta}, \Cref{prop:fellow travel} applies and there is a constant $K$ such
that
\[ \log \tfrac{1}{\theta_\ell} - | t - t_\ell | - K \le \rho_{\gamma,
    \ell}(t) \le \log \tfrac{1}{\theta_\ell} - | t - t_\ell | + K. \]
Thus the radius function lives in a vertical $K$-neighborhood of an
absolute value function, and hence the test path is
$(Q, c)$-quasigeodesic by \Cref{prop:K-neighborhood}, where $Q$ and
$c$ depend only on $K$, as required.
\end{proof}

\medskip

We now show that if the projection intervals of two leaves to $\gamma$
are coarsely nested then the radius function
of the leaf with the longer interval is a coarse upper
bound for the radius function of the other leaf.

\begin{proposition}\label{prop:comparing height functions}
Suppose that $(S_h, \Lambda)$ is a hyperbolic metric on $S$ together
with a suited pair of measured laminations.  Given a constant
$K > 0$ there is a constant $L > 0$ such that for any three geodesics
$\gamma, \ell_1$ and $\ell_2$ with distinct endpoints, such that the
projection intervals $I_{\ell_1}$ and $I_{\ell_2}$ for $\ell_1$ and
$\ell_2$ onto $\gamma$ are coarsely nested, i.e.
$I_{\ell_2} \subseteq N_{K}(I_{\ell_1})$, then the radius function
$\rho_{\gamma, \ell_1}$ determined by $\ell_1$ is a coarse upper bound
for the radius function $\rho_{\gamma, \ell_2}$ determined by
$\ell_2$, i.e. for all $t$,
%
%
\[ \rho_{\gamma, \ell_2}(t) \le \rho_{\gamma, \ell_1}(t) + L.  \]
\end{proposition}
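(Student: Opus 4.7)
The plan is to reduce the comparison to one of piecewise-linear ``hat'' functions on $\RR$, using the characterization of the radius function built up in \Cref{section:exponential} and \Cref{section:height}. Concretely, combining \Cref{prop:projection interval} with the estimate \eqref{eq:interval abs} derived from \Cref{prop:fellow travel}, for any geodesic $\ell$ distinct from $\gamma$ with closest point $\gamma(t_\ell)$ at $\PSL(2,\RR)$-distance $\theta_\ell$, the exponential interval $E_\ell$ has radius $\log \tfrac{1}{\theta_\ell}$ and is coarsely equal to the projection interval $I_\ell$ up to an additive error of $T_0$; moreover
\[
\bigl| \rho_{\gamma, \ell}(t) - |t|_{E_\ell} \bigr| \le K_0,
\]
where $K_0$ depends only on the constant $L_0$ of \Cref{prop:fellow travel} and $|t|_{E_\ell}$ is the absolute-value function supported on $E_\ell$. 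In particular, $\rho_{\gamma, \ell}(t) \le 1 + K_0$ whenever $t$ lies outside $E_\ell$.

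Write $I_{\ell_i}$ as the interval of radius $r_i$ centered at $t_i$; by the remark above this is, up to bounded error, the same as the exponential interval. The nesting hypothesis $I_{\ell_2} \subseteq N_K(I_{\ell_1})$ translates directly into the linear inequality
\[
r_2 + |t_1 - t_2| \le r_1 + K + 2T_0.
\]
We then compare $\rho_{\gamma, \ell_2}(t)$ and $\rho_{\gamma, \ell_1}(t)$ by a case analysis on $t$. First, if $t \notin I_{\ell_2}$, then $\rho_{\gamma, \ell_2}(t) \le 1 + K_0$, so the desired inequality is trivial. Second, if $t \in I_{\ell_1} \cap I_{\ell_2}$, then the triangle inequality $|t - t_1| \le |t - t_2| + |t_1 - t_2|$ combined with the translated nesting inequality gives
\[
\rho_{\gamma, \ell_2}(t) - \rho_{\gamma, \ell_1}(t) \le (r_2 - |t - t_2|) - (r_1 - |t - t_1|) + 2K_0 \le (r_2 - r_1) + |t_1 - t_2| + 2K_0 \le K + 2T_0 + 2K_0.
\]

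The only case requiring a small geometric observation is $t \in I_{\ell_2} \setminus I_{\ell_1}$. Here $t$ lies past one endpoint of $I_{\ell_1}$, and we show that $t$ must then also lie within distance $K + 2T_0$ of the corresponding endpoint of $I_{\ell_2}$, so that $\rho_{\gamma, \ell_2}(t)$ is small. Placing coordinates so $t_2 = 0$ and (breaking symmetry) $t_1 \le 0$, and supposing $t > 0$, the condition $t \notin I_{\ell_1}$ forces $t > t_1 + r_1$, while the nesting inequality gives $r_2 \le r_1 - |t_1| + K + 2T_0 = t_1 + r_1 + K + 2T_0$. Hence $r_2 - |t - t_2| = r_2 - t < K + 2T_0$, and so $\rho_{\gamma, \ell_2}(t) \le K + 2T_0 + K_0$. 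The other subcase ($t < 0$) is symmetric. The result therefore holds with $L = K + 2T_0 + 2K_0$, which depends only on $K$ and absolute constants. There is no real obstacle here beyond tracking the several additive constants arising from \eqref{eq:interval abs} and the $E_\ell \subseteq I_\ell \subseteq N_{T_0}(E_\ell)$ slop; the content of the proof is entirely encoded in the linear inequality forced by the nesting hypothesis.
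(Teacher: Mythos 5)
Your proposal is correct and takes essentially the same approach as the paper: both reduce the comparison to tent (absolute-value) functions supported on the exponential/projection intervals via \Cref{prop:projection interval} and \eqref{eq:interval abs}, then exploit the coarse nesting of those intervals. The only difference is bookkeeping: you split into three cases according to whether $t$ lies in $I_{\ell_2}$ and/or $I_{\ell_1}$, whereas the paper dispatches the endgame with just two cases on $t \in E_{\ell_1}$ versus $t \notin E_{\ell_1}$; the linear inequality $r_2 + |t_1 - t_2| \le r_1 + K + O(T_0)$ you extract is exactly the nesting relation the paper invokes on the intervals directly, and the final constants match up to the same additive slop.
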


\begin{proof}
Suppose that the smallest $\PSL(2, \RR)$-distance between $\gamma$ and
$\ell_1$ is $\theta_1 > 0$, and assume that this occurs at time $t_1$,
the midpoint of $I_{\ell_1}$.  Similarly, suppose that the smallest
distance between $\gamma$ and $\ell_2$ is $\theta_2$, and assume that
this occurs at $t_2$, the midpoint of $I_{\ell_2}$.

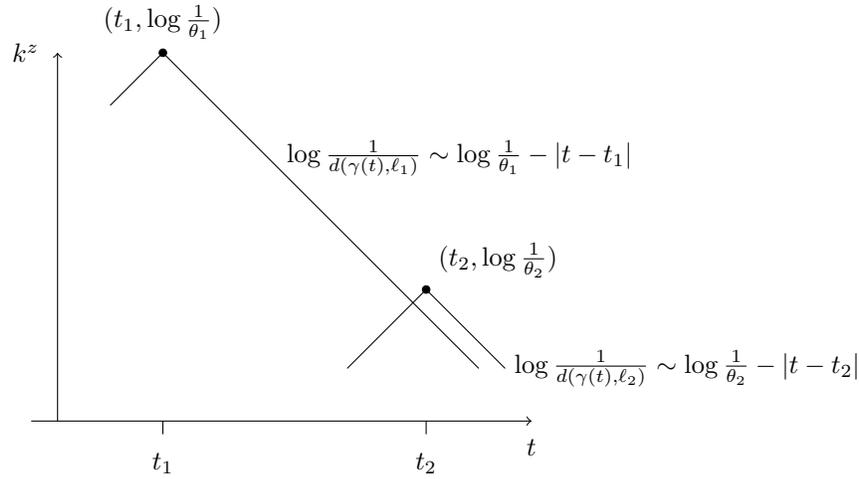
\begin{figure}[h]
\begin{center}
\begin{tikzpicture}[scale=0.7]

\tikzstyle{point}=[circle, draw, fill=black, inner sep=1pt]

\draw (-3, 5) -- (-2, 6) node [point,
label=above:${(t_1, \log \frac{1}{\theta_1})}$] {} -- (2, 2) node
[midway,
label=right:${\log \frac{1}{d(\gamma(t), \ell_1)}} \sim \log
\frac{1}{\theta_1} - |t - t_1|$] {} -- (4, 0);

\draw (1.5, 0) -- (3, 1.5) node [point,
label=above right:${(t_2, \log \frac{1}{\theta_2})}$] {} -- (4.5, 0) node [right]
{${\log \frac{1}{d(\gamma(t), \ell_2)}} \sim \log
\frac{1}{\theta_2} - |t - t_2|$};

\draw [arrows=->] (-4.5, -1) -- (5, -1) node [label=below:$t$] {};
\draw [arrows=->] (-4, -1) -- (-4, 6) node [label=left:$k^z$] {};

\draw (-2, -1) -- (-2, -1.25) node [label=below:$t_1$] {};

\draw (3, -1) -- (3, -1.25) node [label=below:$t_2$] {};

\end{tikzpicture}
\end{center}
\caption{The radius functions for truncated projection
  intervals with close endpoints.} \label{fig:comparing height
  functions}
\end{figure}

\medskip

Recall that the exponential interval
$E_{\ell_i} = [t_i - \log \tfrac{1}{\theta_i}, t_i + \log
\tfrac{1}{\theta_i}]$ is contained in the projection interval
$I_{\ell_i}$, and furthermore, there is a constant $K_1$ such that
$I_{\ell_i} \subseteq N_{K_1}(E_{\ell_i})$, where $K_1$ is the
constant $T_0$ from \Cref{prop:projection interval}.  So if the
projection intervals are coarsely nested,
$I_{\ell_2} \subseteq N_{K}( I_{\ell_1} )$, then the exponential
intervals are also coarsely nested,
$E_{\ell_2} \subseteq N_{K + K_1}( E_{\ell_1} )$.

\medskip
From \eqref{eq:interval abs}, the exponential height function for a
leaf is equal to an absolute value function, up to additive error
$K_2$. Thus, for $i = 1, 2$, we have for $t \in E_{\ell_i}$,
\[ \log \tfrac{1}{\theta_i} - | t - t_i | - K_2 \le \rho_{\gamma,
    \ell_i}(t) 
\le \log \tfrac{1}{\theta_i} - | t - t_i | + K_2,  \]
and for $t \not \in E_{\ell_i}$, the corresponding radius function is
bounded $\rho_{\gamma, \ell_i}(t) \le K_2$.

\medskip
If $t \not \in E_{\ell_1}$, then $t$ is within distance $K + K_1$ of
an endpoint of $E_{\ell_2}$, so
$\rho_{\gamma, \ell_2}(t) \le K + K_1 + K_2$.  As
$\rho_{\gamma, \ell_1}(t) > 0$ for all $t$, this implies
$\rho_{\gamma, \ell_2}(t) \le \rho_{\gamma, \ell_1}(t) + K + K_1 +
K_2$.

\medskip
We now consider points $t \in E_{\ell_1}$.  Consider two absolute
value functions $|\cdot|_{I_a} \colon I_a \to \RR$, with
$I_a = [t_a - a, t_a +a]$ and $|t|_{I_a} = a - |t_a - t|$ and
$|\cdot|_{I_b} \colon I_b \to \RR$, with $I_b = [t_b - b, t_b + b]$
and $|t|_{I_b} = b - |t_b - t|$.  If the first interval is contained
in the second, $I_a \subseteq I_b$, then $|t|_{I_a} \le |t|_{I_b}$.
If the first interval is coarsely contained in the second,
$I_a \subseteq N_{K + K_1}(I_b)$, then
$|t|_{I_a} \le |t|_{I_b} + K + K_1$.  It immediately follows that
\[ \rho_{\gamma, \ell_2}(t) \le \rho_{\gamma, \ell_1}(t) + K +
K_1 + K_2. \]
The result then follows choosing $L = K + K_1 + K_2$, which only
depends on $L$, and constants depending on the geometry of
$\PSL(2, \RR)$, as required.
\end{proof}

We now show that if $I_R$ is sufficiently
long, then the radius function along $\gamma(I_R)$ is equal to the radius function with respect
to a single extended leaf in $R$, up to bounded additive error.

\begin{proposition}\label{prop:single leaf}
Suppose that $(S_h, \Lambda)$ is a hyperbolic metric on $S$ together
with a suited pair of measured laminations.  Then there are
constants $L_R > 0$ (the constant from \Cref{prop:long segment sign})
and $K \ge 0$ such that for any ideal polygon $R$ in
$\widetilde{S}_h \setminus \Lambda_+$, and any non-exceptional
geodesic $\gamma$ such that the intersection interval $I_R$ has length
at least $L_R$, there is a single leaf $\ell$ of the extended
lamination $\overline{\Lambda}_+$ contained in $R$, such that for all
$t \in I_R$
\[ | \rho_\gamma(t) - \rho_{\gamma, \ell}(t) | \le K.  \]
\end{proposition}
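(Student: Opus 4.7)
The plan is to reduce the supremum defining $\rho_\gamma$ to a single leaf of $\overline{\Lambda}_+$ contained in $R$, in three successive reductions. First, I will choose $L_R$ at least as large as the constant in \Cref{prop:long segment sign}, so that $h_\gamma(t) \ge 0$ on all of $I_R$. By inspecting \Cref{def:height function}, non-negativity of the height function forces the $\overline{\Lambda}_-$-contribution to be dominated by the $\overline{\Lambda}_+$-contribution, so that $\rho_\gamma(t) = \max\{\rho_{\gamma,\overline{\Lambda}_+}(t), \rho_{\gamma,\overline{\Lambda}_-}(t)\}$ agrees with $\rho_{\gamma,\overline{\Lambda}_+}(t)$ up to a bounded additive error depending only on the cutoff $\log \tfrac{1}{\theta_\LL}$.

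Second, I will show that only leaves of $\overline{\Lambda}_+$ contained in $R$ contribute to $\rho_{\gamma,\overline{\Lambda}_+}$ on $I_R$ up to a bounded additive error. If $\ell'$ is a leaf of $\overline{\Lambda}_+$ not contained in $R$, then $\ell'$ is separated from $\gamma(I_R)$ by some boundary leaf $\ell_b$ of $R$, itself a leaf of $\Lambda_+$. Any geodesic realizing the distance from $\ell'$ to $\gamma$ must cross $\ell_b$, so the closest point of $\ell'$ to $\gamma$ lies within bounded distance of the closest point of $\ell_b$ to $\gamma$; the exponential-divergence estimate of \Cref{prop:fellow travel} then forces the projection interval $I_{\ell'}$ onto $\gamma$ to be coarsely contained in $I_{\ell_b}$. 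Applying \Cref{prop:comparing height functions} gives $\rho_{\gamma,\ell'}(t) \le \rho_{\gamma,\ell_b}(t) + L_1$ for a universal constant $L_1$, so the supremum over $\overline{\Lambda}_+$ is, up to bounded error, the supremum over the finite set of leaves of $\overline{\Lambda}_+$ lying in $R$ (the boundary leaves of $R$ together with the extended chords triangulating $R$).

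Third, among these finitely many leaves $\ell_1, \ldots, \ell_n$, I select $\ell^*$ minimizing the closest $\PSL(2,\RR)$-distance $\theta_i = d_{\PSL(2,\RR)}(\gamma^1, \ell_i^1)$. If $\theta^* \ge \theta_P$, where $\theta_P$ is the constant from \Cref{prop:polygon} applied to the unique innermost non-rectangular polygon $P \subset R$ (which exists by the suited hypothesis, via \Cref{prop:innermost polygon}), then $\rho_{\gamma,\ell_i}$ is uniformly bounded for every $i$ and any choice suffices. Otherwise, \Cref{prop:polygon} yields a definite peak gap: every other internal chord $\ell_j$ of $R$ satisfies $\theta_j \ge \theta_P$. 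A separate compactness argument, using \Cref{prop:fellow travel} applied to the closest-point geometry of the finitely many boundary leaves of $R$ against the compact window $\gamma(I_R) \cap P$ (of bounded diameter by \Cref{prop:innermost polygon diameter bound}), transfers the same peak gap to boundary leaves. Since each $\rho_{\gamma,\ell_i}$ is a $1$-Lipschitz tent function by \Cref{prop:ell lip} and the closest points to all $\ell_i$ lie in a bounded window of $I_R$, this peak gap propagates to all of $I_R$, yielding $\rho_{\gamma,\ell^*}(t) \ge \rho_{\gamma,\ell_j}(t) - K$ for every $j$ and $t \in I_R$.

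The main obstacle is the last point of the third step. \Cref{prop:polygon} is formulated for extended leaves inside the innermost non-rectangular polygon $P$, so the separation of $\ell^*$ from the boundary leaves of $R$ needs an auxiliary argument; I expect to handle this by a direct compactness argument on the finite configuration of leaves of $\overline{\Lambda}_+$ meeting $R$, exploiting that cusp regions of $R$ (where $\gamma$ could get close to a boundary leaf rather than to an internal chord) are traversed in bounded time by $\gamma$ once we have forced $|I_R| \ge L_R$, so all close approaches take place in a bounded compact subregion of $R$ on which the relevant unit-tangent distances admit a uniform positive lower bound.
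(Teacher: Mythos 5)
Your step 1 is correct and matches the paper exactly: by \Cref{prop:long segment sign}, the height function is non-negative on $I_R$, which by \Cref{def:height function} caps $\rho_{\gamma,\overline{\Lambda}_-}$ at $1 + \log\tfrac{1}{\theta_\LL}$, so $\rho_\gamma \approx \rho_{\gamma,\overline{\Lambda}_+}$ up to bounded additive error. The difficulties begin in steps 2 and 3.

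In step 2 your claim that ``the closest point of $\ell'$ to $\gamma$ lies within bounded distance of the closest point of $\ell_b$ to $\gamma$'' is false for the leaves that actually matter. Take $\ell_b = \ell_2$ to be the boundary leaf through which $\gamma$ exits $R$, and take $\ell'$ to be a leaf of $\Lambda_+$ just beyond $\ell_2$ that also intersects $\gamma$, at some $\gamma(t')$ with $t'$ far past the endpoint $t_2$ of $I_R$. The closest point of $\ell'$ to $\gamma^1$ in $\PSL(2,\RR)$ is then near $\gamma^1(t')$, which can be arbitrarily far from $\gamma^1(t_2)$; consequently $I_{\ell'}$ is \emph{not} coarsely contained in $I_{\ell_2}$, and \Cref{prop:comparing height functions} does not apply directly. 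The paper handles exactly this configuration (the leaf called $\ell_4$ in its proof) by a one-sided argument: the projection intervals $I_{\ell_4}$ and $I_{\ell_2}$ have coarsely the same \emph{initial} endpoint $\gamma(t_1)$ (where $t_2$ is the midpoint of $I_{\ell_2}$), so on the increasing half $[t_1, t_2]$ of the tent function $\rho_{\gamma,\ell_2}$ the two radius functions coarsely agree. Since $t_2$ is an endpoint of $I_R$, this gives domination on all of $I_R$ even though $I_{\ell_4}$ may extend much farther along $\gamma$ outside $I_R$. This one-sided coherence is the key observation your step 2 is missing.

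Step 3 is a genuinely different route from the paper's and has its own unresolved gap. The paper does not select the minimum-$\theta$ leaf and argue via a peak gap; it chooses $\ell$ to be the leaf of $\overline{\Lambda}_+$ joining the two ideal points of $R$ whose projections to $\gamma$ are outermost, which by construction satisfies $I_{\ell'} \subseteq I_\ell$ for every leaf $\ell'$ with endpoints among the ideal points of $R$, and then applies \Cref{prop:comparing height functions} with $K=0$. That is a combinatorial argument requiring no peak-gap input at all. Your proposed use of \Cref{prop:polygon} is restricted by its statement to extended leaves inside the compact innermost polygon $P$, and does not say anything about boundary leaves of $R$, nor about the behavior of $\gamma$ deep inside a cusp of $R$ where it may approach a boundary leaf closely far away from $P$. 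Your auxiliary ``compactness on $\gamma(I_R)\cap P$'' argument does not address this, because the close approach need not occur inside $P$. Moreover, even granting the peak gap, propagating it to a pointwise bound on all of $I_R$ is not automatic when the two tent functions $\rho_{\gamma,\ell^*}$ and $\rho_{\gamma,\ell_j}$ peak at well-separated parameters; the containment of projection intervals is exactly what controls this, and it is the structure the paper's choice of $\ell$ supplies for free. I would recommend replacing your step 3 wholesale with the outermost-ideal-points choice and the interval-containment comparison, and adding the one-sided coherence argument for leaves beyond the entry and exit boundary leaves.
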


\begin{proof}
Up to swapping laminations, we may assume that $R$ is an ideal complementary region of $\Lambda_+$.

\medskip 
Suppose that $\gamma$ is a non-exceptional geodesic that intersects $R$ and the length of $\gamma(I_R)$ is at  least $L_R$.  By \Cref{prop:long segment sign}, $h_\gamma(t)$ is always non-negative along $I_R$.  By definition of the height function, $\rho_{\gamma, \overline{\Lambda}_-}(t) \le 1 + \log
\tfrac{1}{\theta_\LL}$ for all $t \in I_R$, and so up to bounded
additive error,
$\rho_{\gamma}(t) = \rho_{\gamma, \overline{\Lambda}_+}(t)$ for
all $t \in I_R$.

\medskip

We will now identify which leaf $\ell$ of the extended lamination is $K$-dominant.  For any geodesic $\ell$ in $\HH^2$,
the nearest point projection of $\ell$ to $\gamma$ equals the segment of $\gamma$ bounded by the projections of
the ideal points of $\ell$.  Denote by $p_i$ the finitely many ideal
points of $R$, and by $z_i$ their closest point projections on
$\gamma$.  Suppose that $z_i$ and $z_j$ are the outermost points, that
is, all other $z_u$ are contained in the segment $[z_i, z_j]$ of
$\gamma$.  We set $\ell$ to be the leaf of the extended lamination
$\overline{\Lambda}_+$ connecting $p_i$ to $p_j$, which may be a
boundary leaf of $R$.

\medskip

We now show that for any point $t$ in $I_R$, the height function $h_\gamma(t)$ equals up to a bounded
additive error, the height function $h_{\gamma, \ell}(t)$ determined
by this leaf.

\medskip
Let $\ell_1$ and $\ell_2$ be the first and the last boundary leaves of $R$ that $\gamma$ intersects. We have illustrated this in \Cref{fig:dominant
  leaf} in the case where the leaves $\ell, \ell_1$ and
$\ell_2$ are all distinct.  It may be that one of $\ell_1$ or
$\ell_2$ equals $\ell$, but this makes no difference to the
argument.

\begin{figure}[h]
\begin{center}
\begin{tikzpicture}[scale=0.75]

\tikzstyle{point}=[circle, draw, fill=black, inner sep=1pt]

\def\boundary{(0, 0) circle (4)}

\begin{scope}
\clip \boundary;
\draw [color=ForestGreen] (0:5.66) circle (4);
\draw [color=ForestGreen] (90:5.66) circle (4);
\draw [color=ForestGreen] (180:5.66) circle (4);
\draw [color=ForestGreen] (270:5.66) circle (4);

\draw [color=ForestGreen] (180:4.27) circle (1.5);
\draw [color=ForestGreen] (67.5:4.27) circle (1.5);
\end{scope}

\draw [color=ForestGreen] (0, -2) node {$\ell_1$};
\draw [color=ForestGreen] (0, 2) node {$\ell_2$};
\draw [color=ForestGreen] (-3.25, 0) node {$\ell_3$};
\draw [color=ForestGreen] (0, 3.25) node {$\ell_4$};

\draw (-1.6, 1.6) node [color=ForestGreen] {$R$};

\draw [color=ForestGreen] (225:4) -- (45:4) node [midway, label=left:$\ell$] {};

\draw (230:4) -- (50:4) node [midway, label=right:$\gamma$] {};

\draw \boundary;

\end{tikzpicture}
\end{center}
\caption{A non-exceptional geodesic intersecting an ideal
  polygon.} \label{fig:dominant leaf}
\end{figure}
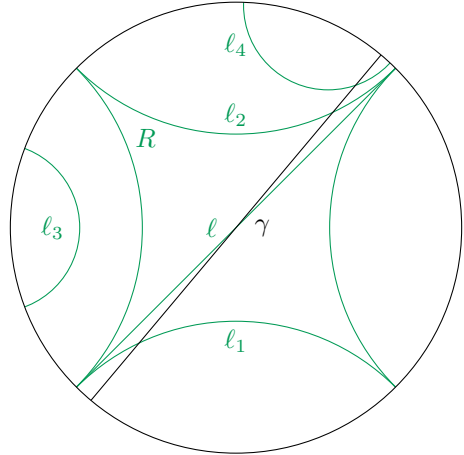

\medskip First consider a leaf $\ell'$ of the extended lamination
which is either a boundary leaf of $R$, or contained in $R$.  By our
choice of $\ell$, the nearest point projection of $\ell'$ to $\gamma$
is contained in the nearest point projection of $\ell$ to $\gamma$.
By \Cref{prop:comparing height functions},
\[ \rho_{\gamma, \ell'}(t) \le \rho_{\gamma, \ell}(t) + L, \]
where $L$ is the constant from \Cref{prop:comparing height functions}
with $K = 0$. Note that this same argument works for any leaf (such as
$\ell_3$ in \Cref{fig:dominant leaf}) of the extended lamination
$\overline{\Lambda}_+$ contained in a component of
$\widetilde{S}_h \setminus R$ disjoint from $\gamma$.

\medskip

Finally, suppose $\ell_4$ is a leaf of $\overline{\Lambda}_+$ in one of the two components of $\widetilde{S}_h \setminus R$
that $\gamma$ intersects. The leaf $\ell_4$ itself may or may not
intersect $\gamma$.  Breaking symmetry, suppose that $\ell_4$ lies
in the component of $\widetilde{S}_h \setminus R$ with boundary
$\ell_2$, as in \Cref{fig:dominant leaf} above.  
The same argument holds for the component with boundary $\ell_1$.
  
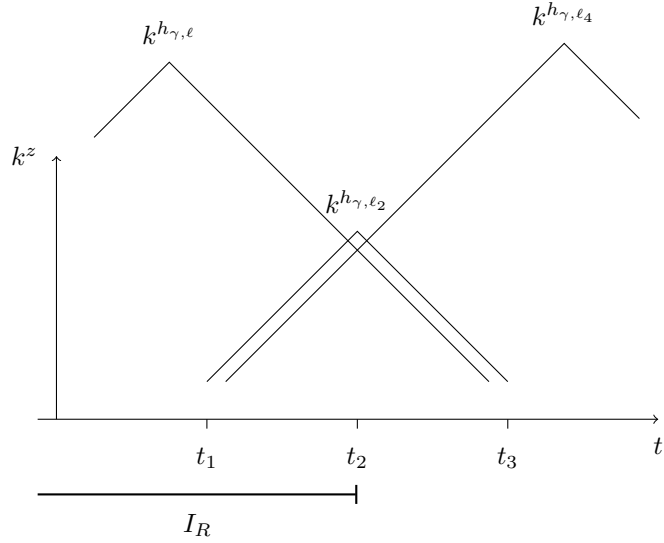
\begin{figure}[h]
\begin{center}
\begin{tikzpicture}[scale=0.5]

\tikzstyle{point}=[circle, draw, fill=black, inner sep=1pt]

\draw (-5, 6.5) -- (-3, 8.5) node [ label=above:$k^{h_{\gamma,
    \ell}}$] {} -- (5.5, 0);

\draw (-2, 0) -- (2, 4) node [ label=above:$k^{h_{\gamma, \ell_2}}$]
{} -- (6, 0);

\draw (-1.5, 0) -- (7.5, 9) node [ label=above:$k^{h_{\gamma, \ell_4}}$] {} -- (9.5, 7);

\draw [arrows=->] (-6.5, -1) -- (10, -1) node [label=below:$t$] {};
\draw [arrows=->] (-6, -1) -- (-6, 6) node [label=left:$k^z$] {};

\draw (-2, -1) -- (-2, -1.25) node [label=below:$t_1$] {};

\draw (2, -1) -- (2, -1.25) node [label=below:$t_2$] {};

\draw (6, -1) -- (6, -1.25) node [label=below:$t_3$] {};

\draw [thick, arrows=-|] (-6.5, -3) -- (2, -3) node [midway,
label=below:${I_R}$] {};

\end{tikzpicture}
\end{center}
\caption{Radius functions for leaves with overlapping projection intervals.} \label{fig:overlaps}
\end{figure}

\medskip
Let $\gamma(t_2)$ be the intersection point of $\gamma$ with the
boundary leaf $\ell_2$. Note that
\begin{itemize}
     \item $\gamma(t_2)$ is the endpoint of the intersection interval $I_R$, and
     \item $\gamma(t_2)$ is the midpoint of the nearest point projection interval $I_{\ell_2}$ of $\ell_2$ onto $\gamma$.
\end{itemize}
Let $\gamma(t_1)$ and $\gamma(t_3)$ be the initial and terminal points of $I_{\ell_2}$.
Let $I_{\ell}$ and $I_{\ell_4}$ be the nearest point projection intervals onto $\gamma$ for the leaves
$\ell$ and $\ell_4$. 
Then $I_{\ell_2} \subseteq I_{\ell}$, with common endpoint $\gamma(t_3)$,
and $I_{\ell_4} \subseteq I_{\ell_2}$, with common initial point
$\gamma(t_1)$.  This is illustrated in \Cref{fig:overlaps}.

\medskip
As $I_{\ell_2} \subseteq I_{\ell}$, by \Cref{prop:comparing height
  functions} the radius function $\rho_{\gamma, \ell}$ is a coarse
upper bound for the radius function $\rho_{\gamma, \ell_2}$. As the
initial point of $I_{\ell_4}$ is further along $\gamma$ than
$\gamma(t_1)$, the initial point of $I_{\ell_2}$, the radius function
$\rho_{\gamma, \ell_2}$ is a coarse upper bound for
$\rho_{\gamma, \ell_4}$ on $[t_1, t_2]$, the first half of
$I_{\ell_2} = [t_1, t_3]$ on which both $\rho_{\gamma, \ell_2}$ and
$\rho_{\gamma, \ell_4}$ are increasing.  As $t_2$ is the endpoint of
$\gamma(I_R)$, we deduce that $\rho_{\gamma, \ell}$ is a coarse upper
bound on $I_R$ for the radius functions
corresponding to all leaves of the extended lamination
$\overline{\Lambda}_+$, as required.
\end{proof}

\Cref{prop:dominant} now follows directly from \Cref{prop:single
  leaf}.

\medskip
Finally, we prove \Cref{prop:union qg}, that the test path over the
union of two intersections intervals meeting in a non-rectangular
polygon is also quasigeodesic.

\begin{proof}{Proof (of \Cref{prop:union qg})}
Let $p_1$ be the endpoint of $I_R$ in the innermost polygon $P$.  Let
$p_0$ be the other endpoint of $I_R$, and let $p_2$ be the endpoint of
$I_{R'}$ disjoint from $I_R$.  As both $\tau_\gamma(I_R)$ and
$\tau_\gamma(I_{R'} \setminus I_R)$ are quasigeodesic, it suffices to
prove that the Gromov product $(p_0, p_2)_{p_1}$ is bounded.

\medskip By \Cref{prop:dominant}, there is an extended leaf $\ell_+$
in $R_+$ that is $K$-dominant for $\gamma(I_{R_+})$.  In particular,
the test path over $\gamma_{I_{R_+}}$ lies in a bounded neighborhood
of a half space in the ladder $F(\ell_+)$.  Similarly, the
complementary region of $\Lambda_-$ containing the cusp $C$ has a
segment of an extended leaf $\ell_-$ that is $K$-dominant for
$\gamma(I_C)$.  Thus the test path over $\gamma(I_R)$ lies in a
bounded neighborhood of a half space in the ladder $F(\ell_-)$.  Let
$q$ be the intersection point of the two extended leaves $\ell_+$ and
$\ell_-$.  Both $p$ and $q_1$ lie in $P$ and so are a bounded distance
apart.  The union of the two quasigeodesics is thus contained in a
bounded neighborhood of the union of two half spaces meeting along the
suspension flow line through $q$.  Each half space is quasi-isometric
to a half space in $\HH^2$ with geodesic boundary, so the union of the
two half spaces is quasi-isometric to $\HH^2$.  The nearest point
projection of the first quasigeodesic to suspension flow line $F(q)$
is contained in a bounded neighborhood of the positive half of $F(q)$,
and the nearest point projection of the second quasigeodesic is
contained in a bounded neighborhood of the negative part of $F(q)$.
Therefore the Gromov product of the endpoints is bounded, and so the
union of the two quasigeodesics is a quasigeodesic.
\end{proof}

\subsection{Vertical projection to the test path is distance decreasing}\label{section:proj}

Finally, we combine the fact that the test path is quasigeodesic with
the fact that the height function is Lipschitz, to show that the
vertical projection from $\iota(\gamma)$ to $\tau_\gamma$ is coarsely
distance decreasing.

\begin{prop:vertical flow distance decreasing}
Suppose that $(S_h, \Lambda)$ is a hyperbolic metric on $S$ together
with a suited pair of measured laminations.  There are constants
$K$ and $c$ such that for any non-exceptional geodesic $\gamma$ with
unit speed parametrization, and any $s$ and $t$,
\[ d_{\wsr}( \tau_\gamma(s), \tau_\gamma(t) ) \le K d_{\wsr} \left(
\iota(\gamma(s)), \iota(\gamma(t)) \right) + c , \]
where here the test path has the parametrization inherited from the
unit speed parametrization on $\gamma$.
\end{prop:vertical flow distance decreasing}

We remark that we do not show that the nearest point projection of
$\iota(\gamma(t))$ to the geodesic $\ogamma$ in $\wsr$ is close to the
corresponding test path point $\tau_\gamma(t)$.  This is equivalent to
the statement that there is an upper bound on the length of the fellow
traveling interval between the vertical flow lines from
$\iota(\gamma(t))$ to $\tau_\gamma(t)$ and the geodesic $\ogamma$.
Although this may be true for our choice of test path, this property
does not hold for all quasigeodesic paths with the same endpoints as
$\ogamma$, and so depends on the exact choice of quasigeodesic.

\begin{proof}
Let $\gamma$ be a non-exceptional unit speed geodesic in $\ws_h$, and let $\tau_\gamma(t)$ be the test path with the
(non-unit speed) parametrization determined by $\gamma(t)$.  Let
$\ogamma$ be the geodesic in $\wsr$ determined by $\gamma$.

\medskip
Let $k > 1$ be the constant from the definition of the Cannon--Thurston
metric, and let $\delta_3$ be the constant of hyperbolicity for
$\wsr$.  We will choose $K = 1 + 2 / \log k$, and
$c = 12 \delta_3 + 6 L$.  Here $L$ is the Morse constant such that the
test path $\tau_\gamma$ is contained in an $L$-neighborhood of the
geodesic $\ogamma$. For notational convenience, set
$D = d_{\wsr}(\iota(\gamma(t_1)), \iota(\gamma(t_2)))$.

\medskip
Let $p_i$ be the nearest point projection of the test path location
$\tau_\gamma(t_i)$ to $\ogamma$ in $\wsr$, and let $q_i$ be the
nearest point projection of $\iota(\gamma(t_i))$ to $\ogamma$.

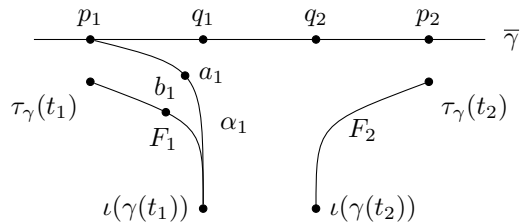
\begin{figure}[h]
\begin{center}
\begin{tikzpicture}[scale=0.75]

\tikzstyle{point}=[circle, draw, fill=black, inner sep=1pt]

\draw (0, 1) -- (8,1) node [label=right:$\overline{\gamma}$] {};

\draw (1, 0.25) node (a) [point, label=below left:$\tau_\gamma(t_1)$] {};

\draw (1, 1) node (c) [point, label=above:$p_1$] {};

\draw (3, 1) node [point, label=above:$q_1$] {};

\draw (7, 0.25) node (b) [point, label=below right:$\tau_\gamma(t_2)$] {};

\draw (7, 1) node [point, label=above:$p_2$] {};

\draw (5, 1) node [point, label=above:$q_2$] {};

\draw (3, -2) node (d) [point, label=left:$\iota(\gamma(t_1))$] {};

\draw (d) .. controls (3, 0.5) and (3, 0.5) .. (c) node [pos=0.25,
label=right:$\alpha_1$] {} node [pos=0.55, point, label=right:$a_1$] {};

\draw (a) .. controls (3, -0.5) and (3, -0.5) .. (d) node [pos=0.6,
label=left:$F_1$] {} node [pos=0.3, point, label=above:$b_1$] {};

\draw (b) .. controls (5, -0.5) and (5, -0.5) ..  (5, -2) node [midway,
label=right:$F_2$] {} node [point, label=right:$\iota(\gamma(t_2))$] {} ;

\end{tikzpicture}
\end{center}
\caption{The geodesic $\overline{\gamma}$ and the vertical flow
  lines.} \label{fig:two vertical flow line}
\end{figure}

We denote by $F_i$ the vertical flow segment from
$F_0(\iota(\gamma(t_i)))$ to
$\tau_\gamma(t_i) = F_{h_\gamma(t_i)}(\iota(\gamma(t_i)))$.  The path
$\alpha_i$ consisting of the union of the two geodesics
$[p_i, q_i] \cup [q_i , \iota(\gamma(t_i))]$ is close to being a
geodesic.  In particular, by
\cite{kapovich-sardar}*{Lemma 1.102} there is a point $a_i$ on $\alpha_i$
distance at most $2 \delta_3$ from $q_i$.  By thin triangles and the distance from $\tau_\gamma(t_i)$ to $p_i$ being at most $L$,
there is a point $b_i$ on $F_i$ within distance
$L + 3 \delta_3$ of $q_i$.

\medskip
Suppose that $q_1$ lies between $p_1$ and $p_2$. As $\tau_\gamma$ is
contained in an $L$-neighborhood of $\ogamma$, there is a point
$\tau_\gamma(t)$ distance at most $L$ from $q_1$,
with $t_1 \le t \le t_2$.  As the height function is
$(1 / \log k)$-Lipschitz, the height difference between
$\tau_\gamma(t_1)$ and $\tau_\gamma(t)$ is at most $D / \log k$, and
so the height difference between $p_1$ and $q_1$ is at most
$D / \log k + 2 L$.

\medskip
By the same argument from the previous two paragraphs applied to $F_2$,
if $q_2$ lies between $p_1$ and $p_2$, then the distance
between $p_2$ and $q_2$ is also at most $D / \log k + 2 L$.

\medskip
As $q_1$ is the nearest point projection of $\iota(\gamma(t_1))$ to
$\ogamma$, the path consisting of the concatenation of the three
geodesics
$[\iota(\gamma(t_1)), q_1] \cup [q_1 , q_2] \cup [q_2,
\iota(\gamma(t_2))]$ is close to being a geodesic.  By
\cite{kapovich-sardar}*{Lemma 1.120}, if the distance between $q_1$
and $q_2$ is at least $8 \delta_3$, then the distance between
$\iota(\gamma(t_1))$ and $\iota(\gamma(t_2))$ is at least
$\left| [q_1 , q_2] \right| - 12 \delta_3$.

\medskip
The distance in $\wsr$ between $\iota(\gamma(t_n))$ and
$\iota(\gamma(t_{n+1}))$ is at most $D$, so the distance between $q_1$
and $q_2$ is at most $D + 12 \delta_3$.  This implies that the
distance between $p_1$ and $p_2$ is at most
$D + 12 \delta_3 + 2 D / \log k + 4L$, and so the distance between
$\tau_\gamma(t_1)$ and $\tau_\gamma(t_2)$ is at most
$(1 + 2 / \log k) D + 12 \delta_3 + 6L$, as required.
\end{proof}

\newpage
\begin{appendices}

\section{Distance bounds in \texorpdfstring{$\HH^2$}{H2}}\label{section:hyperbolic}

In this section we record some standard estimates on distances between
geodesics in the hyperbolic plane $\HH^2$.  We start by finding bounds
on the distances in $\HH^2$ between two non-intersecting geodesics
distance $\theta$ apart.

\begin{proposition}\label{prop:hyperbolic bounds disjoint}
Suppose that $\theta \le 1$ is a positive constant and suppose that $\gamma_1$ and $\gamma_2$ are two bi-infinite geodesics in $\HH^2$ which do not
intersect and are distance $\theta$ apart.  Suppose that $\gamma_1$ is parametrized with unit speed such that $\gamma_1(0)$ is the closest point
on $\gamma_1$ to $\gamma_2$.  Then
\[ \tfrac{1}{3} \theta e^{|t|} \le d_{\HH^2}(\gamma_1(t), \gamma_2)
\le \tfrac{3}{2} \theta e^{|t|}, \text{ if } |t| \le \log
\tfrac{1}{\theta}. \]
Furthermore, the lower bound at $|t| = \log \tfrac{1}{\theta}$ holds
for all $|t| \ge \log \tfrac{1}{\theta}$, i.e.
$d_{\HH^2}(\gamma_1(t), \gamma_2) \ge \tfrac{1}{3}$.
\end{proposition}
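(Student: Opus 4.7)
The plan is to reduce to an explicit model computation and then extract the claimed two-sided exponential bounds from elementary inequalities for $\sinh$ and $\cosh$. Working in the upper half-plane model, I would normalize by an isometry so that the common perpendicular between $\gamma_1$ and $\gamma_2$ lies along the imaginary axis, with foot of the perpendicular on $\gamma_2$ at $i$ and foot on $\gamma_1$ at $ie^{\theta}$. Then $\gamma_2$ is the unit semicircle centered at $0$ and $\gamma_1$ is the semicircle of radius $e^\theta$ centered at $0$. A Gudermannian parametrization puts unit-speed coordinates on $\gamma_1$ as $\gamma_1(t)=(e^\theta\tanh t,\,e^\theta\operatorname{sech}t)$ with $\gamma_1(0)=ie^\theta$, which is the closest point to $\gamma_2$.

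The core step is deriving the closed-form distance formula
\[
  \sinh\bigl(d_{\HH^2}(\gamma_1(t),\gamma_2)\bigr)\;=\;\sinh(\theta)\,\cosh(t).
\]
I would obtain this by minimizing the standard upper half-plane distance formula $\cosh d = 1+\frac{|p-q|^2}{2\,\mathrm{Im}(p)\,\mathrm{Im}(q)}$ with $p=\gamma_1(t)$ over $q\in\gamma_2$; after differentiating in the parameter on $\gamma_2$ and using the identity $|\gamma_1(t)|^2=e^{2\theta}$, the minimum collapses algebraically to $\sinh^2(d)=\sinh^2(\theta)\cosh^2(t)$. As a sanity check, at $t=0$ one recovers $d=\theta$, and the formula is monotone increasing in $|t|$.

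With the formula in hand, the bounds follow from three elementary inequalities valid in the stated range: (a) $\theta\le\sinh\theta\le \tfrac{3}{2}\theta$ for $0\le\theta\le 1$ (since $\sinh\theta/\theta$ is increasing and equals $\sinh(1)\approx 1.175<3/2$ at $\theta=1$); (b) $\tfrac{1}{2}e^{|t|}\le\cosh t\le e^{|t|}$ for all $t$; and (c) in the range $|t|\le \log(1/\theta)$, the product $\sinh\theta\cosh t$ is bounded above by $\sinh\theta/\theta\le 3/2$, so $d\le\operatorname{arcsinh}(3/2)<1.2$, on which interval $d\le\sinh d\le \tfrac{3}{2}d$. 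Combining these gives
\[
  d\;\le\;\sinh d\;=\;\sinh\theta\,\cosh t\;\le\;\tfrac{3}{2}\theta\cdot e^{|t|},
\]
and
\[
  d\;\ge\;\tfrac{2}{3}\sinh d\;\ge\;\tfrac{2}{3}\cdot\theta\cdot\tfrac{1}{2}e^{|t|}\;=\;\tfrac{1}{3}\theta\,e^{|t|},
\]
which is the claim. For the final ``furthermore'' statement, I would simply note that $\sinh d=\sinh\theta\cosh t$ is monotone nondecreasing in $|t|$, so $d(\gamma_1(t),\gamma_2)\ge d(\gamma_1(\log\frac{1}{\theta}),\gamma_2)\ge \tfrac{1}{3}\theta\,e^{\log(1/\theta)}=\tfrac{1}{3}$ whenever $|t|\ge\log(1/\theta)$.

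The only slightly delicate part is step two—producing the identity $\sinh d=\sinh\theta\cosh t$ cleanly. This is classical (it is essentially the hyperbolic analogue of the pythagorean identity for a Lambert/Saccheri configuration), so one may instead prove it conceptually by dropping the perpendicular from $\gamma_1(t)$ to $\gamma_2$ and applying the right-angled hyperbolic trigonometry in the resulting Lambert quadrilateral; either approach is short. Everything else is a uniform estimate, and the constants $\tfrac{1}{3}$ and $\tfrac{3}{2}$ come out of the numerical comparisons above with room to spare.
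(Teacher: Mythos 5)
Your proposal is correct and follows essentially the same route as the paper: both reduce to the Lambert quadrilateral identity $\sinh d = \sinh\theta\cosh t$ and then apply the same elementary bounds $x\le\sinh x\le\tfrac32 x$ (valid because $d\le 1$ in the stated $t$-range) and $\tfrac12 e^{|t|}\le\cosh t\le e^{|t|}$, with monotonicity of $d$ in $|t|$ handling the final claim. The only cosmetic difference is that you sketch an alternative coordinate derivation of the $\sinh d = \sinh\theta\cosh t$ identity before falling back to the Lambert-quadrilateral formula that the paper cites directly.
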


\begin{proof}
Choose unit speed parametrizations of $\gamma_1$ and $\gamma_2$ so
that their closest points are $\gamma_1(0)$ and $\gamma_2(0)$.  Let
$p$ be the closest point on $\gamma_2$ to $\gamma_1(t)$, and set
$d = d_{\HH^2}(\gamma_1(t), \gamma_2) = d_{\HH^2}(\gamma_1(t), p)$.
As the distances are symmetric for $t$ and $-t$, it suffices to
consider the case $t \ge 0$.

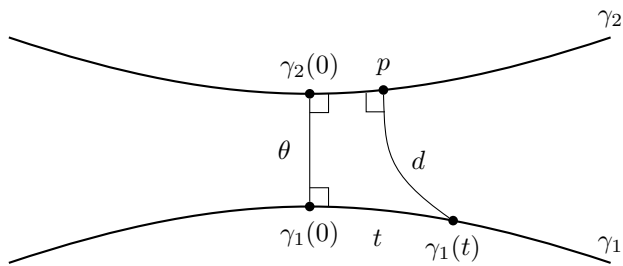
\begin{figure}[h]
\begin{center}
\begin{tikzpicture}

\tikzstyle{point}=[circle, draw, fill=black, inner sep=1pt]

\draw [thick] (0, 0) .. controls (3, 1) and (5, 1) .. (8, 0) node (a)
[midway, point, label=below:$\gamma_1(0)$] {} node [pos=0.62, label=below:$t$] {} node (c)
[pos=0.75, point, label=below:$\gamma_1(t)$] {} node [above] {$\gamma_1$}; 

\draw [thick] (0, 3) .. controls (3, 2) and (5, 2) ..  (8, 3) node (b)
[midway, point, label=above:$\gamma_2(0)$] {} node [above]
{$\gamma_2$} node (d) [pos=0.63, point, label=above:$p$] {};

\draw (a) -- (b) node [midway, label=left:$\theta$] {};

\draw (c) .. controls (5, 1.2) and (5, 1.5) .. (d) node [midway,
label=right:$d$] {};

\begin{scope}[yshift=0.75cm, xshift=3cm]
\draw (1.25, 0) -- (1.25, 0.25) -- (1, 0.25);
\end{scope}

\begin{scope}[yshift=2.25cm, yscale=-1, xshift=3cm]
\draw (1.25, 0) -- (1.25, 0.25) -- (1, 0.25);
\end{scope}

\begin{scope}[xshift=6cm, yshift=2.26cm, scale=-1]
\draw (1.25, 0) -- (1.25, 0.25) -- (1, 0.25);
\end{scope}

\end{tikzpicture}
\end{center}
\caption{The geodesic $\gamma_3$ connecting non-adjacent limit points
  of $\gamma_1$ and $\gamma_2$.} \label{fig:gamma3}
\end{figure}

The four points $\gamma_1(0), \gamma_2(0), \gamma_1(t)$ and $p$
determine a hyperbolic quadrilateral with three right angles, which is
known is as a Lambert quadrilateral, and its sides satisfy
\begin{equation}\label{eq:lambert}
\sinh d = \cosh t \sinh \theta,
\end{equation}
see for example \cite{martin}*{Section 32.2}.  We will use the
following elementary estimates: $x \le \sinh x \le \tfrac{3}{2}x$ for
$0 \le x \le \sinh(1) < 1.18$, and $\tfrac{1}{2} e^x \le \cosh x \le e^x$ for all
values of $x$.  Applying these estimates for $0 \le \theta \le 1$, and
$0 \le d \le \sinh(1)$ gives
\[ \tfrac{1}{3} \theta e^t \le d \le \tfrac{3}{2} \theta e^t,  \]
for $|t| \le \log \tfrac{1}{\theta}$, as for these values of $t$,
$d \le \sinh(1)$.  Differentiating \eqref{eq:lambert} shows that for
fixed $\theta$, $d$ is increasing in $t$, and so for all
$t \ge \log \tfrac{1}{\theta}$, the lower bound at
$t = \log \tfrac{1}{\theta}$ holds, i.e.
$d_{\HH^2}(\gamma_1(t), \gamma_2) \ge \tfrac{1}{3}$, as required.
\end{proof}

We now find bounds on the distances between two geodesics in $\HH^2$
which intersect at angle $\theta$.

\begin{proposition}\label{prop:hyperbolic bounds intersect}
Suppose that $\gamma_1$ and $\gamma_2$ are geodesics in $\HH^2$ which
intersect at angle $\theta < 1$, and suppose that $\gamma_1$ has unit
speed parametrization so that the intersection point is
$\gamma_1(0)$.  Then
\[ \tfrac{1}{8} \theta (e^t - 1) \le d_{\HH^2}(\gamma_1(t), \gamma_2)
\le \tfrac{1}{2} \theta e^t, \text{ if } |t| \le \log \tfrac{1}{\theta}, \]
and furthermore, the lower bound at $t = \log \tfrac{1}{\theta}$ holds
for all $t \ge \log \tfrac{1}{\theta}$, i.e.
$d_{\HH^2}(\gamma_1(t), \gamma_2) \ge \tfrac{1}{8}( 1 - \theta)$.
\end{proposition}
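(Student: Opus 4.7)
The plan is to reduce to the hyperbolic right-triangle identity, in direct parallel with the Lambert-quadrilateral argument of \Cref{prop:hyperbolic bounds disjoint}. Let $p$ be the foot of the perpendicular from $\gamma_1(t)$ to $\gamma_2$, and set $d = d_{\HH^2}(\gamma_1(t), p)$. The points $\gamma_1(0), \gamma_1(t), p$ form a hyperbolic right triangle with the right angle at $p$, hypotenuse of length $|t|$ along $\gamma_1$, and angle at $\gamma_1(0)$ equal to $\theta$ (on the opposite ray of $\gamma_2$ the angle is $\pi - \theta$, but $\sin\theta = \sin(\pi - \theta)$, so we may treat both signs of $t$ uniformly, and by symmetry it suffices to take $t \ge 0$). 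The standard right-triangle identity (see, e.g., \cite{martin}*{Section 32.2}) yields
\[ \sinh d = \sin\theta \, \sinh t. \]

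For the upper bound, I would simply combine $d \le \sinh d$ (valid for $d \ge 0$), $\sin\theta \le \theta$, and $\sinh t \le \tfrac{1}{2} e^t$ to obtain $d \le \tfrac{1}{2}\theta e^t$. For the lower bound, I would first observe that on the range $0 \le t \le \log\tfrac{1}{\theta}$ the identity forces $\sinh d \le \tfrac{1}{2}$, hence $d$ is small enough that $\sinh d \le 2d$ holds (this inequality is valid for $d \le \sqrt{6}$, which is much larger than needed). Combining $d \ge \tfrac{1}{2} \sinh d$ with the elementary estimates $\sin\theta \ge \theta/2$ (valid on $[0,\pi/2]$, since $\sin\theta/\theta$ is decreasing there and $\sin(\pi/2) = 1 > \pi/4$) and $\sinh t \ge \tfrac{1}{2}(e^t - 1)$ for $t \ge 0$ then gives
\[ d \ge \tfrac{1}{2} \cdot \tfrac{1}{2}\theta \cdot \tfrac{1}{2}(e^t - 1) = \tfrac{1}{8}\theta (e^t - 1), \]
which is the stated bound.

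For the final assertion, I would differentiate the identity $\sinh d = \sin\theta \, \sinh t$ implicitly in $t$, giving $\cosh d \cdot d'(t) = \sin\theta \cosh t > 0$, so $d(t)$ is strictly increasing in $t$ on $[0,\infty)$. Therefore evaluating the lower bound at $t = \log\tfrac{1}{\theta}$, which equals $\tfrac{1}{8}(1 - \theta)$, gives a valid lower bound for all $t \ge \log\tfrac{1}{\theta}$, and the same holds for negative $t$ by the symmetry $d(-t) = d(t)$ observed above. The proof is a direct trigonometric calculation, with no real obstacle; the only care required is choosing the elementary inequalities so that the three factors of $\tfrac{1}{2}$ multiply to $\tfrac{1}{8}$ on the lower bound side, and verifying that the range on which $\sinh d \le 2d$ is used comfortably contains the relevant values of $d$.
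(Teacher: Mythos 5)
Your proof is correct and follows the same route as the paper: the same right-triangle identity $\sinh d = \sin\theta\,\sinh t$, the same elementary estimates $\tfrac{1}{2}\theta \le \sin\theta \le \theta$ and $\tfrac{1}{2}(e^t-1) \le \sinh t \le \tfrac{1}{2}e^t$, and the same observation that $\sinh d \le \tfrac{1}{2}$ on the stated range so that $\sinh d \le 2d$ applies. For the final ``furthermore'' clause you differentiate the identity and invoke monotonicity of $d(t)$; this mirrors the argument the paper uses in the companion disjoint-geodesics case (\Cref{prop:hyperbolic bounds disjoint}) and is cleaner than what the paper writes for this proposition, which instead leans on the elementary bound for $\sinh d$ together with an implicit case split on whether $d$ is small (the paper's phrasing that ``$d$ takes values in $[0, \sinh^{-1}(\tfrac{1}{4})]$'' is not literally true since $d$ is unbounded for large $t$; what saves it is that if $d\ge 1$ the bound $d \ge \tfrac{1}{8}(1-\theta)$ is trivial). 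One small slip in your aside: the inequality $\sinh d \le 2d$ does not hold up to $d = \sqrt{6}$; the actual threshold is where $\sinh d = 2d$, roughly $d \approx 2.18$. Since you have already shown $d < 1$ on the relevant range, this does not affect your argument, but the parenthetical claim should be corrected.
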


\begin{proof}
Let $\gamma_1(t)$ be the point distance $t$ along $\gamma_1$ from the
intersection point, and let $d$ be the distance from $\gamma_1(t)$ to
$\gamma_2$.

\begin{figure}[h]
\begin{center}
\begin{tikzpicture}

\tikzstyle{point}=[circle, draw, fill=black, inner sep=1pt]

\draw [thick] (2, 0) -- (8, 0) node (a)
[pos=0.33, point, label=below:$\gamma_1(0)$] {} node (a)
[pos=0.75, point, label=below:$\gamma_1(t)$] {} node [below] {$\gamma_1$}; 

\draw [thick] (2, -1) -- (8, 2) node [label=above:$\gamma_2$] {};

\draw (a) .. controls (6.2, 0.2) and (6, 0.25) .. (5.5, 0.75) node
[midway, label=above:$d$] {};

\begin{scope}[yshift=1.45cm, xshift=6.25cm]
\begin{scope}[rotate=220]
\draw (1.25, 0) -- (1.25, 0.25) -- (1, 0.25);
\end{scope}
\end{scope}

\end{tikzpicture}
\end{center}
\caption{The geodesics of $\gamma_1$ and $\gamma_2$ intersect at angle
  $\theta$.} \label{fig:intersecting}
\end{figure}
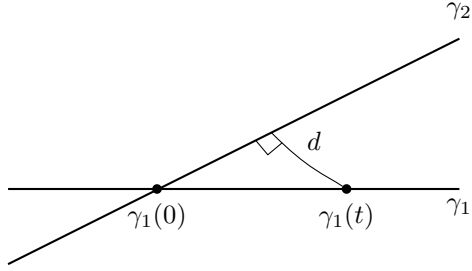

Using the sine formula for right angled triangles in hyperbolic space gives
\[ \sin \theta = \frac{ \sinh d   }{ \sinh t }.  \]

\medskip
Using the elementary estimates: $\tfrac{1}{2} \theta \le \sin \theta \le \theta$ for
$0 \le \theta \le 1$ and $ \tfrac{1}{2} ( e^t - 1 ) \le \sinh t \le \tfrac{1}{2} e^t$ for $t
\ge 0$, we get 

\[ \tfrac{1}{4} \theta (e^t - 1)  \le  \sinh d \le \tfrac{1}{2} \theta e^t    \]

\medskip
For $t \le \log \tfrac{1}{\theta}$, $\sinh d \le \tfrac{1}{2} < 1$, so have the elementary estimate $ d \le \sinh d \le 2d  $ for $0 \le d \le 1$, which gives

\[ \tfrac{1}{8} \theta (e^t - 1) \le d \le \tfrac{1}{2} \theta e^t  \]

as required.

\medskip
Finally, the bound $\sinh(d) \ge \tfrac{1}{4}(e^t - 1)$ holds for all
$t \ge 0$, and $e^t$ is increasing, so for all
$t \ge \log \tfrac{1}{\theta}$,
$\sinh d \ge \tfrac{1}{4}(1 - \theta)$.  The right hand side takes
values in $[0, \tfrac{1}{4}]$, so $d$ takes values in
$[0, \sinh^{-1}(\tfrac{1}{4}) < 1]$, and we may apply the elementary
bound for $\sinh d$, so $d \ge \tfrac{1}{8}(1 - \theta)$ for all
$t \ge \log \tfrac{1}{\theta}$.
\end{proof}

Finally, we prove the bounds on the size of nearest point projection
intervals.

\begin{proposition:pi}
There is a constant $T_0 > 0$ such that for any geodesics $\gamma_1$
and $\gamma_2$ in $\HH^2$ that
\begin{itemize}
    \item intersect at an angle $0 < \theta \le \pi/2$, and
    \item $\gamma_1$ is parametrized with unit speed so that $\gamma_1(0)$ is the point of intersection,
\end{itemize}
then the image of $\gamma_2$ under nearest point
projection to $\gamma_1$ equals $\gamma_1( [-T, T] )$, where
\[ \log \tfrac{1}{\theta} \le T \le \log \tfrac{1}{\theta} +
T_0. \]
\end{proposition:pi}

\begin{proof}
We may parametrize $\gamma_1$ and $\gamma_2$ with unit speed so that
they intersect at $\gamma_1(0) = \gamma_2(0)$.  Let
$\gamma_1(s)$ be the nearest point projection of $\gamma_2(t)$ to
$\gamma_1$. Up to reversing the parametrization for $\gamma_1$ we may
assume that $s \ge 0$ whenever $ t \ge 0$.  Then the three points
$\gamma_1(0), \gamma_2(t)$ and $\gamma_1(s)$ form a right angled
triangle, with right angle at $\gamma_1(s)$ and hypotenuse of length
$t$.  This is illustrated in \Cref{fig:nearest point projection}.

\begin{figure}[h]
\begin{center}
\begin{tikzpicture}

\tikzstyle{point}=[circle, draw, fill=black, inner sep=1pt]

\draw (-1, 0) -- (5, 0) node [below] {$\gamma_1$};

\draw (-1, -0.5) -- (4, 2) node [above] {$\gamma_2$};

\draw (0, 0) node [point, label=above left:${\gamma_1(0) = \gamma_2(0)}$] {};

\draw (3, 1.5) node (a) [point, label=above left:$\gamma_2(t)$] {};

\draw (a) .. controls (2.75, 1) and (2.5, 0.5) .. (2.5, 0) node (b)
[point, label=below:$\gamma_1(s)$] {};

\draw ($(b)+(-0.25, 0)$) -- ($(b)+(-0.25, 0.25)$) -- ($(b)+(0.03, 0.25)$);

\draw ([shift=(0:1cm)]0,0) arc (0:26:1cm) node [midway, right] {$\theta$};

\end{tikzpicture}
\end{center}
\caption{The nearest point projection from one leaf to an intersecting
  leaf.} \label{fig:nearest point projection}
\end{figure}
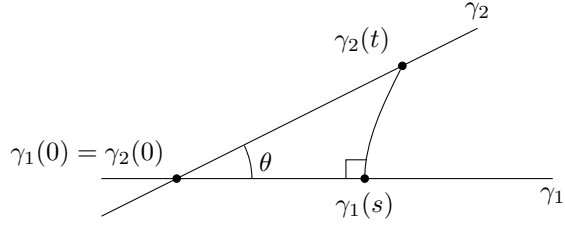

\medskip
For right angled triangles in
$\HH^2$, we have
\[ \cos \theta = \frac{ \tanh s}{ \tanh t}. \]
We will take the limit as $t$ tends to infinity, so we may replace
$\tanh(t)$ by one.  We will use the elementary bounds that for
$0 \le x \le \pi/2$, $1 - x^2/2 \le \cos x \le 1 - x^2 / 4$.  This
gives
\[ 1 - \tfrac{1}{2} \theta^2 \le \tanh s \le 1 - \tfrac{1}{4}
\theta^2. \]
Using the formula for
$\tanh^{-1}(x) = \tfrac{1}{2} \log \frac{1+x}{1-x}$, we get
\[ \tfrac{1}{2} \log \frac{2 - \tfrac{1}{2} \theta^2}{\tfrac{1}{2}
  \theta^2} \le s \le \tfrac{1}{2} \log \frac{ 2 - \tfrac{1}{4}
  \theta^2}{ \tfrac{1}{4} \theta^2 }. \]
\[ \tfrac{1}{2} \log \frac{4}{\theta^2} - 1 \le s \le \tfrac{1}{2} \log \frac{ 2 }{ \tfrac{1}{4} \theta^2 }. \]
\[ \tfrac{1}{2} \log \frac{3}{\theta^2}  \le s \le \tfrac{1}{2} \log
\frac{ 8 }{ \theta^2 }. \]
\[ \log \tfrac{1}{\theta} + \tfrac{1}{2} \log 3 \le s \le \log \tfrac{1}{\theta} + \tfrac{1}{2} \log
8. \]
As the constant on the left is positive, we can choose
$T_0 = \tfrac{1}{2} \log 8 \le 2$.
\end{proof}

\section{Distance bounds in \texorpdfstring{$\PSL(2,\RR)$}{PSL(2,R)} }\label{section:fellow travel}

The fellow traveling results in this section are standard, but we give
detailed proofs for the convenience of the reader.  All constants in
this section depend only on the geometry of $\PSL(2, \RR)$, and in
particular do not depend on a choice of hyperbolic surface $S_h$ or
the suited pair of laminations $\LL$.

\medskip
We abuse notation by using the same notation for geodesics in $\HH^2$
and their lifts in $\PSL(2, \RR)$.


For $t$ reasonably large, distances in $\HH^2$ give reasonable bounds
on distances in $\PSL(2, \RR)$. However, they are not precise enough
for our purposes close to $t = 0$.  For small $t$ we use the fact that
the left invariant metric is bilipschitz to the corresponding matrix
norm on $\PSL(2, \RR)$.

\begin{proposition}\label{prop:small t bounds}
There are constants $\theta_0 > 0$, $L \ge 1$ and
$K \le \log \tfrac{L}{\theta_0}$, such that for any two geodesics
$\gamma_1$ and $\gamma_2$ in $\HH^2$ 
\begin{itemize}
    \item whose lifts in $\PSL(2, \RR)$ are distance $\theta \le \theta_0$ apart, and
    \item the lift $\gamma_1(0)$ has unit speed parametrization such that $\gamma_1(0)$ is the closest point to $\gamma_2$,
\end{itemize}
then for all
$|t| \le \log \frac{1}{\theta} - K$,
\[ \tfrac{1}{L} \theta e^{|t|} \le d_{\PSL(2, \RR)} ( \gamma^1_1(t),
\gamma^1_2 ) \le L \theta e^{|t|}. \]
In particular, $\gamma_1(t)$ lies within a $\theta_0$-neighborhood of
$\gamma_2$ for a symmetric interval of length at least
$2(\log \frac{1}{\theta} - K)$ centered at $t = 0$.
\end{proposition}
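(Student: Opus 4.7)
The plan is to reduce to a matrix computation in $\mathfrak{sl}(2,\RR)$ via the exponential map, using the standard bilipschitz equivalence $\tfrac{1}{C_0}\|Y\| \le d_{\PSL(2,\RR)}(I, \exp Y) \le C_0 \|Y\|$ for $Y \in \mathfrak{sl}(2,\RR)$ of bounded Frobenius norm, where $C_0 \ge 1$ is a universal constant. First, using left-invariance of the metric, I normalize so that $\gamma_1^1(0) = I$ and $\gamma_1^1(t) = a_t := \exp(tA)$, where $A = \mathrm{diag}(\tfrac{1}{2}, -\tfrac{1}{2})$ generates the geodesic flow by right multiplication; I then write $\gamma_2^1(s) = g_0\, a_s$ with $g_0 := \gamma_2^1(0)$ the closest point on $\gamma_2^1$ to $I$, so that $d_{\PSL(2,\RR)}(I, g_0) = \theta$. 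Writing $g_0 = \exp(Y)$ with $\|Y\| \asymp \theta$, the first-order versions of the two closest-point conditions (minimality in $s$ along $\gamma_2^1$ and minimality in $t$ along $\gamma_1^1$) yield $\langle Y, A\rangle = 0$ and $\langle Y, [A, Y]\rangle = 0$, respectively. In the basis $\{A, E_+, E_-\}$ of $\mathfrak{sl}(2,\RR)$, using $[A, E_\pm] = \pm E_\pm$, these force $Y = bE_+ + cE_-$ with $b^2 = c^2$, and hence $|b| = |c| = \theta/\sqrt{2}$ to leading order.

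Next I would apply the conjugation identity $a_{-t}\exp(Y) a_t = \exp(\mathrm{Ad}(a_{-t})Y)$ together with $\mathrm{Ad}(a_{-t})(bE_+ + cE_-) = be^{-t}E_+ + ce^t E_-$ to rewrite $a_{-t} g_0 a_s = \exp(Z)\, a_{s-t}$, where $Z := \mathrm{Ad}(a_{-t})Y$ satisfies $\|Z\|^2 = b^2 e^{-2t} + c^2 e^{2t}$. The upper bound follows immediately from the choice $s = t$:
\[
d_{\PSL(2,\RR)}(\gamma_1^1(t), \gamma_2^1) \le d(I, \exp Z) \le C_0\|Z\| \le C_0\,\theta\, e^{|t|}.
\]
For the lower bound I expand $\log(\exp(Z) a_u) = Z + uA + \tfrac{u}{2}[Z,A] + O((\|Z\|+|u|)^3)$ by the Baker--Campbell--Hausdorff formula; the components orthogonal to $A$ work out to $be^{-t}(1 - u/2)E_+ + ce^t(1+u/2)E_-$ modulo higher-order error, which for $|u| \le 1$ has norm at least $\tfrac{1}{2}\sqrt{b^2e^{-2t}+c^2e^{2t}} \asymp \theta e^{|t|}$. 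For $|u| > 1$ the triangle inequality $d(I,\exp(Z) a_u) \ge |u| - C_0\|Z\|$ already provides a stronger lower bound.

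The restriction $|t| \le \log(1/\theta) - K$ is precisely what keeps $\|Z\| \le \theta e^{|t|} \le e^{-K}$ in the bounded region where BCH converges, where the bilipschitz-to-norm equivalence applies, and where the higher-order corrections governing the structure of $Y$ are genuinely negligible. The hard part will be controlling the BCH remainder uniformly over $u$, ensuring that the leading $E_\pm$-components of $\log(\exp(Z) a_u)$ are never cancelled by higher-order commutator terms for any choice of $u$; this reduces to a quantitative estimate in which $K$ must be taken sufficiently large in terms of $C_0$ and the convergence radius of BCH. Choosing such a $K$ and setting $L$ to be, say, $2\sqrt{2}\,C_0$ then yields the asserted bilipschitz bound, with $K \le \log(L/\theta_0)$ matching the statement of the proposition.
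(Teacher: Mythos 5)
Your strategy shares the core computation with the paper's: reduce to a calculation near the identity via left invariance, conjugate by $a_{-t}$ to diagonalize the flow's action on the deficit $Y$, and compare the left-invariant metric to a matrix norm using the bilipschitz equivalence. Two points of divergence are worth noting. First, you derive the structure $Y = bE_+ + cE_-$ with $|b|\approx |c|$ from first-order stationarity at the mutual closest point. The paper instead puts the two geodesics directly into the standard forms $\alpha_s$ and $\beta_{\pm s}$ by observing that a reflection swaps any pair of non-asymptotic geodesics, forcing the two shear parameters to have equal absolute value; this gives the structure exactly, whereas your variational argument gives it only up to corrections which you would need to quantify. Either route suffices for a bilipschitz bound, but the reflection argument is cleaner.

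Second, and more substantially, your treatment of the infimum over $\gamma_2$ has a genuine gap. The dichotomy $|u|\le 1$ versus $|u|>1$ does not close: for $|u|$ of order $1$ the BCH remainder $O\bigl((\|Z\|+|u|)^3\bigr)$ is $O(1)$, which is not small compared to the target scale $\|Z\|\asymp\theta e^{|t|}$ (and this can be as small as $\theta^2$). You would instead have to cut at $|u|\lesssim \|Z\|$, where the remainder is $O(\|Z\|^3)\ll\|Z\|$, and invoke the triangle inequality already for $|u|\gtrsim \|Z\|$. You correctly flag this as ``the hard part,'' and it is genuinely delicate. The paper sidesteps it entirely: it first proves the bilipschitz estimate for the \emph{specific} pair of points $d\bigl(\gamma^1_1(t),\gamma^1_2(t)\bigr)$ via the same conjugation trick (Proposition~\ref{prop:t bound}), and then passes to $d\bigl(\gamma^1_1(t),\gamma^1_2\bigr)$ using the purely synthetic Proposition~\ref{prop:t bounds d}, which states that in any geodesic metric space two applications of the triangle inequality give $d(\gamma_1(t),\gamma_2)\ge\tfrac12\bigl(d(\gamma_1(t),\gamma_2(t))-\theta\bigr)$. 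That two-line synthetic estimate replaces all of the BCH bookkeeping and is the ingredient your outline is missing.
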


We will use the fact that there is a neighborhood $U$ of the identity
matrix in $\PSL(2, \RR)$ such that any left invariant metric is
bilipschitz to the metric arising from any matrix norm, see
for example \cite{einsiedler-ward}*{Lemma 9.12}, where this is shown
for any Lie group.  In fact, this holds as long as $U$ is the image of
a bounded neighborhood $V$ of the zero matrix in the Lie algebra $\text{sl}(2, \RR)$
on which the exponential map is injective.

\begin{proposition}[\cite{einsiedler-ward}*{Lemma 9.12}]\label{prop:lipschitz}
There is a constant $\theta_0 > 0$ such that for any matrix norm
$\norm{A}$, and any left invariant metric $d_{\PSL(2, \RR)}(A, B)$ on
$\PSL(2, \RR)$, there is a constant $L_0 > 0$ such that for any matrix
$A \in \PSL(2, \RR)$, with $d_{\PSL(2, \RR)}(I, A) \le \theta_0$,
\[ \tfrac{1}{L_0} \norm{I - A} \le d_{\PSL(2, \RR)}(I, A) \le L_0 \norm{I
  - A}. \]
\end{proposition}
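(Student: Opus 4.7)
The plan is to reduce to a canonical choice of matrix norm and of left-invariant Riemannian metric, and then to use the exponential map at $I$ as a local chart in which both quantities $\|I - A\|$ and $d_{\PSL(2,\RR)}(I, A)$ can be compared to a single Euclidean norm on the Lie algebra $\mathfrak{sl}(2,\RR)$.

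First I would argue that it suffices to prove the statement for one fixed matrix norm and one fixed left-invariant metric. Since $M_2(\RR)$ is finite-dimensional, any two matrix norms are mutually bilipschitz, absorbing their ratio into $L_0$. Similarly, any two left-invariant Riemannian metrics on $\PSL(2,\RR)$ arise from inner products on $\mathfrak{sl}(2,\RR) = T_I \PSL(2,\RR)$; these inner products are mutually bilipschitz, and by left-invariance the corresponding distance functions are mutually bilipschitz on any fixed bounded neighborhood of $I$. So I fix, say, the Frobenius norm $\|A\|_F = \sqrt{\mathrm{tr}(A^T A)}$ and one auxiliary left-invariant metric $d$.

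Next, since $d\exp_0 = \mathrm{id}$ on $\mathfrak{sl}(2,\RR)$, the exponential map is a diffeomorphism from a bounded open neighborhood $V$ of $0$ onto an open neighborhood $U$ of $I$ in $\PSL(2,\RR)$. For $X \in V$ the Taylor expansion $\exp(X) = I + X + \tfrac{1}{2}X^2 + \cdots$ and the submultiplicativity of $\|\cdot\|_F$ on the bounded set $V$ give
\[
\|I - \exp(X)\|_F = \|X\|_F \bigl(1 + O(\|X\|_F)\bigr),
\]
so on a possibly smaller neighborhood $V' \subseteq V$, the quantity $\|I - \exp(X)\|_F$ is bilipschitz to $\|X\|_F$. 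On the other hand, pulling the Riemannian metric $d$ back through $\exp$ gives a smooth Riemannian metric on $V'$ whose metric tensor $g(X)$ is a positive-definite bilinear form depending continuously on $X$. Shrinking $V'$ so that $\overline{V'}$ is compact and $g(X)$ is uniformly positive-definite on $\overline{V'}$, we conclude that Riemannian arc lengths on $V'$ are bilipschitz to Euclidean arc lengths, so $d(I,\exp(X))$ is bilipschitz to $\|X\|_F$. Composing the two bilipschitz equivalences gives the desired estimate on $\exp(V')$, and we choose $\theta_0 > 0$ so that the closed $d$-ball of radius $\theta_0$ around $I$ lies inside $\exp(V')$.

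The main subtlety is the Riemannian-to-Euclidean comparison inside $V'$: one needs to know that the Riemannian distance, defined as an infimum over curves in the whole group, agrees (up to a uniform multiplicative constant) with the infimum over curves that remain inside $V'$. This is a standard consequence of choosing $\theta_0$ small relative to the $d$-distance from $I$ to the complement of $\exp(V')$, so that any candidate minimizing curve between two points within distance $\theta_0$ of $I$ is forced to stay in $\exp(V')$. Once this is in place, the rest of the argument is the routine Lie-theoretic local comparison sketched above; the care needed is purely in handling the universal quantifiers over norms and metrics, which is precisely what the reduction in the first paragraph takes care of.
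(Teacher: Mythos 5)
Your argument is correct in substance, but it is worth noting that the paper does not prove this proposition at all: it is quoted from Einsiedler--Ward (Lemma 9.12), where it is established for general Lie groups, and the surrounding text only adds the remark that the neighborhood can be taken to be the image under $\exp$ of a bounded neighborhood of $0$ in $\text{sl}(2,\RR)$ on which $\exp$ is injective. What you have written is essentially the standard proof behind that citation: reduce to one norm and one left-invariant Riemannian metric by equivalence of norms and of inner products on the Lie algebra, then compare both $\norm{I-\exp(X)}$ and $d(I,\exp(X))$ to $\norm{X}$ in the exponential chart, with the localization of minimizing curves handled by taking $\theta_0$ small compared to the distance from $I$ to the complement of the chart. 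Two small points deserve care. First, "left invariant metric" must be read as left-invariant \emph{Riemannian} metric (as in the cited lemma and as you implicitly assume); for arbitrary left-invariant metrics the statement fails (e.g.\ the discrete metric, or a rescaled metric, with $\theta_0$ fixed in advance). Second, as literally stated $\theta_0$ is quantified before the metric, while your $\theta_0$ is chosen for the fixed auxiliary metric; for another metric $d'$ the $d'$-ball of radius $\theta_0$ need only be contained in a larger $d$-ball, outside the range of your local comparison. This is repaired either by letting $\theta_0$ depend on the metric (which is all the paper ever uses, and is how the cited lemma is phrased) or by a routine compactness argument: on the closed $d'$-ball of radius $\theta_0$ both $A\mapsto\norm{I-A}$ and $A\mapsto d'(I,A)$ are continuous, vanish only at $I$, and are comparable near $I$ by your local estimate, hence comparable on the whole ball. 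With that caveat your proof is complete, and it has the advantage over the paper of being self-contained rather than an appeal to the literature.
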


We shall use the matrix norm determined by the Frobenius or
Hilbert-Schmidt inner product
$\langle A, B \rangle = 2 \text{tr}(A^T B) $ on $2 \times 2$ matrices.
The scaling factor of $2$ is chosen so that the following basis for
$\text{sl}(2, \RR)$ is orthonormal,
\[\{ A_1, A_2, A_3 \} = %
\left\{ \begin{bmatrix} \tfrac{1}{2} & 0 \\ 0 &
-\tfrac{1}{2} \end{bmatrix},
\begin{bmatrix} 0 & \tfrac{1}{2} \\ \tfrac{1}{2} & 0 \end{bmatrix},
\begin{bmatrix} 0 & -\tfrac{1}{2} \\ \tfrac{1}{2} & 0 \end{bmatrix} \right\}. \]

We will make use of the following standard forms for a pair of
geodesics in $\HH^2$.  Let $\alpha$ and $\beta$ be oriented geodesics.
Then there is an oriented geodesic $\gamma$ whose positive limit point
is the same as the positive limit point of $\alpha$, and whose
negative limit point is the same as the negative limit point for
$\beta$.  Up to the action of $\PSL(2, \RR)$, we may assume that
$\gamma$ is given by
\[ \gamma(t) = \begin{bmatrix} e^{t/2} & 0 \\ 0 &
e^{-t/2} \end{bmatrix}. \]

\medskip
All geodesics with the same positive limit point are known as the
stable manifold for $\gamma$ and are given by
\[ \alpha_s(t) = \begin{bmatrix} 1 & s \\ 0 &
1 \end{bmatrix} \gamma(t). \]
Similarly, all geodesics with the same negative limit point as
$\gamma$, known as the unstable manifold for $\gamma$, are given by
\[ \beta_{s'}(t) = \begin{bmatrix} 1 & 0 \\ s' &
1 \end{bmatrix} \gamma(t). \]
Given geodesics $\alpha$ and $\beta$, we can, by a reflection if required and a translation of the parametrization of $\gamma$, arrange that in the standard form $\alpha = \alpha_s$ with $s > 0$ and $\beta = \beta_{s'}$ with $s' = \pm s$.

\begin{proposition}
\label{prop:distance upper lower}
For $a+b \le \theta_0 / L_0 \sqrt{2}$,
\[ \tfrac{\sqrt{2}}{L_0} \sqrt{a^2 + b^2} \le d_{\PSL(2, \RR)}
\left( \begin{bmatrix} 1 & a \\ 0 & 1 \end{bmatrix}, \begin{bmatrix} 1
& 0 \\ b & 1 \end{bmatrix} \right) \le L_0 \sqrt{2}(a+b) \]
\end{proposition}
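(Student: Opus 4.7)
The plan is to use left-invariance to reduce the computation of $d_{\PSL(2,\RR)}(A,B)$, where $A = \begin{bmatrix} 1 & a \\ 0 & 1 \end{bmatrix}$ and $B = \begin{bmatrix} 1 & 0 \\ b & 1 \end{bmatrix}$, to the computation of $d_{\PSL(2,\RR)}(I, A^{-1}B)$, and then apply Proposition \ref{prop:lipschitz} to estimate this distance by the Frobenius norm $\|I - A^{-1}B\|$, which can be computed explicitly. The final step is to bound this Frobenius norm above and below by elementary inequalities.

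First, by left-invariance,
\[ d_{\PSL(2,\RR)}(A, B) = d_{\PSL(2,\RR)}(I, A^{-1}B). \]
An explicit computation gives
\[ A^{-1}B = \begin{bmatrix} 1 & -a \\ 0 & 1 \end{bmatrix}\begin{bmatrix} 1 & 0 \\ b & 1 \end{bmatrix} = \begin{bmatrix} 1 - ab & -a \\ b & 1 \end{bmatrix}, \]
so $I - A^{-1}B = \begin{bmatrix} ab & a \\ -b & 0 \end{bmatrix}$. Using the Frobenius norm defined just after Proposition~\ref{prop:lipschitz} (where $\|M\|^2 = 2\,\text{tr}(M^T M)$ so that the displayed basis of $\mathfrak{sl}(2,\RR)$ is orthonormal), we obtain
\[ \|I - A^{-1}B\|^2 = 2\bigl(a^2 + b^2 + a^2 b^2\bigr). \]

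For the condition $a + b \le \theta_0/(L_0\sqrt{2})$, we have $\|I - A^{-1}B\| \le \sqrt{2}(a+b) \le \theta_0/L_0 \le \theta_0$, which places $A^{-1}B$ well within the neighborhood of $I$ where the bi-Lipschitz estimate of Proposition~\ref{prop:lipschitz} applies (after possibly shrinking $\theta_0$, using that the left-invariant metric and the Frobenius norm induce the same topology near $I$). Proposition~\ref{prop:lipschitz} then yields
\[ \tfrac{1}{L_0}\|I - A^{-1}B\| \le d_{\PSL(2,\RR)}(I, A^{-1}B) \le L_0 \|I - A^{-1}B\|. \]
The lower bound follows immediately from $a^2 + b^2 + a^2b^2 \ge a^2 + b^2$:
\[ d_{\PSL(2,\RR)}(A,B) \ge \tfrac{\sqrt{2}}{L_0}\sqrt{a^2+b^2+a^2b^2} \ge \tfrac{\sqrt{2}}{L_0}\sqrt{a^2 + b^2}. \]
For the upper bound, when $a, b \ge 0$ and $a+b$ is small enough that $ab \le 2$, we have $a^2 + b^2 + a^2b^2 \le a^2 + 2ab + b^2 = (a+b)^2$, giving
\[ d_{\PSL(2,\RR)}(A,B) \le L_0\sqrt{2}\sqrt{a^2+b^2+a^2b^2} \le L_0\sqrt{2}(a+b). \]

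The only subtle point is verifying the hypothesis of Proposition~\ref{prop:lipschitz}, namely that $d_{\PSL(2,\RR)}(I, A^{-1}B) \le \theta_0$. Since the bi-Lipschitz statement of Proposition~\ref{prop:lipschitz} is a local fact near $I$ (the two distances induce the same topology via the exponential map), one can choose the constants $\theta_0$ and $L_0$ so that the Frobenius-norm condition $\|I - A^{-1}B\| \le \theta_0/L_0$, ensured by the hypothesis $a+b \le \theta_0/(L_0\sqrt{2})$, already implies $d(I, A^{-1}B) \le \theta_0$. This bookkeeping is the only real obstacle; the rest is a direct matrix computation together with the two elementary inequalities above.
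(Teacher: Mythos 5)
Your proof is correct, and the lower bound is computed exactly as the paper does: left-invariance reduces to $d(I, A^{-1}B)$, and the Frobenius norm of $I - A^{-1}B = \begin{bmatrix} ab & a \\ -b & 0 \end{bmatrix}$ gives $\sqrt{2}\sqrt{a^2+b^2+a^2b^2} \ge \sqrt{2}\sqrt{a^2+b^2}$.

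For the upper bound you take a genuinely different route. You apply the bi-Lipschitz estimate directly to $A^{-1}B$, obtaining $L_0\sqrt{2}\sqrt{a^2+b^2+a^2b^2}$, and then use the elementary inequality $a^2 b^2 \le 2ab$ (valid since $a+b \le \theta_0/(L_0\sqrt{2}) \le 1$ forces $ab \le 1/4$) to land on $L_0\sqrt{2}(a+b)$. The paper instead applies Proposition \ref{prop:lipschitz} separately to each factor, computing $\|I - A\| = \sqrt{2}a$ and $\|I - B\| = \sqrt{2}b$ (both much cleaner norms, since $A = e^{Ua}$ with $\|U\| = \sqrt{2}$), and concludes by the triangle inequality $d(A,B) \le d(A,I) + d(I,B) \le L_0\sqrt{2}(a+b)$. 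The paper's path avoids any additional inequality at the cost of two applications of the bi-Lipschitz bound; yours is more uniform (one reduction, one norm computation, two elementary comparisons of that norm) but requires the extra observation about $ab$. Your aside about choosing constants so that the norm-smallness hypothesis $\|I - A^{-1}B\| \le \theta_0/L_0$ implies the metric-smallness hypothesis $d(I, A^{-1}B) \le \theta_0$ is the right way to resolve the apparent circularity in invoking Proposition \ref{prop:lipschitz}; the paper treats this point as implicit in exactly the same way. Both routes are sound.
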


\begin{proof}
Note that $\begin{bmatrix} 1 & a \\ 0 & 1 \end{bmatrix} = e^{Ua}$, where $U = \begin{bmatrix} 0 & 1 \\ 0 & 0 \end{bmatrix}$. 
In our chosen orthonormal basis for $\text{sl}(2, \RR)$, the length of the matrix $U$ is $\sqrt{2}$. 
Since $L_0 \norm{I - e^{Ua}} = L_0 \norm{Ua} = L_0 \sqrt{2} a \le \theta_0$, we may use the upper bound in \Cref{prop:lipschitz} to obtain that the distance of $e^{Ua}$ from the identity matrix is at most $L_0 \sqrt{2}a $. The required upper bound follows directly from the triangle inequality.

\medskip
For the lower bound, we may again use \Cref{prop:lipschitz} to obtain

\begin{align*}
    d_{\PSL(2, \RR)} \left( \begin{bmatrix} 1 & a \\ 0 &
1 \end{bmatrix}, \begin{bmatrix} 1 & 0 \\ b & 1 \end{bmatrix} \right)
&\ge \tfrac{1}{L_0} \norm{ I - \begin{bmatrix} 1 & -a \\ 0 &
  1 \end{bmatrix} \begin{bmatrix} 1 & 0 \\ b & 1 \end{bmatrix} } 
\ge \tfrac{1}{L_0} \norm{ \begin{bmatrix} ab & a \\ -b &
  0 \end{bmatrix} } \\
&\ge \tfrac{\sqrt{2}}{L_0} \sqrt{ a^2 b^2 + a^2 + b^2 } 
\ge \tfrac{\sqrt{2}}{L_0} \sqrt{ a^2 + b^2 },
\end{align*}
as required.
\end{proof}

\begin{proposition}
\label{prop:distance parameter}
For $s$ such that $2s \le \theta_0/ L_0 \sqrt{2}$, the distance $\theta$ between the geodesics $\alpha_s$ and $\beta_s$ satisfies
\[ \tfrac{2}{L_0} s \le \theta \le L_0 2 \sqrt{2} s. \]
\end{proposition}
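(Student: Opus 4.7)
The plan is to establish the upper bound by evaluating the distance at the matched parameter value $t = \tau = 0$, and to establish the lower bound uniformly over all pairs $(t, \tau)$ by a direct matrix computation of $\alpha_s(t)^{-1}\beta_s(\tau)$ combined with \Cref{prop:lipschitz}.

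For the upper bound, I would note that $\alpha_s(0) = \begin{bmatrix} 1 & s \\ 0 & 1 \end{bmatrix}$ and $\beta_s(0) = \begin{bmatrix} 1 & 0 \\ s & 1 \end{bmatrix}$. Our hypothesis $2s \le \theta_0/(L_0\sqrt{2})$ is precisely the hypothesis of \Cref{prop:distance upper lower} with $a = b = s$, so that result directly gives $\theta \le d_{\PSL(2,\RR)}(\alpha_s(0), \beta_s(0)) \le L_0\sqrt{2}(s+s) = 2\sqrt{2}\, L_0 s$.

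For the lower bound, I would use left-invariance to write $d_{\PSL(2,\RR)}(\alpha_s(t), \beta_s(\tau)) = d_{\PSL(2,\RR)}(I, \alpha_s(t)^{-1}\beta_s(\tau))$ and compute directly:
\begin{equation*}
\alpha_s(t)^{-1}\beta_s(\tau) \;=\; \gamma(-t) \begin{bmatrix} 1 - s^2 & -s \\ s & 1 \end{bmatrix} \gamma(\tau) \;=\; \begin{bmatrix} e^{-u/2}(1-s^2) & -s e^{-v/2} \\ s e^{v/2} & e^{u/2} \end{bmatrix},
\end{equation*}
with $u = t - \tau$ and $v = t + \tau$. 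Since the norm induced by the orthonormal basis $\{A_1, A_2, A_3\}$ satisfies $\|M\|^2 = 2\sum_{ij} m_{ij}^2$, the off-diagonal entries of $I - \alpha_s(t)^{-1}\beta_s(\tau)$ alone contribute $2s^2(e^v + e^{-v}) \ge 4s^2$ to this squared norm. Hence $\|I - \alpha_s(t)^{-1}\beta_s(\tau)\| \ge 2s$ uniformly in $(t, \tau)$, regardless of the diagonal terms.

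To pass from the Lie algebra estimate back to the metric on $\PSL(2, \RR)$, I would apply \Cref{prop:lipschitz}: if $d_{\PSL(2,\RR)}(\alpha_s(t), \beta_s(\tau)) \le \theta_0$, the lower bound there gives $d \ge \|I - \alpha_s(t)^{-1}\beta_s(\tau)\|/L_0 \ge 2s/L_0$. Otherwise $d > \theta_0$, and the hypothesis $2s \le \theta_0/(L_0\sqrt{2})$ together with $L_0 \ge 1$ (which is forced by the two-sided bound in \Cref{prop:lipschitz}) ensures $\theta_0 \ge 2s/L_0$, so the bound holds in this case as well. Taking the infimum over $(t, \tau)$ yields $\theta \ge 2s/L_0$. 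The key step that makes the argument go through cleanly is the observation that the off-diagonal $s$-contributions are bounded below by $2s^2$ via $e^v + e^{-v} \ge 2$ independently of $(u, v)$, so one never has to locate where the minimum in $(t, \tau)$ is achieved — and this is what I would expect to be the main potential obstacle, since the two geodesics do intersect in $\HH^2$ for small $s$, making a direct hyperbolic-plane comparison more delicate.
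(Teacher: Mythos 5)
Your proof is correct and follows essentially the same route as the paper's: evaluating at $t = \tau = 0$ and invoking \Cref{prop:distance upper lower} for the upper bound, and for the lower bound computing $\alpha_s(t)^{-1}\beta_s(\tau)$ in closed form, retaining only the off-diagonal entries in the Frobenius norm, and noting that the dependence on $t+\tau$ enters through $e^v + e^{-v} \ge 2$. The one place where you are slightly more careful than the paper is the applicability of \Cref{prop:lipschitz}: the paper simply asserts that the infimum is achieved at small $t, t'$ where the bilipschitz estimate is valid, whereas you make this precise with the dichotomy on whether $d \le \theta_0$, using $L_0 \ge 1$ and the hypothesis $2s \le \theta_0/(L_0\sqrt{2})$ to dispatch the other case. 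That is a modest but genuine tightening of the written argument.
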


\begin{proof}
Since an upper bound may be obtained by picking any two points, we have
\[ d_{\PSL(2, \RR)}(\alpha_s, \beta_s) \le d_{\PSL(2, \RR)}(\alpha_s(0),
\beta_s(0)). \]
By the previous proposition, $d_{\PSL(2, \RR)}(\alpha_s(0),
\beta_s(0)) \le L_0 2 \sqrt{2} s $.

\medskip
We now obtain a lower bound.  By definition,
\[ \theta = \inf_{t, t'} d_{\PSL(2, \RR)} (\alpha_s (t), \beta_s(t')).  \]
and moreover the right hand side will be infimized over small values of $t$ and $t'$.
In particular, we may use the lower bounds from \Cref{prop:lipschitz}, to obtain 
\[ \theta \ge \inf_{t, t'} \tfrac{1}{L_0} \norm{ I
    - \begin{bmatrix} e^{t/2} & s e^{-t/2} \\ 0 &
    e^{-t/2} \end{bmatrix}^{-1} \begin{bmatrix} e^{t'/2} & 0 \\ s
    e^{t'/2} & e^{-t'/2} \end{bmatrix} }. \]
Combining the matrix terms gives
\[ \theta \ge \tfrac{1}{L_0} \inf_{t, t'} \norm{ \begin{bmatrix} 1 -
  (1+s^2) e^{(t'-t)/2} & -s e^{-(t+t')/2} \\ -s e^{(t+t')/2} & 1 -
  e^{(t-t')/2} \end{bmatrix} }. \]

\medskip
The matrix norm is the sum of the squares of the entries. So we may use the sum of the squares of the off-diagonal entries as a lower bound. We obtain
\[ \theta \ge \tfrac{\sqrt{2}}{L_0} \inf_{t, t'} \sqrt{ s^2 e^{-(t+t')} + s^2
  e^{t+t'} } \]
which is minimized when $t + t' = 0$, so
\[ \theta \ge \tfrac{2}{L_0} s, \]
as required.
\end{proof}

\begin{proposition}\label{prop:t bound}
There are constants $\theta_0 > 0$ and $L_0 \ge 0$ and
$K = \log \tfrac{2 L_0}{\theta_0}$, such that for any two distinct
geodesics $\alpha$ and $\beta$ distance $\theta \le \theta_0$ apart,
there are unit speed parametrizations $\alpha(t)$ and $\beta(t)$ such
that for $|t| \le \log \tfrac{1}{\theta} - K$,
\[\tfrac{1}{2 L_0^2}
\theta e^{|t|} \le d_{\PSL(2, \RR)}(\alpha^1(t), \beta^1(t)) \le
\sqrt{2} L_0^2 \theta e^{|t|}. \]
\end{proposition}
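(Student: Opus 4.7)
The plan is to carry out an explicit computation in the standard coordinate system of $\PSL(2,\RR)$ introduced just before \Cref{prop:distance upper lower}, using the bilipschitz equivalence between the left-invariant metric and the Hilbert--Schmidt matrix norm near the identity, as provided by \Cref{prop:lipschitz}. By the discussion preceding \Cref{prop:distance upper lower}, given a distinct pair of geodesics $\alpha$ and $\beta$, we may apply an isometry of $\PSL(2,\RR)$ and possibly reverse an orientation to assume that $\alpha = \alpha_s$ and $\beta = \beta_{\pm s}$ for the same value $s > 0$, with the unit speed parametrizations $\alpha_s(t)$ and $\beta_{\pm s}(t)$ chosen so that $\alpha_s(0)$ and $\beta_{\pm s}(0)$ are the closest points. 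By \Cref{prop:distance parameter}, whenever $\theta \le \theta_0$ is sufficiently small, the parameter $s$ and the distance $\theta$ are comparable up to the constants of \Cref{prop:lipschitz}, namely $\tfrac{\theta}{2\sqrt{2}L_0} \le s \le \tfrac{L_0\theta}{2}$.

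Next I would compute $A(t) = \alpha_s(t)^{-1}\beta_{\pm s}(t)$ directly. In the case $\beta = \beta_s$ one obtains
\[
A(t) = \begin{bmatrix} 1-s^2 & -se^{-t} \\ se^{t} & 1 \end{bmatrix},
\qquad I - A(t) = \begin{bmatrix} s^2 & se^{-t} \\ -se^{t} & 0 \end{bmatrix},
\]
and hence, using the orthonormal basis for $\mathrm{sl}(2,\RR)$ fixed before \Cref{prop:distance upper lower},
\[
\norm{I - A(t)}^2 = 2\bigl(s^4 + s^2 e^{-2t} + s^2 e^{2t}\bigr).
\]
The case $\beta = \beta_{-s}$ yields exactly the same formula. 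From this identity one has the two-sided bound $\sqrt{2}\, s\, e^{|t|} \le \norm{I - A(t)} \le \sqrt{6}\, s\, e^{|t|}$ as soon as $s \le 1$, which follows once $\theta_0$ is small enough.

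To convert these matrix-norm bounds into bounds on the left-invariant distance, I must keep $A(t)$ inside the neighborhood where \Cref{prop:lipschitz} applies. Taking $K = \log(2 L_0/\theta_0) + \tfrac{1}{2}\log 6$, and restricting to $|t| \le \log\tfrac{1}{\theta} - K$, the upper bound $\norm{I - A(t)} \le \sqrt{6}\, s\, e^{|t|}$ combined with $s \le L_0\theta/2$ forces $\norm{I - A(t)} \le \theta_0$, so \Cref{prop:lipschitz} applies throughout this range. The upper half of \Cref{prop:lipschitz} then gives
\[
d_{\PSL(2,\RR)}(\alpha^1(t), \beta^1(t)) \le L_0 \cdot \sqrt{6}\, s\, e^{|t|} \le \sqrt{2}\, L_0^2 \, \theta\, e^{|t|},
\]
after using $s \le L_0\theta/2$ and absorbing $\sqrt{6}/2 < \sqrt{2}$. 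Likewise the lower half gives
\[
d_{\PSL(2,\RR)}(\alpha^1(t),\beta^1(t)) \ge \tfrac{1}{L_0}\cdot \sqrt{2}\, s\, e^{|t|} \ge \tfrac{\theta e^{|t|}}{2 L_0^2},
\]
using $s \ge \theta/(2\sqrt{2}L_0)$.

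The computation itself is essentially routine matrix arithmetic; the main obstacle is bookkeeping with the constants so that $K$ comes out in the form claimed and the various applications of \Cref{prop:lipschitz} are justified without circularity. In particular, one must use the upper half of \Cref{prop:lipschitz} first to confirm that $A(t)$ remains in the neighborhood of the identity on which the bilipschitz equivalence holds, before invoking the lower half to get the matching lower estimate on $d_{\PSL(2,\RR)}$. The choice of $K$ absorbs precisely the slack between $\sqrt{6}$ and $\sqrt{2}$ and the factor $L_0/2$ arising from \Cref{prop:distance parameter}.
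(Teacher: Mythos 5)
Your proof is correct and takes essentially the same route as the paper: reduce to the standard forms $\alpha_s$, $\beta_{\pm s}$, push the matrix pair to a neighborhood of the identity, compute the Hilbert--Schmidt norm, and convert via \Cref{prop:lipschitz} and \Cref{prop:distance parameter}. The only presentational difference is that you compute $\alpha_s(t)^{-1}\beta_{\pm s}(t)$ directly and invoke \Cref{prop:lipschitz} in one shot, whereas the paper first conjugates both matrices by $\gamma^1(-t)$ and then routes the estimate through \Cref{prop:distance upper lower}; by left invariance these are the same computation, and your slightly larger $K$ (with the extra $\tfrac{1}{2}\log 6$) is an inconsequential adjustment of constants.
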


\begin{proof}
Up to the $\PSL(2, \RR)$ action, we may assume that $\alpha$ and
$\beta$ have the standard forms $\alpha = \alpha_s(t)$ and
$\beta = \beta_s (t)$, defined above.
Since the distance is left invariant, we have 
\[
d_{\PSL(2, \RR)}( \alpha^1_s(t), \beta^1_s (t) ) =  d_{\PSL(2, \RR)}( \gamma^1(-t) \alpha^1_s(t), \gamma^1(-t) \beta^1_s (t) )
\]
Note that $\gamma^1(-t) \alpha^1_s (t) = \begin{bmatrix} 1 & s e^{-t} \\ 0 & 1 \end{bmatrix}$ and $\gamma^1(-t) \beta^1_s(t) = \begin{bmatrix} 1 & 0 \\ \pm s e^{t} & 1 \end{bmatrix}$.
For the small values of $t$ stated in the hypothesis, we use the bounds from \Cref{prop:distance upper lower} to obtain 
\[ \tfrac{\sqrt{2}}{L_0} s e^{|t|} \le d_{\PSL(2, \RR)}(\alpha^1(t), \beta^1(t))
\le L_0 2 \sqrt{2} s e^{|t|} \]
By \Cref{prop:distance parameter}, we obtain from above
\[ \tfrac{1}{2 L_0^2} \theta e^{|t|} \le d_{\PSL(2,
  \RR)}(\alpha^1(t), \beta^1(t)) \le \sqrt{2} L_0^2 \theta e^{|t|} \]
as required.
\end{proof}

We have obtained bounds on the distance from $\gamma^1_1(t)$ to
$\gamma^1_2(t)$.  We wish to show that this gives bounds on the distance
from $\gamma^1_1(t)$ to $\gamma^1_2$.  The upper bound is immediate.  We
start by showing a lower bound that holds in a general geodesic metric
space.

\begin{proposition}\label{prop:t bounds d}
Suppose that $(X, d)$ is a geodesic metric space.
Suppose that $\gamma_1$ and $\gamma_2$ are geodesics distance $\theta$ apart with unit speed parametrizations chosen
such that $\gamma_1(0)$ and $\gamma_2(0)$ are closest points between
$\gamma_1$ and $\gamma_2$. Then for all $t$,
\[ d_X( \gamma_1(t), \gamma_2) \ge \tfrac{1}{2} \left( d_X(
\gamma_1(t), \gamma_2(t)) - \theta \right).  \]
\end{proposition}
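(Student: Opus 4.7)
The approach is a two-step triangle-inequality argument. Let $p = \gamma_2(s') \in \gamma_2$ be a closest point (or near-minimizer, if the infimum is not attained) to $\gamma_1(t)$, realizing $d := d_X(\gamma_1(t), \gamma_2)$, and set $D := d_X(\gamma_1(t), \gamma_2(t))$. The triangle inequality along the path $\gamma_1(t) \to p \to \gamma_2(t)$ yields
\[
D \;\le\; d + d_X(p, \gamma_2(t)) \;=\; d + |s' - t|,
\]
since $p$ and $\gamma_2(t)$ both lie on the unit-speed geodesic $\gamma_2$. The desired inequality $D - \theta \le 2d$ thus reduces to establishing $|s' - t| \le d + \theta$.

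I next bound $|s'|$ in both directions using the hypothesis that $\gamma_1(0), \gamma_2(0)$ realize the minimum distance $\theta$. The triangle inequality applied to the path $\gamma_2(s') \to \gamma_1(t) \to \gamma_1(0) \to \gamma_2(0)$, of total length $d + |t| + \theta$, gives $|s'| \le d + |t| + \theta$. Symmetrically, the path $\gamma_1(0) \to \gamma_2(0) \to \gamma_2(s') \to \gamma_1(t)$ has length $\theta + |s'| + d$ and must be at least $d_X(\gamma_1(0), \gamma_1(t)) = |t|$, so $|t| \le \theta + |s'| + d$. Combining these yields $\bigl||s'| - |t|\bigr| \le d + \theta$.

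The main obstacle is the final step: passing from a bound on $\bigl||s'| - |t|\bigr|$ to a bound on $|s' - t|$. These agree exactly when $s'$ and $t$ have the same sign, in which case the argument is complete. The required sign-compatibility is where the specific hypothesis that $\gamma_1(0)$ and $\gamma_2(0)$ form a globally closest pair must be used in earnest — intuitively, a closest point on $\gamma_2$ to $\gamma_1(t)$ should lie on the same side of $\gamma_2(0)$ as $t$ lies relative to $\gamma_1(0)$, forcing the nearest-point parametrization to be monotonic around $0$. In the non-positive-curvature settings where this lemma is applied (most notably $\PSL(2, \RR)$), this monotonicity follows from non-expansiveness of nearest-point projection to a geodesic, and this is the point at which I would focus the remaining technical work.
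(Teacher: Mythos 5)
Your approach mirrors the paper's proof closely. The paper sets $s = d_X(\gamma_1(t), \gamma_2)$, $d = d_X(\gamma_1(t), \gamma_2(t))$, writes the two triangle inequalities $t \le \theta + r + s$ (via the path $\gamma_1(0) \to \gamma_2(0) \to \gamma_2(r) \to \gamma_1(t)$) and $d \le s + t - r$ (via $\gamma_1(t) \to \gamma_2(r) \to \gamma_2(t)$), and then adds them so that the $r$ and $t$ terms cancel, giving $d \le \theta + 2s$ directly without passing through the intermediate reduction to a bound on the parameter gap.

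The sign issue you flag is real, and the paper's proof makes exactly the same tacit assumption you do: dropping the absolute values requires $0 \le r \le t$. When $r > t$ the argument is salvageable by replacing the first inequality with $r \le \theta + t + s$ (triangle inequality along $\gamma_2(0) \to \gamma_1(0) \to \gamma_1(t) \to \gamma_2(r)$), which combined with $d \le s + (r - t)$ still yields $d \le 2s + \theta$. The case $r < 0 < t$, where the nearest-point parameter ``overshoots'' to the wrong side of the base point, is where both your argument and the paper's silently assume it does not occur; the triangle inequality alone does not exclude it in an arbitrary geodesic metric space. In the paper's actual use this is harmless, since the proposition is invoked for explicit lifts of hyperbolic geodesics to $\PSL(2, \RR)$ whose matrix coordinates control the sign, but as a statement about a general $(X, d)$ the case is left unaddressed. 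One small caution on your suggested remedy: $\PSL(2, \RR)$ with its left-invariant metric is not non-positively curved, so non-expansiveness of nearest-point projection onto geodesics is not available there; the monotonicity you want in that setting has to come from the explicit description of the lifted geodesics in standard form, not from general CAT(0) machinery.
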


\begin{proof}
Suppose that the point of $\gamma_2$ closest to $\gamma_1(t)$ is $\gamma_2(r)$, and suppose that it lies distance $s$ from $\gamma_1(t)$.  We have
illustrated this in \Cref{fig:d bound}.

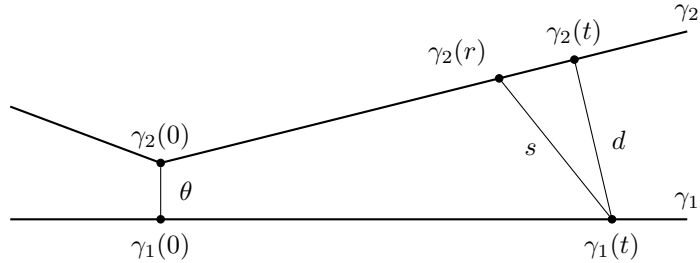
\begin{figure}[h]
\begin{center}
\begin{tikzpicture}

\tikzstyle{point}=[circle, draw, fill=black, inner sep=1pt]

\draw [thick] (-1, -0.5) -- (8, -0.5) node [above] {$\gamma_1$};

\draw [thick] (-1, 1) -- (1, 0.25) -- (8, 2) node [above]
{$\gamma_2$};

\draw (1, -0.5) node (a) [point, label=below:$\gamma_1(0)$] {};

\draw (1, 0.25) node (b) [point, label=above:$\gamma_2(0)$] {};

\draw (7, -0.5) node [point, label=below:$\gamma_1(t)$] {};

\draw (6.5, 1.625) node [point, label=above:$\gamma_2(t)$] {};

\draw (a) -- (b) node [midway, label=right:$\theta$] {};

\draw (7, -0.5) -- (6.5, 1.625) node [midway, label=right:${d}$] {};

\draw (5.5, 1.375) node [point, label=above left:$\gamma_2(r)$] {};

\draw (7, -0.5) -- (5.5, 1.375) node [midway, label=left:$s$] {};

\end{tikzpicture}
\end{center}
\caption{A lower bound for the distance from $\gamma_1(t)$ to
  $\gamma_2$.} \label{fig:d bound}
\end{figure}

Applying the triangle inequality to the path from $\gamma_1(0)$ to
$\gamma_1(t)$ via $\gamma_2(0)$ and $\gamma_2(r)$, gives
\[ t \le \theta + r + s. \]
Similarly, applying the triangle inequality to the path from
$\gamma_1(t)$ to $\gamma_2(t)$ via $\gamma_2(r)$, gives
\[ d \le s + t - r. \]
Adding these two inequalities gives $d \le \theta + 2s$, equivalently,
$s \ge \tfrac{1}{2}(d - \theta)$, as required.
\end{proof}

We now complete the proof of \Cref{prop:small t bounds}.

\begin{proof}[Proof of \Cref{prop:small t bounds}]
By \Cref{prop:t bound} there is a constant $L \ge 1$ such
that for any geodesics $\gamma_1$ and $\gamma_2$, distance
$\theta \le \theta_0$ apart, with unit speed parametrizations
$\gamma_1(t)$ and $\gamma_2(t)$, such that the distance between
$\gamma^1_1(0)$ and $\gamma^1_2(0)$ is equal to $\theta$,
\[ \tfrac{1}{L} \theta e^t \le d = d_{\PSL(2, \RR)}(\gamma^1_1(t),
\gamma^1_2(t)) \le L \theta e^{t}. \]
We now show the following estimate for the distance from $\gamma_1(t)$
to $\gamma_2$.
\begin{equation}\label{eq:gamma bounds}
\tfrac{1}{4 L} \theta e^t \le d_{\PSL(2, \RR)}(\gamma^1_1(t),
\gamma^1_2) \le L \theta e^{t}.
\end{equation}

Since the distance from $\gamma^1_1 (t)$ to $\gamma^1_2 (t)$ is an upper bound for the distance from $\gamma^1_1(t)$ to $\gamma^1_2$, the upper bound follows immediately.
For the lower bound, \Cref{prop:t bounds d} implies that
$d_{\PSL(2, \RR)}(\gamma^1_1(t), \gamma^1_2) \ge \tfrac{1}{2} \left( d -
\theta \right)$.  For $d \ge 2 \theta$, we obtain
$d_{\PSL(2, \RR)}(\gamma^1_1(t), \gamma^1_2) \ge \tfrac{1}{4} d \ge
\tfrac{1}{4 L} \theta e^t$.  If $d \le 2 \theta$ instead, then
$t \le \log 8 L$. In this case, $\tfrac{1}{4L} \theta e^t \le \tfrac{\theta}{2}$. Since $d_{\PSL(2, \RR)}(\gamma^1_1(t), \gamma^1_2) \ge d_{\PSL(2, \RR)}(\gamma^1_1, \gamma^1_2) = \theta$, the lower bound holds trivially.

\medskip
The estimate for $d_{\PSL(2, \RR)}(\gamma^1_1(t), \gamma^1_2)$ in
\eqref{eq:gamma bounds} is valid as long as the bilipschitz bounds
hold in a small neighborhood of the identity in $\PSL(2, \RR)$,
i.e. as long as $d \le \theta_0$, and this holds as long as
$t \le \log \tfrac{1}{\theta} + \log \tfrac{\theta_0}{L}$.  Therefore
we may choose $K = - \log \tfrac{\theta_0}{L}$, which is non-negative
as $\theta_0 \le 1$ and $L \ge 1$.  Furthermore, this choice of $K$
implies that $\gamma^1_1(t)$ lies within a $\theta_0$-neighborhood of
$\gamma^1_2$ for an interval of size at least
$2( \log \tfrac{1}{\theta} - K )$, as required.
\end{proof}

Consider the map $p \colon \PSL(2, \RR) \to \HH^2$ given by applying 
the matrix $A$ to $i$ in the upper half space as a M\"obius
transformation, i.e.
\[ p \colon A = \begin{bmatrix} a & b \\ c & d \end{bmatrix} \mapsto
\frac{ a i + b }{ci + d}. \]
Recalling our basis for the Lie algebra, note that 
\[ e^{tA_1} = \begin{bmatrix} e^{t/2} & 0 \\ 0 & e^{-t/2} \end{bmatrix}, \]
and so $p(e^{tA_1}) = e^{t} i$, which is a unit speed geodesic in the upper half space.  Similarly $p(e^{t A_2})$ is the unit speed geodesic
obtained by rotating the geodesic for $e^{tA_1}$ by $\pi/2$ clockwise, and $e^{tA_3}$ gives a rotation about $i$. Thus, the derivative at $I$ of
the map $p \colon \PSL(2, \RR) \to \HH^2$ is given by the projection
matrix
\[ \begin{bmatrix} 1 & 0 & 0 \\ 0 & 1 & 0 \end{bmatrix}, \]
where we use the basis $\{ i, 1 \}$ for the tangent space to $i$ in the upper
half space.  As the map is equivariant, it is
distance non-increasing, and so distances in $\HH^2$ are lower bounds
for distances in $\PSL(2, \RR)$.  Since the kernel of the map is the
compact subgroup $SO(2) \cong S^1$, which has diameter $\pi/2$, we get
\begin{equation}\label{eq:dist reducing}
d_{\HH^2}( p(A), p(B) ) \le d_{\PSL(2, \RR)}(A, B) \le d_{\HH^2}(
p(A), p(B) ) + \pi.
\end{equation}

We may now complete the proof of \Cref{prop:fellow travel}
by combining the large $t$ estimates from distances in $\HH^2$ with
the small $t$ estimates from \Cref{prop:small t bounds}.

\begin{proof}[Proof of \Cref{prop:fellow travel}]
For small values of $t$, we have the bounds from \Cref{prop:small t bounds}, i.e. there are constants $\theta_0$, $L$
and $K = \log \tfrac{L}{\theta_0}$ such that for $\theta \le \theta_0$
and $|t| \le \log \tfrac{1}{\theta} - K$,
\[ \tfrac{1}{L} \theta e^t \le d_{\PSL(2, \RR)}( \gamma^1_1(t), \gamma^1_2
) \le L \theta e^t.  \]

We now use the distance estimates in $\HH^2$ to extend these bounds to
$|t| \le \log \tfrac{1}{\theta}$.  Using \eqref{eq:dist reducing}, and
the distance bounds from \Cref{prop:hyperbolic bounds disjoint} and
\Cref{prop:hyperbolic bounds intersect}, we obtain the following
bounds for distances in $PSL(2, \RR)$.
\[ \tfrac{1}{8}(e^t - 1) \le d_{\PSL(2, \RR)}( \gamma^1_1(t), \gamma^1_2 )
\le \tfrac{3}{2} \theta e^t + \pi.  \]
So for $t \in [ \log \tfrac{1}{\theta} - 9, \log \tfrac{1}{\theta} ]$,
\[ \tfrac{1}{M} \theta e^t \le d_{\PSL(2, \RR)}( \gamma^1_1(t), \gamma^1_2
) \le M \theta e^t,  \]
for $M = \tfrac{3}{2} + \pi e^9 \le 10^5$.

\medskip
We can now choose the constant $L_0$ in the statement of \Cref{prop:fellow travel} to be the maximum of $M$ and $L$ from
\Cref{prop:small t bounds}.
\end{proof}

\end{appendices}

\bibliography{references}


\medskip

School of Mathematics and Statistics, University of Glasgow \href{mailto:Vaibhav.Gadre@glasgow.ac.uk}{Vaibhav.Gadre@glasgow.ac.uk}

\medskip

CUNY College of Staten Island and CUNY Graduate Center
\href{mailto:joseph.maher@csi.cuny.edu}{joseph.maher@csi.cuny.edu}

\medskip 
Queen's University at Kingston \href{mailto:catherine.pfaff@gmail.com}{catherine.pfaff@gmail.com}

\medskip

Department of Mathematics,
University of Wisconsin--Madison
\href{mailto:caglar@math.wisc.edu}{caglar@math.wisc.edu}

\end{document}